\newcommand{\xtwoheadrightarrow}[2][]{%
  \xrightarrow[#1]{#2}\mathrel{\mkern-14mu}\rightarrow
}
\def\slashedarrowfill@#1#2#3#4#5{%
  $\m@th\thickmuskip0mu\medmuskip\thickmuskip\thinmuskip\thickmuskip
   \relax#5#1\mkern-7mu%
   \cleaders\hbox{$#5\mkern-2mu#2\mkern-2mu$}\hfill
   \mathclap{#3}\mathclap{#2}%
   \cleaders\hbox{$#5\mkern-2mu#2\mkern-2mu$}\hfill
   \mkern-7mu#4$%
}
\def\rightslashedarrowfill@{%
  \slashedarrowfill@\relbar\relbar\mapstochar\rightarrow}
\newcommand\xslashedrightarrow[2][]{%
  \ext@arrow 0055{\rightslashedarrowfill@}{#1}{#2}}
\newcommand{\ostar}{\mathbin{\mathpalette\make@circled\star}}
\newcommand{\make@circled}[2]{%
  \ooalign{$\m@th#1\smallbigcirc{#1}$\cr\hidewidth$\m@th#1#2$\hidewidth\cr}%
}
\newcommand{\smallbigcirc}[1]{%
  \vcenter{\hbox{\scalebox{0.77778}{$\m@th#1\bigcirc$}}}%
}
\newcommand{\yo}{\text{\usefont{U}{min}{m}{n}\symbol{'110}}}
\DeclareFontFamily{U}{min}{}
\DeclareFontShape{U}{min}{m}{n}{<-> dmjhira}{}
\tikzset{curve/.style={settings={#1},to path={(\tikztostart)
    .. controls ($(\tikztostart)!\pv{pos}!(\tikztotarget)!\pv{height}!270:(\tikztotarget)$)
    and ($(\tikztostart)!1-\pv{pos}!(\tikztotarget)!\pv{height}!270:(\tikztotarget)$)
    .. (\tikztotarget)\tikztonodes}},
    settings/.code={\tikzset{quiver/.cd,#1}
        \def\pv##1{\pgfkeysvalueof{/tikz/quiver/##1}}},
    quiver/.cd,pos/.initial=0.35,height/.initial=0}
\tikzset{tail reversed/.code={\pgfsetarrowsstart{tikzcd to}}}
\tikzset{2tail/.code={\pgfsetarrowsstart{Implies[reversed]}}}
\tikzset{2tail reversed/.code={\pgfsetarrowsstart{Implies}}}
\tikzset{no body/.style={/tikz/dash pattern=on 0 off 1mm}}
\definecolor{blue(pigment)}{rgb}{0.2, 0.2, 0.6}
\definecolor{americanrose}{rgb}{1.0, 0.01, 0.24}
\definecolor{nicegreen}{rgb}{0.0, 0.5, 0.0}
\definecolor{deepmagenta}{rgb}{0.8, 0.0, 0.8}
\definecolor{deepcarrotorange}{rgb}{0.91, 0.41, 0.17}
\definecolor{cadetgrey}{rgb}{0.57, 0.64, 0.69}
\newtheorem{theoremm}{Theorem}[section]
\newtheorem{theoremmm}{Theorem}
\newtheorem{sketches}{Definition-sketch}
\declaretheorem[style=plain,name=Theorem,numberlike=theoremmm]{theoremat}
\declaretheorem[style=plain,name=Theorem,numberlike=theoremm]{theorem}
\declaretheorem[style=plain,name=Theorem,numbered=no]{theorem*}
\declaretheorem[style=plain,name=Lemma,numberlike=theoremm]{lemma}
\declaretheorem[style=plain,name=Proposition,numberlike=theoremm]{proposition}
\declaretheorem[style=plain,name=Corollary,numberlike=theoremm]{corollary}
\declaretheorem[style=plain,name=Observation,numberlike=theoremm]{observation}
\declaretheorem[style=plain,name=Conjecture,numberlike=theoremm]{conjecture}
\declaretheorem[style=definition,name=Definition,numberlike=theorem]{definition}
\declaretheorem[style=definition,name=Definition-sketch,numberlike=sketches]{defsketch}
\declaretheorem[style=remark,name=Example,numberlike=theorem]{example}
\declaretheorem[style=remark,name=Remark,numberlike=theorem]{remark}
\declaretheorem[style=remark,name=Notation,numberlike=theorem]{notation}
\declaretheorem[style=remark,name=Terminology,numberlike=theorem]{terminology}
\font\sc=rsfs10
\newcommand{\csym}[1]{\sc\mbox{#1}\hspace{1.0pt}}
\font\scc=rsfs7
\newcommand{\ccf}[1]{\scc\mbox{#1}\hspace{0.5pt}}
\font\sccc=rsfs5
\newcommand{\cccsym}[1]{\sccc\mbox{#1}\hspace{0.2pt}}
\newcommand{\on}[1]{\operatorname{#1}}
\newcommand{\setj}[1]{\left\{ #1 \right\}}
\newcommand{\hcomp}{\circ_{h}}
\newcommand{\xiso}{\xrightarrow{\sim}}
\DeclareMathSymbol{\blackdiamond}{\mathbin}{mathb}{"0C}
\DeclareMathAlphabet\EuRoman{U}{eur}{m}{n}
\SetMathAlphabet\EuRoman{bold}{U}{eur}{b}{n}
\newcommand{\euler}{\EuRoman}
\newcommand{\Hom}[1]{\left\langle #1 \right\rangle}
\newcommand{\Cint}{\left\llbracket \csym{C}\, \right\rrbracket}
\newcommand{\Cintjr}{\left\llbracket \ccf{C}\, \right\rrbracket}
\newcommand{\Sint}{\left\llbracket \csym{S}\, \right\rrbracket}
\newcommand{\cotimes}{\overset{\cccsym{C}}{\otimes}}
\newcommand{\kotimes}{\otimes_{\Bbbk}}
\newcommand{\zotimes}{\otimes_{\mathbb{Z}}}
\newcommand{\got}[1]{\left\llbracket #1 \right\rrbracket}
\newcommand{\gotr}[1]{\left\llbracket #1 \right\rrbracket_{\mathbb{R}}}
\tikzstyle{bieski}=[-, draw={rgb,255: red,41; green,28; blue,218}, dashed, thick]
\tikzstyle{thickstrand}=[-, thick]
\begin{document}
\title{Identity in the presence of adjunction}
\author{Mateusz Stroi{\' n}ski}
\begin{abstract}
 We develop a theory of adjunctions in semigroup categories, i.e. monoidal categories without a unit object. We show that a rigid semigroup category is promonoidal, and thus one can naturally adjoin a unit object to it. This extends the previous results of Houston in the symmetric case, and addresses a question of his. It also extends the results in the non-symmetric case with additional finiteness assumptions, obtained by Benson-Etingof-Ostrik, Coulembier and Ko-Mazorchuk-Zhang. We give an interpretation of these results using comonad cohomology, and, in the absence of finiteness conditions, using enriched traces of monoidal categories. As an application of our results, we give a characterization of finite tensor categories in terms of the finitary $2$-representation theory of Mazorchuk-Miemietz.
\end{abstract}

\maketitle

\tableofcontents

\section{Introduction}

One of the most important notions in higher category theory is that of an adjunction internal to a given $n$-category (or $(\infty,n)$-category).
The most familiar, primordial instance of adjunctions is that of an adjunction internal to the bicategory $\mathbf{Cat}$, which gives nothing else but an adjunction of (small) categories, since the bicategorical structures internal to $\mathbf{Cat}$ realize the theory of ordinary (small) categories.

Viewing monoidal categories as special bicategories (under delooping), monoidally dual objects are realized as adjoint $1$-morphisms. This leads to another elementary example: a pair consisting of a finite-dimensional $\Bbbk$-vector space and its $\Bbbk$-linear dual forms an (ambidextrously) adjoint pair.

Another fundamentally important instance of adjunctions is the study of monoidal $(\infty,n)$-categories with duals in the context of homotopy hypothesis (see \cite{BaDo}, \cite{Lu1}, \cite{GrPa}), which in that language states that the category $\on{Bord}_{n}^{\on{fr}}$ of $n$-dimensional extended framed cobordisms is the free symmetric monoidal $(\infty, n)$-category with duals. Here, following \cite{Lu1}, we say that a monoidal $(\infty,n)$-category has duals if its homotopy category, has duals.

We thus see that in the above case (and many others), it suffices to consider duals in monoidal categories and adjunctions in bicategories.
This is also the scope of generality of this paper. We remark that the algebraic notions of a weak $n$-adjunction are also of great interest - see e.g. \cite{Gu1},\cite{Gu2} for their applications to coherence problems in tricategories.

Recall that an adjunction
$\begin{tikzcd}
	{\mathtt{i}} & {\mathtt{j}}
	\arrow[""{name=0, anchor=center, inner sep=0}, "{\mathrm{F}}", shift left, from=1-1, to=1-2]
	\arrow[""{name=1, anchor=center, inner sep=0}, "{\mathrm{G}}", shift left=2, from=1-2, to=1-1]
	\arrow["\dashv"{anchor=center, rotate=-90}, draw=none, from=0, to=1]
\end{tikzcd}$
in a bicategory $\csym{B}$ requires a {\it unit $2$-morphism} $\mathbb{1}_{\mathtt{i}} \Rightarrow \mathrm{G}\circ \mathrm{F}$ and a {\it counit $2$-morphism} $\mathrm{F \circ G} \Rightarrow \mathbb{1}_{\mathtt{j}}$. In particular, this axiomatization of adjointness explicitly requires the presence of unit $1$-morphisms.

In contrast to this, from \cite[Chapter~2.10]{EGNO}, we know that unitality of a monoidal categority is a property, and not an additional structure: the unit object of a monoidal category is unique up to a unique isomorphism. The latter fact indicates that we should be able to axiomatize adjointness in a monoidal category (or a bicategory) without reference to the unit object.

The main aim of this paper is to provide a unit-free axiomatization of adjointness in a semigroup category (i.e. a category endowed with a coherently associative tensor product functor but not necessarily admitting a unit object) which is equivalent to the usual notion of an internal adjunction in the monoidal case, and then study the problem of adjoining units to rigid (i.e. such that every object admits both a left and a right adjoint in the non-unital sense) semigroup categories.
We now sketch the definition mentioned above:

\begin{defsketch}[{Definition~\ref{AdjunctionSemigroup}}]\label{Sketchy}
 Let $(\csym{S},\boxtimes, \mathbf{a})$ be a semigroup category. A {\it non-unital adjunction} in $\csym{S}$ is a tuple $(\mathrm{F},\mathrm{F}^{\blackdiamond}, \eta^{l},\eta^{r}, \varepsilon^{l},\varepsilon^{r})$ consisting of objects $\mathrm{F},\mathrm{F}^{\blackdiamond}$ and natural transformations: units $\eta^{l}: \on{Id}_{\ccf{S}} \Rightarrow - \boxtimes (\mathrm{F}^{\blackdiamond}\boxtimes \mathrm{F})$ and $\eta^{r}: \on{Id}_{\ccf{S}} \Rightarrow (\mathrm{F}^{\blackdiamond}\boxtimes \mathrm{F}) \boxtimes -$, and similarly defined counits, such that:
  \begin{itemize}
     \item $(\mathrm{F} \boxtimes - ,\mathrm{F}^{\blackdiamond} \boxtimes -, \eta^{l}, \varepsilon^{l})$ is an (ordinary) adjunction of left $\csym{S}$-module endofunctors
     \item $- \boxtimes (\mathrm{F}^{\blackdiamond}, -\boxtimes \mathrm{F}, \eta^{r}, \varepsilon^{r})$ is an (ordinary) adjunction of right $\csym{S}$-module endofunctors,
  \end{itemize}
  satisfying further compatibility conditions between the two adjunctions.
\end{defsketch}
Three points should be emphasized: first, our definition explicitly refers to ordinary adjunctions in auxiliary monoidal categories. However, the unit object in both of them (the identity functor) is a much more ``natural'' choice of a unit than one abstractly specified; in some sense, we delegate unitality considerations to ``more canonical'' units living in other categories.

Second, similarly to ordinary adjoints, non-unital adjoints are unique up to isomorphism, see Corollary~\ref{propertyNotStructure}. Thus, rigidity (i.e. admitting left and right duals for all objects) is a condition for a semigroup category, rather than additional structure.

 Third, the adjunctions and transformations involved are not merely adjunctions in $\mathbf{Cat}$, but adjunctions of $\csym{S}$-module endofunctors. Importantly, we cannot reduce to the former case, see \cite{HZ}. The main results of the first part of the paper can be summarized as follows:
\begin{theoremat}[Theorem~\ref{PromonoidalUnital}, Theorem~\ref{Ansatzes}]\label{ThmA}
    Assume $\csym{S}$ is rigid. The category $\widehat{\csym{S}}$ of presheaves on $\csym{S}$ is monoidal, the Yoneda embedding $\csym{S} \hookrightarrow \widehat{\csym{S}}$ is a semigroup embedding with image in $\widehat{\csym{S}}_{\on{rig}}$, the category of rigid objects in $\widehat{\csym{S}}$. The monoidal unit of $\csym{S}$ is given by $\on{Hom}_{\ccf{S}\!\on{-Mod}(\ccf{S},\ccf{S})}(-,\on{Id}_{\ccf{S}})$, which is isomorphic to $\on{Hom}_{\on{Mod-}\ccf{S}(\ccf{S},\ccf{S})}(-,\on{Id}_{\ccf{S}})$
\end{theoremat}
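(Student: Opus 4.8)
The plan is to realise $\widehat{\csym{S}}$ as the \emph{Day convolution} of the semigroup $\csym{S}$ and then to pin down its unit using rigidity. First I would define the tensor $\ast$ on $\widehat{\csym{S}}$ as the left Kan extension of $\yo\circ\boxtimes$ along $\yo\times\yo$; pointwise this is the coend $(F\ast G)(c)=\int^{a,b}\csym{S}(c,a\boxtimes b)\otimes F(a)\otimes G(b)$, where $\otimes$ denotes the copower over the enriching base. Associativity of $\ast$ is transported from the associator $\mathbf{a}$ of $\csym{S}$ by the standard coend bookkeeping, and a double use of the density (co-Yoneda) formula yields $\yo(x)\ast\yo(y)\cong\yo(x\boxtimes y)$, so that $\yo$ is a strong semigroup embedding. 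Crucially, none of this uses rigidity: it equips $\widehat{\csym{S}}$ with a \emph{non-unital} monoidal structure for every semigroup category, and the entire difficulty is concentrated in producing a unit.

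For the unit I would take the ansatz $J\colon a\mapsto\on{Hom}_{\csym{S}\on{-Mod}(\csym{S},\csym{S})}(-\boxtimes a,\on{Id}_{\csym{S}})$, the presheaf of \emph{left} $\csym{S}$-module natural transformations $(-\boxtimes a)\Rightarrow\on{Id}_{\csym{S}}$, and write $J'$ for the analogous presheaf built from \emph{right} module endofunctors $(a\boxtimes-)$. Because $\ast$ is cocontinuous in each variable (each $F\ast-$ is a left adjoint) and every presheaf is a canonical colimit of representables, it suffices to verify the unit constraints after restricting along $\yo$. Co-Yoneda then reduces $F\ast J\cong F$ and $J\ast F\cong F$, for $F=\yo(d)$, to the coend identities $\int^{b}\csym{S}(c,d\boxtimes b)\otimes J(b)\cong\csym{S}(c,d)$ and $\int^{a}\csym{S}(c,a\boxtimes d)\otimes J'(a)\cong\csym{S}(c,d)$, the two candidate isomorphisms being the evaluation maps $(g,\theta)\mapsto\theta_{d}\circ g$. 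Matching the two one-sided computations is exactly what forces the asserted isomorphism $J\cong J'$.

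The crux — and the step I expect to be the main obstacle — is proving that these evaluation maps are bijections, equivalently that $J$ is a genuine two-sided unit. In the unital case one invokes the module Yoneda equivalence $\csym{S}\on{-Mod}(\csym{S},\csym{S})\simeq\csym{S}$ to rewrite $J(a)\cong\csym{S}(a,\mathbb{1})$ and collapse the coend against $\mathbb{1}$; here there is neither a unit object nor such an equivalence, so rigidity must supply the collapsing data. The counits $\varepsilon^{l},\varepsilon^{r}$ of each non-unital adjunction are module transformations of precisely the two types occurring in $J$ and $J'$, while the units $\eta^{l},\eta^{r}$ provide, for every morphism $h\colon c\to d$, a lift into the relevant coend; the triangle identities of Definition~\ref{Sketchy}, combined with the dinaturality relations of the coend, then collapse the composites and produce two-sided inverses. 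I would package this uniformly by exhibiting $J$ as a \emph{Saavedra unit}: rigidity yields an isomorphism $J\ast J\cong J$ together with full faithfulness of $J\ast-$ and $-\ast J$, and the uniqueness of such units simultaneously endows $(\widehat{\csym{S}},\ast)$ with a unit and identifies it with the ansatz $J$, in accordance with the slogan that unitality is a property rather than a structure.

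Finally, to see that $\yo$ lands in $\widehat{\csym{S}}_{\on{rig}}$, I would transport the non-unital adjunctions across $\yo$. By its very definition (Definition~\ref{Sketchy}), a non-unital adjunction is an ordinary internal adjunction, in the \emph{unital} monoidal category $\csym{S}\on{-Mod}(\csym{S},\csym{S})$ (whose unit is the genuine object $\on{Id}_{\csym{S}}$), between the module endofunctors attached to $\mathrm{F}$ and $\mathrm{F}^{\blackdiamond}$. Since $J$ is built to corepresent that unit $\on{Id}_{\csym{S}}$ and since $\yo$ intertwines $\boxtimes$ with $\ast$, this adjunction transports to an honest duality $\yo(\mathrm{F})\dashv\yo(\mathrm{F}^{\blackdiamond})$ in $(\widehat{\csym{S}},\ast,J)$, so every object in the image of $\yo$ is rigid. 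The pentagon and triangle coherences for $(\widehat{\csym{S}},\ast,J)$ are then inherited from the associator of $\csym{S}$ together with the unit constraints constructed above, completing the identification of $\widehat{\csym{S}}$ as a monoidal category containing $\csym{S}$ as a full rigid semigroup subcategory with the stated unit.
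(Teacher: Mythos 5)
Your overall architecture is the same as the paper's: Day convolution for the semigroup structure on presheaves (Theorem~\ref{UniversalPropertyDay}), the same Hom-to-identity Ansatz for the unit, reduction to representables by cocontinuity of $\oast$, a unit-recognition principle (the paper uses Proposition~\ref{EGNOUnit} where you invoke Saavedra units), and transport of non-unital adjunctions along $\yo$ to show the image of $\csym{S}$ is rigid. Deducing $J\cong J'$ abstractly from the two one-sided unit properties is also legitimate; the paper makes the same observation before Theorem~\ref{Ansatzes}, and the explicit isomorphism constructed there is additional information not strictly required by the statement.

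The gap sits exactly where you flag the crux, and your proposed mechanism does not close it. First, a correction to the mechanism itself: the unit and counit of one and the same one-sided adjunction can never be composed inside your coend, because their types are永 mismatched — $\eta^{l}$ lands in $-\boxtimes(\mathrm{F}^{\blackdiamond}\boxtimes\mathrm{F})$ while $\varepsilon^{l}$ departs from $-\boxtimes(\mathrm{F}\boxtimes\mathrm{F}^{\blackdiamond})$, and in a non-pivotal rigid category there is no comparison map between these objects. What actually produces a section of your evaluation map is the \emph{mixed} pairing: the left-module unit $\eta^{l}$ supplies the lift, the right-module counit $\varepsilon^{r}$ supplies the element of the Hom-presheaf, and the collapse is the mixed zigzag of Axiom~\ref{ZigZag}, $\varepsilon^{r}_{\mathrm{F}}\circ\mathsf{a}^{-1}_{\mathrm{F,F^{\blackdiamond},F}}\circ\eta^{l}_{\mathrm{F}}=\on{id}_{\mathrm{F}}$ — a compatibility \emph{between} the two adjunctions, not a triangle identity of either. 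Second, and this is the genuine gap: that construction only gives surjectivity. Injectivity — that every class in the coend is dinaturally equal to one of this special form — is precisely the full faithfulness of $\mathrm{F}\mapsto\mathrm{F}\boxtimes-$ into $\on{Mod-}\!\csym{S}(\csym{S},\csym{S})$ (Proposition~\ref{StrictifyingEmbedding}) together with the hom-isomorphisms of Corollary~\ref{HomIsos}, and the proof of fullness hinges on the left–right compatibility Axiom~\ref{Involution}, through the rewriting $\mathrm{F}\boxtimes\eta^{r}_{X}=\eta^{l}_{\mathrm{F}}\boxtimes X$. Dinaturality plus the triangle identities of the two separate module adjunctions cannot deliver this: Houston's axiomatization has exactly that data, and the resulting asymmetry there is what motivates adding Axiom~\ref{Involution} in the first place. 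For the same reason, your Saavedra-unit packaging is circular as stated: full faithfulness of $J\ast-$ and $-\ast J$ is not something rigidity "yields" — it is the assertion to be proven, and proving it requires the compatibility axiom that your appeal to "the triangle identities of Definition-sketch~\ref{Sketchy}" never isolates.
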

The last statement illustrates an important difference between our axiomatization and that found in \cite{Hou}, where the problem of unit-free adjunctions also is considered, but only in the case $\csym{S}$ is braided or symmetric. It is remarked there that there is no obvious isomorphism between the copresheaf $\on{Hom}_{\ccf{S}\!\on{-Mod}(\ccf{S},\ccf{S})}(\on{Id}_{\ccf{S}},-)$ and the copresheaf $\on{Hom}_{\on{Mod-}\ccf{S}(\ccf{S},\ccf{S})}(\on{Id}_{\ccf{S}},-)$. We explicitly exhibit an isomorphism under our axiomatization.

Let $\csym{T}$ be a cocomplete monoidal category. By the results of \cite{Day}, every semigroup functor $\euler{G}: \csym{S} \rightarrow \csym{T}$ extends to a semigroup functor $\widehat{\euler{G}}:\widehat{\csym{S}} \rightarrow \csym{T}$.
The problem we consider in the second part of this paper is that of finding sufficient conditions for $\widehat{\euler{G}}$ to be monoidal (hence, unital), in the case $\csym{T} = \mathbf{Cat}(\widehat{\mathbf{M}},\widehat{\mathbf{M}})$, where $\mathbf{M}$ is an $\csym{S}$-module category.

In other words, letting $\csym{S}^{\circ} = \csym{S} \sqcup \setj{\mathbb{1}_{\widehat{\ccf{S}}}}$, we study the problem of determining when an $\csym{S}$-module category $\mathbf{M}$ can be extended to a unital $\csym{S}^{\circ}$-module category. This problem (using the language of bicategories rather than monoidal categories) was considered in \cite{KMZ}, in the finitary setting - our approach allows to remove these finiteness restrictions.

We now briefly describe the approach of \cite{KMZ}. There one instead starts by requiring from $\csym{S}$ a weak unitality structure - the existence of so-called lax and oplax units.
A {\it lax unit} in \cite{KMZ} is an object $\mathrm{I}$ of $\csym{S}$ together with natural transformations $l_{\mathrm{I}}: \on{Id}_{\ccf{S}} \Rightarrow \mathrm{I} \boxtimes -$ and $r_{\mathrm{I}}: \on{Id}_{\ccf{S}} \Rightarrow - \boxtimes \mathrm{I}$ satisfying certain axioms. Similarly one defines an oplax unit.

Two objects $\mathrm{F,G} \in \csym{S}$ are said to be adjoint relative to a choice of a lax unit $\mathrm{I}$ and an oplax unit $\mathrm{I'}$ if they satisfy zig-zag equations modified by appropriately replacing the unit object of a monoidal category with the lax and oplax units.
If $\csym{R}$ has a (bilax) unitality structure with respect to which it admits left and right adjoints, it is said to be {\it fiax}. It is then shown (\cite[Proposition~3.13]{KMZ}) that one can adjoin a unit to a fiax category by constructing a bar complex resolving the identity functor on $\csym{S}$.
Taking instead the approach of writing down the bar complex first and deducing the unit from it, one sees that the finiteness assumptions in \cite{KMZ} are used to establish the representability of the bilax unit - this is not necessary when solving the problem of unitality of $\widehat{\csym{S}}$, and thus the finiteness assumptions are not necessary.

There is a direct connection between our setup and that of \cite{KMZ}. For a rigid $\csym{S}$ and any object $\mathrm{F} \in \csym{S}$, the tuple $(\mathrm{F}^{\blackdiamond}\boxtimes \mathrm{F}, \varepsilon^{l}, \varepsilon^{r})$, where $\varepsilon^{l},\varepsilon^{r}$ are as in Definition-sketch~\ref{Sketchy}, gives a lax unit in $\csym{S}$. Further, two objects $(\mathrm{F,G}) \in \csym{S}$ form an adjoint pair in our sense precisely if $\mathrm{F}\boxtimes \mathrm{G}$ is a lax unit, $\mathrm{G}\boxtimes \mathrm{F}$ is an oplax unit, and $(\mathrm{F,G})$ are trivially (unit and counit both being identity) adjoint in the sense of \cite{KMZ}, with respect to that lax unit and oplax unit.

Our results on unital lifts of module categories parallel those of \cite{KMZ}, but, again, the finiteness assumptions can be removed. In both cases, we need to assume that $\csym{S}\star \mathbf{M} = \mathbf{M}$, i.e. that every object of $\mathbf{M}$ is a retract of an object (in the $\Bbbk$-linear case, a finite direct sum of objects) of the form $\mathrm{F}\star X$, for some $\mathrm{F} \in \csym{S}$ and $X \in \mathbf{M}$.
\begin{theorem*}[\cite{KMZ}]
 Let $\csym{S}$ be a fiax semigroup category. A finitary bilax $\csym{S}$-module category $\mathbf{M}$ such that $\csym{S} \star \mathbf{M} = \mathbf{M}$ extends to a (unital) $\overline{\csym{S}}$-module category $\overline{\mathbf{M}}$.
\end{theorem*}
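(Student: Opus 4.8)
The plan is to follow \cite[Proposition~3.13]{KMZ}, where a unit is adjoined to $\csym{S}$ by means of a bar complex resolving $\on{Id}_{\ccf{S}}$, and to lift that resolution to the endofunctor category of $\mathbf{M}$. Concretely, I would take $\overline{\mathbf{M}}$ to have the same underlying category as $\mathbf{M}$, with the $\csym{S}$-action extended along the embedding $\csym{S} \hookrightarrow \overline{\csym{S}}$ and with the adjoined unit $\mathbb{1}$ acting through the realization of the bar complex of the bilax unit. The theorem then reduces to two assertions: that this realization is naturally isomorphic to $\on{Id}_{\mathbf{M}}$, and that the ensuing unit constraints satisfy the coherence axioms of a module category.

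I would begin by recalling the resolution on $\csym{S}$ from \cite{KMZ}. The lax and oplax unit transformations of the fiax structure assemble the tensor powers of the bilax unit into an augmented bar complex of endofunctors of $\csym{S}$, and the fiax adjunction data supply an extra degeneracy splitting the augmentation over $\on{Id}_{\ccf{S}}$; its realization therefore recovers $\on{Id}_{\ccf{S}}$, which is precisely what makes $\mathbb{1}$ a genuine unit in $\overline{\csym{S}}$. Next I would transport this picture to $\mathbf{M}$: the bilax module structure furnishes module-level analogues of the lax and oplax transformations, assembling the same tensor powers, now acting via $\star$, into an augmented bar complex of endofunctors of $\mathbf{M}$ that is compatible under $\star$ with the one on $\csym{S}$.

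The crux is to show that the realization of this module-level complex recovers $X$ for every object $X$ of $\mathbf{M}$, equivalently $\mathbb{1} \star X \cong X$ naturally, and this is where the hypothesis $\csym{S} \star \mathbf{M} = \mathbf{M}$ is indispensable. For $X = \mathrm{F} \star Y$ I would transport the extra degeneracy from $\csym{S}$ through the canonical isomorphisms that move a factor of $\mathrm{F}$ across $\star$, so that the augmented complex at $X$ is split and its realization is $X$. For general $X$, which by hypothesis is a retract of some $\mathrm{F} \star Y$, I would descend the splitting along the idempotent exhibiting the retraction. I expect this descent to be the main obstacle: the contracting homotopy is canonically defined only on the essential image of the $\csym{S}$-action, and one must verify that it is compatible with the splitting idempotents and hence passes to retracts. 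This is exactly where idempotent-completeness, packaged in the finitary hypothesis, is used, and it plays the role that representability of the bilax unit plays in \cite{KMZ}.

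Once $\mathbb{1} \star - \cong \on{Id}_{\mathbf{M}}$ is in hand, the remaining module-category coherence is comparatively routine. The associativity constraint of $\overline{\mathbf{M}}$ restricted to $\csym{S}$ is the original one, on the unit it is controlled by the unit constraints of $\overline{\csym{S}}$, and the pentagon and triangle axioms for $\overline{\mathbf{M}}$ reduce to the coherences already established for $\overline{\csym{S}}$ in \cite[Proposition~3.13]{KMZ} together with the bilax compatibility axioms. As these are equalities of natural transformations between functors built from $\star$, $\boxtimes$, and the realization, it suffices to check them on objects of the form $\mathrm{F} \star Y$ and to extend by retraction exactly as above.
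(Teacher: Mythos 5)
Your proposal is correct in outline, but it is a reconstruction of the proof in \cite{KMZ} itself, not of the argument in this paper: the paper states this theorem only as a quotation from \cite{KMZ}, and what it actually proves is the finiteness-free counterpart, Theorem~\ref{Extension}, by a genuinely different route. Your route — augmented bar complex of the bilax unit, extra degeneracy from the fiax adjunction data splitting it over $\on{Id}_{\ccf{S}}$, transport to $\mathbf{M}$, splitting at objects $\mathrm{F}\star Y$, descent to retracts — is sound, and the descent step you flag as the main obstacle closes by a standard argument: the idempotent $e$ exhibiting $X$ as a retract of $\mathrm{F}\star Y$ induces, by naturality of the structure maps, an idempotent chain endomorphism of the bar complex at $\mathrm{F}\star Y$, and if $h$ is the contraction there, then $ehe$ contracts the retract complex, since $d(ehe)+(ehe)d = e(dh+hd)e = e$; Cauchy completeness of a finitary category then splits the retract, as you say. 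The paper instead works in presheaf categories: Day convolution makes $[\csym{S}^{\on{op}},\mathbf{Vec}_{\Bbbk}]$ monoidal (Theorem~\ref{PromonoidalUnital}), the unit is identified with an explicit coequalizer indexed by \emph{all} objects of $\csym{S}$ (Theorem~\ref{UnitGeneralCase}), and the unit's action on $[\mathbf{M}^{\on{op}},\mathbf{Vec}_{\Bbbk}]$ is shown to be the identity by exhibiting a comparison map $\theta^{\mathbf{M}}_{X}$ as a split epimorphism whose section is also an epimorphism (Theorem~\ref{Extension}); only in the presence of a liberal object does the paper use the bar resolution, repackaged as comonad cohomology of a resolvent pair (Theorem~\ref{AbelianizationCorrespondence}), where faithfulness of the right adjoint replaces your extra-degeneracy-plus-retract argument. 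What your (that is, \cite{KMZ}'s) approach buys is an explicit contraction living inside the finite cocompletion; what the paper's buys is the removal of all finiteness hypotheses — the coproduct over all of $\csym{S}$ makes representability of the bilax unit irrelevant — no homotopy descent, and a cleaner treatment of the coherence you defer to \cite{KMZ}: once the unit acts as the identity, Proposition~\ref{AnotherEGNOLemma} yields the monoidal module structure in one stroke, rather than by checking pentagon and triangle axioms on generators and extending by retraction.
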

\begin{theoremat}[Theorem~\ref{Extension}] \label{ThmB}
 Let $\csym{S}$ be a rigid semigroup category. An adjointness-preserving $\csym{S}$-module category $\mathbf{M}$ such that $\csym{S} \star \mathbf{M} = \mathbf{M}$ lifts to a unital $\widehat{\csym{S}}$-module category.
\end{theoremat}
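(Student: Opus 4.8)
The plan is to realize the desired structure through the extension theorem of \cite{Day} and to reduce unitality to the single assertion that the Day unit acts as the identity.

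First I would reformulate the hypothesis. A left $\csym{S}$-module structure on $\mathbf{M}$ is the same datum as a semigroup functor $\euler{G}\colon \csym{S}\to\mathbf{E}$, $\mathrm{F}\mapsto(\mathrm{F}\star-)$, into the monoidal category $\mathbf{E}:=\mathbf{Cat}(\widehat{\mathbf{M}},\widehat{\mathbf{M}})$ of cocontinuous endofunctors of the presheaf category $\widehat{\mathbf{M}}$, monoidal under composition, which is cocomplete with composition preserving colimits in each variable (each $\mathrm{F}\star-$ is first left Kan extended along the Yoneda embedding $\mathbf{M}\hookrightarrow\widehat{\mathbf{M}}$). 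Since $\mathbf{E}$ is such a cocomplete monoidal category, \cite{Day} produces a strong semigroup functor $\widehat{\euler{G}}\colon\widehat{\csym{S}}\to\mathbf{E}$ — the left Kan extension of $\euler{G}$ along $\csym{S}\hookrightarrow\widehat{\csym{S}}$ — whose multiplicativity constraints $\widehat{\euler{G}}(P)\circ\widehat{\euler{G}}(Q)\xiso\widehat{\euler{G}}(P\boxtimes Q)$ are invertible and which agrees with $\euler{G}$ on representables, $\widehat{\euler{G}}(y\mathrm{F})=(\mathrm{F}\star-)$. This equips $\widehat{\mathbf{M}}$ with a (a priori only non-unital) $\widehat{\csym{S}}$-action extending, along Yoneda, the original $\csym{S}$-action on $\mathbf{M}$; I take this cocompletion $\widehat{\mathbf{M}}$, which contains $\mathbf{M}$ compatibly, as the candidate lift. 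The theorem then reduces to the claim that $\widehat{\euler{G}}$ is \emph{monoidal}, i.e.\ that the canonical comparison between $\on{Id}_{\widehat{\mathbf{M}}}$ and $\widehat{\euler{G}}(\mathbb{1})$, for $\mathbb{1}$ the monoidal unit of $\widehat{\csym{S}}$ of Theorem~\ref{ThmA}, is an isomorphism.

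Second — the computational heart — I would show $\widehat{\euler{G}}(\mathbb{1})\cong\on{Id}_{\widehat{\mathbf{M}}}$. By Theorem~\ref{ThmA} the object $\mathbb{1}$ is a genuine monoidal unit in $\widehat{\csym{S}}$, so $\mathbb{1}\boxtimes y\mathrm{F}\cong y\mathrm{F}$ for every $\mathrm{F}\in\csym{S}$; applying the strong semigroup functor $\widehat{\euler{G}}$ gives $\widehat{\euler{G}}(\mathbb{1})\circ(\mathrm{F}\star-)\cong(\mathrm{F}\star-)$, and hence $\widehat{\euler{G}}(\mathbb{1})(\mathrm{F}\star X)\cong\mathrm{F}\star X$ for all $\mathrm{F}\in\csym{S}$ and $X\in\mathbf{M}$. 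The hypothesis $\csym{S}\star\mathbf{M}=\mathbf{M}$ says exactly that every object of $\mathbf{M}$ is a retract of one of the form $\mathrm{F}\star X$; splitting the corresponding idempotent in the idempotent-complete category $\widehat{\mathbf{M}}$ and using functoriality of $\widehat{\euler{G}}(\mathbb{1})$ upgrades the isomorphism to all representables $m\in\mathbf{M}$. Finally $\widehat{\euler{G}}(\mathbb{1})$, being a left Kan extension, is cocontinuous, and every object of $\widehat{\mathbf{M}}$ is a colimit of representables, so $\widehat{\euler{G}}(\mathbb{1})\cong\on{Id}_{\widehat{\mathbf{M}}}$ follows by cocontinuity.

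Third, I would verify that this isomorphism is the \emph{canonical} unitor and that it obeys the triangle coherence of a unital module category; this is where adjointness-preservation is indispensable and where I expect the main difficulty to lie. Concretely, $\widehat{\euler{G}}(\mathbb{1})$ is the large weighted colimit $\int^{\mathrm{F}}\mathbb{1}(\mathrm{F})\cdot(\mathrm{F}\star-)$, and the description of $\mathbb{1}$ in Theorem~\ref{ThmA} as $\on{Hom}_{\ccf{S}\!\on{-Mod}(\ccf{S},\ccf{S})}(-,\on{Id}_{\ccf{S}})$ feeds into this colimit through the counits $\varepsilon^{l},\varepsilon^{r}$, equivalently through the lax unit $\mathrm{F}^{\blackdiamond}\boxtimes\mathrm{F}$ of Definition-sketch~\ref{Sketchy}. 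Adjointness-preservation guarantees that $\euler{G}$ carries the non-unital adjunction $(\mathrm{F},\mathrm{F}^{\blackdiamond},\eta^{l/r},\varepsilon^{l/r})$ to an honest adjunction $(\mathrm{F}\star-,\mathrm{F}^{\blackdiamond}\star-)$ of endofunctors of $\mathbf{M}$; consequently the abstract isomorphism produced above is identified with the map assembled from these (co)units, the triangle identities of the adjunctions become the module coherence axioms, and the comparison is recognized as the canonical unitor rather than merely some isomorphism. The delicate point is to perform this identification compatibly with the colimit presentation of $\mathbb{1}$ and with naturality in $m$ across the retract step — the idempotent splittings used above are not themselves natural in $m$, so one must show they intertwine the unitor. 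I would handle this by computing the unitor directly on representables from $\varepsilon^{l},\varepsilon^{r}$ and the zig-zag identities, where naturality and coherence are transparent, and then extending by cocontinuity exactly as in the second step.
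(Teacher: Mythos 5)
There is a genuine gap, and it sits in your second step — the one you treat as formal. From $\mathbb{1}\oast\yo_{\ccf{S}}\mathrm{F}\simeq \yo_{\ccf{S}}\mathrm{F}$ and strong multiplicativity of $\widehat{\euler{G}}$ you do correctly obtain isomorphisms $\phi_{\mathrm{F}}:\widehat{\euler{G}}(\mathbb{1})\circ(\mathrm{F}\star-)\xiso(\mathrm{F}\star-)$, natural in the argument and in $\mathrm{F}$. But these isomorphisms are natural only with respect to morphisms in the image of the action, i.e.\ morphisms of the form $f\star g$. The idempotent $e=\iota\circ\pi$ on $\mathrm{F}\star X$ that splits off an arbitrary object $m\in\mathbf{M}$ is \emph{not} of this form, so the square comparing $\widehat{\euler{G}}(\mathbb{1})(e)$ with $e$ through $\phi_{\mathrm{F},X}$ has no reason to commute. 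Functoriality alone only tells you that $\widehat{\euler{G}}(\mathbb{1})(m)$ is the image of the idempotent $\phi_{\mathrm{F},X}\circ\widehat{\euler{G}}(\mathbb{1})(e)\circ\phi_{\mathrm{F},X}^{-1}$ on $\mathrm{F}\star X$, and without naturality against $e$ this idempotent need not be conjugate to $e$; hence you cannot conclude $\widehat{\euler{G}}(\mathbb{1})(m)\simeq m$. You acknowledge this non-naturality in your third step, but the proposed repair (``compute the unitor on representables from $\varepsilon^{l},\varepsilon^{r}$ and the zig-zags, then extend by cocontinuity'') is a pointer to the missing argument, not the argument itself — and note that your step 2, as written, never uses adjointness-preservation at all, so if it were correct the theorem would hold without that hypothesis.

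This also reveals a structural misdiagnosis. By the paper's Proposition~\ref{AnotherEGNOLemma} (the EGNO-type statement that a semigroup functor between monoidal categories is monoidal as soon as it sends the unit to \emph{some} object isomorphic to the unit), the coherence you defer to step 3 is automatic once any isomorphism $\widehat{\euler{G}}(\mathbb{1})\simeq\on{Id}_{\widehat{\mathbf{M}}}$ exists; there is no ``canonical unitor'' to verify. The entire difficulty of the theorem is producing that one isomorphism, and this is exactly where adjointness-preservation enters. The paper does it by replacing the abstract unit with its explicit coequalizer presentation $\mathbb{1}\simeq\on{Coeq}\big(\coprod_{\mathrm{F,G}}\Hom{-,\mathrm{FF^{\blackdiamond}GG^{\blackdiamond}}}\rightrightarrows\coprod_{\mathrm{F}}\Hom{-,\mathrm{FF^{\blackdiamond}}}\big)$ (Theorem~\ref{UnitGeneralCase}); because $\mathbf{M}$ respects the adjunctions (Proposition~\ref{Djunctions} upgrades preservation to respect), the counits $\varepsilon^{\mathrm{F},r}$ appearing in this presentation act on $\mathbf{M}$ as genuine adjunction counits $\varepsilon^{\mathrm{F},\mathbf{M}}$, which is what lets one build a comparison map $\theta^{\mathbf{M}}_{X}$ to $\Hom{-,X}$, show it is split epi using $\csym{S}\star\mathbf{M}=\mathbf{M}$, and show its section is epi using naturality and the zig-zag identities for $\eta^{\mathrm{F},\mathbf{M}},\varepsilon^{\mathrm{F},\mathbf{M}}$. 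In short: your reduction to ``the unit acts as the identity'' agrees with the paper, but the retract/idempotent-splitting argument you use to establish it fails, and fixing it requires the explicit bar-type presentation of $\mathbb{1}$ together with the respected-adjunction data, not just abstract unit-ness.
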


Beyond the removal of finiteness assumptions, we should observe that the unital extensions of \cite{KMZ} use {\it abelianizations}, i.e. finite cocompletions, whereas in the potentially infinite setting, we use the significantly bigger categories of presheaves.
Second, the requirement of \cite{KMZ} that the module category be {\it bilax} asks for an additional structure on $\mathbf{M}$, namely, (op)lax unitality transformations $\on{Id}_{\mathbf{M}} \rightarrow \mathbf{M}(\mathrm{I})$ and $\mathbf{M}(\mathrm{I}) \rightarrow \on{Id}_{\mathbf{M}}$.
This requirement is mirrored by our requirement of $\mathbf{M}$ being {\it adjointness-preserving}: if $(\mathrm{F},\mathrm{F}^{\blackdiamond})$ is an adjoint pair in the sense of Definition-sketch~\ref{Sketchy}, then the pair of functors $(\mathbf{M}(\mathrm{F}),\mathbf{M}(\mathrm{F}^{\blackdiamond}))$ should be an adjoint pair.
This is clearly a property of $\mathbf{M}$, and not additional structure on it.
In fact, the proof of Theorem~\ref{ThmB} requires the existence of an adjunction $(\mathbf{M}(\mathrm{F}),\mathbf{M}(\mathrm{F}^{\blackdiamond}),\eta,\varepsilon)$ for which the unit $\eta$ and the counit $\varepsilon$ have additional properties, which would require additional structure, but in Proposition~\ref{Djunctions}, we show that if $(\mathbf{M}(\mathrm{F}),\mathbf{M}(\mathrm{F}^{\blackdiamond}))$ is an adjoint pair, then $\eta,\varepsilon$ having the additional properties can be found - in that sense, Proposition~\ref{Djunctions} can be viewed as a coherence result.

The problem of adjoining a unit (or rather, associating a monoidal category to a semigroup category) in a $\Bbbk$-linear setting with additional finiteness conditions was considered also in \cite{BEO} and \cite{Co}. There, the object of interest is not an abstract semigroup category, but a tensor ideal in a tensor category. The unit is adjoined in a similar way to that in \cite{KMZ}, by constructing a suitable bar complex.
The setup there is slightly less finite than finitary, so more precise technical conditions are identified than those above (however, we remark that these conditions can also be found in \cite{KMZ}).

We interpret the bar complex of \cite{BEO},\cite{Co}, \cite{KMZ} as one coming from the comonad cohomology associated to the coalgebra object $\mathrm{F}^{\blackdiamond} \boxtimes \mathrm{F} \in \csym{S}$, for an object $\mathrm{F}$ satisfying the aforementioned techinical assumptions (in the terminology of \cite{Co}, a {\it strongly faithful object}).
The assumptions about $\mathrm{F}$ may then be identified with those found in \cite{AEBSV}, characterizing the faithfulness of the ``ìnverse image'' functor $\euler{G}^{\ast}: \widehat{\mathcal{D}} \rightarrow \widehat{\mathcal{C}}$ associated to a functor $\euler{G}: \mathcal{C} \rightarrow \mathcal{D}$.
In the presence of such an object, Theorem~\ref{ThmB} is implied by the results of \cite{BEO}, \cite{Co} and \cite{KMZ}; however, our use of comonad cohomology gives more conceptual proofs, involving fewer calculations.

In the absence of finiteness conditions, Theorem~\ref{ThmB} still applies, but we may no longer be able to find a representing coalgebra object $\mathrm{F}^{\blackdiamond} \boxtimes \mathrm{F}$ in $\csym{S}$ for the bar complex we consider.
However, if $\csym{S}$ is symmetric, we provide a different interpretation of the bar complex: we show that the projective presentation obtained by truncating the bar complex is an $\csym{S}$-enriched analogue of the trace of $\csym{S}$ in the sense of \cite{BGHL}.

In the final part of the paper, combining our results with some of the results of \cite{BEO} and \cite{MM5}, we give a description of finite tensor categories in terms of the properties of their semigroup categories of projective objects.
More precisely, given a finite tensor category $\csym{C}$, the category $\csym{C}\!\on{-proj}$ is a finitary rigid semigroup category (it is not monoidal unless $\csym{C}$ is semisimple). We may recover $\csym{C}$ as $[\csym{C}\!\on{-proj}, \mathbf{vec}_{\Bbbk}]$, the category of finite-dimensional presheaves on $\csym{C}\!\on{-proj}$, endowed with the monoidal structure coming from Day convolution. We characterize the finitary rigid semigroup categories $\csym{S}$, such that the finite abelian monoidal category $[\csym{S},\mathbf{vec}_{\Bbbk}]$ is a tensor category:
\begin{theoremat}\label{ThmC}
    $[\csym{S}^{\on{op}},\mathbf{vec}_{\Bbbk}]$ is a finite tensor category if and only if $\csym{S}$ is simple transitive (in the sense of \cite{MM5}) as both a left and a right module category over itself.
\end{theoremat}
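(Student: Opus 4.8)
The plan is to identify $\csym{A}:=[\csym{S}^{\on{op}},\mathbf{vec}_{\Bbbk}]$ as a finite abelian monoidal category and then to translate the two defining requirements of a finite tensor category---rigidity and simplicity of the unit---into the two halves of the simple transitivity hypothesis. Since $\csym{S}$ is finitary, $\csym{A}$ is a finite $\Bbbk$-linear abelian category whose indecomposable projectives are exactly the representable presheaves, so the Yoneda embedding identifies $\csym{S}$ with the category $\csym{A}\!\on{-proj}$ of projective objects; by the finite-dimensional version of Theorem~\ref{ThmA}, Day convolution makes $\csym{A}$ monoidal with unit $\mathbb{1}=\on{Hom}_{\ccf{S}\!\on{-Mod}(\ccf{S},\ccf{S})}(-,\on{Id}_{\ccf{S}})$ and with all representables rigid. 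It therefore remains to match rigidity of the \emph{whole} of $\csym{A}$ and simplicity of $\mathbb{1}$ with two-sided simple transitivity of the regular action.

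For the unit, I would run the following argument, which is essentially the evaluation map made to act. If $\csym{A}$ is a finite tensor category then $\mathbb{1}$ is simple, so for indecomposable projectives $P,Q$ the evaluation $P^{\blackdiamond}\boxtimes P\twoheadrightarrow\mathbb{1}$ is an epimorphism and the projective cover $P_{0}$ of $\mathbb{1}$ is a summand of the projective object $P^{\blackdiamond}\boxtimes P$; tensoring the epimorphism $P_{0}\twoheadrightarrow\mathbb{1}$ with $Q$ and using exactness, $Q\boxtimes\mathbb{1}=Q$, and projectivity exhibits $Q$ as a summand of $(Q\boxtimes P^{\blackdiamond})\boxtimes P$, which is left transitivity; right transitivity follows by using right duals. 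Conversely, once $\csym{A}$ is known to be multitensor, transitivity of the left regular action forbids a nontrivial block decomposition $\mathbb{1}=\mathbb{1}_{1}\oplus\mathbb{1}_{2}$, since $F\boxtimes(-)$ preserves the right block of its argument and two summands of $\mathbb{1}$ would split the indecomposable projectives into mutually unreachable families. Thus, \emph{granted} the multitensor structure, simplicity of $\mathbb{1}$ is equivalent to transitivity on each side.

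The harder half is to match rigidity of all of $\csym{A}$---equivalently bi-exactness of Day convolution---with the absence of proper nonzero stable ideals. A nonzero left-stable ideal $\mathcal{I}\subseteq\csym{S}$ yields a nonzero $f\colon P\to Q$ between indecomposable projectives with $Z\boxtimes f\in\mathcal{I}$ for all $Z$, and rigidity lets one contract $f$ against coevaluations, in the Krull--Schmidt setting, into a split idempotent; simple transitivity then propagates the resulting identity through the (now transitive) action to all of $\csym{S}$, giving $\mathcal{I}=\csym{S}$. Running this in reverse, the single-two-sided-cell structure guaranteed by \cite{MM5} for a simple transitive regular action is exactly the input needed for the bar resolution of $\on{Id}_{\ccf{S}}$ underlying Theorem~\ref{ThmB}---reinterpreted through comonad cohomology, and sharpened by \cite{BEO} and \cite{Co}---to produce duals for \emph{all} objects of $\csym{A}$ rather than merely for its projectives, i.e.\ to force Day convolution to be bi-exact. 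Combining the two halves, $\csym{A}$ is rigid with simple unit, hence a finite tensor category, and conversely.

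I expect the principal obstacle to be precisely this transfer of rigidity from the projective objects to the entire abelian category. Rigidity of $\csym{S}=\csym{A}\!\on{-proj}$ is given for free, yet a non-rigid finite monoidal category can still have rigid projectives when the tensor product fails to be exact; excluding this is where the finiteness-sensitive machinery of \cite{BEO} and \cite{Co} must be imported, and where the need for \emph{both} the left and the right module structures---governing left and right duals respectively---becomes essential rather than cosmetic.
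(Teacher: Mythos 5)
Your overall skeleton is the same as the paper's: the forward direction (finite tensor $\Rightarrow$ simple transitivity on both sides) and the backward direction (disimplicity $\Rightarrow$ finite tensor) are proved separately, and your second paragraph — liberality of $-\boxtimes\mathrm{P}$ from simplicity of $\mathbb{1}$, and the block-decomposition argument ruling out a multitensor category with several components — matches Proposition~\ref{FaithfulUnits} and the final step of Proposition~\ref{FinTensor}. However, both of the steps you yourself flag as "harder" contain genuine gaps. In the forward direction, the mechanism "rigidity lets one contract $f$ against coevaluations into a split idempotent" does not work: for a radical morphism $f\colon \mathrm{P}\to\mathrm{Q}$ such contractions (traces, partial traces, names of $f$) can simply vanish, and rigidity alone never splits anything. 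The paper's Proposition~\ref{ProjDis} instead factors $f$ through its image $\mathrm{P}\xrightarrow{\mathrm{p}}\on{Im}(\mathrm{f})\xrightarrow{\iota}\mathrm{Q}$ and tensors with a projective $\mathrm{P}'$: then $\mathrm{p}\otimes\mathrm{P}'$ is a \emph{split} epi because $\on{Im}(\mathrm{f})\otimes\mathrm{P}'$ is projective (Proposition~\ref{EGNOHelpsAgain}), and $\iota\otimes\mathrm{P}'$ is a \emph{split} mono because finite tensor categories are self-injective (Corollary~\ref{SelfInjective}), so that projective objects are injective; faithfulness of $-\otimes\mathrm{P}'$ (Proposition~\ref{FaithfulUnits}, which is where simplicity of $\mathbb{1}$ enters) guarantees $\on{Im}(\mathrm{f})\otimes\mathrm{P}'\neq 0$. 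Self-injectivity is the ingredient your sketch omits, and without it the monomorphism does not split and no identity morphism lands in the ideal.

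In the backward direction the gap is larger. You propose that the bar resolution of $\on{Id}_{\ccf{S}}$ from Theorem~\ref{ThmB}, "sharpened by \cite{BEO} and \cite{Co}", forces Day convolution on $[\csym{S}^{\on{op}},\mathbf{vec}_{\Bbbk}]$ to be bi-exact; but a resolution of the unit gives no exactness statement whatsoever. The step that actually powers the \cite{BEO} arguments is entirely different: one must first show that simple transitivity of the two regular $\csym{S}$-actions implies that the induced actions on the abelianization are \emph{projectivizing}, i.e.\ $\mathrm{F}\star X$ is projective for every $\mathrm{F}\in\csym{S}$ and \emph{every} object $X$ of $[\csym{S}^{\on{op}},\mathbf{vec}_{\Bbbk}]$, not just the projective ones. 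This is the paper's Theorem~\ref{MazorchukMiemietzLemma} (a self-contained proof of \cite[Proposition~18(i)]{KM}), and its proof is genuinely combinatorial-homological: a KM-idempotent built from the Perron--Frobenius eigenvector of $\got{\bigoplus_{\mathrm{F}}\mathrm{F}}$ (Lemma~\ref{PerronFrobenius}), compatibility of projective covers with the action (Lemma~\ref{CoverPreservation}), radicality of syzygy maps (Lemma~\ref{RadicalCover}), and then simple transitivity to kill the ideal generated by the radical differential (Proposition~\ref{MiniMM5}). Only with the projectivizing property in hand can one repeat \cite[Propositions~2.34--2.36]{BEO} to conclude that the abelianization is a multiring, hence rigid, category, after which disimplicity rules out more than one tensor component as you describe. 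Your proposal never produces this property, and comonad cohomology is not a substitute for it; this is precisely the finiteness-sensitive input you correctly suspected was needed but did not identify.
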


For the reader's convenience, we give a self-contained proof of  \cite[Proposition~18(i)]{KM}, which is extremely useful to us, and formulated as Theorem~\ref{MazorchukMiemietzLemma}.

We remark that Theorem~\ref{ThmC} bears some resemblence to the following elementary algebraic statement:
\begin{equation}\label{Claim}
 \text{Let $R$ be a ring with no non-trivial left or right ideals. Then $R$ is a division ring.}
\end{equation}
Indeed, adjoints are sometimes interpreted as ``weak inverses'': for instance, viewing a monoid as a discrete monoidal category, we find that it is a group if and only if the corresponding monoidal category is rigid.
In fact, \cite{NCat} describes the idea of an adjoint as approximating a weak inverse. Theorem~\ref{ThmC} guarantees the existence of {\it adjoints}, provided the absence of non-trivial ideals; similarly, provided the absence of non-trivial ideals, Observation~\ref{Claim} guarantees the existence of {\it inverses}.

The paper is organized as follows:
\begin{itemize}
 \item In Section~\ref{s2}, we first introduce the elementary semigroup categorical notions, illustrate the similarities and contrasts with monoidal categories via various examples, and, crucially, introduce our notion of an adjunction in a semigroup category, in Definition~\ref{AdjunctionSemigroup}. We then establish some properties of adjunctions in semigroup categories, in order to prove Theorem~\ref{ThmA}. Finally, in Subsection~\ref{s21}, we show that the two choices of Ansatz for a unit in $\widehat{\csym{S}}$ given in \cite{Hou} are, in fact, isomorphic.
 \item
In Section~\ref{s3}, we first recall the definition and some of the properties of comonad cohomology, largely following \cite{FGS}; we then recall the notion of a liberal functor, and show that in the presence of an object $\mathrm{F} \in \csym{S}$ such that $\mathrm{F} \otimes -$ is liberal, one can interpret Theorem~\ref{ThmA} in terms of comonad cohomology. In Subsection~\ref{s32}, Theorem~\ref{ThmB} is proved in the presence of the object $\mathrm{F}$ above. Our coherence result for adjunctions, Proposition~\ref{Djunctions}, is also stated and proven therein. In Subsection~\ref{s33}, we extend Theorem~\ref{ThmB} to the case where the object $\mathrm{F}$ does not necessarily exist. In Subsection~\ref{s34}, we show that in the symmetric case our results can be interpreted as calculating the enriched vertical trace of the semigroup category $\csym{S}$.
\item In Subsection~\ref{41}, we give a self-contained proof of \cite[Proposition~18(i)]{KM}. In Subsection~\ref{42}, we prove Theorem~\ref{ThmC}.
\end{itemize}

\vspace{5mm}
\textbf{Acknowledgements.}
The author would like to thank his advisor Volodymyr Mazorchuk for very helpful comments, and for introducing the author to the problem of defining adjunctions in semigroup categories.
Majority of this paper was written during the visit of the author to the University of Oregon. The author would like to thank his host professor at the University of Oregon, Victor Ostrik, for numerous interesting and helpful discussions.
The hospitality of the University of Oregon is gratefully acknowledged.
The visit was partially financed by Stiftelsen
f{\" o}r Vetenskaplig Forskning och Utbildning i Matematik (SVeFUM), Thuns resestipendium, Svenska Matematikersamfundet (SMS) and Kungliga Vetenskapsakademien, to whom the sincere gratitude of the author is extended.

\section{Semigroup categories}\label{s2}

We assume $\Bbbk$ to be a field, and throughout assume all categories and functors to be $\Bbbk$-linear. Throughout the document, we will often denote Hom-spaces in categories using bracket notation: for example, $\Hom{X,Y}_{\mathcal{C}}$ is synonymous to $\on{Hom}_{\mathcal{C}}(X,Y)$.

\begin{definition}
  A {\it semigroup category} is a category $\csym{S}$ together with a functor $\boxtimes: \csym{S} \kotimes \csym{S} \rightarrow \csym{S}$ and a natural isomorphism $\mathsf{a}: \boxtimes \circ (\boxtimes \kotimes \on{Id}_{\ccf{S}}) \xiso \boxtimes \circ (\on{Id}_{\ccf{S}} \kotimes  \boxtimes)$ satisfying the familiar pentagon axiom for monoidal categories - see \cite{EK}, \cite[Remark~2.2.9]{EGNO}. Similarly, we define braided, symmetric and closed semigroup categories. Observe that these notions are easy to define, since none of them explicitly uses the existence of the unit object.
\end{definition}

 From now on, let $\csym{S}$ be a semigroup category. We denote the semigroup category obtained by reversing the tensor product by $\csym{Ś}^{\on{rev}}$.

 Similarly to semigroup categories, one obtains the notion of a semigroup functor and a semigroup transformation, by removing unitality conditions.

\begin{definition}\label{SemigroupModules}
 For a semigroup category $\csym{S}$, a left $\csym{S}$-module category is a category $\mathbf{M}$ together with a functor $\euler{LA}^{\mathbf{M}}: \csym{S} \kotimes \mathbf{M} \rightarrow \mathbf{M}$. For $\mathrm{F} \in \csym{S}$ and $X \in \mathbf{M}$, we denote $\euler{LA}^{\mathbf{M}}((\mathrm{F},X))$ by $\mathrm{F} \star X$. We also require isomorphisms
 \[
  \mathbf{m}_{\mathrm{G,F};X}: (\mathrm{G} \boxtimes \mathrm{F})\star X \xiso \mathrm{G} \star (\mathrm{F} \star X), \text{ natural in } \mathrm{G,F},X,
 \]
 satisfying the multiplicative coherence axiom for module categories over a monoidal category.

 Similarly to the case of monoidal categories, a right $\csym{S}$-module category is defined as a left $\csym{S}^{\on{rev}}$-module category, and, given a further semigroup category $\csym{T}$, a $\csym{S}$-$\csym{T}$-bimodule category is a category which admits a left $\csym{S}$-module category structure and a right $\csym{T}$-module category structure, with the two commuting up to a coherent isomorphism.
\begin{example}
 The functor $\boxtimes$ makes $\csym{S}$ into a $\csym{S}$-$\csym{S}$-bimodule category.
\end{example}

 Again analogously to monoidal categories, by removing unitality axioms we obtain the suitable notions of $\csym{S}$-module functors and transformations, for $\csym{S}$ a semigroup category. We denote the $2$-category of small left $\csym{S}$-module categories, functors and transformations by $\csym{S}\!\on{-Mod}$, and the similarly defined $2$-category of right $\csym{S}$-module categories by $\on{Mod-}\!\csym{S}$.
\end{definition}

 Similarly to semigroups versus monoids, semigroup categories are not as well-behaved as monoidal categories. The following example can be used to easily exhibit semigroup-categorical phenomena that do not occur in the monoidal setting.

\begin{example}\label{ZeroSemigroup}
  Let $\mathcal{A}$ be a category admitting a zero object $\euler{0}_{\mathcal{A}}$. It is easy to verify that setting
  \[
\mathrm{G} \boxtimes \mathrm{F} := \euler{0}_{\mathcal{A}} \text{ and } \mathsf{a}_{\mathrm{H,G,F}} := \on{id}_{\euler{0}_{\mathcal{A}}} \text{ for all } \mathrm{H,G,F} \in \mathcal{A}
  \]
  endows $\mathcal{A}$ with the structure of a semigroup category.
\end{example}

\begin{example}\label{NaiveCat}
 Let $(S,\blackdiamond_{S})$ be a semigroup. The semigroup category $\mathsf{C}^{s}(S)$ is the discrete category on the object set $\on{Ob}\mathsf{C}^{s}(S) := S$, with semigroup structure given by the unique functor $\mathsf{C}^{s}(S) \times \mathsf{C}^{s}(S) \rightarrow \mathsf{C}^{s}(S)$ which equals $\blackdiamond_{S}$ on objects. The free $\Bbbk$-linear category $\mathsf{C}(S)$ on $\mathsf{C}^{s}(S)$ inherits a semigroup category structure.

 In particular, $\mathsf{C}^{s}(S)$ and $\mathsf{C}(S)$ are monoidal categories if and only if $S$ is a monoid.
\end{example}

Recall the following characterization of unit objects in a monoidal category:
\begin{proposition}[{\cite[Chapter~2.2]{EGNO}}]\label{EGNOUnit}
The unit object in a monoidal category $\csym{C}$ is unique up to a unique isomorphism; it is characterized by the combination of the following properties:
\begin{enumerate}
 \item $\mathbb{1} \boxtimes \mathbb{1} \simeq \mathbb{1}$;
 \item $\mathbb{1} \boxtimes -$ and $- \boxtimes \mathbb{1}$ are autoequivalences of $\csym{C}$.
\end{enumerate}

In particular, it is sufficient for $\mathbb{1} \boxtimes -$ and $- \boxtimes \mathbb{1}$ to both be naturally isomorphic to the identity functor.
\end{proposition}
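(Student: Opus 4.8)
The plan is to prove the three assertions in turn, observing first that the genuine unit $\mathbb{1}$ satisfies (1) and (2) for free: the left and right unitors are natural isomorphisms $\lambda\colon \mathbb{1}\boxtimes - \xiso \on{Id}$ and $\rho\colon -\boxtimes\mathbb{1}\xiso\on{Id}$, which exhibit $\mathbb{1}\boxtimes -$ and $-\boxtimes\mathbb{1}$ as autoequivalences, while $\lambda_{\mathbb{1}}\colon \mathbb{1}\boxtimes\mathbb{1}\xiso\mathbb{1}$ gives (1). The content is therefore the converse — that any object satisfying (1) and (2) is isomorphic to $\mathbb{1}$ — together with uniqueness up to a unique isomorphism. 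The final clause then needs no separate argument: a natural isomorphism $e\boxtimes -\cong\on{Id}\cong -\boxtimes e$ makes both functors autoequivalences and yields $e\boxtimes e\cong e$ by evaluating at $e$, so (1) and (2) hold and the characterization applies.

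For the characterization, suppose $e$ carries an isomorphism $j\colon e\boxtimes e\xiso e$ and that $L:=e\boxtimes -$ is an autoequivalence; I claim $e\cong\mathbb{1}$. The point is that $L$, being part of an equivalence, is fully faithful and hence reflects isomorphisms. Now $\rho_e\colon e\boxtimes\mathbb{1}\to e$ and $j\colon e\boxtimes e\to e$ are both isomorphisms, so $j^{-1}\circ\rho_e\colon L(\mathbb{1})=e\boxtimes\mathbb{1}\xiso e\boxtimes e=L(e)$ is an isomorphism lying in the image of $L$. By fullness and faithfulness it equals $L(f)=e\boxtimes f$ for a unique morphism $f\colon\mathbb{1}\to e$, and since $L$ reflects isomorphisms, $f$ is itself an isomorphism. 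Thus $e\cong\mathbb{1}$. I note that this direction uses only the isomorphism $e\boxtimes e\cong e$ and the full faithfulness of $e\boxtimes -$ (not the full force of (2)); a symmetric argument runs with $-\boxtimes e$.

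For uniqueness, let $(\mathbb{1},\lambda,\rho)$ and $(\mathbb{1}',\lambda',\rho')$ be two unit structures on $\csym{C}$. I would form the canonical comparison map $\phi:=\rho_{\mathbb{1}'}\circ(\lambda'_{\mathbb{1}})^{-1}\colon\mathbb{1}\to\mathbb{1}'$, an isomorphism since $\lambda'_{\mathbb{1}}$ and $\rho_{\mathbb{1}'}$ are isomorphisms out of $\mathbb{1}'\boxtimes\mathbb{1}$, and show that it intertwines the two families of constraints, i.e.\ $\lambda'_X\circ(\phi\boxtimes\on{id}_X)=\lambda_X$ for all $X$. Given the intertwining relation, uniqueness is automatic: it forces $\phi\boxtimes\on{id}_X=(\lambda'_X)^{-1}\circ\lambda_X$ for every $X$, and naturality of $\rho$ along $\phi$ (giving $\phi\circ\rho_{\mathbb{1}}=\rho_{\mathbb{1}'}\circ(\phi\boxtimes\on{id}_{\mathbb{1}})$) then recovers $\phi$ from $\phi\boxtimes\on{id}_{\mathbb{1}}$, so any two compatible isomorphisms coincide.

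The main obstacle is precisely the existence half of the uniqueness step — verifying that the canonically defined $\phi$ really does intertwine the two families of unit constraints. This is the one place where the coherence data enters in earnest, unwinding to a diagram chase combining the pentagon, the triangle axiom, naturality of $\mathsf{a}$, and the standard identity $\lambda_{\mathbb{1}}=\rho_{\mathbb{1}}$; everything else in the argument reduces to the single structural fact that tensoring with a unit (or with $e$) is fully faithful, hence reflects isomorphisms and is cancellable.
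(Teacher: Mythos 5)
Your proposal cannot be compared line-by-line with an argument in the paper, because the paper does not prove this proposition: it is recalled from \cite[Chapter~2.2]{EGNO}. Measured against that source, your route is genuinely different. In \cite{EGNO} the unit is \emph{defined} by properties (1)--(2) (a pair $(\mathbb{1},\iota)$ with $\iota\colon \mathbb{1}\boxtimes\mathbb{1}\xiso\mathbb{1}$ such that $\mathbb{1}\boxtimes -$ and $-\boxtimes\mathbb{1}$ are equivalences), and the unit constraints $\lambda,\rho$ are then \emph{constructed} from that data; the characterization is nearly tautological there, and the substance lies in that construction and in uniqueness. You instead take the unitor-based definition as primary, and your proof of the characterization is correct and complete: full faithfulness of $L=e\boxtimes -$ produces $f\colon\mathbb{1}\to e$ with $L(f)=j^{-1}\circ\rho_e$, and $f$ is invertible because fully faithful functors reflect isomorphisms; the ``in particular'' clause follows as you say. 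One consequence of this choice is worth flagging: your argument presupposes an ambient unit, whereas the paper later invokes the proposition for \emph{semigroup} categories (Proposition~\ref{2Yoneda}, Theorem~\ref{PromonoidalUnital}) to conclude that they are monoidal; that usage rests on EGNO's rectification of (1)--(2) into coherent unitors, which your proof does not reproduce.

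The one genuine incompleteness is the heart of the uniqueness step: you assert, but do not verify, that $\phi=\rho_{\mathbb{1}'}\circ(\lambda'_{\mathbb{1}})^{-1}$ intertwines the left constraints. The verification does go through, with essentially the ingredients you name (though $\lambda_{\mathbb{1}}=\rho_{\mathbb{1}}$ is not needed). The triangle axiom for $(\mathbb{1},\lambda,\rho)$ at the pair $(\mathbb{1}',X)$ reads $(\on{id}_{\mathbb{1}'}\boxtimes\lambda_X)\circ\mathsf{a}_{\mathbb{1}',\mathbb{1},X}=\rho_{\mathbb{1}'}\boxtimes\on{id}_X$, so
\[
\lambda'_X\circ(\phi\boxtimes\on{id}_X)
=\lambda'_X\circ(\on{id}_{\mathbb{1}'}\boxtimes\lambda_X)\circ\mathsf{a}_{\mathbb{1}',\mathbb{1},X}\circ\big((\lambda'_{\mathbb{1}})^{-1}\boxtimes\on{id}_X\big)
=\lambda_X\circ\lambda'_{\mathbb{1}\boxtimes X}\circ\mathsf{a}_{\mathbb{1}',\mathbb{1},X}\circ\big((\lambda'_{\mathbb{1}})^{-1}\boxtimes\on{id}_X\big)
=\lambda_X,
\]
where the second equality is naturality of $\lambda'$ at the morphism $\lambda_X$, and the third is Kelly's identity $\lambda'_{\mathbb{1}\boxtimes X}\circ\mathsf{a}_{\mathbb{1}',\mathbb{1},X}=\lambda'_{\mathbb{1}}\boxtimes\on{id}_X$, the standard pentagon-plus-triangle consequence. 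With this chain inserted, and the mirror computation showing that the same $\phi$ intertwines the right constraints, your proof is complete.
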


\begin{proposition}\label{2Yoneda}
 There is a semigroup functor
 \[
 \begin{aligned}
 \yo_{\mathbf{B}(\ccf{S})}: \csym{S} &\rightarrow \on{Mod-}\csym{S}(\csym{S},\csym{S}). \\
 \mathrm{F} &\mapsto \mathrm{F} \boxtimes - \text{ for } \mathrm{F} \in \csym{S},\\
 \mathrm{f} &\mapsto \mathrm{f} \boxtimes - \text{ for } \mathrm{f} \in \csym{S}(\mathrm{F,F'}).
 \end{aligned}
 \]
 $\yo_{\mathbf{B}(\ccf{S})}$ is a semigroup equivalence if and only if $\csym{S}$ is monoidal. If $\csym{S}$ is not monoidal, then $\yo_{\mathbf{B}(\ccf{S})}$ is not essentially surjective. It may also fail to be full and faithful.
\end{proposition}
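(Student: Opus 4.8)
The plan is to read $\yo_{\mathbf{B}(\ccf{S})}$ as the non-unital shadow of the bicategorical Yoneda embedding for the delooping $\mathbf{B}(\csym{S})$, and, whenever a unit exists, to recover it by evaluating a module endofunctor against it. First I would check that $\yo_{\mathbf{B}(\ccf{S})}$ is a well-defined semigroup functor. For $\mathrm{F} \in \csym{S}$ the associator supplies a natural isomorphism $(\mathrm{F} \boxtimes X) \boxtimes Z \xiso \mathrm{F} \boxtimes (X \boxtimes Z)$, which is exactly a right $\csym{S}$-module structure on $\mathrm{F} \boxtimes -$; naturality of $\mathsf{a}$ makes each $\mathrm{f} \boxtimes -$ a morphism in $\on{Mod-}\csym{S}(\csym{S},\csym{S})$, and the pentagon axiom yields the coherence isomorphism $\yo_{\mathbf{B}(\ccf{S})}(\mathrm{G}) \circ \yo_{\mathbf{B}(\ccf{S})}(\mathrm{F}) \cong \yo_{\mathbf{B}(\ccf{S})}(\mathrm{G} \boxtimes \mathrm{F})$ with its associativity constraint. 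This part is routine bookkeeping.

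For the forward direction, assume $\csym{S}$ is monoidal with unit $\mathbb{1}$. I would introduce the candidate quasi-inverse $\on{ev}_{\mathbb{1}} : \on{Mod-}\csym{S}(\csym{S},\csym{S}) \to \csym{S}$ sending a module endofunctor $(\euler{F},\phi)$ to $\euler{F}(\mathbb{1})$ and a module transformation $\alpha$ to $\alpha_{\mathbb{1}}$. The right unitor gives $\on{ev}_{\mathbb{1}} \circ \yo_{\mathbf{B}(\ccf{S})} \cong \on{Id}_{\csym{S}}$ via $\mathrm{F} \boxtimes \mathbb{1} \cong \mathrm{F}$. In the other order, the composite $\euler{F}(\mathbb{1}) \boxtimes X \xrightarrow{\phi_{\mathbb{1},X}^{-1}} \euler{F}(\mathbb{1} \boxtimes X) \xrightarrow{\euler{F}(\lambda_X)} \euler{F}(X)$, with $\lambda$ the left unitor, assembles into a module-functor isomorphism $\euler{F}(\mathbb{1}) \boxtimes - \cong \euler{F}$, so $\yo_{\mathbf{B}(\ccf{S})} \circ \on{ev}_{\mathbb{1}} \cong \on{Id}$; the module-functor compatibility of this isomorphism is a coherence check. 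Equivalently, one runs the module (enriched) Yoneda lemma directly: a module transformation $\mathrm{F} \boxtimes - \Rightarrow \mathrm{G} \boxtimes -$ is determined by its value at $\mathbb{1}$, giving a bijection with $\Hom{\mathrm{F},\mathrm{G}}_{\ccf{S}}$, hence full faithfulness, while $\on{ev}_{\mathbb{1}}$ delivers essential surjectivity. Since $\yo_{\mathbf{B}(\ccf{S})}$ is a semigroup functor and an equivalence of underlying categories, its quasi-inverse inherits a canonical semigroup structure, so it is a semigroup equivalence.

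For the converse I would argue by transport of structure. The category $\euler{E} := \on{Mod-}\csym{S}(\csym{S},\csym{S})$ is genuinely monoidal under composition, with strict unit $\on{Id}_{\csym{S}}$, which therefore trivially satisfies the two characterizing properties of Proposition~\ref{EGNOUnit}. If $\yo_{\mathbf{B}(\ccf{S})}$ is a semigroup equivalence with quasi-inverse semigroup functor $\Psi$, then $\mathbb{1} := \Psi(\on{Id}_{\csym{S}})$ satisfies $\mathbb{1} \boxtimes \mathbb{1} \cong \mathbb{1}$, and both $\mathbb{1} \boxtimes -$ and $- \boxtimes \mathbb{1}$ are autoequivalences, because the coherence isomorphisms of $\Psi$ identify each of them with the transport along $\Psi$ of left- or right-composition by the unit $\on{Id}_{\csym{S}}$ of $\euler{E}$, which is itself the identity endofunctor. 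Proposition~\ref{EGNOUnit} then makes $\csym{S}$ monoidal, completing the equivalence.

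The delicate point — and the step I expect to be the main obstacle — is the sharper negative claim that \emph{mere} essential surjectivity already forces monoidality. Essential surjectivity applied to $\on{Id}_{\csym{S}} \in \euler{E}$ produces an object $\mathbb{1}$ with a module-functor isomorphism $\mathbb{1} \boxtimes - \xiso \on{Id}_{\csym{S}}$; unwinding the module-functor axiom, this is precisely a left unit whose left-unit constraint is coherent with $\mathsf{a}$, i.e.\ it supplies only \emph{one} of the two autoequivalence conditions of Proposition~\ref{EGNOUnit}. The crux is to manufacture the matching right-unit side, where a naive attempt to build $- \boxtimes \mathbb{1} \cong \on{Id}$ by hand stalls, so one must genuinely invoke the strength of the hypotheses rather than the one-sided datum alone. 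I would close by recording that the residual assertions are negative and are cleanest to witness by example: for a semigroup $S$ that is not a monoid, the naive category $\mathsf{C}(S)$ of Example~\ref{NaiveCat} and the zero semigroup category of Example~\ref{ZeroSemigroup} furnish non-monoidal $\csym{S}$ on which $\yo_{\mathbf{B}(\ccf{S})}$ visibly fails to be full or faithful, making precise the closing hedge of the statement.
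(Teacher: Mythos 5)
Your handling of the biconditional is correct and is, in substance, the paper's own proof. For the converse you do exactly what the paper does: take a quasi-inverse semigroup functor $\Psi$, set $\mathbb{1}:=\Psi(\on{Id}_{\ccf{S}})$, and use the monoidal constraint of $\Psi$ to turn $\Psi(\yo_{\mathbf{B}(\ccf{S})}(\mathrm{F})\circ \on{Id}_{\ccf{S}})\simeq \Psi(\yo_{\mathbf{B}(\ccf{S})}(\mathrm{F}))\boxtimes\Psi(\on{Id}_{\ccf{S}})$ into the two unit conditions required by Proposition~\ref{EGNOUnit}; the paper's displayed chain of isomorphisms is precisely your transport-of-structure argument. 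For the forward direction the paper just cites the bicategorical Yoneda lemma for $\mathbf{B}(\csym{S})$, while you unwind it by hand (quasi-inverse given by evaluation at $\mathbb{1}$, full faithfulness via the module Yoneda argument); that is the same mathematics, traded between self-containedness and deferred coherence checks. The failure of fullness and faithfulness is in both cases delegated to the examples following the proposition (Examples~\ref{ZeroSemigroup} and~\ref{NaiveCat}).

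Your ``delicate point'' deserves a direct answer, because your suspicion is vindicated rather than a defect of your write-up. The paper's proof never addresses the claim that non-monoidality forces failure of essential surjectivity: it proves only the biconditional for semigroup equivalences, exactly as you do. Moreover, that claim is false as literally stated, so no completion of your stalled step exists. Take $\csym{S}$ to be a small version of $\mathbf{vec}_{\Bbbk}\times\mathbf{vec}_{\Bbbk}$ with $(X_{1},X_{2})\boxtimes(Y_{1},Y_{2}):=(X_{1}\kotimes Y_{1},\,X_{1}\kotimes Y_{2})$ and componentwise associators. This is a semigroup category with a coherent left unit $(\Bbbk,0)$ but no right unit: for any candidate $(U_{1},U_{2})$ the second component of $-\boxtimes(U_{1},U_{2})$ is $(X_{1},X_{2})\mapsto X_{1}\kotimes U_{2}$, which ignores $X_{2}$, so $\csym{S}$ is not monoidal. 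Yet $\yo_{\mathbf{B}(\ccf{S})}$ \emph{is} essentially surjective: any right module endofunctor $(\euler{F},\phi)$ satisfies $\euler{F}(Y)\simeq\euler{F}((\Bbbk,0)\boxtimes Y)\xrightarrow{\phi}\euler{F}((\Bbbk,0))\boxtimes Y$, and the coherence axiom for $\phi$ at the triple $((\Bbbk,0),X,Y)$ is exactly the condition making this a module-functor isomorphism $\euler{F}\simeq\yo_{\mathbf{B}(\ccf{S})}(\euler{F}((\Bbbk,0)))$. (This $\yo_{\mathbf{B}(\ccf{S})}$ is even full, though not faithful, since it ignores second components.) So the one-sided datum you extracted from essential surjectivity is genuinely all one gets, and the correct dividing line is the statement both you and the paper actually prove: $\yo_{\mathbf{B}(\ccf{S})}$ is a semigroup equivalence if and only if $\csym{S}$ is monoidal.
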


\begin{proof}

 If $\csym{S}$ is monoidal, this fact follows from the bicategorical Yoneda lemma (see e.g. \cite[Section~8.3]{JY}): we consider the delooping bicategory $\mathbf{B}(\csym{S})$ of $\csym{S}$, consisting of a unique object $\mathsf{i}$ whose endomorphism (monoidal) category is $\csym{S}$, and the unique representable pseudofunctor $\mathbf{B}(\csym{S}) \rightarrow \mathbf{Cat}$ is given by $\yo_{\mathbf{B}(\csym{S})}$.

 From the bicategorical Yoneda lemma  it then follows that for any pseudofunctor $\mathbf{M}: \mathbf{B}(\csym{S}) \rightarrow \mathbf{Cat}$ we have $\mathbf{PsFun}(\mathbf{B}(\csym{S})(\mathsf{i},-), \mathbf{M}) \simeq \mathbf{M}(\mathsf{i})$ and the monoidal equivalence in the lemma is the special case $\mathbf{M} = \mathbf{B}(\csym{S})(\mathsf{i},-)$.

 Assume that $\yo_{\mathbf{B}(\ccf{S})}$ is a semigroup equivalence. Choose a quasi-inverse $\yo_{\mathbf{B}(\ccf{S})}^{-1}$. We show that $\yo_{\mathbf{B}(\ccf{S})}^{-1}(\on{Id}_{\ccf{S}})$ is a unit object for $\csym{S}$. Since $\yo_{\mathbf{B}(\ccf{S})}\circ \yo_{\mathbf{B}(\ccf{S})}^{-1} \simeq \on{Id}_{\on{Mod-}\!\ccf{S}(\ccf{S},\ccf{S})}$, we have
 \[
 \yo_{\mathbf{B}(\ccf{S})}^{-1}(\on{Id}_{\ccf{S}}) \boxtimes - \simeq \on{Id}_{\ccf{S}}.
 \]
 Composing the isomorphisms
 \[
  \mathrm{F} \xiso \yo_{\mathbf{B}(\ccf{S})}^{-1}(\mathrm{F} \boxtimes -) = \yo_{\mathbf{B}(\ccf{S})}^{-1}((\mathrm{F} \boxtimes -)\circ \on{Id}_{\ccf{S}}) \simeq \yo_{\mathbf{B}(\ccf{S})}^{-1}(\mathrm{F} \boxtimes -) \circ \yo_{\mathbf{B}(\ccf{S})}^{-1}(\on{Id}_{\ccf{S}}) \simeq \mathrm{F} \boxtimes \yo_{\mathbf{B}(\ccf{S})}^{-1}(\on{Id}_{\ccf{S}}),
 \]
 and observing that they are all natural in $\mathrm{F}$, shows that
 \[
 - \boxtimes \yo_{\mathbf{B}(\ccf{S})}^{-1}(\on{Id}_{\ccf{S}})  \simeq \on{Id}_{\ccf{S}}.
 \]
  Using Proposition~\ref{EGNOUnit}, we conclude that $\csym{S}$ is monoidal.
  \end{proof}

\begin{example}
  $\yo_{\mathbf{B}(\ccf{S})}$ may fail to be faithful.
\end{example}

  \begin{proof}
   Let $\mathcal{A}$ be a category admitting a zero object and a non-zero object and endow it with the zero semigroup category structure given in Example~\ref{ZeroSemigroup}. We have $(\yo_{\mathbf{B}(\mathcal{A})})_{A,A'} = 0$ for all $A,A' \in \on{Ob}\mathcal{A}$, so the functor $\yo_{\mathbf{B}(\mathcal{A})}$ is not faithful.
  \end{proof}

  \begin{example}
  $\yo_{\mathbf{B}(\ccf{S})}$ may fail to be full.
  \end{example}

  \begin{proof}
   Consider the semigroup $S = \setj{y,0}$ where $0$ is a zero element and $y^{2} = y$. Denote by $\csym{S}$ the $\Bbbk$-linear category $\mathsf{C}(S)$ described in Example~\ref{NaiveCat}.

   Then $\on{Mod-}\csym{S}(\csym{S},\csym{S})(-\boxtimes y,-\boxtimes y) \simeq \csym{S}(0,0) \oplus \csym{S}(y,y)$. Indeed, $\csym{S}$ is discrete $\Bbbk$-linear, so any pair of morphisms $(\sigma_{y}: y\boxtimes y \rightarrow y \boxtimes y, \sigma_{0}: 0 \boxtimes y \rightarrow 0 \boxtimes y)$ gives a natural transformation, which is easily checked to be a right $\csym{S}$-module transformation. On the other hand, $\csym{S}(y,y)$ is one-dimensional, and so $\yo_{\mathbf{B}(\ccf{S})}$ cannot be full since $(\yo_{\mathbf{B}(\ccf{S})})_{y}$ cannot be surjective due to the dimension of its codomain being strictly greater than that of its domain.

   To obtain a similar example where the $0$ element of the semigroup corresponds to the zero object, extend of $S$ to $S' = \setj{x,y,0}$, where $xa = ax = 0$ for all $a \in S'$; setting $\on{Hom}(0,0) = \setj{0}$ and $\on{Hom}(x,x) = \Bbbk$, one obtains a similar dimension inequality.
  \end{proof}

  We remark that the following definition agrees with the one given in \cite[Definition~2.10.1]{EGNO}, however, we decorate the right dual of an object on the right, writing $\mathrm{F}^{\blackdiamond}$ for the right dual of $\mathrm{F}$; \cite{EGNO} would instead denote the right dual by ${}^{\ast}\mathrm{F}$, decorating on the left.
\begin{definition}{\cite[Definition~2.10.1]{EGNO}}
 The {\it right dual} of an object $\mathrm{F}$ in a monoidal category $\csym{C}$ is an object $\mathrm{F}^{\blackdiamond}$ such that there are morphisms $\eta^{\mathrm{F}}: \mathbb{1}_{\ccf{C}} \rightarrow \mathrm{F}^{\blackdiamond}\mathrm{F}$ and $\varepsilon^{\mathrm{F}}: \mathrm{F}\mathrm{F}^{\blackdiamond} \rightarrow \mathbb{1}_{\ccf{C}}$, satisfying the zig-zag equations analogous to those for the unit and counit of an adjunction. The {\it left dual} of $\mathrm{F}$ is a right dual ${}^{\blackdiamond}\mathrm{F}$ for $\mathrm{F}$ in $\csym{C}^{\on{rev}}$. The object $\mathrm{F}$ is said to be {\it rigid} if it admits both a left and a right dual.

 The monoidal category $\csym{C}$ is said to be {\it rigid} if all of its objects are rigid. Taking right duals defines an anti-autoequivalence $(-)^{\blackdiamond}: \csym{C}^{\on{rev,opp}} \xiso \csym{C}$, a quasi-inverse for which is given by taking left duals.
\end{definition}

Duals (and more generally, adjunctions in bicategories) are preserved by all monoidal functors (pseudofunctors in the bicategorical case). As an immediate consequence we find the following:
\begin{observation}\label{RigidMonoidal}
 Let $\csym{C}$ be a monoidal category and let $\mathrm{F}$ be an object of $\csym{C}$ admitting a right dual $\mathrm{F}^{\blackdiamond}$. The pair of functors $(\mathrm{F}\boxtimes -,\mathrm{F}^{\blackdiamond}\boxtimes -)$ is an adjoint pair. Conversely, if $\mathrm{F}\boxtimes -$ admits a right dual $\Phi$ inside $\on{Mod-}\csym{C}(\csym{C},\csym{C})$, then, by Lemma~\ref{2Yoneda}, $\Phi(\mathbb{1}_{\ccf{C}})$ is a right dual to $\mathrm{F}$.
\end{observation}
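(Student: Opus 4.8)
The plan is to deduce both implications from the functor $\yo_{\mathbf{B}(\ccf{C})}\colon \csym{C} \to \on{Mod-}\csym{C}(\csym{C},\csym{C})$ of Proposition~\ref{2Yoneda}, combined with the principle recalled immediately above — that units, counits and zig-zag equations are preserved by monoidal functors, so that duals are carried to duals. When $\csym{C}$ is monoidal, Proposition~\ref{2Yoneda} makes $\yo_{\mathbf{B}(\ccf{C})}$ a monoidal equivalence; in particular it is a monoidal functor, sending $\boxtimes$ to composition of right $\csym{C}$-module endofunctors and the unit $\mathbb{1}_{\ccf{C}}$ to $\on{Id}_{\ccf{C}}$. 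So the observation is essentially the announced ``immediate consequence'' applied to this specific functor.

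For the forward direction, I would start from a right dual $\mathrm{F}^{\blackdiamond}$ of $\mathrm{F}$, witnessed by $\eta^{\mathrm{F}}\colon \mathbb{1}_{\ccf{C}} \to \mathrm{F}^{\blackdiamond}\mathrm{F}$ and $\varepsilon^{\mathrm{F}}\colon \mathrm{F}\mathrm{F}^{\blackdiamond}\to \mathbb{1}_{\ccf{C}}$ obeying the zig-zag equations. Applying $\yo_{\mathbf{B}(\ccf{C})}$ converts this into a unit $\on{Id}_{\ccf{C}} \Rightarrow (\mathrm{F}^{\blackdiamond}\boxtimes -)\circ(\mathrm{F}\boxtimes -)$ and a counit $(\mathrm{F}\boxtimes -)\circ(\mathrm{F}^{\blackdiamond}\boxtimes -)\Rightarrow \on{Id}_{\ccf{C}}$ of right $\csym{C}$-module endofunctors still satisfying the triangle identities; this is exactly preservation of duals by the monoidal $\yo_{\mathbf{B}(\ccf{C})}$. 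Thus $(\mathrm{F}\boxtimes -, \mathrm{F}^{\blackdiamond}\boxtimes -)$ is a dual pair in the monoidal category $(\on{Mod-}\csym{C}(\csym{C},\csym{C}), \circ)$, and forgetting the module structure its underlying functors and natural transformations form an ordinary adjunction $\mathrm{F}\boxtimes - \dashv \mathrm{F}^{\blackdiamond}\boxtimes -$ in $\mathbf{Cat}$. (One could instead write $\eta^{\mathrm{F}},\varepsilon^{\mathrm{F}}$ out, whisker by $-\boxtimes X$ and insert unitors and associators, then verify the two zig-zags by hand; the conceptual route avoids this bookkeeping.)

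For the converse I would use that, $\csym{C}$ being monoidal, Proposition~\ref{2Yoneda} makes $\yo_{\mathbf{B}(\ccf{C})}$ a monoidal equivalence, hence it admits a monoidal quasi-inverse and therefore reflects duals. The quasi-inverse is evaluation at the unit: any right $\csym{C}$-module endofunctor $\Psi$ satisfies $\Psi(X)\cong\Psi(\mathbb{1}_{\ccf{C}}\boxtimes X)\cong \Psi(\mathbb{1}_{\ccf{C}})\boxtimes X$ naturally in $X$, so $\Psi \cong \yo_{\mathbf{B}(\ccf{C})}(\Psi(\mathbb{1}_{\ccf{C}}))$. Transporting the dual pair $(\mathrm{F}\boxtimes - ,\Phi)$ along this quasi-inverse then exhibits $\Phi(\mathbb{1}_{\ccf{C}})$ as a right dual of $\mathrm{F}$ in $\csym{C}$.

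The substantive input is Proposition~\ref{2Yoneda}; granting it, both directions are formal consequences of preservation and reflection of duals by a monoidal equivalence. The only real care needed is in the bookkeeping of handedness: matching the two triangle identities for the right dual $\mathrm{F}^{\blackdiamond}$ with the correct unit/counit directions of the intended adjunction $\mathrm{F}\boxtimes - \dashv \mathrm{F}^{\blackdiamond}\boxtimes -$ — note that $\yo_{\mathbf{B}(\ccf{C})}$ carries $\mathrm{F}^{\blackdiamond}\boxtimes\mathrm{F}$ to $(\mathrm{F}^{\blackdiamond}\boxtimes -)\circ(\mathrm{F}\boxtimes -)$, i.e. to $R L$ for $R=\mathrm{F}^{\blackdiamond}\boxtimes -$ and $L=\mathrm{F}\boxtimes -$, consistently with a unit $\on{Id}_{\ccf{C}}\Rightarrow RL$ — and confirming that a dual pair in $(\on{Mod-}\csym{C}(\csym{C},\csym{C}),\circ)$ unwinds to an ordinary adjoint pair of functors after forgetting the $\csym{C}$-module structure. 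This is the only place one could slip, and it is routine.
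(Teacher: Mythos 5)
Your proposal is correct and is essentially the paper's own reasoning: the paper states this as an immediate consequence of the fact that monoidal functors preserve duals, applied to $\yo_{\mathbf{B}(\ccf{C})}$ (a monoidal equivalence by Proposition~\ref{2Yoneda}), with the converse obtained exactly as you do, by identifying $\Phi \simeq \Phi(\mathbb{1}_{\ccf{C}})\boxtimes -$ via evaluation at the unit. Your write-up merely makes explicit the forgetting of the module structure and the handedness bookkeeping, which the paper leaves implicit.
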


\begin{remark}
 It is crucial that $\Phi$ in Observation~\ref{RigidMonoidal} lies in $\on{Mod-}\csym{C}(\csym{C},\csym{C})$. Requiring just an adjunction in $\mathbf{Cat}(\csym{C},\csym{C})$ gives us the notion of internal homs, and even if such a left adjoint were to be of the form $\mathrm{F}^{\circ} \boxtimes -$, for some $\mathrm{F}^{\circ} \in \csym{C}$, this would not suffice to guarantee $\mathrm{F}^{\circ}$ to be left dual to $\mathrm{F}$ in $\csym{C}$; see \cite{HZ}.
\end{remark}

\begin{remark}\label{CounterExamples}
 In view of Observation~\ref{RigidMonoidal}, we could try to naïvely define duals in a semigroup category $\csym{S}$ as duals in $\on{Mod-}\!\csym{S}(\csym{S},\csym{S})$. However, elementary properties of duals, which can be deduced from Observation~\ref{RigidMonoidal} and Lemma~\ref{2Yoneda}, such as uniqueness of duals, do not necessarily hold under this definition. For a counter-example, again consider a category $\mathcal{A}$ admitting a zero object and a non-zero object and endow it with the zero semigroup category structure of Example~\ref{ZeroSemigroup}. The zero functor is self-adjoint, and so, under this naïve definition, every object in $\mathcal{A}$ is the two-sided dual of every other object.

 Observe also that in this case, for any object $X$ and any choice of a right dual $X^{\blackdiamond}$, the components of the unit and counit of the adjunction $(X \boxtimes -, X^{\blackdiamond} \boxtimes -, \eta^{X}, \varepsilon^{X})$ can be written as images of morphisms in $\mathcal{A}$ under $\yo_{\mathbf{B}(\mathcal{A})}$. Thus, modifying the definition so that it also requires this condition still does not suffice for a well-behaved definition of a dual in a semigroup category.
\end{remark}

\begin{definition}\label{AdjunctionSemigroup}
 Let $\csym{S}$ be a semigroup category. An {\it adjunction} in $\csym{S}$ is a tuple $(\mathrm{F},\mathrm{F}^{\blackdiamond}, \eta^{l},\eta^{r}, \varepsilon^{l},\varepsilon^{r})$, where:
 \begin{enumerate}
  \item $\mathrm{F}$ and $\mathrm{F}^{\blackdiamond}$ are objects of $\csym{S}$;
  \item $\eta^{l}: \on{Id}_{\ccf{S}} \rightarrow - \boxtimes (\mathrm{F}^{\blackdiamond} \boxtimes \mathrm{F})$ and $\varepsilon^{l}: - \boxtimes (\mathrm{F} \boxtimes \mathrm{F}^{\blackdiamond}) \rightarrow \on{Id}_{\ccf{S}}$ are left $\csym{S}$-module transformations;
  \item $\eta^{r}: \on{Id}_{\ccf{S}} \rightarrow  (\mathrm{F}^{\blackdiamond} \boxtimes \mathrm{F}) \boxtimes -$ and $\varepsilon^{r}:  (\mathrm{F} \boxtimes \mathrm{F}^{\blackdiamond}) \boxtimes - \rightarrow \on{Id}_{\ccf{S}}$ are right $\csym{S}$-module transformations.
 \end{enumerate}
 This data is subject to the following axioms:
 \begin{enumerate}[label = (\Roman*)]
  \item \label{LeftAdj} The tuple $(-\boxtimes \mathrm{F}^{\blackdiamond},-\boxtimes \mathrm{F},\eta^{l},\varepsilon^{l})$ is an adjunction in $\csym{S}\!\on{-Mod}(\csym{S},\csym{S})$;
  \item \label{RightAdj} The tuple $(\mathrm{F} \boxtimes -,\mathrm{F}^{\blackdiamond}\boxtimes -,\eta^{l},\varepsilon^{l})$ is an adjunction in $\on{Mod-}\!\csym{S}(\csym{S},\csym{S})$;
  \item \label{ZigZag} The following diagrams commute:
\[\begin{tikzcd}[ampersand replacement=\&]
	{\mathrm{F}} \& {\mathrm{F} \boxtimes (\mathrm{F}^{\blackdiamond}\boxtimes \mathrm{F})} \& {(\mathrm{F} \boxtimes \mathrm{F}^{\blackdiamond})\boxtimes \mathrm{F}} \\
	\&\& {\mathrm{F}}
	\arrow["{\eta^{l}_{\mathrm{F}}}", from=1-1, to=1-2]
	\arrow["{\mathsf{a}^{-1}_{\mathrm{F,F^{\blackdiamond},F}}}", from=1-2, to=1-3]
	\arrow["{\varepsilon^{r}_{\mathrm{F}}}", from=1-3, to=2-3]
	\arrow["{\on{id}_{\mathrm{F}}}"', from=1-1, to=2-3]
\end{tikzcd}\]
and
\[\begin{tikzcd}[ampersand replacement=\&]
	{\mathrm{F}^{\blackdiamond}} \& {(\mathrm{F}^{\blackdiamond}\boxtimes \mathrm{F}) \boxtimes \mathrm{F}^{\blackdiamond}} \& {\mathrm{F}^{\blackdiamond}\boxtimes (\mathrm{F} \boxtimes \mathrm{F}^{\blackdiamond})} \\
	\&\& {\mathrm{F}^{\blackdiamond}}
	\arrow["{\eta^{r}_{\mathrm{F}}}", from=1-1, to=1-2]
	\arrow["{\mathsf{a}_{\mathrm{F^{\blackdiamond},F,F^{\blackdiamond}}}}", from=1-2, to=1-3]
	\arrow["{\varepsilon^{l}_{\mathrm{F}}}", from=1-3, to=2-3]
	\arrow["{\on{id}_{\mathrm{F}^{\blackdiamond}}}"', from=1-1, to=2-3]
\end{tikzcd}\]
\item \label{Involution} For all $\mathrm{H,K} \in \csym{S}$, the following diagrams commute:
\[\begin{tikzcd}[ampersand replacement=\&]
	{\mathrm{H \boxtimes K}} \&\& {\mathrm{H \boxtimes ((F^{\blackdiamond} \boxtimes F) \boxtimes K)}} \\
	\&\& {\mathrm{(H \boxtimes (F^{\blackdiamond} \boxtimes F))\boxtimes K}}
	\arrow["{\mathrm{H \boxtimes \eta^{r}_{\mathrm{K}}}}", from=1-1, to=1-3]
	\arrow["{\mathsf{a}_{\mathrm{H,F^{\blackdiamond}\boxtimes F, K}}}", from=1-3, to=2-3]
	\arrow["{\eta^{l}_{\mathrm{H}}\boxtimes \mathrm{K}}"', from=1-1, to=2-3]
\end{tikzcd}\]
and
\[\begin{tikzcd}[ampersand replacement=\&]
	{\mathrm{(H \boxtimes (F\boxtimes F^{\blackdiamond}))\boxtimes K}} \&\& {\mathrm{H \boxtimes K}} \\
	{\mathrm{H \boxtimes ((F \boxtimes F^{\blackdiamond}) \boxtimes K)}}
	\arrow["{\mathrm{H \boxtimes \varepsilon^{r}_{\mathrm{K}}}}"', from=2-1, to=1-3]
	\arrow["{\mathsf{a}_{\mathrm{H,F^{\blackdiamond}\boxtimes F, K}}}"', from=1-1, to=2-1]
	\arrow["{\varepsilon^{l}_{\mathrm{H}}\boxtimes \mathrm{K}}", from=1-1, to=1-3]
\end{tikzcd}\]
 \end{enumerate}
\end{definition}

\begin{remark}\label{SplitMono}
 As a consequence of Axiom~\ref{ZigZag}, we see that the morphisms $\eta^{l,\mathrm{F}}_{\mathrm{F}}$ and $\eta^{r,\mathrm{F}}_{\mathrm{F}}$ are split monomorphisms, and that the morphisms $\varepsilon^{l,\mathrm{F}}_{\mathrm{F}},\varepsilon^{r,\mathrm{F}}_{\mathrm{F}}$ are split epimorphisms.
\end{remark}

\begin{remark}
  We have stated our axioms in the setting of a non-strict semigroup category, however, we will omit associators and unitors in our proofs to make them more readable - they can be easily reinsterted, and in fact all our results follow generally from the strict case as soon as we have established Proposition~\ref{StrictifyingEmbedding}, which gives a strictification.
\end{remark}

\begin{proposition}\label{StrictifyingEmbedding}
 If $\csym{S}$ is a semigroup category such that all objects of $\csym{S}$ admit right duals, then the semigroup functor $\yo_{\mathbf{B}(\ccf{S})}: \csym{S} \rightarrow \on{Mod-}\!\csym{S}(\csym{S},\csym{S})$ is full and faithful.

  If the objects of $\csym{S}$ instead admit left duals, then the semigroup functor $\yo_{\mathbf{B}(\ccf{S})}^{\on{rev}}: \csym{S}^{\on{rev}} \rightarrow \csym{S}\!\on{-Mod}(\csym{S},\csym{S})$ is full and faithful.
\end{proposition}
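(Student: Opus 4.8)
The plan is to fix objects $\mathrm{F},\mathrm{G}$ and prove that the map
\[
\Theta_{\mathrm{F},\mathrm{G}}\colon \csym{S}(\mathrm{F},\mathrm{G}) \longrightarrow \on{Mod-}\csym{S}(\csym{S},\csym{S})(\mathrm{F}\boxtimes -,\, \mathrm{G}\boxtimes -),\qquad \mathrm{f}\mapsto \mathrm{f}\boxtimes -,
\]
is a bijection; fullness and faithfulness of $\yo_{\mathbf{B}(\ccf{S})}$ are exactly surjectivity and injectivity of all the $\Theta_{\mathrm{F},\mathrm{G}}$. I fix a right dual $\mathrm{F}^{\blackdiamond}$ of $\mathrm{F}$ together with the adjunction data $(\mathrm{F},\mathrm{F}^{\blackdiamond},\eta^{l},\eta^{r},\varepsilon^{l},\varepsilon^{r})$ of Definition~\ref{AdjunctionSemigroup}, and suppress associators. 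The guiding idea is that in the monoidal case $\Theta_{\mathrm{F},\mathrm{G}}$ is inverted by $\alpha\mapsto\alpha_{\mathbb{1}}$; since $\csym{S}$ has no unit object, the role of ``evaluation at $\mathbb{1}$'' must be played by the right dual, and the precise substitute is supplied by the zig-zag identities of Axiom~\ref{ZigZag} together with the interchange Axiom~\ref{Involution}.

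I would construct an explicit two-sided inverse. By Remark~\ref{SplitMono} the counit component $\varepsilon^{l}_{\mathrm{F}}\colon \mathrm{F}\boxtimes(\mathrm{F}\boxtimes\mathrm{F}^{\blackdiamond})\to \mathrm{F}$ is a split epimorphism; fix a section $s$ with $\varepsilon^{l}_{\mathrm{F}}\circ s=\on{id}_{\mathrm{F}}$. For a right $\csym{S}$-module transformation $\alpha\colon \mathrm{F}\boxtimes-\Rightarrow \mathrm{G}\boxtimes-$ set
\[
\Psi(\alpha):=\Big(\mathrm{F}\xrightarrow{\,s\,}\mathrm{F}\boxtimes(\mathrm{F}\boxtimes\mathrm{F}^{\blackdiamond})\xrightarrow{\,\alpha_{\mathrm{F}\boxtimes\mathrm{F}^{\blackdiamond}}\,}\mathrm{G}\boxtimes(\mathrm{F}\boxtimes\mathrm{F}^{\blackdiamond})\xrightarrow{\,\varepsilon^{l}_{\mathrm{G}}\,}\mathrm{G}\Big).
\]
That $\Psi\circ\Theta_{\mathrm{F},\mathrm{G}}=\on{id}$ is immediate and gives faithfulness: for $\mathrm{f}\colon \mathrm{F}\to\mathrm{G}$, naturality of $\varepsilon^{l}$ in its argument gives $\varepsilon^{l}_{\mathrm{G}}\circ(\mathrm{f}\boxtimes \on{id})=\mathrm{f}\circ \varepsilon^{l}_{\mathrm{F}}$, whence $\Psi(\mathrm{f}\boxtimes-)=\mathrm{f}\circ\varepsilon^{l}_{\mathrm{F}}\circ s=\mathrm{f}$.

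The substantial point is $\Theta_{\mathrm{F},\mathrm{G}}\circ\Psi=\on{id}$, i.e.\ $\Psi(\alpha)\boxtimes X=\alpha_{X}$ for all $X$. Tensoring the definition of $\Psi(\alpha)$ with $X$ and using that $\alpha$ is a right module transformation (so $\alpha_{\mathrm{F}\boxtimes\mathrm{F}^{\blackdiamond}}\boxtimes X=\alpha_{(\mathrm{F}\boxtimes\mathrm{F}^{\blackdiamond})\boxtimes X}$), I would rewrite the two outer maps via Axiom~\ref{Involution}, which identifies $\varepsilon^{l}_{\mathrm{G}}\boxtimes X$ with $\mathrm{G}\boxtimes\varepsilon^{r}_{X}$ and $\varepsilon^{l}_{\mathrm{F}}\boxtimes X$ with $\mathrm{F}\boxtimes\varepsilon^{r}_{X}$. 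Ordinary naturality of $\alpha$ against the morphism $\varepsilon^{r}_{X}\colon (\mathrm{F}\boxtimes\mathrm{F}^{\blackdiamond})\boxtimes X\to X$ then slides the output counit past $\alpha$, turning the composite into $\alpha_{X}\circ\big((\mathrm{F}\boxtimes\varepsilon^{r}_{X})\circ(s\boxtimes X)\big)$; one more application of Axiom~\ref{Involution} rewrites the bracket as $(\varepsilon^{l}_{\mathrm{F}}\circ s)\boxtimes X=\on{id}_{\mathrm{F}\boxtimes X}$, giving $\Psi(\alpha)\boxtimes X=\alpha_{X}$. The main obstacle is exactly this verification, and conceptually it is where the absence of a unit bites: the module structure of $\alpha$ only controls the right-hand tensor factor, whereas the evaluation $\varepsilon^{l}$ that produces a bare morphism $\mathrm{F}\to\mathrm{G}$ sits on that same factor ``from the other side.'' Axiom~\ref{Involution}—the compatibility of the left- and right-module counits $\varepsilon^{l},\varepsilon^{r}$—is precisely what lets these be interchanged so that the zig-zag collapses to the identity; without it the two one-sided pieces of data would not fit together. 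The remaining work is bookkeeping of the suppressed associators $\mathsf{a}$, which is routine given the pentagon axiom and the coherence of the module constraints.

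Finally, the statement for left duals follows formally by applying the case just proved to $\csym{S}^{\on{rev}}$: right duals in $\csym{S}^{\on{rev}}$ are left duals in $\csym{S}$, one has $\on{Mod-}\csym{S}^{\on{rev}}(\csym{S}^{\on{rev}},\csym{S}^{\on{rev}})=\csym{S}\!\on{-Mod}(\csym{S},\csym{S})$, and the functor $\yo_{\mathbf{B}(\ccf{S})}$ for $\csym{S}^{\on{rev}}$ is $\yo_{\mathbf{B}(\ccf{S})}^{\on{rev}}$, so full faithfulness transports directly.
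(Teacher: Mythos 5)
Your construction has the same overall shape as the paper's argument (an explicit inverse to $\mathrm{f}\mapsto\mathrm{f}\boxtimes-$, verified using the module property of $\alpha$, Axiom~\ref{Involution}, ordinary naturality, and a final zig-zag collapse), but there is a genuine gap at the one point where you deviate: the section $s$ you need does not exist in general. Remark~\ref{SplitMono} does not assert that the component $\varepsilon^{l,\mathrm{F}}_{\mathrm{F}}\colon \mathrm{F}\boxtimes(\mathrm{F}\boxtimes\mathrm{F}^{\blackdiamond})\to\mathrm{F}$ is a split epimorphism; its (admittedly loose) notation follows the labels in the two diagrams of Axiom~\ref{ZigZag}, and the left counit occurring there is the component at $\mathrm{F}^{\blackdiamond}$, namely $\varepsilon^{l,\mathrm{F}}_{\mathrm{F}^{\blackdiamond}}\colon\mathrm{F}^{\blackdiamond}\boxtimes(\mathrm{F}\boxtimes\mathrm{F}^{\blackdiamond})\to\mathrm{F}^{\blackdiamond}$, split by $\mathsf{a}\circ\eta^{r,\mathrm{F}}_{\mathrm{F}^{\blackdiamond}}$; the other split epimorphism is $\varepsilon^{r,\mathrm{F}}_{\mathrm{F}}$. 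The triangle identities of Axioms~\ref{LeftAdj} and \ref{RightAdj} likewise only split the components $\varepsilon^{l,\mathrm{F}}_{X\boxtimes\mathrm{F}^{\blackdiamond}}$ and $\varepsilon^{r,\mathrm{F}}_{\mathrm{F}\boxtimes X}$. A section of $\varepsilon^{l,\mathrm{F}}_{\mathrm{F}}$ would be a map $\mathrm{F}\to\mathrm{F}\boxtimes(\mathrm{F}\boxtimes\mathrm{F}^{\blackdiamond})$, i.e.\ coevaluation data exhibiting $\mathrm{F}^{\blackdiamond}$ as a \emph{left} dual, which is not available. It genuinely fails: in the rigid monoidal poset category of unbounded order-preserving maps $\mathbb{Z}\to\mathbb{Z}$ under composition, take $f(x)=2x$ with right dual $f^{\blackdiamond}(y)=\lfloor y/2\rfloor$; the morphism $\varepsilon^{l,f}_{f}\colon f\circ f\circ f^{\blackdiamond}\to f$ exists (as $4\lfloor x/2\rfloor\le 2x$ for all $x$), but it has no section, since that would require $2x\le 4\lfloor x/2\rfloor$, which fails at $x=1$. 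Since both your faithfulness argument and the identity $\Theta_{\mathrm{F},\mathrm{G}}\circ\Psi=\on{id}$ rest on $\varepsilon^{l}_{\mathrm{F}}\circ s=\on{id}_{\mathrm{F}}$, the proof as written collapses, and it cannot be repaired by a better choice of $s$.

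The repair is precisely the paper's choice of evaluation point: build $\Psi$ from the adjunction data of the \emph{codomain} $\mathrm{G}$ rather than of $\mathrm{F}$, setting $\Psi(\alpha)=\varepsilon^{r,\mathrm{G}}_{\mathrm{G}}\circ\mathsf{a}^{-1}_{\mathrm{G},\mathrm{G}^{\blackdiamond},\mathrm{G}}\circ\alpha_{\mathrm{G}^{\blackdiamond}\boxtimes\mathrm{G}}\circ\eta^{l,\mathrm{G}}_{\mathrm{F}}$. Then no choice is made anywhere: $\eta^{l,\mathrm{G}}_{\mathrm{F}}$ is a component of an honest natural transformation (the unit exists at \emph{every} object, which is exactly what replaces ``evaluation at $\mathbb{1}$''), Axiom~\ref{Involution} rewrites $\eta^{l,\mathrm{G}}_{\mathrm{F}}\boxtimes X$ as $\mathrm{F}\boxtimes\eta^{r,\mathrm{G}}_{X}$, the module property of $\alpha$ and $\varepsilon^{r,\mathrm{G}}$ and naturality of $\alpha$ apply just as in your argument, and the final collapse is the unit--counit identity for $(\mathrm{G}\boxtimes-,\mathrm{G}^{\blackdiamond}\boxtimes-)$ together with Axiom~\ref{ZigZag} for $\mathrm{G}$ --- all canonical, no splitness assumptions beyond the axioms. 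Your reduction of the left-dual statement to the right-dual statement for $\csym{S}^{\on{rev}}$ is correct and agrees with the paper's ``dual'' argument.
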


\begin{proof}
 We only show the first claim; the second is dual to the first.
 We first show that $\yo_{\mathbf{B}(\ccf{S})}$ is faithful. Let $\mathrm{F,G} \in \csym{S}$ and let $\mathrm{f,f'} \in \Hom{\mathrm{F,G}}_{\ccf{S}}$. Assume that $\mathrm{f} \boxtimes - = \mathrm{f'} \boxtimes -$. In particular $\mathrm{f} \boxtimes (\mathrm{G}^{\blackdiamond} \boxtimes \mathrm{G}) = \mathrm{f'} \boxtimes (\mathrm{G}^{\blackdiamond} \boxtimes \mathrm{G})$. It now follows that
 \[
  \begin{aligned}
   \mathrm{f} = \varepsilon^{r,\mathrm{G}}_{\mathrm{G}} \circ \eta^{l,\mathrm{G}}_{\mathrm{G}} \circ \mathrm{f} = \varepsilon^{l,\mathrm{G}}_{\mathrm{G}} \circ (\mathrm{f \boxtimes G^{\blackdiamond} \boxtimes G}) \circ \eta^{l,\mathrm{G}}_{\mathrm{F}} = \varepsilon^{l,\mathrm{G}}_{\mathrm{G}} \circ (\mathrm{f' \boxtimes G^{\blackdiamond} \boxtimes G}) \circ \eta^{l,\mathrm{G}}_{\mathrm{F}} = \varepsilon^{r,\mathrm{G}}_{\mathrm{G}} \circ \eta^{l,\mathrm{G}}_{\mathrm{G}} \circ \mathrm{f'} = \mathrm{f'},
  \end{aligned}
 \]
 where the first and last equality follow by Axiom~\ref{ZigZag}, the second and the penultimate ones follow by the naturality of $\eta^{l,\mathrm{G}}$, and the middle one follows from the assumption.

 Now we show that $\yo_{\mathbf{B}(\ccf{S})}$ is full. Let $\alpha: \mathrm{F} \boxtimes - \rightarrow \mathrm{G} \boxtimes -$. We claim that
 \begin{equation}\label{Fullness}
\alpha_{\mathrm{X}} = (\varepsilon^{r,\mathrm{G}}_{\mathrm{G}} \circ \alpha_{\mathrm{G^{\blackdiamond}\boxtimes \mathrm{G}}} \circ \eta^{l,\mathrm{G}}_{\mathrm{F}}) \boxtimes \mathrm{X}.
 \end{equation}
 This shows fullness, since then $\alpha = (\varepsilon^{r,\mathrm{G}}_{\mathrm{G}} \circ \alpha_{\mathrm{G^{\blackdiamond}\boxtimes \mathrm{G}}} \circ \eta^{l,\mathrm{G}}_{\mathrm{F}}) \boxtimes -$. To show the claim, consider the diagram
\begin{equation}\label{DiagramFullness}
\begin{tikzcd}[ampersand replacement=\&]
	{\mathrm{F}\boxtimes \mathrm{X}} \&\& {\mathrm{G}\boxtimes \mathrm{X}} \& {\mathrm{G}\boxtimes \mathrm{X}} \\
	{\mathrm{F}\boxtimes \mathrm{G}^{\blackdiamond}\boxtimes \mathrm{G} \boxtimes \mathrm{X}} \&\& {\mathrm{G}\boxtimes \mathrm{G}^{\blackdiamond}\boxtimes \mathrm{G} \boxtimes \mathrm{X}}
	\arrow[""{name=0, anchor=center, inner sep=0}, "{\alpha_{X}}", from=1-1, to=1-3]
	\arrow["{\mathrm{F}\boxtimes \eta^{r,\mathrm{G}}_{X}}"', from=1-1, to=2-1]
	\arrow[""{name=1, anchor=center, inner sep=0}, "{\alpha_{\mathrm{G}^{\blackdiamond}\boxtimes \mathrm{G} \boxtimes X}}"', from=2-1, to=2-3]
	\arrow["{\mathrm{G}\boxtimes \eta^{r,\mathrm{G}}_{\mathrm{X}}}"', from=1-3, to=2-3]
	\arrow["{\on{id}_{\mathrm{G\boxtimes X}}}", from=1-3, to=1-4]
	\arrow[""{name=2, anchor=center, inner sep=0}, "{\varepsilon^{r,\mathrm{G}}_{\mathrm{G \boxtimes X}}}"', from=2-3, to=1-4]
	\arrow["{(2)}"{description}, draw=none, from=1-3, to=2]
	\arrow["{(1)}"{description}, draw=none, from=0, to=1]
\end{tikzcd}
\end{equation}
Face (1) commutes due to the naturality of $\alpha$ and face (2) commutes by the unit-counit identities for $\eta^{r,\mathrm{G}}, \varepsilon^{r,\mathrm{G}}$.

Now we observe that, by Axiom~\ref{Involution}, we have $\mathrm{F} \boxtimes \eta^{r,\mathrm{G}}_{X} = \eta^{l,\mathrm{G}}_{\mathrm{F}} \boxtimes \mathrm{X}$, and that, since $\alpha$ and $\varepsilon^{r,\mathrm{G}}$ are $\csym{S}$-module transformations, we have $\alpha_{\mathrm{G^{\blackdiamond} \boxtimes G \boxtimes X}} = \alpha_{\mathrm{G^{\blackdiamond} \boxtimes G}} \boxtimes \mathrm{X}$ and $\varepsilon^{r,\mathrm{G}}_{\mathrm{G}\boxtimes \mathrm{X}} = \varepsilon^{r,\mathrm{G}}_{\mathrm{G}}\boxtimes \mathrm{X}$. Combining these with the equation given by the outer face of Diagram~\eqref{DiagramFullness} gives Equation~\eqref{Fullness}.
\end{proof}

\begin{proposition}\label{AdjunctionsCompose}
 Given adjunctions $(\mathrm{F},\mathrm{F}^{\blackdiamond}, \eta^{l,\mathrm{F}},\eta^{r,\mathrm{F}}, \varepsilon^{l,\mathrm{F}},\varepsilon^{r,\mathrm{F}})$ and $(\mathrm{G},\mathrm{G}^{\blackdiamond}, \eta^{l,\mathrm{G}},\eta^{r,\mathrm{G}}, \varepsilon^{l,\mathrm{G}},\varepsilon^{r,\mathrm{G}})$ in $\csym{S}$, the tuple $(\mathrm{G}\boxtimes \mathrm{F},\mathrm{F}^{\blackdiamond} \boxtimes \mathrm{G}^{\blackdiamond}, \eta^{l,\mathrm{G}\boxtimes \mathrm{F}},\eta^{r,\mathrm{G}\boxtimes \mathrm{F}}, \varepsilon^{l,\mathrm{G}\boxtimes\mathrm{F}},\varepsilon^{r,\mathrm{G}\boxtimes \mathrm{F}})$, defined by
 \begin{multicols}{2}
 \begin{enumerate}
  \item $\eta^{l,\mathrm{G}\boxtimes \mathrm{F}}_{\mathrm{X}} = (\eta^{l,\mathrm{G}}_{\mathrm{X} \boxtimes \mathrm{F}^{\blackdiamond}} \boxtimes \mathrm{F}) \circ \eta^{l,\mathrm{F}}_{\mathrm{X}}$;
  \item $\varepsilon^{l,\mathrm{G}\boxtimes \mathrm{F}}_{\mathrm{X}} = \varepsilon^{l,\mathrm{G}}_{X} \circ (\varepsilon^{l,\mathrm{F}}_{\mathrm{X \boxtimes G}} \boxtimes \mathrm{G}^{\blackdiamond})$;
  \item $\eta^{r,\mathrm{G}\boxtimes \mathrm{F}}_{\mathrm{X}} = (\mathrm{F}^{\blackdiamond}\boxtimes \eta^{r,\mathrm{G}}_{\mathrm{F} \boxtimes \mathrm{X}}) \circ \eta^{r,\mathrm{F}}_{\mathrm{X}}$;
  \item $\varepsilon^{r,\mathrm{G}\boxtimes \mathrm{F}}_{\mathrm{X}} = \varepsilon^{r,\mathrm{G}}_{X} \circ (\mathrm{G} \boxtimes \varepsilon^{r,\mathrm{F}}_{\mathrm{G^{\blackdiamond} \boxtimes X}}) $,
 \end{enumerate}
\end{multicols}
 is an adjunction in $\csym{S}$.
\end{proposition}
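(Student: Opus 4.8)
The plan is to verify the four axioms of Definition~\ref{AdjunctionSemigroup} for the proposed tuple, working throughout in the strict setting (suppressing associators, as permitted by the Remark preceding the statement and justified by Proposition~\ref{StrictifyingEmbedding}, with concatenation denoting $\boxtimes$). First I would record that the four transformations (1)--(4) are built from the module transformations attached to $\mathrm{F}$ and $\mathrm{G}$ by whiskering and vertical composition; since both operations preserve the property of being a left (resp. right) $\csym{S}$-module transformation, the structural requirements (2)--(3) of Definition~\ref{AdjunctionSemigroup} hold automatically.

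For Axioms~\ref{LeftAdj} and \ref{RightAdj}, the key observation is that (1)--(4) are \emph{exactly} the unit and counit produced by the standard composite of two adjunctions in a $2$-category. Writing $L_{\mathrm F} = -\boxtimes\mathrm F^{\blackdiamond}$, $R_{\mathrm F} = -\boxtimes\mathrm F$ and $L_{\mathrm G}, R_{\mathrm G}$ analogously, one has in $\csym{S}\!\on{-Mod}(\csym{S},\csym{S})$ that $L_{\mathrm G}\circ L_{\mathrm F} = -\boxtimes(\mathrm F^{\blackdiamond}\boxtimes\mathrm G^{\blackdiamond})$ is left adjoint to $R_{\mathrm F}\circ R_{\mathrm G} = -\boxtimes(\mathrm G\boxtimes\mathrm F)$, and formulas (1)--(2) are precisely the whiskered composites $(R_{\mathrm F}\,\eta^{l,\mathrm G}\,L_{\mathrm F})\circ\eta^{l,\mathrm F}$ and $\varepsilon^{l,\mathrm G}\circ(L_{\mathrm G}\,\varepsilon^{l,\mathrm F}\,R_{\mathrm G})$. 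Thus Axiom~\ref{LeftAdj} is just the fact that adjunctions compose; Axiom~\ref{RightAdj} is the mirror statement in $\on{Mod-}\!\csym{S}(\csym{S},\csym{S})$, using (3)--(4).

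Next I would deduce Axiom~\ref{Involution} for the composite from Axiom~\ref{Involution} for $\mathrm F$ and $\mathrm G$ together with naturality. Whiskering formula (1) on the right by $\mathrm K$ and applying Axiom~\ref{Involution} first to $\mathrm F$ (on the factor $\eta^{l,\mathrm F}_{\mathrm H}\boxtimes\mathrm K = \mathrm H\boxtimes\eta^{r,\mathrm F}_{\mathrm K}$) and then to $\mathrm G$ (on $\eta^{l,\mathrm G}_{\mathrm{HF}^{\blackdiamond}}\boxtimes\mathrm{FK} = \mathrm{HF}^{\blackdiamond}\boxtimes\eta^{r,\mathrm G}_{\mathrm{FK}}$) rewrites $(\eta^{l,\mathrm{G}\boxtimes\mathrm F}_{\mathrm H})\boxtimes\mathrm K$ as $(\mathrm{HF}^{\blackdiamond}\boxtimes\eta^{r,\mathrm G}_{\mathrm{FK}})\circ(\mathrm H\boxtimes\eta^{r,\mathrm F}_{\mathrm K})$, which is exactly $\mathrm H\boxtimes\eta^{r,\mathrm{G}\boxtimes\mathrm F}_{\mathrm K}$ by formula (3). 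The counit diagram of Axiom~\ref{Involution} is identical after replacing units by counits.

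The genuine work is Axiom~\ref{ZigZag}, the only axiom that couples the left- and right-module structures. For the first zig-zag I must show $\varepsilon^{r,\mathrm{G}\boxtimes\mathrm F}_{\mathrm{G}\boxtimes\mathrm F}\circ\eta^{l,\mathrm{G}\boxtimes\mathrm F}_{\mathrm{G}\boxtimes\mathrm F} = \on{id}_{\mathrm{G}\boxtimes\mathrm F}$. Expanding via (1) and (4) yields a four-fold composite $\mathrm{GF}\to\mathrm{GFF}^{\blackdiamond}\mathrm F\to\mathrm{GFF}^{\blackdiamond}\mathrm G^{\blackdiamond}\mathrm{GF}\to\mathrm{GG}^{\blackdiamond}\mathrm{GF}\to\mathrm{GF}$, and the strategy is to collapse it onto the two component zig-zags. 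Step one: use Axiom~\ref{Involution} for $\mathrm G$ to rewrite $\eta^{l,\mathrm G}_{\mathrm{GFF}^{\blackdiamond}}\boxtimes\mathrm F = \mathrm{GFF}^{\blackdiamond}\boxtimes\eta^{r,\mathrm G}_{\mathrm F}$. Step two: by naturality of $\varepsilon^{r,\mathrm F}$ along $\eta^{r,\mathrm G}_{\mathrm F}$, slide the counit past the unit, turning the two middle arrows into $(\mathrm G\boxtimes\eta^{r,\mathrm G}_{\mathrm F})\circ(\mathrm G\boxtimes\varepsilon^{r,\mathrm F}_{\mathrm F})$. Step three: since $\eta^{l,\mathrm F}$ is a left module transformation, $\eta^{l,\mathrm F}_{\mathrm{GF}} = \mathrm G\boxtimes\eta^{l,\mathrm F}_{\mathrm F}$, so the $\mathrm F$-zig-zag (Axiom~\ref{ZigZag} for $\mathrm F$) collapses $(\mathrm G\boxtimes\varepsilon^{r,\mathrm F}_{\mathrm F})\circ(\mathrm G\boxtimes\eta^{l,\mathrm F}_{\mathrm F})$ to $\on{id}_{\mathrm{GF}}$, leaving $\varepsilon^{r,\mathrm G}_{\mathrm{GF}}\circ(\mathrm G\boxtimes\eta^{r,\mathrm G}_{\mathrm F})$. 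Step four: Axiom~\ref{Involution} for $\mathrm G$ once more gives $\mathrm G\boxtimes\eta^{r,\mathrm G}_{\mathrm F} = \eta^{l,\mathrm G}_{\mathrm G}\boxtimes\mathrm F$, while the right-module-transformation identity gives $\varepsilon^{r,\mathrm G}_{\mathrm{GF}} = \varepsilon^{r,\mathrm G}_{\mathrm G}\boxtimes\mathrm F$, so the remainder equals $(\varepsilon^{r,\mathrm G}_{\mathrm G}\circ\eta^{l,\mathrm G}_{\mathrm G})\boxtimes\mathrm F = \on{id}_{\mathrm{GF}}$ by the $\mathrm G$-zig-zag. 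The second zig-zag of Axiom~\ref{ZigZag} is dual, interchanging $\eta^l/\varepsilon^r$ with $\eta^r/\varepsilon^l$ and the roles of $\mathrm F$ and $\mathrm G$. The main obstacle is exactly this interleaving: the composite snake is \emph{not} a $2$-categorical triangle identity, so it cannot be reduced by the adjunction axioms alone; the crux is that Axiom~\ref{Involution} for the components lets one trade right-whiskering of a left unit for left-multiplication by a right unit, which is what exposes the two component zig-zags.
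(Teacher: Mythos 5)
Your proposal is correct and takes essentially the same route as the paper: Axioms (I) and (II) via composition of adjunctions in $\csym{S}\!\on{-Mod}(\csym{S},\csym{S})$ and $\on{Mod-}\!\csym{S}(\csym{S},\csym{S})$, Axiom (IV) by applying the component Involution axioms for $\mathrm{F}$ and $\mathrm{G}$ in succession, and the zig-zag in Axiom (III) by the same sequence of moves (Involution for $\mathrm{G}$, naturality of $\varepsilon^{r,\mathrm{F}}$, then the component zig-zag for $\mathrm{F}$). The only cosmetic difference is your final step, where you invoke Involution for $\mathrm{G}$ together with the $\mathrm{G}$-zig-zag, whereas the paper closes the same face by the triangle identity for the adjunction $(\mathrm{G}\boxtimes -,\mathrm{G}^{\blackdiamond}\boxtimes -,\eta^{r,\mathrm{G}},\varepsilon^{r,\mathrm{G}})$; both are valid one-line justifications of the identical diagram.
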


\begin{proof}
 Axioms~\ref{LeftAdj} and \eqref{RightAdj} follow immediately from the fact that $\eta^{l,\mathrm{G}\boxtimes \mathrm{F}}, \varepsilon^{l,\mathrm{G}\boxtimes \mathrm{F}}$ are given by composition of adjunctions in the monoidal category $\csym{S}\!\on{-Mod}(\csym{S},\csym{S})$, and similarly for $\eta^{r,\mathrm{G}\boxtimes \mathrm{F}}, \varepsilon^{r,\mathrm{G}\boxtimes \mathrm{F}}$.

 To verify the first diagram in Axiom~\ref{ZigZag}, consider the following diagram:
\[\begin{tikzcd}[ampersand replacement=\&]
	{\mathrm{G \boxtimes F}} \&\& {\mathrm{G \boxtimes F \boxtimes F^{\blackdiamond} \boxtimes F}} \&\& {\mathrm{G \boxtimes F \boxtimes F^{\blackdiamond} \boxtimes G^{\blackdiamond} \boxtimes G \boxtimes F}} \\
	\&\& {\mathrm{G \boxtimes F}} \&\& {\mathrm{G \boxtimes G^{\blackdiamond} \boxtimes G \boxtimes F}} \\
	\&\&\&\& {\mathrm{G \boxtimes F}}
	\arrow[""{name=0, anchor=center, inner sep=0}, from=1-1, to=2-3, equal]
	\arrow["{\eta^{l,\mathrm{F}}_{\mathrm{G \boxtimes F}}}", from=1-1, to=1-3]
	\arrow[""{name=1, anchor=center, inner sep=0}, "{\eta^{l,\mathrm{G}}_{\mathrm{G \boxtimes F \boxtimes F^{\blackdiamond}}} \boxtimes \mathrm{F}}", from=1-3, to=1-5]
	\arrow["{\mathrm{G} \boxtimes \varepsilon^{r,\mathrm{F}}_{\mathrm{F}}}"', from=1-3, to=2-3]
	\arrow["{\mathrm{G}\boxtimes \varepsilon^{r,\mathrm{F}}_{\mathrm{GG^{\blackdiamond}F}}}", from=1-5, to=2-5]
	\arrow[""{name=2, anchor=center, inner sep=0}, "{\mathrm{G} \boxtimes \eta^{r,\mathrm{G}}_{\mathrm{F}}}", from=2-3, to=2-5]
	\arrow["{\varepsilon^{r,\mathrm{G}}_{\mathrm{G \boxtimes F}}}", from=2-5, to=3-5]
	\arrow[""{name=3, anchor=center, inner sep=0}, from=2-3, to=3-5, equal]
	\arrow["{(1)}"{description}, draw=none, from=1-3, to=0]
	\arrow["{(3)}"{description}, draw=none, from=2-5, to=3]
	\arrow["{(2)}"{description, pos=0.3}, draw=none, from=1, to=2]
\end{tikzcd}\]
 Face (1) commutes since $\mathrm{G}\boxtimes \varepsilon^{r,\mathrm{F}}_{\mathrm{F}}\circ \eta^{l,\mathrm{F}}_{\mathrm{G \boxtimes F}} = \mathrm{G}\boxtimes \varepsilon^{r,\mathrm{F}}_{\mathrm{F}}\circ \mathrm{G}\boxtimes \eta^{l,\mathrm{F}}_{\mathrm{F}} = \mathrm{G} \boxtimes (\varepsilon^{r,\mathrm{F}}_{\mathrm{F}}\circ \eta^{l,\mathrm{F}}_{\mathrm{F}}) = \mathrm{G} \boxtimes \on{id}_{\mathrm{F}}$, where the last equality follows from Axiom~\ref{ZigZag}.

 Face (2) commutes since $\eta^{l,\mathrm{G}}_{\mathrm{G \boxtimes F \boxtimes F^{\blackdiamond}}} \boxtimes \mathrm{F} = \mathrm{G \boxtimes F \boxtimes F^{\blackdiamond}} \boxtimes \eta^{r,\mathrm{G}}_{\mathrm{F}}$ and since the diagram
\begin{equation}\label{ComposeAdjunctions}
\begin{tikzcd}[ampersand replacement=\&]
	{\mathrm{G \boxtimes F \boxtimes F^{\blackdiamond} \boxtimes F}} \&\& {\mathrm{G \boxtimes F \boxtimes F^{\blackdiamond} \boxtimes G^{\blackdiamond} \boxtimes G \boxtimes F}} \\
	{\mathrm{G \boxtimes F}} \&\& {\mathrm{G \boxtimes G^{\blackdiamond} \boxtimes G \boxtimes F}}
	\arrow["{\mathrm{G \boxtimes F \boxtimes F^{\blackdiamond}}\boxtimes \eta^{r,\mathrm{G}}_{\mathrm{F}}}", from=1-1, to=1-3]
	\arrow["{\mathrm{G} \boxtimes \varepsilon^{r,\mathrm{F}}_{\mathrm{F}}}"', from=1-1, to=2-1]
	\arrow["{\mathrm{G}\boxtimes \varepsilon^{r,\mathrm{F}}_{\mathrm{GG^{\blackdiamond}F}}}", from=1-3, to=2-3]
	\arrow["{\mathrm{G}\boxtimes \eta^{r,\mathrm{G}}_{\mathrm{F}}}", from=2-1, to=2-3]
\end{tikzcd}
\end{equation}
commutes.
The commutativity of Diagram~\eqref{ComposeAdjunctions} follows from naturality of $\mathrm{G}\boxtimes \varepsilon^{r,\mathrm{F}}$.
Face (3) commutes by unit-counit identities for $\eta^{r,\mathrm{G}},\varepsilon^{r,\mathrm{G}}$.

The second diagram in Axiom~\ref{ZigZag} can be checked analogously to the first one.

The commutativity of the first diagram in Axiom~\ref{Involution} follows from the commutativity of
\[\begin{tikzcd}[ampersand replacement=\&]
	{\mathrm{H \boxtimes K}} \&\& {\mathrm{H \boxtimes F^{\blackdiamond} \boxtimes F \boxtimes K}} \&\&\& {\mathrm{H \boxtimes F^{\blackdiamond} \boxtimes G^{\blackdiamond} \boxtimes G \boxtimes F \boxtimes K}} \\
	{\mathrm{H \boxtimes K}} \&\& {\mathrm{H \boxtimes F^{\blackdiamond} \boxtimes F \boxtimes K}} \&\&\& {\mathrm{H \boxtimes F^{\blackdiamond} \boxtimes G^{\blackdiamond} \boxtimes G \boxtimes F \boxtimes K}}
	\arrow["{\mathrm{H}\boxtimes \eta^{r,\mathrm{F}}_{\mathrm{K}}}", from=1-1, to=1-3]
	\arrow["{\mathrm{H\boxtimes F^{\blackdiamond}\boxtimes \eta^{r,\mathrm{G}}}_{\!\mathrm{F\boxtimes K}}}"{pos=0.4}, from=1-3, to=1-6]
	\arrow[from=1-1, to=2-1]
	\arrow[from=1-3, to=2-3]
	\arrow[from=1-6, to=2-6]
	\arrow["{\eta^{l,\mathrm{F}}_{\mathrm{H}} \boxtimes \mathrm{K}}"', from=2-1, to=2-3]
	\arrow["{\eta^{l,\mathrm{G}}_{\mathrm{H \boxtimes F^{\blackdiamond}}}\boxtimes \mathrm{F \boxtimes K}}"', from=2-3, to=2-6]
\end{tikzcd}\]
whose faces commute by applying Axiom~\ref{Involution} to $(\mathrm{F},\mathrm{F}^{\blackdiamond}, \eta^{l,\mathrm{F}},\eta^{r,\mathrm{F}}, \varepsilon^{l,\mathrm{F}},\varepsilon^{r,\mathrm{F}})$ and $(\mathrm{G},\mathrm{G}^{\blackdiamond}, \eta^{l,\mathrm{G}},\eta^{r,\mathrm{G}}, \varepsilon^{l,\mathrm{G}},\varepsilon^{r,\mathrm{G}})$. The second diagram in Axiom~\ref{Involution} follows analogously to the first one.
\end{proof}

\begin{definition}
 An object of a semigroup category is {\it rigid} if it admits both left and right duals. A {\it rigid semigroup category} is a semigroup category all of whose objects are rigid.
\end{definition}

In Example~\ref{FreeStrings} below, we define a ``free'' rigid strict semigroup category on a self-dual generator.
Indeed, using the coherence theorem for monoidal categories, one can show that the category of string diagrams with ``cups'' and ``caps'' is the free rigid strict monoidal category on a self-dual generator. Combining this with Proposition~\ref{StrictifyingEmbedding} justifies our use of the term ``free'' also in the case of Example~\ref{FreeStrings}. As a consequence, the proofs of many of the results in this section could be conducted diagrammatically.
\begin{example}\label{FreeStrings}
 We define $\csym{S}$ as the semigroup category of non-empty string diagrams, with one generating object, which we denote by $1$, denoted by a thin black string, and generating morphisms $\eta^{l}, \eta^{r}, \varepsilon^{l}, \varepsilon^{r}$ given by the following diagrams:
 \[
\eta^{l} = \eta^{l}_{1}:= \; \begin{tikzpicture}[style={line width=0.2mm, inner sep = 0pt, outer sep = 0pt},baseline=(D.base),scale=0.5]
 \node (D) at (0, 1.5) {};
 \node (Z0) at (-0.7,0) {};
 \node (Z1) at (-0.7,3) {};
 \node (K) at (0.7,2) {$\bullet$};
 \node (Y0) at (0,3) {};
 \node (Y1) at (1.4,3) {};
 \node (J) at (-0.7,0.6) {};
 \draw [-] (K) to [out=173,in=-90] (Y0);
 \draw [-] (K) to [out=7, in=-90] (Y1);
 \draw[dashed,gray] [-] (J) to [out=7,in= -97] (K);
 \draw [-] (Z0) to (Z1);
\end{tikzpicture}
\; ;\qquad \eta^{r} = \eta^{r}_{1}: = \begin{tikzpicture}[style={line width=0.2mm, inner sep = 0pt, outer sep = 0pt},baseline=(D.base),scale=0.5]
 \node (D) at (0, 1.5) {};
 \node (Z0) at (2.1,0) {};
 \node (Z1) at (2.1,3) {};
 \node (K) at (0.7,2) {$\bullet$};
 \node (Y0) at (0,3) {};
 \node (Y1) at (1.4,3) {};
 \node (J) at (2.1,0.6) {};
 \draw [-] (K) to [out=173,in=-90] (Y0);
 \draw [-] (K) to [out=7, in=-90] (Y1);
 \draw[dashed,gray] [-] (J) to [out=173,in= -83] (K);
 \draw [-] (Z0) to (Z1);
\end{tikzpicture}
\; ;\qquad \varepsilon^{l} = \varepsilon^{l}_{1}:=\; \begin{tikzpicture}[style={line width=0.2mm, inner sep = 0pt, outer sep = 0pt},baseline=(D.base),scale=0.5]
 \node (D) at (0, 1.5) {};
	\node (X0) at (0, 0) {};
	\node (X1) at (1.4, 0) {};
	\node (K) at (0.7,1) {$\bullet$};
	\node (Z0) at (-0.7,0) {};
	\node (Z1) at (-0.7,3) {};
	\node (J) at (-0.7,2.4) {};
	\draw [-] (Z0) to (Z1);
	\draw [-] (X0) to [out=90, in=187] (K);
	\draw [-] (X1) to [out=90, in=-7] (K);
	\draw[dashed,gray] [-] (K) to [out=97, in=-7] (J);
\end{tikzpicture}
\; ;\qquad \varepsilon^{r}=\varepsilon^{r}_{1}:= \begin{tikzpicture}[style={line width=0.2mm, inner sep = 0pt, outer sep = 0pt},baseline=(D.base),scale=0.5]
 \node (D) at (0, 1.5) {};
	\node (X0) at (0, 0) {};
	\node (X1) at (1.4, 0) {};
	\node (K) at (0.7,1) {$\bullet$};
	\node (Z0) at (2.1,0) {};
	\node (Z1) at (2.1,3) {};
	\node (J) at (2.1,2.4) {};
	\draw [-] (Z0) to (Z1);
	\draw [-] (X0) to [out=90, in=187] (K);
	\draw [-] (X1) to [out=90, in=-7] (K);
	\draw[dashed,gray] [-] (K) to [out=83, in=187] (J);
\end{tikzpicture}
 \]
 The category $\csym{S}$ is obtained by imposing the following relations:
 \[
  \begin{tikzpicture}[style={line width=0.2mm, inner sep = 0pt, outer sep = 0pt},baseline=(D.base),scale=0.5]
   \node (D) at (0, 1.5) {};
	\node (X0) at (0, 0) {};
	\node (X1) at (1.4, 0) {};
	\node (K) at (0.7,1) {$\bullet$};
	\node (Z0) at (2.1,0) {};
	\node (Z1) at (2.1,3) {};
	\node (J) at (2.1,2.4) {};
	\node (A0) at (-0.7,0) {};
	\node (A1) at (-0.7,3) {};
	\draw [-] (Z0) to (Z1);
	\draw [-] (X0) to [out=90, in=187] (K);
	\draw [-] (X1) to [out=90, in=-7] (K);
	\draw[dashed,gray] [-] (K) to [out=83, in=187] (J);
	\draw [-] (A0) to (A1);
\end{tikzpicture}
=
  \begin{tikzpicture}[style={line width=0.2mm, inner sep = 0pt, outer sep = 0pt},baseline=(D.base), scale=0.5]
   \node (D) at (0, 1.5) {};
	\node (X0) at (0, 0) {};
	\node (X1) at (1.4, 0) {};
	\node (K) at (0.7,1) {$\bullet$};
	\node (Z0) at (2.1,0) {};
	\node (Z1) at (2.1,3) {};
	\node (J) at (-0.7,2.4) {};
	\node (A0) at (-0.7,0) {};
	\node (A1) at (-0.7,3) {};
	\draw [-] (Z0) to (Z1);
	\draw [-] (X0) to [out=90, in=187] (K);
	\draw [-] (X1) to [out=90, in=-7] (K);
	\draw[dashed,gray] [-] (K) to [out=97, in=-7] (J);
	\draw [-] (A0) to (A1);
\end{tikzpicture}
\; ; \qquad
\begin{tikzpicture}[style={line width=0.2mm, inner sep = 0pt, outer sep = 0pt},baseline=(D.base),scale=0.5]
 \node (D) at (0, 1.5) {};
 \node (Z0) at (-0.7,0) {};
 \node (Z1) at (-0.7,3) {};
 \node (K) at (0.7,2) {$\bullet$};
 \node (Y0) at (0,3) {};
 \node (Y1) at (1.4,3) {};
 \node (A0) at (2.1,0) {};
 \node (A1) at (2.1,3) {};
 \node (J) at (-0.7,0.6) {};
 \draw [-] (K) to [out=173,in=-90] (Y0);
 \draw [-] (K) to [out=7, in=-90] (Y1);
 \draw[dashed,gray] [-] (J) to [out=7,in= -97] (K);
 \draw [-] (Z0) to (Z1);
 \draw [-] (A0) to (A1);
\end{tikzpicture}
=
\begin{tikzpicture}[style={line width=0.2mm, inner sep = 0pt, outer sep = 0pt},baseline=(D.base), scale=0.5]
 \node (D) at (0, 1.5) {};
 \node (Z0) at (-0.7,0) {};
 \node (Z1) at (-0.7,3) {};
 \node (K) at (0.7,2) {$\bullet$};
 \node (Y0) at (0,3) {};
 \node (Y1) at (1.4,3) {};
 \node (J) at (2.1,0.6) {};
 \node (A0) at (2.1,0) {};
 \node (A1) at (2.1,3) {};
 \draw [-] (K) to [out=173,in=-90] (Y0);
 \draw [-] (K) to [out=7, in=-90] (Y1);
 \draw[dashed,gray] [-] (J) to [out=173,in= -83] (K);
 \draw [-] (Z0) to (Z1);
 \draw [-] (A0) to (A1);
\end{tikzpicture}
 \]
and
\[
 \begin{tikzpicture}[style={line width=0.2mm, inner sep = 0pt, outer sep = 0pt},baseline=(D.base),scale = 0.5]
 \node (D) at (0, 3) {};
 \node (Z0) at (2.8,0) {};
 \node (Z1) at (2.8,3) {};
 \node (K) at (0.7,2) {$\bullet$};
 \node (Y0) at (0,3) {};
 \node (Y1) at (1.4,3) {};
 \node (J) at (2.8,0.6) {};
 \draw [-] (K) to [out=173,in=-90] (Y0);
 \draw [-] (K) to [out=7, in=-90] (Y1);
 \draw[dashed,gray] [-] (J) to [out=173,in= -83] (K);
 \draw [-] (Z0) to (Z1);
 	\node (M0) at (1.4, 3) {};
	\node (M1) at (2.8, 3) {};
	\node (C) at (2.1,4) {$\bullet$};
	\node (N0) at (0,3) {};
	\node (N1) at (0,6) {};
	\node (R) at (0,5.4) {};
	\draw [-] (N0) to (N1);
	\draw [-] (M0) to [out=90, in=187] (C);
	\draw [-] (M1) to [out=90, in=-7] (C);
	\draw[dashed,gray] [-] (C) to [out=97, in=-7] (R);
	\node (E) at (3.2,3) {$=$};
	\node (L0) at (3.6,0) {};
	\node (L1) at (3.6,6) {};
	\draw [-] (L0) to (L1);
\end{tikzpicture}
\; ; \qquad
\begin{tikzpicture}[style={line width=0.2mm, inner sep = 0pt, outer sep = 0pt},baseline=(D.base),scale = 0.5]

 \node (D) at (0, 3) {};
 \node (Z0) at (0,0) {};
 \node (Z1) at (0,3) {};
 \node (K) at (2.1,2) {$\bullet$};
 \node (Y0) at (1.4,3) {};
 \node (Y1) at (2.8,3) {};
 \node (J) at (0,0.6) {};
 \node (M0) at (0, 3) {};
 \node (M1) at (1.4, 3) {};
 \node (C) at (0.7,4) {$\bullet$};
 \node (N0) at (2.8,3) {};
 \node (N1) at (2.8,6) {};
 \node (R) at (2.8,5.4) {};
 \draw [-] (K) to [out=173,in=-90] (Y0);
 \draw [-] (K) to [out=7, in=-90] (Y1);
 \draw[dashed,gray] [-] (J) to [out=7,in= -97] (K);
 \draw [-] (Z0) to (Z1);
 \draw [-] (N0) to (N1);
 \draw [-] (M0) to [out=90, in=187] (C);
 \draw [-] (M1) to [out=90, in=-7] (C);
 \draw[dashed,gray] [-] (C) to [out=83, in=187] (R);
 \node (E) at (3.2,3) {$=$};
 \node (L0) at (3.6,0) {};
 \node (L1) at (3.6,6) {};
 \draw [-] (L0) to (L1);
\end{tikzpicture}
\]
One easily finds that $(1,1,\eta^{l},\eta^{r}, \varepsilon^{l},\varepsilon^{r})$ is an adjunction, and so $1$ is the left and right dual of itself.
Denoting by $n$ the object $1^{\otimes n}$ represented by $n$ strings, using Proposition~\ref{AdjunctionsCompose}, we find that generally $n$ is left and right dual to itself, establishing rigidity.
\end{example}

\begin{proposition}\label{Dualize}
 Let $\csym{S}$ be a semigroup category whose objects admit right duals. A choice of an adjunction $(\mathrm{F},\mathrm{F}^{\blackdiamond}, \eta^{l},\eta^{r}, \varepsilon^{l},\varepsilon^{r})$ for every object $\mathrm{F}$ extends to a full, faithful semigroup functor $(-)^{\blackdiamond}: \csym{S}^{\on{rev,opp}} \rightarrow \csym{S}$, sending a morphism $\mathrm{f}: \mathrm{F} \rightarrow \mathrm{G}$ to $\varepsilon^{l,\mathrm{G}}_{\mathrm{F}^{\blackdiamond}} \circ \mathrm{F}^{\blackdiamond} \boxtimes \mathrm{f} \boxtimes \mathrm{G}^{\blackdiamond} \circ \eta^{r,\mathrm{F}}_{\mathrm{G}^{\blackdiamond}}$.
 If the objects of $\csym{S}$ instead admit left duals, then a similar choice of adjunctions giving left adjoints yields a full, faithful semigroup functor ${}^{\blackdiamond}(-): \csym{S}^{\on{rev,opp}} \rightarrow \csym{S}$.
 If $\csym{S}$ is rigid, then $(-)^{\blackdiamond}$ and ${}^{\blackdiamond}(-)$ are mutually quasi-inverse equivalences.
\end{proposition}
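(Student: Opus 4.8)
The plan is to deduce everything from the classical theory of duals in a genuine monoidal category, transported along the full, faithful semigroup embedding $\yo := \yo_{\mathbf{B}(\ccf{S})} \colon \csym{S} \hookrightarrow \csym{M}$, where $\csym{M} := \on{Mod-}\csym{S}(\csym{S},\csym{S})$, furnished by Proposition~\ref{StrictifyingEmbedding} (we may also assume strictness, so that associators can be suppressed in what follows). First I would record that, by Axiom~\ref{RightAdj} of Definition~\ref{AdjunctionSemigroup}, the tuple $(\yo(\mathrm F),\yo(\mathrm F^{\blackdiamond}),\eta^{r,\mathrm F},\varepsilon^{r,\mathrm F})$ exhibits $\yo(\mathrm F^{\blackdiamond})$ as a right dual of $\yo(\mathrm F)$ in the monoidal category $\csym{M}$. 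Hence the essential image of $\yo$ consists of right-rigid objects and, with the chosen duals, is sent into itself by the right-dual operation of $\csym{M}$. Since $\yo$ is full and faithful, I can therefore \emph{define} $(-)^{\blackdiamond}$ on $\csym{S}$ by declaring $\yo(\mathrm f^{\blackdiamond})$ to be the classical right dual (mate) of $\yo(\mathrm f)$ in $\csym{M}$; functoriality (contravariant and tensor-reversing) is then inherited verbatim from the mate calculus in $\csym{M}$.

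Second, I would match this abstract definition with the stated formula. Evaluating the $\csym{M}$-mate of $\yo(\mathrm f)=\mathrm f\boxtimes-$ at an object $\mathrm X$ and using that $\eta^{r,\mathrm F},\varepsilon^{r,\mathrm G}$ are right $\csym{S}$-module transformations (so that e.g. $\eta^{r,\mathrm F}_{\mathrm G^{\blackdiamond}\boxtimes \mathrm X}=\eta^{r,\mathrm F}_{\mathrm G^{\blackdiamond}}\boxtimes \mathrm X$), the component at $\mathrm X$ becomes $\big(\mathrm F^{\blackdiamond}\boxtimes\varepsilon^{r,\mathrm G}_{\mathrm X}\big)\circ(\mathrm F^{\blackdiamond}\boxtimes \mathrm f\boxtimes \mathrm G^{\blackdiamond}\boxtimes \mathrm X)\circ(\eta^{r,\mathrm F}_{\mathrm G^{\blackdiamond}}\boxtimes \mathrm X)$. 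The only discrepancy with the formula in the statement is that the latter contracts $\mathrm G\boxtimes \mathrm G^{\blackdiamond}$ using the \emph{left} counit $\varepsilon^{l,\mathrm G}_{\mathrm F^{\blackdiamond}}$, whereas the mate uses the \emph{right} counit $\varepsilon^{r,\mathrm G}_{\mathrm X}$. These agree precisely by the second commuting square of Axiom~\ref{Involution} (taken with $\mathrm H=\mathrm F^{\blackdiamond}$, $\mathrm K=\mathrm X$), which asserts $\varepsilon^{l,\mathrm G}_{\mathrm F^{\blackdiamond}}\boxtimes \mathrm X=(\mathrm F^{\blackdiamond}\boxtimes\varepsilon^{r,\mathrm G}_{\mathrm X})\circ\mathsf a$; faithfulness of $\yo$ then identifies $\mathrm f^{\blackdiamond}$ with the claimed morphism. (The same computation, using Axiom~\ref{ZigZag}, shows $\on{id}_{\mathrm F}^{\blackdiamond}=\on{id}_{\mathrm F^{\blackdiamond}}$.)

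Third, for fullness and faithfulness I would note that the mate map $\Hom{\yo\mathrm F,\yo\mathrm G}_{\csym{M}}\to\Hom{(\yo\mathrm G)^{\blackdiamond},(\yo\mathrm F)^{\blackdiamond}}_{\csym{M}}$ is a bijection: although $\csym{M}$ need not be rigid, the object $(\yo\mathrm G)^{\blackdiamond}=\yo(\mathrm G^{\blackdiamond})$ always admits $\yo(\mathrm G)$ as a \emph{left} dual (the data $\eta^{r,\mathrm G},\varepsilon^{r,\mathrm G}$ read backwards satisfy the zig-zags), so the inverse ``left mate'' exists. Combining this bijection with the full faithfulness of $\yo$ on both pairs of objects yields that $(-)^{\blackdiamond}$ is full and faithful. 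For the semigroup structure I would invoke Proposition~\ref{AdjunctionsCompose}: the composite adjunction exhibits $\mathrm F^{\blackdiamond}\boxtimes \mathrm G^{\blackdiamond}$ as a right dual of $\mathrm G\boxtimes \mathrm F$, so it is canonically isomorphic to the chosen dual $(\mathrm G\boxtimes \mathrm F)^{\blackdiamond}$ via the mate of the identity; this gives the natural comparison isomorphism $\mathrm F^{\blackdiamond}\boxtimes \mathrm G^{\blackdiamond}\xiso(\mathrm G\boxtimes \mathrm F)^{\blackdiamond}$, whose coherence with the associator is inherited from the coherence of duals in $\csym{M}$.

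Finally, when $\csym{S}$ is rigid the mirror-image construction (using left duals and the second half of Proposition~\ref{StrictifyingEmbedding}) yields the full, faithful semigroup functor ${}^{\blackdiamond}(-)\colon \csym{S}^{\on{rev,opp}}\to\csym{S}$, and I would then exhibit natural isomorphisms ${}^{\blackdiamond}(\mathrm F^{\blackdiamond})\cong \mathrm F\cong({}^{\blackdiamond}\mathrm F)^{\blackdiamond}$ coming from the uniqueness of one-sided duals in $\csym{M}$ (two right duals of a fixed object are canonically isomorphic), transported back through $\yo$; these witness that $(-)^{\blackdiamond}$ and ${}^{\blackdiamond}(-)$ are mutually quasi-inverse. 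I expect the genuine work to be concentrated in the second step --- reconciling the asymmetric formula, which mixes the left counit with the right unit, with the intrinsic mate of $\csym{M}$ --- since it is exactly here that the compatibility Axiom~\ref{Involution} between the left and right module adjunctions is indispensable; everything else is a transport of standard monoidal dual calculus along the embedding $\yo$.
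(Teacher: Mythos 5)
Your proof is correct and follows essentially the same route as the paper: both reduce to classical monoidal duality by transporting along the full, faithful embedding $\yo_{\mathbf{B}(\ccf{S})}$ of Proposition~\ref{StrictifyingEmbedding} into $\on{Mod-}\!\csym{S}(\csym{S},\csym{S})$, where Axiom~\ref{RightAdj} exhibits $\yo_{\mathbf{B}(\ccf{S})}(\mathrm{F}^{\blackdiamond})$ as a genuine right dual of $\yo_{\mathbf{B}(\ccf{S})}(\mathrm{F})$. The paper executes this more tersely, forming the monoidal subcategory $\csym{S}^{\mathbf{1}}$ on the image of $\yo_{\mathbf{B}(\ccf{S})}$ together with $\on{Id}_{\ccf{S}}$ and citing \cite[Exercise~2.10.7]{EGNO}, whereas you additionally check that the resulting mate coincides with the stated formula (which mixes $\varepsilon^{l,\mathrm{G}}$ with $\eta^{r,\mathrm{F}}$) by means of Axiom~\ref{Involution} --- a reconciliation that the paper's own proof leaves implicit.
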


\begin{proof}
 The claim of the lemma holds for monoidal categories, see \cite[Exercise~2.10.7]{EGNO}. The full subcategory $\csym{S}^{\mathbf{1}}$ of $\on{Mod-}\!\csym{S}(\csym{S},\csym{S})$ consisting of image of $\yo_{\mathbf{B}(\ccf{S})}$ and $\on{Id}_{\ccf{S}}$ is a monoidal category whose objects admit right duals, since $\on{Id}_{\ccf{S}}$ is left and right dual of itself. Thus, there is a full and faithful monoidal endofunctor $(-)^{\blackdiamond, \cccsym{S}^{\mathbf{1}}}$ of $\csym{S}^{\mathbf{1}}$, which sends the image of $\yo_{\mathbf{B}(\ccf{S})}$ to itself. Thus, the restriction of $(-)^{\blackdiamond, \cccsym{S}^{\mathbf{1}}}$ to $\csym{S}$ gives the semigroup functor in question.

 The case of left duals and the rigid case follow similarly, see \cite[Remark~2.10.3]{EGNO}.
\end{proof}

\begin{corollary}\label{propertyNotStructure}
 Right duals in a semigroup category are unique up to isomorphism: if $\mathrm{F}^{\blackdiamond,1}, \mathrm{F}^{\blackdiamond,2}$ are right dual to $\mathrm{F} \in \csym{S}$, then $\mathrm{F}^{\blackdiamond,1} \simeq \mathrm{F}^{\blackdiamond, 2}$.
 Similarly, left duals in $\csym{S}$ are unique up to isomorphism.
\end{corollary}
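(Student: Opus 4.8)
The plan is to reduce uniqueness of right duals to the classical uniqueness of right adjoints, and then transport the resulting isomorphism back into $\csym{S}$ along the fully faithful embedding $\yo_{\mathbf{B}(\ccf{S})}$. Suppose $\mathrm{F}^{\blackdiamond,1}$ and $\mathrm{F}^{\blackdiamond,2}$ are both right dual to $\mathrm{F}$, and fix, for each, a choice of adjunction data $(\mathrm{F},\mathrm{F}^{\blackdiamond,i},\eta^{l,i},\eta^{r,i},\varepsilon^{l,i},\varepsilon^{r,i})$ as in Definition~\ref{AdjunctionSemigroup}. The essential point is that Axiom~\ref{RightAdj}, applied to each of these tuples, asserts that $\mathrm{F}^{\blackdiamond,1}\boxtimes -$ and $\mathrm{F}^{\blackdiamond,2}\boxtimes -$ are both right adjoint to the single functor $\mathrm{F}\boxtimes -$ inside the monoidal category $\on{Mod-}\!\csym{S}(\csym{S},\csym{S})$.

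Right adjoints of a fixed functor are unique up to canonical isomorphism; since the $1$-morphisms of $\on{Mod-}\!\csym{S}(\csym{S},\csym{S})$ are in particular functors and the comparison isomorphism produced from the two units and counits is automatically a right $\csym{S}$-module transformation, I obtain an isomorphism of right $\csym{S}$-module endofunctors $\mathrm{F}^{\blackdiamond,1}\boxtimes - \simeq \mathrm{F}^{\blackdiamond,2}\boxtimes -$, that is, $\yo_{\mathbf{B}(\ccf{S})}(\mathrm{F}^{\blackdiamond,1}) \simeq \yo_{\mathbf{B}(\ccf{S})}(\mathrm{F}^{\blackdiamond,2})$. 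Now I would invoke Proposition~\ref{StrictifyingEmbedding}: under the standing hypothesis that the objects of $\csym{S}$ admit right duals (the assumption inherited from Proposition~\ref{Dualize}), the functor $\yo_{\mathbf{B}(\ccf{S})}$ is full and faithful, and a full and faithful functor reflects isomorphisms. Hence the isomorphism above descends to an isomorphism $\mathrm{F}^{\blackdiamond,1}\simeq \mathrm{F}^{\blackdiamond,2}$ in $\csym{S}$, as required. The statement about left duals is proved identically after passing to $\csym{S}^{\on{rev}}$ and replacing $\yo_{\mathbf{B}(\ccf{S})}$ by $\yo_{\mathbf{B}(\ccf{S})}^{\on{rev}}$, now using Axiom~\ref{LeftAdj} and the second half of Proposition~\ref{StrictifyingEmbedding}.

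I do not expect a genuine analytic obstacle here; the argument is almost entirely bookkeeping, and the proof in the text may well be only a sentence or two. The one point that genuinely requires the theory built up so far — and which is the reason the naïve definition of duals critiqued in Remark~\ref{CounterExamples} fails to give uniqueness — is that a right dual in the sense of Definition~\ref{AdjunctionSemigroup} yields a right adjoint inside the \emph{module-functor} $2$-category $\on{Mod-}\!\csym{S}(\csym{S},\csym{S})$, rather than merely in $\mathbf{Cat}(\csym{S},\csym{S})$; it is precisely this enhancement (Axiom~\ref{RightAdj}) that lets $\yo_{\mathbf{B}(\ccf{S})}$, which is fully faithful only into the module-functor category, reflect the isomorphism back to an honest isomorphism of objects.
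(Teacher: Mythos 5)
Your first step is sound and, in fact, morally identical to what the paper ultimately relies on: by Axiom~\ref{RightAdj}, both $\yo_{\mathbf{B}(\ccf{S})}(\mathrm{F}^{\blackdiamond,1})$ and $\yo_{\mathbf{B}(\ccf{S})}(\mathrm{F}^{\blackdiamond,2})$ are right duals of $\yo_{\mathbf{B}(\ccf{S})}(\mathrm{F})$ in the monoidal category $\on{Mod-}\!\csym{S}(\csym{S},\csym{S})$, and the canonical comparison isomorphism, being a composite of tensor products of the given units and counits, is a morphism of that category, i.e.\ a right $\csym{S}$-module transformation. The gap is in the reflection step. The corollary is stated for an \emph{arbitrary} semigroup category: nothing is assumed about any object other than $\mathrm{F}$, so your ``standing hypothesis'' that all objects of $\csym{S}$ admit right duals is not part of the statement, and it is not harmless --- the point of the corollary (cf.\ the introduction) is precisely that rigidity is a property, so uniqueness must be available \emph{before} one knows the category is rigid. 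Without that hypothesis you cannot invoke Proposition~\ref{StrictifyingEmbedding}, and the hypothesis cannot simply be localized along your route: inspecting the proof of Proposition~\ref{StrictifyingEmbedding}, full faithfulness of $\yo_{\mathbf{B}(\ccf{S})}$ at a hom-space $\Hom{\mathrm{H},\mathrm{K}}_{\ccf{S}}$ uses the right-dual adjunction data of the objects involved, and the objects you need it for are $\mathrm{F}^{\blackdiamond,1}$ and $\mathrm{F}^{\blackdiamond,2}$ --- which are only known to admit \emph{left} duals (namely $\mathrm{F}$), not right ones. So, as written, your argument proves uniqueness of right duals only in a category where every object already has a right dual, which is strictly weaker than the corollary.

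The paper avoids this by reflecting along a different fully faithful functor: by Proposition~\ref{Dualize}, a choice of left duals yields a full and faithful functor ${}^{\blackdiamond}(-)\colon \csym{S}_{\euler{R}} \rightarrow \csym{S}_{\euler{L}}$, defined on the full semigroup subcategory $\csym{S}_{\euler{R}}$ of objects admitting left duals; this subcategory contains $\mathrm{F}^{\blackdiamond,1}$ and $\mathrm{F}^{\blackdiamond,2}$ \emph{by construction}, with no global hypothesis on $\csym{S}$, and both objects are sent to (objects isomorphic to) $\mathrm{F}$, whence full faithfulness gives $\mathrm{F}^{\blackdiamond,1}\simeq\mathrm{F}^{\blackdiamond,2}$. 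Your argument can be repaired in the same spirit by switching sides: use Axiom~\ref{LeftAdj}, which exhibits $-\boxtimes\mathrm{F}^{\blackdiamond,1}$ and $-\boxtimes\mathrm{F}^{\blackdiamond,2}$ as left adjoints of $-\boxtimes\mathrm{F}$ in $\csym{S}\!\on{-Mod}(\csym{S},\csym{S})$, take the comparison isomorphism there, and reflect along $\yo^{\on{rev}}_{\mathbf{B}(\ccf{S})}$; the local form of (the second half of) Proposition~\ref{StrictifyingEmbedding} needed here only uses left-dual data for $\mathrm{F}^{\blackdiamond,1}$ and $\mathrm{F}^{\blackdiamond,2}$, which is exactly what the hypothesis provides. (Symmetrically, uniqueness of \emph{left} duals should be run through Axiom~\ref{RightAdj} and $\yo_{\mathbf{B}(\ccf{S})}$ --- the opposite pairing from the one you proposed.) With that adjustment your proof becomes correct in the stated generality, and is then genuinely parallel to, though packaged differently from, the paper's.
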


\begin{proof}
 Let $\csym{S}_{\euler{L}}$ be the full semigroup subcategory of $\csym{S}$ consisting of objects admitting a right dual, and let $\csym{S}_{\euler{R}}$ be the full semigroup subcategory of $\csym{S}$ consisting of objects admitting a left dual.
 As a consequence of Corollary~\ref{Dualize}, taking left duals is a full and faithful semigroup functor ${}^{\blackdiamond}(-): \csym{S}_{\euler{R}} \rightarrow \csym{S}_{\euler{L}}$.
 In particular, it is conservative, i.e. it reflects isomorphisms, and so, since our assumption implies ${}^{\blackdiamond}\big(\mathrm{F}^{\blackdiamond,1}\big) \simeq \mathrm{F} \simeq {}^{\blackdiamond}\big(\mathrm{F}^{\blackdiamond,2}\big)$, we have $\mathrm{F}^{\blackdiamond,1} \simeq \mathrm{F}^{\blackdiamond,2}$.
\end{proof}

\begin{corollary}\label{HomIsos}
 If $\mathrm{F}^{\blackdiamond}$ is right dual to $\mathrm{F} \in \csym{S}$, then there are isomorphisms
 \[
  \Hom{\mathrm{F\boxtimes H,K}}_{\ccf{S}} \simeq \Hom{\mathrm{H,F^{\blackdiamond} \boxtimes K}}_{\ccf{S}} \text{ and } \Hom{\mathrm{H,K\boxtimes F}}_{\ccf{S}} \simeq \Hom{\mathrm{H\boxtimes F^{\blackdiamond}, K}}_{\ccf{S}} \text{ natural in } \mathrm{H,K}.
 \]
Similarly, if ${}^{\blackdiamond}\mathrm{F}$ is a left dual, then there are isomorphisms
 \[
  \Hom{\mathrm{{}^{\blackdiamond}F\boxtimes H,K}}_{\ccf{S}} \simeq \Hom{\mathrm{H,F\boxtimes K}}_{\ccf{S}} \text{ and } \Hom{\mathrm{H,K\boxtimes {}^{\blackdiamond}F}}_{\ccf{S}} \simeq \Hom{\mathrm{H\boxtimes F, K}}_{\ccf{S}} \text{ natural in } \mathrm{H,K}.
 \]
\end{corollary}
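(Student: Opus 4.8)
The plan is to derive both displayed isomorphisms directly from the two module adjunctions built into Definition~\ref{AdjunctionSemigroup}, with no extra input: each of Axioms~\ref{RightAdj} and \ref{LeftAdj} furnishes an adjunction of module endofunctors of $\csym{S}$, and I will simply forget the module structure and evaluate the resulting $\mathbf{Cat}$-adjunction at objects. Recall that an adjunction of $1$-morphisms in a $2$-category is carried to an adjunction by any $2$-functor; applying the forgetful $2$-functor $\on{Mod-}\!\csym{S} \to \mathbf{Cat}$ (equivalently, noting that the triangle identities, being equalities of module transformations, are in particular equalities of ordinary natural transformations) turns a module adjunction into an honest adjunction of the underlying endofunctors of $\csym{S}$. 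The asserted Hom-isomorphisms are then nothing but the hom-set bijections of these ordinary adjunctions.

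Concretely, for the first isomorphism I would start from Axiom~\ref{RightAdj}, which makes $(\mathrm{F}\boxtimes -) \dashv (\mathrm{F}^{\blackdiamond}\boxtimes -)$ an adjunction in $\on{Mod-}\!\csym{S}(\csym{S},\csym{S})$; forgetting module structure and evaluating the hom-set bijection with $\mathrm{H}$ in the domain of the left adjoint and $\mathrm{K}$ in the codomain of the right adjoint gives
\[
 \Hom{\mathrm{F}\boxtimes \mathrm{H},\mathrm{K}}_{\ccf{S}} \;\simeq\; \Hom{\mathrm{H},\mathrm{F}^{\blackdiamond}\boxtimes \mathrm{K}}_{\ccf{S}},
\]
naturally in $\mathrm{H}$ and $\mathrm{K}$, since naturality of an adjunction bijection in both variables is automatic. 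Symmetrically, Axiom~\ref{LeftAdj} makes $(-\boxtimes \mathrm{F}^{\blackdiamond}) \dashv (-\boxtimes \mathrm{F})$ an adjunction in $\csym{S}\!\on{-Mod}(\csym{S},\csym{S})$, whose underlying hom-set bijection reads
\[
 \Hom{\mathrm{H}\boxtimes \mathrm{F}^{\blackdiamond},\mathrm{K}}_{\ccf{S}} \;\simeq\; \Hom{\mathrm{H},\mathrm{K}\boxtimes \mathrm{F}}_{\ccf{S}},
\]
which is the second isomorphism. Here, as agreed, I suppress the associators identifying the composite functors $(\mathrm{F}\boxtimes -)$ and $(\mathrm{F}^{\blackdiamond}\boxtimes -)$ with iterated tensoring when evaluated on objects.

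Finally, the two statements for a left dual ${}^{\blackdiamond}\mathrm{F}$ require no separate argument: by definition a left dual of $\mathrm{F}$ is an object ${}^{\blackdiamond}\mathrm{F}$ having $\mathrm{F}$ as its right dual, i.e. $({}^{\blackdiamond}\mathrm{F})^{\blackdiamond} \simeq \mathrm{F}$. Applying the two isomorphisms just obtained to the object ${}^{\blackdiamond}\mathrm{F}$ and substituting its right dual $\mathrm{F}$ yields exactly $\Hom{{}^{\blackdiamond}\mathrm{F}\boxtimes \mathrm{H},\mathrm{K}}_{\ccf{S}} \simeq \Hom{\mathrm{H},\mathrm{F}\boxtimes \mathrm{K}}_{\ccf{S}}$ and $\Hom{\mathrm{H},\mathrm{K}\boxtimes {}^{\blackdiamond}\mathrm{F}}_{\ccf{S}} \simeq \Hom{\mathrm{H}\boxtimes \mathrm{F},\mathrm{K}}_{\ccf{S}}$. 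I do not anticipate a genuine obstacle: the entire content is the observation that a module adjunction has an underlying $\mathbf{Cat}$-adjunction, together with the standard hom-set formulation of adjointness. The only points demanding care are bookkeeping ones, namely matching the left-module adjunction (Axiom~\ref{LeftAdj}) with tensoring on the right and the right-module adjunction (Axiom~\ref{RightAdj}) with tensoring on the left, and keeping track of the side on which the dual appears.
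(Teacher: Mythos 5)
Your proof is correct, but it follows a genuinely different route from the paper's. The paper disposes of this corollary in one line: by Proposition~\ref{StrictifyingEmbedding}, $\yo_{\mathbf{B}(\ccf{S})}$ embeds $\csym{S}$ fully faithfully into the monoidal category $\on{Mod-}\!\csym{S}(\csym{S},\csym{S})$, and the four isomorphisms are then imported from the corresponding standard statement for duals in a monoidal category, \cite[Proposition~2.10.8]{EGNO}. You instead extract the underlying $\mathbf{Cat}$-adjunctions from Axioms~\ref{LeftAdj} and~\ref{RightAdj} of Definition~\ref{AdjunctionSemigroup} and read off their hom-set bijections; this is legitimate, since the triangle identities are componentwise equations (or, as the paper itself records just before Observation~\ref{RigidMonoidal}, adjunctions are preserved by any monoidal functor, here the forgetful one), and naturality in both variables is automatic for the bijection attached to an adjunction. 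The trade-off is as follows: your argument is local, using only the adjunction data attached to the single object $\mathrm{F}$ (resp.\ ${}^{\blackdiamond}\mathrm{F}$), whereas Proposition~\ref{StrictifyingEmbedding} requires \emph{every} object of $\csym{S}$ to admit a right dual --- so your proof establishes the corollary under its literal, weaker hypothesis, while the paper's version is only guaranteed in the (right-)rigid setting in which it is actually applied. What the paper's route buys is brevity given the machinery already in place, and consistency with its systematic strategy of transporting monoidal-category facts along the strictifying embedding (the same mechanism underlies Proposition~\ref{Dualize} and Theorem~\ref{PromonoidalUnital}). Incidentally, your reading of Axiom~\ref{RightAdj} silently corrects a typo in the paper: the unit and counit there should be $\eta^{r},\varepsilon^{r}$, not $\eta^{l},\varepsilon^{l}$, and that is indeed the correct interpretation.
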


\begin{proof}
 In view of Proposition~\ref{StrictifyingEmbedding}, this is a consequence of \cite[Proposition~2.10.8]{EGNO}.
\end{proof}

 By removing units and unitality properties from \cite{Day}, \cite{IK}, we find the following:
\begin{theorem}[\cite{Day},\cite{IK}]\label{UniversalPropertyDay}
 Given a semigroup category $\csym{S}$, its presheaf category $[\csym{S}^{\on{opp}},\mathbf{Vec}_{\Bbbk}]$ is a semigroup category under Day convolution. Given presheaves $\euler{P},\euler{Q} \in [\csym{S}^{\on{opp}},\mathbf{Vec}_{\Bbbk}]$, we set
 \[
  (\euler{P} \oast \euler{Q})(-) = \int^{\mathrm{H,K}\in\ccf{S}} \csym{S}(-,\mathrm{H \boxtimes K}) \kotimes \euler{P}(\mathrm{H}) \kotimes \euler{Q}(\mathrm{K}).
 \]
 The functor $- \oast -$ is cocontinuous in each variable, the Yoneda embedding $\yo_{\ccf{S}}: \csym{S}\hookrightarrow [\csym{S}^{\on{opp}},\mathbf{Vec}_{\Bbbk}]$ is a semigroup functor, and $[\csym{S}^{\on{opp}}, \mathbf{Vec}_{\Bbbk}]$ has the following universal property: for any cocomplete semigroup category $\csym{Z}$, precomposing with the Yoneda embedding $\yo_{\ccf{S}}$ gives an equivalence
 \[
 \mathbf{SemCat}_{\Bbbk}(\csym{S},\csym{Z}) \simeq \mathbf{SemCat}_{\Bbbk}^{\text{Scoc}}([\csym{S}^{\on{opp}},\mathbf{Vec}_{\Bbbk}], \csym{Z}),
 \]
 where $\mathbf{SemCat}_{\Bbbk}$ is the category of $\Bbbk$-linear semigroup functors, and $\mathbf{SemCat}_{\Bbbk}^{\text{Scoc}}$ is the category of $\Bbbk$-linear semigroup functors cocontinuous in each variable.
\end{theorem}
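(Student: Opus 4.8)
The plan is to adapt the classical proofs of Day's convolution theorem and the Im--Kelly universal property of the free $\mathbf{Vec}_{\Bbbk}$-enriched cocompletion, checking at each step that the unit object of $\csym{S}$ is never used: only the tensor $\boxtimes$, the associator $\mathsf{a}$, and its pentagon enter, and all of these survive in the semigroup setting. Write $\widehat{\csym{S}} = [\csym{S}^{\on{opp}},\mathbf{Vec}_{\Bbbk}]$, and recall the co-Yoneda (density) isomorphism $\euler{P} \cong \int^{\mathrm{H}} \euler{P}(\mathrm{H}) \kotimes \yo_{\ccf{S}}(\mathrm{H})$, which I use repeatedly. Cocontinuity of $\oast$ in each variable is immediate from the coend formula: a coend is a colimit, colimits commute with colimits, and $-\kotimes V$ is cocontinuous for every vector space $V$; hence $\oast$ preserves colimits separately in each argument.

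Next I would construct the associator and verify the pentagon. Using the Fubini theorem for coends together with co-Yoneda to absorb the representable $\csym{S}(-,\mathrm{H}\boxtimes\mathrm{K})$, both iterated products compute to
\[
((\euler{P}\oast\euler{Q})\oast\euler{R})(\mathrm{X})\cong\int^{\mathrm{H,K,L}}\csym{S}(\mathrm{X},(\mathrm{H}\boxtimes\mathrm{K})\boxtimes\mathrm{L})\kotimes\euler{P}(\mathrm{H})\kotimes\euler{Q}(\mathrm{K})\kotimes\euler{R}(\mathrm{L}),
\]
the opposite bracketing differing only in the parenthesization inside $\csym{S}(\mathrm{X},-)$. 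Post-composition with $\csym{S}(\mathrm{X},\mathsf{a}_{\mathrm{H,K,L}})$, functorially in $\mathrm{H,K,L}$, induces the associator of $\oast$, and the pentagon for $\oast$ reduces, after applying co-Yoneda, to the pentagon for $\mathsf{a}$. That $\yo_{\ccf{S}}$ is a semigroup functor then follows by collapsing two coends: $(\yo_{\ccf{S}}(\mathrm{A})\oast\yo_{\ccf{S}}(\mathrm{B}))(\mathrm{X})=\int^{\mathrm{H,K}}\csym{S}(\mathrm{X},\mathrm{H}\boxtimes\mathrm{K})\kotimes\csym{S}(\mathrm{H,A})\kotimes\csym{S}(\mathrm{K,B})\cong\csym{S}(\mathrm{X},\mathrm{A}\boxtimes\mathrm{B})$, with compatibility with the associators given by naturality of $\mathsf{a}$.

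The universal property is the crux. Given a cocomplete semigroup category $\csym{Z}$ and a semigroup functor $\euler{G}\colon\csym{S}\to\csym{Z}$, the unique (up to isomorphism) cocontinuous extension along $\yo_{\ccf{S}}$ is the pointwise left Kan extension $\widehat{\euler{G}}(\euler{P})=\int^{\mathrm{H}}\euler{P}(\mathrm{H})\odot\euler{G}(\mathrm{H})$, where $\odot$ denotes the copower in $\csym{Z}$. I would then promote $\widehat{\euler{G}}$ to a semigroup functor: Fubini, the co-Yoneda reduction $\int^{\mathrm{X}}\csym{S}(\mathrm{X},\mathrm{H}\boxtimes\mathrm{K})\odot\euler{G}(\mathrm{X})\cong\euler{G}(\mathrm{H}\boxtimes\mathrm{K})$, and the separate cocontinuity of the tensor product of $\csym{Z}$ give
\[
\widehat{\euler{G}}(\euler{P}\oast\euler{Q})\cong\int^{\mathrm{H,K}}(\euler{P}(\mathrm{H})\kotimes\euler{Q}(\mathrm{K}))\odot(\euler{G}(\mathrm{H})\boxtimes\euler{G}(\mathrm{K}))\cong\widehat{\euler{G}}(\euler{P})\boxtimes\widehat{\euler{G}}(\euler{Q}),
\]
the middle step being the semigroup constraint of $\euler{G}$. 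Conversely, precomposition with $\yo_{\ccf{S}}$ sends a cocontinuous semigroup functor to a semigroup functor, and by density a cocontinuous functor is determined up to isomorphism by its restriction to representables, so the two assignments are mutually quasi-inverse; the same density argument upgrades this to the asserted equivalence of functor categories.

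The main point to be careful about --- and the reason the result is presented as an adaptation rather than a direct citation --- is to confirm that discarding the unit genuinely costs nothing. In the monoidal proof the unit of $\widehat{\csym{S}}$ is $\yo_{\ccf{S}}(\mathbb{1})$ and the triangle identities are verified alongside the pentagon; but since every computation above invokes only $\boxtimes$, $\mathsf{a}$, and cocontinuity, simply erasing the unitor rows from the classical argument leaves a complete proof. The only genuine subtlety is the bookkeeping in the enriched Fubini and co-Yoneda reductions needed to see that the induced associators are coherent, which is precisely the content of Day's original verification with the unit-related diagrams removed.
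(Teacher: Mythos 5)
Your proposal is correct and follows exactly the route the paper takes: the paper gives no proof of Theorem~\ref{UniversalPropertyDay} at all, merely citing \cite{Day} and \cite{IK} with the remark that the statement follows ``by removing units and unitality properties'' from those works, and your write-up is a faithful expansion of precisely that strategy (Day convolution via coends, associator from $\mathsf{a}$ with pentagon reduced to pentagon, and the Im--Kelly left-Kan-extension argument for the universal property). The only point worth flagging is that ``cocomplete semigroup category $\csym{Z}$'' must be read as including separate cocontinuity of the tensor of $\csym{Z}$, which your argument correctly uses and which matches the standard Im--Kelly hypothesis.
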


\begin{theorem}~\label{PromonoidalUnital}
 If $\csym{S}$ is a rigid semigroup category, then $[\csym{S}^{\on{opp}},\mathbf{Vec}_{\Bbbk}]$ endowed with Day convolution is a monoidal category. Its unit object is the presheaf $\Hom{\yo_{\mathbf{B}(\ccf{S})}(-),\on{Id}}_{\on{Mod-}\!\ccf{S}(\ccf{S},\ccf{S})}$ given by
 \[
  \mathrm{F} \mapsto \Hom{\mathrm{F} \boxtimes -, \on{Id}}_{\on{Mod-}\!\ccf{S}(\ccf{S},\ccf{S})}
 \]
\end{theorem}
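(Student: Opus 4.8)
The plan is to realize $\csym{S}$ as a promonoidal category whose pro-unit is the presheaf $J := \Hom{\yo_{\mathbf{B}(\ccf{S})}(-),\on{Id}}_{\on{Mod-}\!\ccf{S}(\ccf{S},\ccf{S})}$ of the statement, and to read off the monoidal structure from Day convolution. By Theorem~\ref{UniversalPropertyDay} the pair $(\,[\csym{S}^{\on{opp}},\mathbf{Vec}_{\Bbbk}],\oast)$ is already a cocomplete semigroup category with $\oast$ cocontinuous in each variable, and its associativity constraint is inherited from $\mathsf{a}$; what is missing is a unit. I claim it suffices to exhibit natural isomorphisms $J\oast-\xiso\on{Id}$ and $-\oast J\xiso\on{Id}$: these are precisely the promonoidal unit constraints, and they exhibit $J\oast-$ and $-\oast J$ as autoequivalences naturally isomorphic to the identity, so that $J$ is a unit object in the sense of Proposition~\ref{EGNOUnit} (the remaining unit coherence is then formal). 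Since $\oast$ is cocontinuous in each variable and every presheaf is canonically a colimit of representables, it is enough to construct these isomorphisms after restriction along the Yoneda embedding $\yo_{\ccf{S}}$ and extend by the density (co-Yoneda) formula; thus I reduce to producing natural isomorphisms $J\oast\yo_{\ccf{S}}(\mathrm{B})\xiso\yo_{\ccf{S}}(\mathrm{B})$ and $\yo_{\ccf{S}}(\mathrm{B})\oast J\xiso\yo_{\ccf{S}}(\mathrm{B})$.

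For the left unit I use the coend defining $\oast$ and contract, by co-Yoneda, the variable paired with $\yo_{\ccf{S}}(\mathrm{B})$:
\[
(J\oast\yo_{\ccf{S}}(\mathrm{B}))(\mathrm{A})=\int^{\mathrm{H},\mathrm{K}}\csym{S}(\mathrm{A},\mathrm{H}\boxtimes\mathrm{K})\kotimes J(\mathrm{H})\kotimes\csym{S}(\mathrm{K},\mathrm{B})\;\simeq\;\int^{\mathrm{H}}\csym{S}(\mathrm{A},\mathrm{H}\boxtimes\mathrm{B})\kotimes J(\mathrm{H}).
\]
Next I invoke the duality isomorphism $\csym{S}(\mathrm{A},\mathrm{H}\boxtimes\mathrm{B})\simeq\csym{S}(\mathrm{A}\boxtimes\mathrm{B}^{\blackdiamond},\mathrm{H})$ of Corollary~\ref{HomIsos}, natural in $\mathrm{A}$ and in the coend variable $\mathrm{H}$, and contract once more by co-Yoneda, reducing the whole expression to $J(\mathrm{A}\boxtimes\mathrm{B}^{\blackdiamond})=\Hom{(\mathrm{A}\boxtimes\mathrm{B}^{\blackdiamond})\boxtimes-,\on{Id}}$. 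Everything therefore comes down to a \emph{key lemma}: a natural isomorphism $J(\mathrm{A}\boxtimes\mathrm{B}^{\blackdiamond})\simeq\csym{S}(\mathrm{A},\mathrm{B})$. (Tracing the chain back, the resulting comparison map sends a coend class $[(f,\theta)]$ to $\theta_{\mathrm{B}}\circ f$.)

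To prove the key lemma I would write down two mutually inverse maps for the adjunction chosen for $\mathrm{B}$, with unit $\eta^{l}$ and counit $\varepsilon^{r}$. Send $g\colon\mathrm{A}\to\mathrm{B}$ to the right $\csym{S}$-module transformation $\theta^{g}$ with $\theta^{g}_{\mathrm{Y}}=\varepsilon^{r}_{\mathrm{Y}}\circ((g\boxtimes\mathrm{B}^{\blackdiamond})\boxtimes\mathrm{Y})$, and send $\theta\in J(\mathrm{A}\boxtimes\mathrm{B}^{\blackdiamond})$ to $\Xi(\theta):=\theta_{\mathrm{B}}\circ\eta^{l}_{\mathrm{A}}\colon\mathrm{A}\to\mathrm{B}$. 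That $\Xi(\theta^{g})=g$ follows at once from the first zig-zag identity of Axiom~\ref{ZigZag} (for $\mathrm{B}$) and naturality of $\eta^{l}$. The content is the opposite composite $\theta^{\Xi(\theta)}=\theta$: substituting, then using the right $\csym{S}$-module property of $\theta$ to rewrite $\theta_{\mathrm{B}}\boxtimes\mathrm{B}^{\blackdiamond}\boxtimes\mathrm{Y}=\theta_{\mathrm{B}\boxtimes\mathrm{B}^{\blackdiamond}\boxtimes\mathrm{Y}}$, and then naturality of $\theta$ along $\varepsilon^{r}_{\mathrm{Y}}$, one obtains $\theta^{\Xi(\theta)}_{\mathrm{Y}}=\theta_{\mathrm{Y}}\circ E$, where $E$ is the endomorphism of $\mathrm{A}\boxtimes\mathrm{B}^{\blackdiamond}\boxtimes\mathrm{Y}$ assembled from $\eta^{l}_{\mathrm{A}}$ and $\varepsilon^{r}_{\mathrm{Y}}$. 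The main point — and the hardest step — is that $E=\on{id}$: this is a ``zig-zag yank'' of the $\mathrm{B}^{\blackdiamond}$-strand. It is not literally one of the stated zig-zags, since $E$ is built from $\eta^{l}$ and $\varepsilon^{r}$, whereas the yank of $\mathrm{B}^{\blackdiamond}$ in Axiom~\ref{ZigZag} is phrased with $\eta^{r}$ and $\varepsilon^{l}$; the two are reconciled by Axiom~\ref{Involution}, which rewrites $\eta^{l}_{\mathrm{A}}\boxtimes\mathrm{B}^{\blackdiamond}\boxtimes\mathrm{Y}$ as $\mathrm{A}\boxtimes\eta^{r}_{\mathrm{B}^{\blackdiamond}\boxtimes\mathrm{Y}}$ and $\mathrm{B}^{\blackdiamond}\boxtimes\varepsilon^{r}_{\mathrm{Y}}$ as $\varepsilon^{l}_{\mathrm{B}^{\blackdiamond}}\boxtimes\mathrm{Y}$. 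After these rewrites $E$ collapses, via the right $\csym{S}$-module property of $\eta^{r}$ and the second zig-zag of Axiom~\ref{ZigZag} (for $\mathrm{B}$, at $\mathrm{B}^{\blackdiamond}$), to the identity.

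The right unit isomorphism $\yo_{\ccf{S}}(\mathrm{B})\oast J\xiso\yo_{\ccf{S}}(\mathrm{B})$ is entirely analogous: contracting the Day coend reduces it to $\int^{\mathrm{H}}\csym{S}(\mathrm{A},\mathrm{B}\boxtimes\mathrm{H})\kotimes J(\mathrm{H})$, the other duality isomorphism of Corollary~\ref{HomIsos} turns $\csym{S}(\mathrm{A},\mathrm{B}\boxtimes\mathrm{H})$ into $\csym{S}({}^{\blackdiamond}\mathrm{B}\boxtimes\mathrm{A},\mathrm{H})$, and one is left with the mirror lemma $J({}^{\blackdiamond}\mathrm{B}\boxtimes\mathrm{A})\simeq\csym{S}(\mathrm{A},\mathrm{B})$, proved by the same yank for the left-dual adjunction of $\mathrm{B}$. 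With all four natural isomorphisms in hand, $J$ is a unit object and $[\csym{S}^{\on{opp}},\mathbf{Vec}_{\Bbbk}]$ is monoidal with unit $J$, as asserted. The chief obstacle is the key lemma, and within it the identity $E=\on{id}$: this is the single place where rigidity is used in full, and it genuinely requires both zig-zag axioms and both involution axioms. It is worth flagging why the reduction to the special object $\mathrm{A}\boxtimes\mathrm{B}^{\blackdiamond}$ is essential: contracting the coend $\int^{\mathrm{H}}\csym{S}(\mathrm{A},\mathrm{H}\boxtimes\mathrm{B})\kotimes J(\mathrm{H})$ directly against the counit $\varepsilon^{r}$ would need every $\theta\in J(\mathrm{H})$ to factor through $\varepsilon^{r}$, i.e. $\varepsilon^{r}$ to be a split epimorphism onto $\on{Id}$, which fails in general; passing through $\mathrm{B}^{\blackdiamond}$ is precisely what makes the yank available.
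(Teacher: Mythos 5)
Your proof is correct, and its overall strategy is the paper's: establish $J\oast-\simeq\on{Id}$ and $-\oast J\simeq\on{Id}$ for $J=\Hom{\yo_{\mathbf{B}(\ccf{S})}(-),\on{Id}}_{\on{Mod-}\!\ccf{S}(\ccf{S},\ccf{S})}$ by coend calculus together with Corollary~\ref{HomIsos}, then conclude by Proposition~\ref{EGNOUnit}. Where you genuinely diverge is the middle step. The paper works with an arbitrary presheaf $\euler{P}$ and, after contracting one coend variable, disposes of the resulting term $\Hom{\mathrm{F\boxtimes K^{\blackdiamond}}\boxtimes -,\on{Id}}_{\on{Mod-}\!\ccf{S}(\ccf{S},\ccf{S})}$ by citing the adjunction between $\mathrm{K}\boxtimes-$ and $\mathrm{K}^{\blackdiamond}\boxtimes-$ inside $\on{Mod-}\!\csym{S}(\csym{S},\csym{S})$ (\cite[Proposition~2.10.8]{EGNO}) together with full faithfulness of $\yo_{\mathbf{B}(\ccf{S})}$ (Proposition~\ref{StrictifyingEmbedding}). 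You instead restrict along the Yoneda embedding (legitimate: both functors are cocontinuous, so a natural isomorphism on representables extends by density) and prove the resulting key lemma $J(\mathrm{A}\boxtimes\mathrm{B}^{\blackdiamond})\simeq\Hom{\mathrm{A,B}}_{\ccf{S}}$ by exhibiting explicit mutually inverse maps; your identity $E=\on{id}$, obtained from Axiom~\ref{Involution}, the right module property of $\eta^{r}$, and the second zig-zag of Axiom~\ref{ZigZag}, is a correct inlining of precisely what the paper outsources, and is in substance the fullness computation from the proof of Proposition~\ref{StrictifyingEmbedding} adapted to transformations into $\on{Id}$ (which is not in the image of $\yo_{\mathbf{B}(\ccf{S})}$, whence the extra adjunction step the paper cites). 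The trade-off: the paper's citations make its proof shorter, given that those results are already on record; your version is self-contained, records exactly where each axiom enters, and your explicit comparison map $[(f,\theta)]\mapsto\theta_{\mathrm{B}}\circ f$ involves no choice of duals, so naturality in $\mathrm{B}$ is immediate, whereas the paper must invoke Proposition~\ref{Dualize} for naturality in the corresponding coend variable.
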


\begin{proof}
 We show that $\Hom{\mathrm{F} \boxtimes -, \on{Id}}_{\on{Mod-}\!\ccf{S}(\ccf{S},\ccf{S})} \oast -$ and $- \oast \Hom{\mathrm{F} \boxtimes -, \on{Id}}_{\on{Mod-}\!\ccf{S}(\ccf{S},\ccf{S})}$ are naturally isomorphic to the identity functor; the result will then follow from Proposition~\ref{EGNOUnit}.

 Indeed, for any $\euler{P} \in [\csym{S}^{\on{opp}},\mathbf{Vec}]$, we have
 \[
 \resizebox{.99\hsize}{!}{$
 \begin{aligned}
  &\int^{\mathrm{H,K}} \Hom{\mathrm{F}, \mathrm{H \boxtimes K}}_{\ccf{S}} \kotimes \Hom{\mathrm{H} \boxtimes -, \on{Id}}_{\on{Mod-}\!\ccf{S}(\ccf{S},\ccf{S})} \kotimes \euler{P}(\mathrm{K}) {\overset{(1)}\simeq} \int^{\mathrm{K}}\int^{\mathrm{H}} \Hom{\mathrm{F \boxtimes K^{\blackdiamond}, H}}_{\ccf{S}} \kotimes \Hom{\mathrm{H} \boxtimes -, \on{Id}}_{\on{Mod-}\!\ccf{S}(\ccf{S},\ccf{S})} \kotimes \euler{P}(\mathrm{K}) \\
  &{\overset{(2)}\simeq} \int^{\mathrm{K}} \Hom{\mathrm{F \boxtimes K^{\blackdiamond}\boxtimes -},\on{Id}}_{\on{Mod-}\!\ccf{S}(\ccf{S},\ccf{S})} \kotimes \euler{P}(\mathrm{K}) {\overset{(3)}\simeq} \int^{\mathrm{K}} \Hom{\mathrm{(F \boxtimes -) \circ (K^{\blackdiamond}\boxtimes -)},\on{Id}}_{\on{Mod-}\!\ccf{S}(\ccf{S},\ccf{S})} \kotimes \euler{P}(\mathrm{K}) \\
  &  {\overset{(4)}\simeq} \int^{\mathrm{K}} \Hom{\mathrm{F \boxtimes -, K\boxtimes -}}_{\on{Mod-}\!\ccf{S}(\ccf{S},\ccf{S})} \kotimes \euler{P}(\mathrm{K}) {\overset{(5)}\simeq} \int^{\mathrm{K}} \Hom{\mathrm{F, K}}_{\ccf{S}} \kotimes \euler{P}(\mathrm{K}) {\overset{(6)}\simeq} \euler{P}(\mathrm{F}).
 \end{aligned}$}
 \]
 The isomorphisms are justified as follows
 \begin{multicols}{3}
 \begin{enumerate}
  \item Fubini's theorem for coends, together with Corollary~\ref{HomIsos};
  \item Yoneda lemma;
  \item definition of composition in ${\on{Mod-}\!\ccf{S}(\ccf{S},\ccf{S})}$;
  \item \cite[Proposition~2.10.8]{EGNO};
  \item Proposition~\ref{StrictifyingEmbedding};
  \item Yoneda lemma.
 \end{enumerate}
\end{multicols}
 Naturality in $\mathrm{F}$ and $\mathrm{H}$ follows from the naturality claim in Corollary~\ref{HomIsos}; naturality in $\mathrm{K}$ follows from Corollary~\ref{Dualize}; finally, naturality in $\euler{P}$ follows from the naturality of Yoneda lemma.

 This shows $\Hom{\mathrm{F} \boxtimes -, \on{Id}}_{\on{Mod-}\!\ccf{S}(\ccf{S},\ccf{S})} \oast - \simeq \on{Id}_{[\ccf{S}^{\on{opp}},\mathbf{Vec}_{\Bbbk}]}$. An analogous calculation, crucially using left duals in $\ccf{S}$, shows $-\oast \Hom{\mathrm{F} \boxtimes -, \on{Id}}_{\on{Mod-}\!\ccf{S}(\ccf{S},\ccf{S})} \simeq \on{Id}_{[\ccf{S}^{\on{opp}},\mathbf{Vec}_{\Bbbk}]}$.
\end{proof}

\begin{corollary}
 An adjunction $(\mathrm{F},\mathrm{F}^{\blackdiamond}, \eta^{l},\eta^{r}, \varepsilon^{l},\varepsilon^{r})$ in $\csym{S}$ gives an adjunction $(\Hom{-,\mathrm{F}},\Hom{-,\mathrm{F}^{\blackdiamond}},\widehat{\eta}^{l},\widehat{\varepsilon}^{l})$ in $[\csym{S}^{\on{opp}},\mathbf{Vec}_{\Bbbk}]$.
 The morphisms $\widehat{\eta}^{l}$ is defined as the composite
\[\begin{tikzcd}
	{\Hom{\yo_{\mathbf{B}(\ccf{S})}(-),\on{Id}}_{\on{Mod-}\!\ccf{S}(\ccf{S},\ccf{S})}} &&& {\Hom{\yo_{\mathbf{B}(\ccf{S})}(-),(\mathrm{F}^{\blackdiamond}\boxtimes \mathrm{F})\boxtimes -}_{\on{Mod-}\!\ccf{S}(\ccf{S},\ccf{S})}} & {\Hom{-,\mathrm{F}^{\blackdiamond}\boxtimes \mathrm{F}}}
	\arrow["{\Hom{\yo_{\mathbf{B}(\cccsym{S})}(-),\eta^{l}}_{\on{Mod-}\!\cccsym{S}(\cccsym{S},\cccsym{S})}}", shift left=1, from=1-1, to=1-4]
	\arrow["\simeq", from=1-4, to=1-5]
\end{tikzcd}\]
and the morphism $\widehat{\varepsilon}^{l}$ is defined analogously.
\end{corollary}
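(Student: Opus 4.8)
The plan is to exhibit the claimed adjunction as the image, under a strong monoidal functor, of the functor-level adjunction already recorded in Axiom~\ref{RightAdj}, and then invoke the standard fact that monoidal functors preserve duals. Recall from the proof of Proposition~\ref{Dualize} the monoidal subcategory $\csym{S}^{\mathbf{1}} \subseteq \on{Mod-}\!\csym{S}(\csym{S},\csym{S})$ whose objects are $\on{Id}_{\ccf{S}}$ together with the image of $\yo_{\mathbf{B}(\ccf{S})}$, with tensor product given by composition and monoidal unit $\on{Id}_{\ccf{S}}$; it is closed under composition since $(\mathrm{K}\boxtimes -)\circ(\mathrm{L}\boxtimes -) = (\mathrm{K}\boxtimes \mathrm{L})\boxtimes -$. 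By Axiom~\ref{RightAdj}, the tuple $(\mathrm{F}\boxtimes -,\mathrm{F}^{\blackdiamond}\boxtimes -,\eta^{r},\varepsilon^{r})$ is an adjunction in $\on{Mod-}\!\csym{S}(\csym{S},\csym{S})$, so in the delooping of $\csym{S}^{\mathbf{1}}$ the object $\mathrm{F}^{\blackdiamond}\boxtimes -$ is a right dual of $\mathrm{F}\boxtimes -$, with unit $\eta^{r}\colon \on{Id}_{\ccf{S}}\to (\mathrm{F}^{\blackdiamond}\boxtimes \mathrm{F})\boxtimes -$ and counit $\varepsilon^{r}\colon (\mathrm{F}\boxtimes\mathrm{F}^{\blackdiamond})\boxtimes -\to \on{Id}_{\ccf{S}}$. (The middle arrow in the displayed diagram is labelled $\eta^{l}$, but the right $\csym{S}$-module transformation with the indicated codomain $(\mathrm{F}^{\blackdiamond}\boxtimes\mathrm{F})\boxtimes -$ is $\eta^{r}$.)

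Next I would consider the functor $\Phi\colon \csym{S}^{\mathbf{1}} \to [\csym{S}^{\on{opp}},\mathbf{Vec}_{\Bbbk}]$ sending $T \mapsto \Hom{\yo_{\mathbf{B}(\ccf{S})}(-),T}_{\on{Mod-}\!\ccf{S}(\ccf{S},\ccf{S})}$, and show that it is strong monoidal from $(\csym{S}^{\mathbf{1}},\circ,\on{Id}_{\ccf{S}})$ to $([\csym{S}^{\on{opp}},\mathbf{Vec}_{\Bbbk}],\oast,\mathbb{1})$. On objects, Proposition~\ref{StrictifyingEmbedding} gives $\Phi(\mathrm{K}\boxtimes -)\simeq \Hom{-,\mathrm{K}}=\yo_{\ccf{S}}(\mathrm{K})$, while Theorem~\ref{PromonoidalUnital} gives exactly $\Phi(\on{Id}_{\ccf{S}})\simeq \mathbb{1}$, supplying the unit constraint $\phi_{0}\colon \mathbb{1}\to \Phi(\on{Id}_{\ccf{S}})$. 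The tensor constraint $\phi\colon \Phi(T)\oast\Phi(T')\xiso\Phi(T\circ T')$ is obtained, on the image of $\yo_{\mathbf{B}(\ccf{S})}$, by combining these identifications with the fact that $\yo_{\ccf{S}}$ is a semigroup functor for Day convolution (Theorem~\ref{UniversalPropertyDay}): $\Phi(\mathrm{K}\boxtimes -)\oast\Phi(\mathrm{L}\boxtimes -)\simeq \yo_{\ccf{S}}(\mathrm{K})\oast\yo_{\ccf{S}}(\mathrm{L})\simeq \yo_{\ccf{S}}(\mathrm{K}\boxtimes\mathrm{L})\simeq\Phi((\mathrm{K}\boxtimes\mathrm{L})\boxtimes -)$. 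These are precisely the isomorphisms $(1)$--$(6)$ appearing in the proof of Theorem~\ref{PromonoidalUnital}, reorganized as the coherence data of a monoidal functor.

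With $\Phi$ in hand, the corollary follows from the fact---recorded in this paper just before Observation~\ref{RigidMonoidal}---that strong monoidal functors preserve duals. Applying $\Phi$ to the dual pair $(\mathrm{F}\boxtimes -,\mathrm{F}^{\blackdiamond}\boxtimes -)$ yields that $\Phi(\mathrm{F}^{\blackdiamond}\boxtimes -)=\Hom{-,\mathrm{F}^{\blackdiamond}}$ is a right dual of $\Phi(\mathrm{F}\boxtimes -)=\Hom{-,\mathrm{F}}$, with unit
\[
\mathbb{1}\xrightarrow{\phi_{0}}\Phi(\on{Id}_{\ccf{S}})\xrightarrow{\Phi(\eta^{r})}\Phi\big((\mathrm{F}^{\blackdiamond}\boxtimes\mathrm{F})\boxtimes -\big)\xrightarrow{\phi^{-1}}\Hom{-,\mathrm{F}^{\blackdiamond}}\oast\Hom{-,\mathrm{F}},
\]
and counit defined dually from $\varepsilon^{r}$; here $\Phi(\eta^{r})$ is postcomposition with $\eta^{r}$, matching the middle arrow of the displayed diagram, and unwinding $\phi_{0},\phi,\Phi$ recovers exactly the composites $\widehat{\eta}^{l},\widehat{\varepsilon}^{l}$ of the statement. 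The two triangle identities for $(\widehat{\eta}^{l},\widehat{\varepsilon}^{l})$ are then the images under $\Phi$ of the triangle identities built into the adjunction of Axiom~\ref{RightAdj}, transported by the monoidal coherence of $\Phi$. Note that, given the earlier machinery, this argument needs only rigidity of $\csym{S}$ (to make $[\csym{S}^{\on{opp}},\mathbf{Vec}_{\Bbbk}]$ monoidal and $\yo_{\mathbf{B}(\ccf{S})}$ full and faithful) together with Axiom~\ref{RightAdj}.

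I expect the main obstacle to be verifying, with full coherence, that $\Phi$ is strong monoidal---concretely, that the isomorphisms $(1)$--$(6)$ of Theorem~\ref{PromonoidalUnital} assemble into a tensor constraint $\phi$ that is natural and satisfies the associativity and unit coherence axioms of a monoidal functor. This is a bookkeeping task in coend calculus rather than a conceptual difficulty. If one prefers to avoid it, the same result can be obtained by instead defining $\widehat{\eta}^{l},\widehat{\varepsilon}^{l}$ directly as in the statement and checking the two triangle identities by hand, reducing each---via the natural isomorphisms of Theorem~\ref{PromonoidalUnital} and Corollary~\ref{HomIsos}---to the corresponding triangle identity for the functor adjunction $(\mathrm{F}\boxtimes -,\mathrm{F}^{\blackdiamond}\boxtimes -,\eta^{r},\varepsilon^{r})$ in $\on{Mod-}\!\ccf{S}(\ccf{S},\ccf{S})$.
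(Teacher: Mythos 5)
Your proposal is correct and follows essentially the same route as the paper: the paper applies the strong monoidal Yoneda embedding of $\on{Mod-}\!\csym{S}(\csym{S},\csym{S})$ to the adjunction of Axiom~\ref{RightAdj} and then restricts along $\yo_{\mathbf{B}(\ccf{S})}$, invoking Theorem~\ref{PromonoidalUnital} to identify the unit objects so that the zigzag identities survive restriction --- which is exactly the composite you package as the single strong monoidal functor $\Phi$ on $\csym{S}^{\mathbf{1}}$ and handle via preservation of duals. Your side remark that the displayed unit must be $\eta^{r}$ rather than $\eta^{l}$ to have codomain $(\mathrm{F}^{\blackdiamond}\boxtimes\mathrm{F})\boxtimes -$ is also a correct reading of the paper's conventions.
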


\begin{proof}
 By definition, $(\mathrm{F} \boxtimes -, \mathrm{F}^{\blackdiamond} \boxtimes -, \eta^{l}, \varepsilon^{l})$ is an adjunction in $\on{Mod-}\!\csym{S}(\csym{S},\csym{S})$. Since the Yoneda embedding $\on{Mod-}\!\csym{S}(\csym{S},\csym{S})$ is strong monoidal, we obtain an adjunction $(\Hom{-,\mathrm{F} \boxtimes -}, \Hom{-,\mathrm{F}^{\blackdiamond} \boxtimes -},\Hom{-, \eta^{l}}, \Hom{-,\varepsilon^{l})})$.
 Restricting along $\yo_{\mathbf{B}(\ccf{S})}$ in the left coordinate, the zigzag equations are clearly still satisfied, and indeed prove that $\left(\Hom{\yo_{\mathbf{B}(\ccf{S})}(-),\mathrm{F} \boxtimes -}, \Hom{\yo_{\mathbf{B}(\ccf{S})}(-),\mathrm{F}^{\blackdiamond} \boxtimes -},\Hom{\yo_{\mathbf{B}(\ccf{S})}(-), \eta^{l}}, \Hom{\yo_{\mathbf{B}(\ccf{S})}(-),\varepsilon^{l})}\right)$ is an adjunction, since $\Hom{-,\on{Id}}$ is the unit object for $[\on{Mod-}\!\csym{S}(\csym{S},\csym{S})^{\on{opp}},\mathbf{Vec}_{\Bbbk}]$ and, by Theorem~\ref{PromonoidalUnital}, $\Hom{\yo_{\mathbf{B}(\ccf{S})}(-),\on{Id}}$ is the unit object for $[\csym{S}^{\on{opp}},\mathbf{Vec}_{\Bbbk}]$.
\end{proof}

\subsection{Independence of Ansatz}\label{s21}

We will now show that the apparent choice of the presheaf
 \begin{equation}\label{Comparison}
  \mathrm{F} \mapsto \Hom{\mathrm{F} \boxtimes -, \on{Id}}_{\on{Mod-}\!\ccf{S}(\ccf{S},\ccf{S})}
 \text{ over the presheaf }
  \mathrm{F} \mapsto \Hom{\mathrm{F} \boxtimes -, \on{Id}}_{\ccf{S}\!\on{-Mod}(\ccf{S},\ccf{S})}
 \end{equation}
in Theorem~\ref{PromonoidalUnital} is inessential. In \cite[Section~8]{Hou}, it is remarked that, under the axiomatization of rigid semigroup categories provided therein, there is no natural isomorphism between these two presheaves. However, the axiomatization in \cite{Hou} does not have a ``left-right compatibility axiom'' analogous to Axiom~\eqref{Involution}. In our axiomatization, the two presheaves are isomorphic, since both give the unit object for the monoidal structure on $[\csym{S}^{\on{opp}},\mathbf{Vec}_{\Bbbk}]$ - this follows from Theorem~\ref{PromonoidalUnital}. However, in order to clarify that the asymmetry issues identified in \cite{Hou} do not apply to our case, we provide an explicit isomorphism below.

To that end, we need a couple of lemmata.

\begin{lemma}\label{EasySwirl}
 Let $\csym{C}$ be a strict monoidal category with unit object $\mathbb{1}$. Let $\mathrm{W} \in \csym{C}$ admitting a right dual $\mathrm{W}^{\blackdiamond}$ and a double right dual $\mathrm{W}^{\blackdiamond\blackdiamond}$. For any $\alpha \in \Hom{\mathrm{W},\mathbb{1}}$, we have
 \[
  \alpha = \varepsilon^{\mathrm{W}} \circ (\mathrm{W}\varepsilon^{\mathrm{W}^{\blackdiamond}}\mathrm{W}^{\blackdiamond}) \circ (\mathrm{W}\mathrm{W}^{\blackdiamond}\alpha\mathrm{W}^{\blackdiamond\blackdiamond}\mathrm{W}^{\blackdiamond}) \circ (\mathrm{WW^{\blackdiamond}W\eta^{W^{\blackdiamond}}}) \circ (\mathrm{W\eta^{W}}).
 \]
 Similarly,
 \[
  \alpha = \varepsilon^{{}^{\blackdiamond}\!\mathrm{W}}\circ ({}^{\blackdiamond}\!\mathrm{W}\varepsilon^{\!{}^{\blackdiamond\blackdiamond}\!\mathrm{W}}\,\mathrm{W}) \circ ({}^{\blackdiamond}\!\mathrm{W}\,{}^{\blackdiamond\blackdiamond}\!\mathrm{W}\alpha \, {}^{\blackdiamond}\!\mathrm{W}\,\mathrm{W})\circ (\eta^{\!{}^{\blackdiamond\blackdiamond}\!\mathrm{W}}\,\mathrm{W}\, {}^{\blackdiamond}\!\mathrm{W}\, \mathrm{W}) \circ (\eta^{\!{}^{\blackdiamond}\!\mathrm{W}}\,\mathrm{W})
 \]
 The presence of additional double duals in the latter expression is due to our convention for superscripts of units and counits ``preferring'' right adjoints.
\end{lemma}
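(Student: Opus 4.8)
The plan is to collapse the entire right-hand side down to $\alpha$ using nothing more than the functoriality (interchange law) of the tensor product together with two instances of the zig-zag identity — one for the duality $(\mathrm{W},\mathrm{W}^{\blackdiamond})$ and one for the duality $(\mathrm{W}^{\blackdiamond},\mathrm{W}^{\blackdiamond\blackdiamond})$. The organising observation is that, although the double dual $\mathrm{W}^{\blackdiamond\blackdiamond}$ occurs, no coherence hypothesis about double duals is ever required: the maps $\eta^{\mathrm{W}^{\blackdiamond}}$ and $\varepsilon^{\mathrm{W}^{\blackdiamond}}$ will annihilate one another through a single zig-zag before any double-dual subtlety could enter. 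I work in the strict category $\csym{C}$ and write juxtaposition for $\otimes$, as in the statement.

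First I would isolate the three inner factors. Each of $\mathrm{W}\varepsilon^{\mathrm{W}^{\blackdiamond}}\mathrm{W}^{\blackdiamond}$, $\;\mathrm{W}\mathrm{W}^{\blackdiamond}\alpha\mathrm{W}^{\blackdiamond\blackdiamond}\mathrm{W}^{\blackdiamond}$ and $\mathrm{W}\mathrm{W}^{\blackdiamond}\mathrm{W}\eta^{\mathrm{W}^{\blackdiamond}}$ has the form $\on{id}_{\mathrm{W}}\otimes(-)$, so by bifunctoriality their composite is $\on{id}_{\mathrm{W}}\otimes\beta$, where
\[
 \beta=(\varepsilon^{\mathrm{W}^{\blackdiamond}}\mathrm{W}^{\blackdiamond})\circ(\mathrm{W}^{\blackdiamond}\alpha\mathrm{W}^{\blackdiamond\blackdiamond}\mathrm{W}^{\blackdiamond})\circ(\mathrm{W}^{\blackdiamond}\mathrm{W}\eta^{\mathrm{W}^{\blackdiamond}})\colon \mathrm{W}^{\blackdiamond}\mathrm{W}\to\mathrm{W}^{\blackdiamond}.
\]
Here $\alpha$ acts on a tensor factor disjoint from the one on which $\eta^{\mathrm{W}^{\blackdiamond}}$ creates $\mathrm{W}^{\blackdiamond\blackdiamond}\mathrm{W}^{\blackdiamond}$, so interchange lets me perform $\alpha$ first, giving $\beta=(\varepsilon^{\mathrm{W}^{\blackdiamond}}\mathrm{W}^{\blackdiamond})\circ(\mathrm{W}^{\blackdiamond}\eta^{\mathrm{W}^{\blackdiamond}})\circ(\mathrm{W}^{\blackdiamond}\alpha)$. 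The first two factors are exactly the zig-zag identity $(\varepsilon^{\mathrm{W}^{\blackdiamond}}\mathrm{W}^{\blackdiamond})\circ(\mathrm{W}^{\blackdiamond}\eta^{\mathrm{W}^{\blackdiamond}})=\on{id}_{\mathrm{W}^{\blackdiamond}}$ for $(\mathrm{W}^{\blackdiamond},\mathrm{W}^{\blackdiamond\blackdiamond})$, whence $\beta=\mathrm{W}^{\blackdiamond}\alpha$.

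Substituting this back, the right-hand side becomes
\[
 \varepsilon^{\mathrm{W}}\circ(\mathrm{W}\otimes\beta)\circ(\mathrm{W}\eta^{\mathrm{W}})=\varepsilon^{\mathrm{W}}\circ(\mathrm{W}\mathrm{W}^{\blackdiamond}\alpha)\circ(\mathrm{W}\eta^{\mathrm{W}}).
\]
Now $\varepsilon^{\mathrm{W}}$ and $\alpha$ act on disjoint factors, so interchange rewrites $\varepsilon^{\mathrm{W}}\circ(\mathrm{W}\mathrm{W}^{\blackdiamond}\alpha)=\alpha\circ(\varepsilon^{\mathrm{W}}\mathrm{W})$, and the expression reduces to $\alpha\circ(\varepsilon^{\mathrm{W}}\mathrm{W})\circ(\mathrm{W}\eta^{\mathrm{W}})$. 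Applying the zig-zag identity $(\varepsilon^{\mathrm{W}}\mathrm{W})\circ(\mathrm{W}\eta^{\mathrm{W}})=\on{id}_{\mathrm{W}}$ for $(\mathrm{W},\mathrm{W}^{\blackdiamond})$ then yields $\alpha$, as required.

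The only real work is the bookkeeping of these interchange moves; the apparent difficulty — that the double dual might force a coherence assumption — is precisely what evaporates once one simplifies $\beta$ before trying to straighten the outer $\mathrm{W}$-strand. Finally, the second displayed identity is the mirror image of the first: applying the established identity in $\csym{C}^{\on{rev}}$ (where right duals of $\csym{C}^{\on{rev}}$ are the left duals of $\csym{C}$) and translating through the superscript conventions for units and counits produces it, which is exactly why the extra double duals flagged in the statement appear.
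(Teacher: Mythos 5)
Your proof is correct and is essentially the paper's own argument in equational form: the paper's string-diagram proof performs exactly your moves (an interchange to slide $\alpha$ past $\eta^{\mathrm{W}^{\blackdiamond}}$, the zig-zag identity for $(\mathrm{W}^{\blackdiamond},\mathrm{W}^{\blackdiamond\blackdiamond})$ to cancel $\eta^{\mathrm{W}^{\blackdiamond}}$ against $\varepsilon^{\mathrm{W}^{\blackdiamond}}$, then the zig-zag for $(\mathrm{W},\mathrm{W}^{\blackdiamond})$ to straighten the remaining strand), and it likewise disposes of the second identity as the mirror image, i.e. the first identity applied in $\csym{C}^{\on{rev}}$.
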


\begin{proof}
The first claim can be proved diagrammatically as follows:
\[
 \resizebox{.8\hsize}{!}{
\begin{tikzpicture}[style={line width=0.2mm, inner sep = 0.8pt}]
	\node (X0) at (0, 0) {$\mathrm{W}$};
	\node [circle, draw] (X1) at (2, 1) {$\eta^{\mathrm{W}}$};
	\node [circle, draw,inner sep=0.8pt] (X2) at (5,2) {$\eta^{\mathrm{W}^{\blackdiamond}}$};
	\node [circle, draw,inner sep=0.8pt] (X3) at (3, 3) {$\alpha$};
	\node [circle, draw,inner sep=0.8pt] (X4) at (3, 4) {$\varepsilon^{\mathrm{W}^{\blackdiamond}}$};
	\node [circle, draw] (X5) at (3, 5) {$\varepsilon^{W}$};
	\draw [-] (X0) to [out=90, in=180] (X5);
	\draw [-] (X1) to [out=15, in=-90] (X3);
	\draw [-] (X1) to [out=165, in=180] (X4);
	\draw [-] (X2) to [out=165, in=0] (X4);
	\draw [-] (X2) .. controls (6,2) and (6,5) .. (X5);
    \node (E0) at (6,3) {$=$};
	\node (Y0) at (7, 0) {$\mathrm{W}$};
	\node [circle, draw] (Y1) at (9, 1) {$\eta^{\mathrm{W}}$};
	\node [circle, draw,inner sep=0.8pt] (Y2) at (10, 2) {$\alpha$};
	\node [circle, draw,inner sep=0.8pt] (Y3) at (11.5,3) {$\eta^{\mathrm{W}^{\blackdiamond}}$};
	\node [circle, draw,inner sep=0.8pt] (Y4) at (10, 4) {$\varepsilon^{\mathrm{W}^{\blackdiamond}}$};
	\node [circle, draw] (Y5) at (10, 5) {$\varepsilon^{W}$};
	\draw [-] (Y0) to [out=90, in=180] (Y5);
	\draw [-] (Y1) to [out=15, in=-90] (Y2);
	\draw [-] (Y1) to [out=165, in=180] (Y4);
	\draw [-] (Y3) to [out=165, in=0] (Y4);
	\draw [-] (Y3) .. controls (13,3) and (13,5) .. (Y5);
    \node (E1) at (13,3) {$=$};
	\node (Z0) at (14, 0) {$\mathrm{W}$};
	\node [circle, draw] (Z1) at (17, 2) {$\eta^{\mathrm{W}}$};
	\node [circle, draw,inner sep=0.8pt] (Z2) at (18, 4) {$\alpha$};
	\node [circle, draw] (Z3) at (15, 5) {$\varepsilon^{W}$};
	\draw [-] (Z0) .. controls (14,4) and (14,5) .. (Z3);
	\draw [-] (Z1) to [out=180, in = 0] (Z3);
	\draw [-] (Z1) to [out=0,in=-90] (Z2);
    \node (E2) at (19,3) {$=$};
    \node (W0) at (20,0) {$\mathrm{W}$};
    \node [circle,draw] (W1) at (20,4) {$\alpha$};
    \draw [-] (W0) to (W1);
\end{tikzpicture}
}
\]
The diagrammatic proof of the latter claim is the mirror image of the above.
\end{proof}

Applying Lemma~\ref{EasySwirl} in the case $\csym{C} = \on{Mod-}\csym{S}(\csym{S},\csym{S})$, we obtain the following:

\begin{corollary}\label{Simplify}
 Let $\csym{S}$ be a rigid semigroup category and let $\alpha \in \Hom{\mathrm{F} \boxtimes -, \on{Id}}_{\on{Mod-}\!\ccf{S}(\ccf{S},\ccf{S})}$. Writing $\overline{\mathrm{F}}$ for the right $\csym{S}$-module functor $\mathrm{F} \boxtimes - \in \on{Mod-}\!\csym{S}(\csym{S},\csym{S})$, we have:
 \[
  \alpha = \varepsilon^{\mathrm{F},r} \circ (\overline{\mathrm{F}}\varepsilon^{\mathrm{F}^{\blackdiamond},r}\overline{\mathrm{F}^{\blackdiamond}}) \circ (\overline{\mathrm{F}}\overline{\mathrm{F}^{\blackdiamond}}\alpha\overline{\mathrm{F}^{\blackdiamond\blackdiamond}}\,\overline{\mathrm{F}^{\blackdiamond}}) \circ (\overline{\mathrm{F}}\,\overline{\mathrm{F}^{\blackdiamond}}\,\overline{\mathrm{F}}\eta^{\mathrm{F}^{\blackdiamond},r}) \circ (\overline{\mathrm{F}}\eta^{\mathrm{F},r}).
 \]

 This equality allows us to rewrite $\alpha_{\mathrm{H}}$, for $\mathrm{H} \in \csym{S}$, in terms of graphical calculus for $\csym{S}$ (rather than the graphical calculus for $\on{Mod-}\csym{S}(\csym{S},\csym{S})$, which Lemma~\ref{EasySwirl} refers to in this application thereof) as follows:
 \[
 \scalebox{.7}{
  \begin{tikzpicture}[style={line width=0.2mm, inner sep = 0.8pt}]
   \node (X0) at (0,0) {$\mathrm{F}$};
   \node (X1) at (2,0) {$\mathrm{H}$};
   \node [rectangle,draw] (X2) at (1,2.5) {$\;\alpha \;$};
   \node (X3) at (2,5) {$\mathrm{H}$};
   \draw [-] (X0) to [out=90,in=-160] (X2) ;
   \draw [-] (X1) to [out=90,in=-20] (X2);
   \draw [-] (X2) to [out=70,in=-90] (X3);
   \node (E0) at (3,2) {$=$};
   \node (Y0) at (4,0) {$\mathrm{F}$};
   \node (Y1) [circle,draw,inner sep=0pt] at (5,1) {$\hspace{0mm}$};
   \node (Y2) [circle,draw,inner sep=0pt] at (6.5,1.4) {$\hspace{0mm}$};
   \node [rectangle,draw] (Y3) at (5.75,2.5) {$\; \alpha \;$};
   \node (Y4) [circle,draw,inner sep=0pt] at (5,3.75) {$\hspace{0mm}$};
   \node (Y5) [circle,draw,inner sep=0pt] at (5.5,5) {$\hspace{0mm}$};
   \node (Y6) at (7.5,0) {$\mathrm{H}$};
   \node (Y7) at (7.5,5) {$\mathrm{H}$};
   \draw [-] (Y0) .. controls (4,4.9) .. (Y5);
   \draw [-] (Y1) to [out=0,in=-110] (Y3.south west);
   \draw [-] (Y2) to [out=180,in=-70] (Y3.south east);
   \draw [-] (Y1) .. controls (4.25,1.25) and (4.25,3.75) .. (Y4);
   \draw [-] (Y3.north east) .. controls (6,2.9) and (6,3.75) .. (Y4);
   \draw [-] (Y2) .. controls (7.25,1.25) and (7.25,5) .. (Y5);
   \draw [-] (Y6) to (Y7);
  \end{tikzpicture}
 }
 \]
 On the level of components, we find
 \[
  \alpha_{H} = \varepsilon^{\mathrm{F},r}_{H} \circ (\mathrm{F}\varepsilon^{\mathrm{F}^{\blackdiamond},r}_{\mathrm{F}^{\blackdiamond}}\mathrm{H}) \circ \mathrm{FF^{\blackdiamond}}\alpha_{\mathrm{F}^{\blackdiamond\blackdiamond}}\mathrm{F}^{\blackdiamond}\mathrm{H} \circ (\mathrm{FF^{\blackdiamond}F}\eta^{\mathrm{F}^{\blackdiamond},r}_{\mathrm{H}}) \circ \mathrm{F}\eta^{\mathrm{F},r}_{\mathrm{H}}
 \]

 Similarly, for any $\alpha \in \Hom{- \boxtimes \mathrm{F}, \on{Id}}_{\ccf{S}\!\on{-Mod}(\ccf{S},\ccf{S})}$, writing $\underline{\mathrm{F}}$ for $-\boxtimes \mathrm{F} \in \csym{S}\!\on{-Mod}(\csym{S},\csym{S})$, we have:
   \[
    \alpha = \varepsilon^{{}^{\blackdiamond}\!\mathrm{F},l} \circ (\underline{\mathrm{F}}\varepsilon^{\!{}^{\blackdiamond\blackdiamond}\!\mathrm{F},l}\,\underline{{}^{\blackdiamond}\!\mathrm{F}}) \circ (\underline{\mathrm{F}}\,\underline{{}^{\blackdiamond}\!\mathrm{F}}\alpha \, \underline{\mathrm{F}}\,\underline{{}^{\blackdiamond\blackdiamond}\!\mathrm{F}})\circ (\underline{\mathrm{F}}\, \underline{{}^{\blackdiamond}\!\mathrm{F}}\, \underline{\mathrm{F}}\,\eta^{\!{}^{\blackdiamond\blackdiamond}\!\mathrm{F},l}) \circ (\underline{\mathrm{F}}\,\eta^{\!{}^{\blackdiamond}\!\mathrm{F},l}).
 \]
 Since the embedding $\csym{S}^{\on{rev}} \rightarrow \csym{S}\!\on{-Mod}(\csym{S},\csym{S})$ reverses the monoidal product, on components we find:
 \[
    \alpha_{\mathrm{H}} = \varepsilon^{{}^{\blackdiamond}\!\mathrm{F},l}_{\mathrm{H}}\circ (\mathrm{H}\varepsilon^{\!{}^{\blackdiamond\blackdiamond}\!\mathrm{F},l}_{{}^{\blackdiamond}\!\mathrm{F}}\,\mathrm{F}) \circ (\mathrm{H}\,{}^{\blackdiamond}\!\mathrm{F}\alpha_{{}^{\blackdiamond\blackdiamond}\!\mathrm{F}} \, {}^{\blackdiamond}\!\mathrm{F}\,\mathrm{F})\circ (\eta^{\!{}^{\blackdiamond\blackdiamond}\!\mathrm{F},l}_{\mathrm{H}}\,\mathrm{F}\, {}^{\blackdiamond}\!\mathrm{F}\, \mathrm{F}) \circ (\eta^{\!{}^{\blackdiamond}\!\mathrm{F},l}_{\mathrm{H}}\,\mathrm{F}).
 \]
\end{corollary}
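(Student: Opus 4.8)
The plan is to read both of the module-transformation identities directly off Lemma~\ref{EasySwirl}, applied to the two strict monoidal categories $\on{Mod-}\csym{S}(\csym{S},\csym{S})$ and $\csym{S}\!\on{-Mod}(\csym{S},\csym{S})$ (both strict, since their tensor product is functor composition and their unit is $\on{Id}_{\ccf{S}}$), and then to translate the resulting $2$-cell identities into the component-wise and graphical calculus of $\csym{S}$ itself. There is no new mathematical content beyond Lemma~\ref{EasySwirl}; the work is entirely in identifying duals and in the whiskering-versus-components bookkeeping.

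First I would treat the right-module case. The embedding $\overline{(-)}=\yo_{\mathbf{B}(\ccf{S})}\colon\csym{S}\to\on{Mod-}\csym{S}(\csym{S},\csym{S})$ is full and faithful by Proposition~\ref{StrictifyingEmbedding}, and is strong monoidal onto its image with unit $\on{Id}_{\ccf{S}}$; by Observation~\ref{RigidMonoidal} together with Axiom~\ref{RightAdj} of Definition~\ref{AdjunctionSemigroup}, the right dual of $\overline{\mathrm{F}}$ is $\overline{\mathrm{F}^{\blackdiamond}}$, with unit $\eta^{\mathrm{F},r}$ and counit $\varepsilon^{\mathrm{F},r}$ (noting $\overline{\mathrm{F}^{\blackdiamond}}\circ\overline{\mathrm{F}}=(\mathrm{F}^{\blackdiamond}\boxtimes\mathrm{F})\boxtimes-$, so the Lemma's $\eta^{\mathrm{W}}$ is indeed the right-module unit $\eta^{\mathrm{F},r}$). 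Since $\csym{S}$ is rigid, $\mathrm{F}^{\blackdiamond}$ has a further right dual $\mathrm{F}^{\blackdiamond\blackdiamond}$, so $\overline{\mathrm{F}}$ admits a double right dual $\overline{\mathrm{F}^{\blackdiamond\blackdiamond}}$, with adjunction data $\eta^{\mathrm{F}^{\blackdiamond},r},\varepsilon^{\mathrm{F}^{\blackdiamond},r}$. Substituting $\mathrm{W}=\overline{\mathrm{F}}$ and these identifications into the first formula of Lemma~\ref{EasySwirl} produces the first displayed identity verbatim. To pass to components I would evaluate this $2$-cell identity at an object $\mathrm{H}\in\csym{S}$: left whiskering by $\overline{\mathrm{G}}=\mathrm{G}\boxtimes-$ contributes a factor $\mathrm{G}\boxtimes(-)$, whereas right whiskering by $\overline{\mathrm{G}}$ reindexes the evaluation object to $\mathrm{G}\boxtimes\mathrm{H}$; since $\alpha,\eta^{\mathrm{F}^{\blackdiamond},r},\varepsilon^{\mathrm{F}^{\blackdiamond},r}$ are right $\csym{S}$-module transformations, their values slide out as $\beta_{\mathrm{X}\boxtimes\mathrm{Y}}=\beta_{\mathrm{X}}\boxtimes\mathrm{Y}$, and carrying out these substitutions yields exactly the stated component formula for $\alpha_{\mathrm{H}}$. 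The displayed string diagram is then just a rendering of this component formula in the graphical calculus for $\csym{S}$, legitimate by the coherence discussion surrounding Example~\ref{FreeStrings} and Proposition~\ref{StrictifyingEmbedding}.

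Finally, for the left-module case I would run the same argument with $\csym{C}=\csym{S}\!\on{-Mod}(\csym{S},\csym{S})$ and $\mathrm{W}=\underline{\mathrm{F}}=-\boxtimes\mathrm{F}$. The relevant full and faithful monoidal embedding is $\yo^{\on{rev}}_{\mathbf{B}(\ccf{S})}\colon\csym{S}^{\on{rev}}\to\csym{S}\!\on{-Mod}(\csym{S},\csym{S})$ of Proposition~\ref{StrictifyingEmbedding}, which \emph{reverses} the tensor product; consequently the right dual of $\underline{\mathrm{F}}$ is $\underline{{}^{\blackdiamond}\mathrm{F}}$ and its double right dual is $\underline{{}^{\blackdiamond\blackdiamond}\mathrm{F}}$, and translating the adjunction data (Axiom~\ref{LeftAdj}, Proposition~\ref{Dualize}) through the embedding identifies the units and counits of Lemma~\ref{EasySwirl} with the left versions $\eta^{{}^{\blackdiamond}\mathrm{F},l},\varepsilon^{{}^{\blackdiamond}\mathrm{F},l},\eta^{{}^{\blackdiamond\blackdiamond}\mathrm{F},l},\varepsilon^{{}^{\blackdiamond\blackdiamond}\mathrm{F},l}$. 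The appearance of double duals is forced precisely by this tensor-reversal together with the superscript convention that prefers right adjoints. Evaluating at $\mathrm{H}$ now places $\mathrm{H}$ on the left, since the whiskering functors have the form $-\boxtimes\mathrm{G}$ and the transformations are left $\csym{S}$-module transformations, giving the last displayed formula. I expect the only genuine difficulty to be this bookkeeping: tracking the passage between horizontal/vertical composition of module transformations and the tensorial component calculus of $\csym{S}$, and in particular the tensor-reversal in the left-module case; beyond that, the statement is a direct specialization of Lemma~\ref{EasySwirl}.
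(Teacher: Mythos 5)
Your proposal is correct and is essentially the paper's own argument: the paper derives the corollary precisely by applying Lemma~\ref{EasySwirl} with $\csym{C}=\on{Mod-}\csym{S}(\csym{S},\csym{S})$ (resp.\ $\csym{S}\!\on{-Mod}(\csym{S},\csym{S})$), identifying the right dual of $\overline{\mathrm{F}}$ as $\overline{\mathrm{F}^{\blackdiamond}}$ via Axiom~\ref{RightAdj} (resp.\ the right dual of $\underline{\mathrm{F}}$ as $\underline{{}^{\blackdiamond}\mathrm{F}}$, with the tensor-reversal of $\yo^{\on{rev}}_{\mathbf{B}(\ccf{S})}$ forcing the double left duals), and then doing exactly the whiskering-versus-components bookkeeping you describe, using that the units, counits and $\alpha$ are $\csym{S}$-module transformations. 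Your only loose phrasing is calling the embedding ``strong monoidal onto its image'' (the image lacks the unit $\on{Id}_{\ccf{S}}$ unless $\csym{S}$ is monoidal), but this does not affect the argument, since the duality data lives in the ambient strict monoidal category $\on{Mod-}\csym{S}(\csym{S},\csym{S})$ where the lemma is applied.
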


\begin{theorem}\label{Ansatzes}
 The assignments
 \[
 \begin{aligned}
  \Psi: \Hom{\mathrm{F} \boxtimes -, \on{Id}}_{\on{Mod-}\!\ccf{S}(\ccf{S},\ccf{S})} &\rightarrow
  \Hom{-\boxtimes \mathrm{F}, \on{Id}}_{\ccf{S}\!\on{-Mod}(\ccf{S},\ccf{S})} \\
  \alpha
  &\mapsto
  \varepsilon^{\mathrm{F},l} \circ (\mathrm{F}\varepsilon^{\mathrm{F}^{\blackdiamond},l}\mathrm{F}^{\blackdiamond}) \circ (- \boxtimes \mathrm{FF^{\blackdiamond}}\alpha_{\mathrm{F}^{\blackdiamond\blackdiamond}}\mathrm{F}^{\blackdiamond}) \circ (\mathrm{FF^{\blackdiamond}F}\eta^{\mathrm{F}^{\blackdiamond},l}) \circ (\mathrm{F}\eta^{\mathrm{F},l})
 \end{aligned}
 \]
 and
 \[
 \begin{aligned}
  \Phi: \Hom{-\boxtimes \mathrm{F}, \on{Id}}_{\ccf{S}\!\on{-Mod}(\ccf{S},\ccf{S})}
  &\rightarrow
  \Hom{\mathrm{F} \boxtimes -, \on{Id}}_{\on{Mod-}\!\ccf{S}(\ccf{S},\ccf{S})} \\
  \beta
  &\mapsto
  \varepsilon^{{}^{\blackdiamond}\!\mathrm{F},r} \circ ({}^{\blackdiamond}\!\mathrm{F}\varepsilon^{{}^{\blackdiamond\blackdiamond}\!\mathrm{F},r}\mathrm{F}) \circ ({}^{\blackdiamond}\!\mathrm{F}\beta_{{}^{\blackdiamond \blackdiamond}\!\mathrm{F}}{}^{\blackdiamond}\!\mathrm{F}\, \mathrm{F} \boxtimes -) \circ (\eta^{{}^{\blackdiamond\blackdiamond}\!\mathrm{F},r}\mathrm{F}\, {}^{\blackdiamond}\!\mathrm{F}\, \mathrm{F}) \circ \eta^{{}^{\blackdiamond}\!\mathrm{F},r}\mathrm{F}
 \end{aligned}
 \]
 are mutually inverse isomorphisms of presheaves in $[\csym{S}^{\on{op}},\mathbf{Vec}_{\Bbbk}]$.
\end{theorem}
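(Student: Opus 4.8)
The plan is to treat $\Psi$ and $\Phi$ as explicit realizations of the abstract isomorphism forced by uniqueness of the unit object, and then to verify directly that the two given formulas are mutually inverse. Indeed, Theorem~\ref{PromonoidalUnital} shows that $\Hom{\mathrm{F}\boxtimes-,\on{Id}}_{\on{Mod-}\!\ccf{S}(\ccf{S},\ccf{S})}$ is the monoidal unit of $[\csym{S}^{\on{op}},\mathbf{Vec}_{\Bbbk}]$, and the same argument with left modules and left duals in place of right ones shows that $\Hom{-\boxtimes\mathrm{F},\on{Id}}_{\ccf{S}\!\on{-Mod}(\ccf{S},\ccf{S})}$ is also a unit; uniqueness of units (Proposition~\ref{EGNOUnit}) then guarantees an isomorphism of presheaves abstractly, so the content of the theorem is that $\Psi$ and $\Phi$ exhibit it concretely and invert one another.

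First I would check that $\Psi$ and $\Phi$ are well defined, i.e. that $\Psi(\alpha)$ really lands in $\Hom{-\boxtimes\mathrm{F},\on{Id}}_{\ccf{S}\!\on{-Mod}(\ccf{S},\ccf{S})}$ and $\Phi(\beta)$ in $\Hom{\mathrm{F}\boxtimes-,\on{Id}}_{\on{Mod-}\!\ccf{S}(\ccf{S},\ccf{S})}$. This is essentially automatic: each formula is a composite of the module (co)units $\eta^{l},\varepsilon^{l}$ (resp. $\eta^{r},\varepsilon^{r}$) of Definition~\ref{AdjunctionSemigroup}, whiskered against a single fixed morphism $\alpha_{\mathrm{F}^{\blackdiamond\blackdiamond}}$ (resp. $\beta_{{}^{\blackdiamond\blackdiamond}\!\mathrm{F}}$); since whiskering a fixed morphism of $\csym{S}$ by the module action produces a module transformation, each composite is a module transformation and is natural in the module variable by construction.

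Next comes naturality in $\mathrm{F}$, which is what upgrades $\Psi,\Phi$ from families of linear maps to morphisms of presheaves. For $\mathrm{f}\colon\mathrm{F}\to\mathrm{G}$ I would transport the relevant (co)units along the dual functor of Corollary~\ref{Dualize} and invoke the naturality of $\eta^{l},\eta^{r},\varepsilon^{l},\varepsilon^{r}$; the required square then reduces to the compatibility of the swirl formula of Corollary~\ref{Simplify} with precomposition by $\mathrm{f}\boxtimes-$, which follows from naturality of $\alpha$ and of the (co)units.

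The heart of the proof, and the main obstacle, is the identity $\Phi\circ\Psi=\on{id}$ (with its mirror $\Psi\circ\Phi=\on{id}$). Here I would argue in the graphical calculus of Corollary~\ref{Simplify}: unravelling $\Phi(\Psi(\alpha))$ produces a \emph{double swirl}, one nested cup–cap configuration built from the left-module data of $\mathrm{F},\mathrm{F}^{\blackdiamond}$ and one from the right-module data of ${}^{\blackdiamond}\!\mathrm{F},{}^{\blackdiamond\blackdiamond}\!\mathrm{F}$, wrapped around the single evaluation of $\alpha$. The strategy is to collapse this using three ingredients: the zig-zag equations of Axiom~\ref{ZigZag} to straighten one layer, the left–right compatibility of Axiom~\ref{Involution} to interchange whiskered left- and right-(co)units (exactly as in the proof of Proposition~\ref{StrictifyingEmbedding}, where it yields $\mathrm{F}\boxtimes\eta^{r}_{X}=\eta^{l}_{\mathrm{F}}\boxtimes\mathrm{X}$), and the swirl identity of Corollary~\ref{Simplify} read backwards to repackage the remaining configuration as $\alpha$ itself. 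The delicate bookkeeping is the appearance of the iterated duals $\mathrm{F}^{\blackdiamond\blackdiamond}$ and ${}^{\blackdiamond\blackdiamond}\!\mathrm{F}$, together with tracking which adjunction's unit or counit is straightened at each stage; once the two swirls are recognized as inverse reflections, Axiom~\ref{Involution} ensures that the side on which $\mathrm{F}$ acts is correctly exchanged back and the composite returns to the identity. The computation for $\Psi\circ\Phi$ is the mirror image, exchanging left and right duals throughout.
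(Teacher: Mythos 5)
Your proposal is correct and follows essentially the same route as the paper: the paper's proof likewise expands $(\Phi\circ\Psi)(\alpha)$, uses Axiom~\ref{Involution} to trade whiskered left-module (co)units for right-module ones (via $\eta^{\mathrm{H},l}_{\mathrm{G}}\boxtimes - = \mathrm{G}\eta^{\mathrm{H},r}$ and its counit analogue), straightens the two resulting zigzags diagrammatically, and then recognizes the collapsed diagram as the swirl expression of Corollary~\ref{Simplify}, hence equal to $\alpha$. Your additional remarks on well-definedness and naturality in $\mathrm{F}$ are routine checks the paper leaves implicit, not a different method.
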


\begin{proof}
 We verify that $(\Phi \circ \Psi)(\alpha) = \alpha$. Verifying that $(\Psi \circ \Phi)(\beta) = \beta$ is analogous.
 We have
 \[
 \resizebox{.999\hsize}{!}{$
 \begin{aligned}
  &(\Phi \circ \Psi)(\alpha) = \\
  &\varepsilon^{{}^{\blackdiamond}\!\mathrm{F},r} \circ ({}^{\blackdiamond}\!\mathrm{F}\varepsilon^{{}^{\blackdiamond\blackdiamond}\!\mathrm{F},r}\mathrm{F}) \circ \Big({}^{\blackdiamond}\!\mathrm{F}\big( \varepsilon^{\mathrm{F},l} \circ (\mathrm{F}\varepsilon^{\mathrm{F}^{\blackdiamond},l}\mathrm{F}^{\blackdiamond}) \circ (- \boxtimes \mathrm{FF^{\blackdiamond}}\alpha_{\mathrm{F}^{\blackdiamond\blackdiamond}}\mathrm{F}^{\blackdiamond}) \circ (\mathrm{FF^{\blackdiamond}F}\eta^{\mathrm{F}^{\blackdiamond},l}) \circ (\mathrm{F}\eta^{\mathrm{F},l}) \big)_{{}^{\blackdiamond \blackdiamond}\!\mathrm{F}}{}^{\blackdiamond}\!\mathrm{F}\, \mathrm{F} \boxtimes -\Big) \circ (\eta^{{}^{\blackdiamond\blackdiamond}\!\mathrm{F},r}\mathrm{F}\, {}^{\blackdiamond}\!\mathrm{F}\, \mathrm{F}) \circ \eta^{{}^{\blackdiamond}\!\mathrm{F},r}\mathrm{F} \\
  &\varepsilon^{{}^{\blackdiamond}\!\mathrm{F},r} \circ ({}^{\blackdiamond}\!\mathrm{F}\varepsilon^{{}^{\blackdiamond\blackdiamond}\!\mathrm{F},r}\mathrm{F}) \circ \Bigg({}^{\blackdiamond}\!\mathrm{F}\, {}^{\blackdiamond\blackdiamond}\!\mathrm{F}\, \Big( \big(\varepsilon^{\mathrm{F},r} \circ (\mathrm{F}\varepsilon^{\mathrm{F}^{\blackdiamond},r}\mathrm{F}^{\blackdiamond})\big)\,{}^{\blackdiamond}\!\mathrm{F}\,\mathrm{F} \circ (\mathrm{FF^{\blackdiamond}}\alpha_{\mathrm{F}^{\blackdiamond\blackdiamond}}\mathrm{F}^{\blackdiamond}\,{}^{\blackdiamond}\!\mathrm{F}\,\mathrm{F} \boxtimes -) \circ \big((\mathrm{FF^{\blackdiamond}F}\eta^{\mathrm{F}^{\blackdiamond},r}) \circ (\mathrm{F}\eta^{\mathrm{F},r})\big){}^{\blackdiamond}\!\mathrm{F}\,\mathrm{F} \Big)\Bigg) \circ (\eta^{{}^{\blackdiamond\blackdiamond}\!\mathrm{F},r}\mathrm{F}\, {}^{\blackdiamond}\!\mathrm{F}\, \mathrm{F}) \circ \eta^{{}^{\blackdiamond}\!\mathrm{F},r}\mathrm{F} \\
 \end{aligned}
 $}
 \]
The first equality follows by definition. The second equality follows by observing that Axiom~\ref{Involution} implies
\[
\eta^{\mathrm{H},l}_{\mathrm{G}} \boxtimes - = \mathrm{G}\eta^{\mathrm{H},r}, \text{ and similarly } \varepsilon^{\mathrm{H},l}_{\mathrm{G}} \boxtimes - = \mathrm{G}\varepsilon^{\mathrm{H},r},
\]
 for all $\mathrm{G,H}$, and then regrouping terms.
 The obtained expression can be written diagrammatically as follows:
 \[
 \scalebox{.35}{
  \begin{tikzpicture}[style={line width=0.2mm, inner sep = 0.8pt}]
   \node [scale=2] (X) at (14,0) {$\mathrm{F}$};
   \node [scale=2] (H) at (16,0) {$\mathrm{H}$};
   \node [circle,draw,inner sep=0pt] (T1) at (9,2) {$\hspace{0pt}$};
   \node [circle,draw,inner sep=0pt] (T2) at (2,4) {$\hspace{0pt}$};
   \node [circle,draw,inner sep=0pt] (T3) at (8,6) {$\hspace{0pt}$};
   \node [circle,draw,inner sep=0pt] (T4) at (10,8) {$\hspace{0pt}$};
   \node [rectangle,draw,scale=1.9] (A) at (9,10) {$\alpha_{\mathrm{F}^{\blackdiamond\blackdiamond}\boxtimes -}$};
   \node [circle,draw,inner sep=0pt] (P1) at (8,12) {$\hspace{0pt}$};
   \node [circle,draw,inner sep=0pt] (P2) at (8.5,14) {$\hspace{0pt}$};
   \node [circle,draw,inner sep=0pt] (P3) at (8,16) {$\hspace{0pt}$};
   \node [circle,draw,inner sep=0pt] (P4) at (7,18) {$\hspace{0pt}$};
   \draw [-] (X) .. controls (14,18) .. (P4);
   \draw [-] (T1) .. controls (5,2) and (5,14) .. (P2);
   \draw [-] (T1) .. controls (12,2) and (14,16) .. (P3);
   \draw [-] (T2) .. controls (6,4) and (3,16) .. (P3);
   \draw [-] (T2) .. controls (-2,4) and (-2,18) .. (P4);
   \draw [-] (T3) .. controls (6,6) and (6,12) .. (P1);
   \draw [-] (T3) .. controls (8.5,6.2) .. (A.south west);
   \draw [-] (T4) .. controls (9.5,8) .. (A.south east);
   \draw [-] (T4) .. controls (12,8) and (11.5,14) .. (P2);
   \draw [-] (A.north east) .. controls (9.5,12) .. (P1);
   \draw [-] (H) to (16,18);
  \end{tikzpicture}
 }
 \]
  which, after straightening two zigzags, becomes
   \[
 \scalebox{.4}{
  \begin{tikzpicture}[style={line width=0.2mm, inner sep = 0.8pt}]
   \node [scale=2] (Y0) at (0,0) {$\mathrm{F}$};
   \node [scale=2] (H) at (8.5,0) {$\mathrm{H}$};
   \node (Y1) [circle,draw,inner sep=0pt] at (2,2) {$\hspace{0mm}$};
   \node (Y2) [circle,draw,inner sep=0pt] at (5,2.8) {$\hspace{0mm}$};
   \node [rectangle,draw,scale=1.5] (Y3) at (3.5,5) {$\; \alpha_{\mathrm{F}^{\blackdiamond\blackdiamond}} \boxtimes - \;$};
   \node (Y4) [circle,draw,inner sep=0pt] at (2.5,7.5) {$\hspace{0mm}$};
   \node (Y5) [circle,draw,inner sep=0pt] at (3,10) {$\hspace{0mm}$};
   \draw [-] (Y0) .. controls (0,9.8) .. (Y5);
   \draw [-] (Y1) to [out=0,in=-110] (Y3.south west);
   \draw [-] (Y2) to [out=180,in=-70] (Y3.south east);
   \draw [-] (Y1) .. controls (0.5,2.5) and (0.5,7.5) .. (Y4);
   \draw [-] (Y3.north east) .. controls (4,5.8) and (5.5,7.5) .. (Y4);
   \draw [-] (Y2) .. controls (6.5,2.5) and (6.5,10) .. (Y5);
   \draw [-] (H) to (8.5,10);
  \end{tikzpicture}
 }
 \]
 Again writing $\overline{\mathrm{F}}$ for $\mathrm{F} \boxtimes -$, the above diagram gives us the morphism
 $
  \varepsilon^{\mathrm{F},r} \circ (\overline{\mathrm{F}}\varepsilon^{\mathrm{F}^{\blackdiamond},r}\mathrm{F}^{\blackdiamond}) \circ (\overline{\mathrm{F}}\overline{\mathrm{F}^{\blackdiamond}}\overline{\alpha_{\mathrm{F}^{\blackdiamond\blackdiamond}}}\overline{\mathrm{F}^{\blackdiamond}}) \circ \overline{\mathrm{F}}\overline{\mathrm{F}^{\blackdiamond}}\overline{\mathrm{F}}\eta^{\mathrm{F}^{\blackdiamond},r}\circ \overline{\mathrm{F}}\eta^{\mathrm{F},r}
 $
 which, by Corollary~\ref{Simplify}, is equal to $\alpha$.
\end{proof}

\section{The bar complex approach}\label{s3}

\subsection{Comonad cohomology} \label{s31}

We give a very brief summary of the theories of relative homological algebra and comonad cohomology. More extensive accounts can be found in \cite{FGS}, \cite{GHS} \cite{MacL}.

Given a preadditive category $\mathcal{A}$ and a comonad $\euler{C}$ on $\mathcal{A}$, one obtains a cohomology theory by declaring a sequence $X \xrightarrow{f} Y \xrightarrow{g} Z$ to be {\it $\euler{C}$-exact} if the sequence
\[
 \Hom{\euler{C}(W), X} \xrightarrow{\Hom{\euler{C}(W),f}} \Hom{\euler{C}(W), Y} \xrightarrow{\Hom{\euler{C}(W),g}} \Hom{\euler{C}(W), Z}
\]
is an exact sequence in $\mathbf{Ab}$, for all $W \in \mathcal{A}$.
Recall that the categories and functors we consider are $\Bbbk$-linear. Thus, in our case, we obtain exact sequences in $\mathbf{Vec}_{\Bbbk}$, rather than just in $\mathbf{Ab}$.

An object $Q \in \mathcal{A}$ is said to be {\it $\euler{C}$-projective} if there is a morphism $s: Q \rightarrow \euler{C}(Q)$ such that $\varepsilon_{Q} \circ s = \on{id}_{Q}$. By \cite[Lemma~2.5]{GHS}, this is equivalent to $Q$ being a direct summand of an object of the form $\euler{C}(Q')$, for some $Q' \in \mathcal{A}$. A sequence
\[\begin{tikzcd}
	\cdots & {Q_{2}} & {Q_{1}} & {Q_{0}} & X & 0
	\arrow[from=1-5, to=1-6]
	\arrow[from=1-1, to=1-2]
	\arrow[from=1-2, to=1-3]
	\arrow[from=1-3, to=1-4]
	\arrow[from=1-4, to=1-5]
\end{tikzcd}\]
is said to be a {\it $\euler{C}$-resolution} if it is $\euler{C}$-exact and $Q_{i}$ is $\euler{C}$-projective. Every object of $X$ admits a $\euler{C}$-resolution, known as the {\it bar resolution}, given by the following sequence:
\[\begin{tikzcd}
	\cdots & {\euler{C}^{2}(X)} & {\euler{C}(X)} & X & 0
	\arrow["{d_{1}}", from=1-2, to=1-3]
	\arrow["{d_{0}}", from=1-3, to=1-4]
	\arrow[from=1-4, to=1-5]
	\arrow[from=1-1, to=1-2]
\end{tikzcd}\]
where $d_{n} = \sum_{i=0}^{n} (-1)^{i}\euler{C}^{n-i}(\varepsilon_{\euler{C}^{i}(X)})$. In other words, it is the chain complex associated to the simplicial object
\[\begin{tikzcd}
	\cdots & {\euler{C}^{3}(X)} &&& {\euler{C}^{2}(X)} && {\euler{C}(X)} & X
	\arrow["{\varepsilon_{X}}", from=1-7, to=1-8]
	\arrow["{\euler{C}\varepsilon_{X}}", shift left=4, from=1-5, to=1-7]
	\arrow["{\varepsilon_{\euler{C}(X)}}"', shift right=3, from=1-5, to=1-7]
	\arrow["{\euler{C}^{2}\varepsilon_{X}}", shift left=5, curve={height=-6pt}, from=1-2, to=1-5]
	\arrow["{\varepsilon_{\euler{C}^{2}(X)}}"', shift right=5, curve={height=6pt}, from=1-2, to=1-5]
	\arrow["{\euler{C}\varepsilon_{\euler{C}(X)}}"{description}, from=1-2, to=1-5]
	\arrow[from=1-1, to=1-2]
	\arrow["\Delta"{description}, from=1-7, to=1-5]
	\arrow["{\euler{C}\Delta}"{description}, shift left=4, from=1-5, to=1-2]
	\arrow["{\Delta\euler{C}}"{description}, shift right=4, from=1-5, to=1-2]
\end{tikzcd}\]
under Dold-Kan correspondence (for the appropriate generalization of the case $\mathcal{A} = \mathbf{Ab}$, see \cite[Theorem~1.2.3.7]{Lu2}).

\begin{definition}\label{ResolventPair}
If the category $\mathcal{A}$ is abelian, and the comonad $\euler{C}$ is of the form $\euler{C} = \euler{F} \circ \euler{G}$, for an adjoint pair of functors
\[\begin{tikzcd}
	{\mathcal{B}} & {\mathcal{A}}
	\arrow[""{name=0, anchor=center, inner sep=0}, "{\euler{F}}", shift left=2, from=1-1, to=1-2]
	\arrow[""{name=1, anchor=center, inner sep=0}, "{\euler{G}}", shift left=2, from=1-2, to=1-1]
	\arrow["\dashv"{anchor=center, rotate=-90}, draw=none, from=0, to=1]
\end{tikzcd},\]
where $\mathcal{B}$ also is abelian, and the right adjoint $\euler{G}$ is exact and faithful, then $(\euler{F,G})$ is said to be a {\it resolvent pair}.
\end{definition}
  For a resolvent pair $(\mathrm{F,G})$, the comonad cohomology associated to the comonad $\mathrm{F}\circ \mathrm{G}$ can be recast in terms of relative homological algebra of \cite{Hoc} ,\cite{MacL}, see \cite[Lemma~2.16]{FGS} and \cite[Proposition~2.17]{FGS}.
  In particular, we have the following:
  \begin{proposition}[{\cite[Lemma~2.16.4 and Proposition~2.17.2]{FGS}}]\label{Soundness}
 If $\euler{C}$ is a comonad coming from a resolvent pair, then a $\euler{C}$-exact sequence is exact in the ordinary sense of homological algebra on the abelian category $\mathcal{A}$, and, similarly, a $\euler{C}$-resolution is a resolution in the sense of homological algebra on $\mathcal{A}$.
  \end{proposition}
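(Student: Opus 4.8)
The plan is to reduce both assertions to the single statement that $\euler{C}$-exactness of a sequence in $\mathcal{A}$ forces ordinary exactness; the claim about resolutions then follows by applying this at every term, the $\euler{C}$-projectivity of the $Q_i$ being irrelevant to exactness. The crucial structural input is that an exact and faithful additive functor between abelian categories reflects exact sequences. Indeed, if $\euler{G}$ carries a complex $X \xrightarrow{f} Y \xrightarrow{g} Z$ to an exact one, then writing $H$ for the homology object $\ker g / \on{im} f$, exactness of $\euler{G}$ yields $\euler{G}(H) \cong \ker \euler{G}(g)/\on{im}\euler{G}(f) = 0$, and faithfulness forces $H = 0$, since a faithful additive functor reflects zero objects ($\euler{G}(H)=0$ makes $\on{id}_H$ vanish under $\euler{G}$). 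Thus it suffices to show that a $\euler{C}$-exact sequence becomes exact after applying $\euler{G}$.

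To that end I would first transport $\euler{C}$-exactness into $\mathcal{B}$. Since $\euler{C} = \euler{F} \circ \euler{G}$ with $\euler{F} \dashv \euler{G}$, the adjunction supplies natural isomorphisms $\Hom{\euler{C}(W), A}_{\mathcal{A}} \cong \Hom{\euler{G}(W), \euler{G}(A)}_{\mathcal{B}}$, so $\euler{C}$-exactness of $X \to Y \to Z$ says precisely that $\Hom{\euler{G}(W), \euler{G}(X)}_{\mathcal{B}} \to \Hom{\euler{G}(W),\euler{G}(Y)}_{\mathcal{B}} \to \Hom{\euler{G}(W),\euler{G}(Z)}_{\mathcal{B}}$ is exact for every $W \in \mathcal{A}$. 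Under this isomorphism the counit $\varepsilon_W \colon \euler{C}(W) \to W$ corresponds to $\on{id}_{\euler{G}(W)}$, and it is an epimorphism exactly because $\euler{G}$ is faithful; testing the vanishing of the composite at $W = X$ against $\varepsilon_X$ gives $gf\varepsilon_X = 0$, whence $gf = 0$, so $\euler{G}$ of the sequence is a complex.

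The key observation is now that every object occurring in $\euler{G}(X) \to \euler{G}(Y) \to \euler{G}(Z)$ is itself of the form $\euler{G}(W)$ with $W \in \mathcal{A}$, hence an admissible test object. I would use this to build a contracting homotopy in $\mathcal{B}$: for the (augmented) complex $\cdots \to \euler{G}(Q_1) \to \euler{G}(Q_0) \to \euler{G}(X) \to 0$ one constructs maps $h_n \colon \euler{G}(C_n) \to \euler{G}(C_{n+1})$ with $\euler{G}(d_{n+1})h_n + h_{n-1}\euler{G}(d_n) = \on{id}$ by induction, at each stage lifting $\on{id} - h_{n-1}\euler{G}(d_n)$ through $\euler{G}(d_{n+1})_\ast$. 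Such a lift exists because exactness of $\Hom{\euler{G}(C_n),\euler{G}(-)}_{\mathcal{B}}$ — the test object being $W = C_n$ — identifies the image of $\euler{G}(d_{n+1})_\ast$ with the kernel of $\euler{G}(d_n)_\ast$, and the inductive hypothesis together with $d^2 = 0$ places the relevant element in that kernel. A contractible complex is split exact, hence exact, and by the reflection property of $\euler{G}$ established above, the original complex is exact in $\mathcal{A}$.

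The main obstacle lies entirely in this last step: the bookkeeping of the homotopy, and especially starting the induction at the augmentation term $\euler{G}(X)$, which is not $\euler{C}$-projective — there one lifts $\on{id}_{\euler{G}(X)}$ through the augmentation using the surjectivity half of $\euler{C}$-exactness. For a bare three-term $\euler{C}$-exact sequence rather than a full resolution one should instead replace the homotopy by the direct remark that a $\euler{C}$-exact short exact sequence is $\euler{G}$-split and therefore splits after $\euler{G}$, which is again exact. Everything else — the adjunction rewriting, the counit being epimorphic, and the reflection lemma — is clean and formal.
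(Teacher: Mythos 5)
Your reflection lemma (an exact, faithful additive functor reflects exactness), the adjunction transport $\Hom{\euler{C}(W),A}_{\mathcal{A}} \cong \Hom{\euler{G}(W),\euler{G}(A)}_{\mathcal{B}}$, the identification of the counit with $\on{id}_{\euler{G}(W)}$, the deduction $gf=0$ from the counit being epi, and the inductive contracting homotopy for the augmented complex are all correct; in particular the resolution half of the proposition is fully proved by your argument. (The paper itself offers no proof, importing the statement from \cite{FGS}, so any correct proof is necessarily "your own route".) The genuine gap is in the other half --- the very statement you announced at the outset you would reduce everything to. For a bare three-term $\euler{C}$-exact sequence $X \xrightarrow{f} Y \xrightarrow{g} Z$, your closing remark does not do the job: the sequence is not assumed short exact ($f$ need not be mono, $g$ need not be epi, and exactness at $Y$ is precisely what is to be proved), so the observation that a $\euler{C}$-exact \emph{short exact} sequence becomes split exact under $\euler{G}$ is inapplicable --- invoking it presupposes the exactness you are trying to establish. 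Nor can the homotopy method be adapted: a three-term complex has no further term through which to lift $\on{id}_{\euler{G}(Y)} - k\,\euler{G}(g)$, which is exactly why you abandoned it there.

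The missing idea is to use the kernel itself as a test object; this closes the gap and in fact makes all the machinery unnecessary. Work directly in $\mathcal{A}$: put $K = \ker g$ with inclusion $\iota\colon K \to Y$, and apply $\euler{C}$-exactness with $W = K$ to the morphism $\iota \circ \varepsilon_{K}\colon \euler{C}(K) \to Y$, which satisfies $g\,\iota\,\varepsilon_{K}=0$. This produces $h\colon \euler{C}(K) \to X$ with $f h = \iota\,\varepsilon_{K}$. Since $\varepsilon_{K}$ is epi (faithfulness of $\euler{G}$ again), $\on{Im}(\iota\,\varepsilon_{K}) = \on{Im}(\iota) = K$, whence $\ker g \subseteq \on{Im} f$; combined with $gf=0$ this gives exactness at $Y$. (If you prefer to stay in your transported picture in $\mathcal{B}$, the same works because $\euler{G}$, being exact, preserves kernels, so $\ker \euler{G}(g) = \euler{G}(\ker g)$ is an admissible test object.) Note that this argument uses only that the counit is pointwise epi, not exactness of $\euler{G}$, and applied termwise --- together with the remark that $d_{0}h = \varepsilon_{X}$ epi forces $d_{0}$ epi at the augmentation --- it also yields the resolution half in a few lines, replacing the reflection lemma and the homotopy altogether.
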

Using Proposition~\ref{Soundness}, combined with the fact that colimits in functor categories are constructed pointwise, we obtain the following consequence:
\begin{corollary}\label{ResolvingIdentity}
 If $\euler{C}$ is a comonad coming from a resolvent pair, then there is a natural isomorphism
\[
  \on{Id}_{\mathcal{A}} \simeq \on{coeq}
  \Big(
\begin{tikzcd}[ampersand replacement=\&]
	{\euler{C}^{2}} \& {\euler{C}}
	\arrow["{\euler{C}\varepsilon}", shift left=2, from=1-1, to=1-2]
	\arrow["{\varepsilon\euler{C}}"', shift right=2, from=1-1, to=1-2]
\end{tikzcd}
  \Big).
\]
\end{corollary}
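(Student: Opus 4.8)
The plan is to establish the isomorphism objectwise and then promote it to a natural isomorphism of functors, using exactly the two ingredients flagged before the statement: Proposition~\ref{Soundness} and the fact that colimits in functor categories are computed pointwise.

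First I would fix $X \in \mathcal{A}$ and consider the truncated bar complex
\[
\euler{C}^{2}(X) \xrightarrow{d_{1}} \euler{C}(X) \xrightarrow{\varepsilon_{X}} X \to 0,
\]
where, by the differential formula $d_{n} = \sum_{i=0}^{n}(-1)^{i}\euler{C}^{n-i}(\varepsilon_{\euler{C}^{i}(X)})$ recalled above, we have $d_{0} = \varepsilon_{X}$ and $d_{1} = \euler{C}(\varepsilon_{X}) - \varepsilon_{\euler{C}(X)} = (\euler{C}\varepsilon)_{X} - (\varepsilon\euler{C})_{X}$. Since the full bar resolution is a $\euler{C}$-resolution and $\euler{C}$ arises from a resolvent pair, Proposition~\ref{Soundness} upgrades $\euler{C}$-exactness to ordinary exactness in $\mathcal{A}$; in particular the displayed three-term tail is exact. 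Exactness at $X$ says $\varepsilon_{X}$ is an epimorphism, and exactness at $\euler{C}(X)$ says $\on{im}(d_{1}) = \ker(\varepsilon_{X})$, so that $\varepsilon_{X}$ exhibits $X$ as a cokernel of $d_{1}$.

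Second, since $\mathcal{A}$ is abelian and hence additive, the coequalizer of a parallel pair is the cokernel of its difference, so
\[
\on{coeq}\big((\euler{C}\varepsilon)_{X},(\varepsilon\euler{C})_{X}\big) = \on{coker}\big((\euler{C}\varepsilon)_{X} - (\varepsilon\euler{C})_{X}\big) = \on{coker}(d_{1}) \cong X,
\]
the last isomorphism being the one just produced, with structure map $\varepsilon_{X}$. It remains to assemble these objectwise statements. Because coequalizers in the functor category are computed pointwise, $\on{coeq}(\euler{C}\varepsilon,\varepsilon\euler{C})$ evaluated at $X$ is exactly $\on{coeq}((\euler{C}\varepsilon)_{X},(\varepsilon\euler{C})_{X})$. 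Moreover $\varepsilon \colon \euler{C} \Rightarrow \on{Id}_{\mathcal{A}}$ coequalizes the pair: naturality of $\varepsilon$ applied to the morphism $\varepsilon_{X}$ gives $\varepsilon_{X}\circ\euler{C}(\varepsilon_{X}) = \varepsilon_{X}\circ\varepsilon_{\euler{C}(X)}$, i.e. $\varepsilon_{X}\circ d_{1} = 0$. Hence the universal property yields a comparison natural transformation $\on{coeq}(\euler{C}\varepsilon,\varepsilon\euler{C}) \Rightarrow \on{Id}_{\mathcal{A}}$ whose components are precisely the objectwise isomorphisms above, so it is a natural isomorphism.

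The only genuine content is the passage from $\euler{C}$-exactness to honest exactness of the bar complex, which is delivered wholesale by Proposition~\ref{Soundness}; everything else is the elementary identification of a coequalizer with a cokernel together with a routine naturality check, so I do not anticipate a real obstacle. The two points to get exactly right are matching the bar differential $d_{1}$ with the difference $\euler{C}\varepsilon - \varepsilon\euler{C}$, and verifying that $\varepsilon$ itself is the coequalizing map, so that the isomorphism obtained is the canonical one induced by $\varepsilon$ rather than merely an abstract comparison.
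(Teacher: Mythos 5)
Your proof is correct and follows exactly the route the paper intends: the paper states this corollary as an immediate consequence of Proposition~\ref{Soundness} together with pointwise computation of colimits in functor categories, and your argument (truncating the bar resolution, identifying $d_{1}$ with $\euler{C}\varepsilon - \varepsilon\euler{C}$, upgrading $\euler{C}$-exactness to ordinary exactness, and using naturality of $\varepsilon$ to induce the comparison map) supplies precisely the details the paper leaves implicit.
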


Recall that a $\Bbbk$-linear functor $\euler{F}: \mathcal{A} \rightarrow \mathcal{B}$ gives rise to an adjoint triple
\[\begin{tikzcd}[ampersand replacement=\&]
	{[\mathcal{A}^{\on{op}},\mathbf{Vec}_{\Bbbk}]} \&\& {[\mathcal{B}^{\on{op}},\mathbf{Vec}_{\Bbbk}]}
	\arrow[""{name=0, anchor=center, inner sep=0}, "{\euler{F}_{!}}"{description}, shift left=4, curve={height=-6pt}, from=1-1, to=1-3]
	\arrow[""{name=1, anchor=center, inner sep=0}, "{\euler{F}^{\ast}}"{description}, from=1-3, to=1-1]
	\arrow[""{name=2, anchor=center, inner sep=0}, "{\euler{F}_{\ast}}"{description}, shift right=4, curve={height=6pt}, from=1-1, to=1-3]
	\arrow["\dashv"{anchor=center, rotate=-90}, draw=none, from=1, to=2]
	\arrow["\dashv"{anchor=center, rotate=-90}, draw=none, from=0, to=1]
\end{tikzcd}\]
where $\euler{F}^{\ast}(\euler{P})(X) = \euler{P}(\euler{F}(X))$, and $\euler{F}_{!}$ and $\euler{F}_{\ast}$ are given by the left and right Kan extensions of $\yo_{\mathcal{B}} \circ \euler{F}$ along $\yo_{\mathcal{A}}$, respectively. Thus,
\[
 \euler{F}_{!}(\euler{Q})(Y) = \int^{X} \euler{Q}(X) \kotimes \Hom{Y,\euler{F}(X)} \text{ and } \euler{F}_{\ast}(\euler{Q})(Y) = \int_{X}\Hom{\Hom{\euler{F}(X),Y}, \euler{Q}(X)} = \on{Hom}_{[\mathcal{A}^{\on{op}},\mathbf{Vec}_{\Bbbk}]}(\Hom{\euler{F}(-),Y},\euler{Q}).
\]

As a consequence, the functor $\euler{F}^{\ast}$ is continuous and cocontinuous. Thus, the adjoint pair $(\euler{F}_{!},\euler{F}^{\ast})$ is resolvent if and only if $\euler{F}^{\ast}$ is faithful.

In the non-enriched case, \cite[Proposition~4.1]{AEBSV} provide an elementary proof of the following claim first established in \cite{CJSV}: $\euler{F}^{\ast}$ is faithful if and only if every object of $\mathcal{B}$ is a retract of an object of the form $\euler{F}(X)$, for some $X \in \mathcal{A}$. Following the terminology of \cite{CJSV}, these functors are known as {\it liberal functors}. Following the arguments of \cite{CJSV} in the case of the $2$-category $\mathbf{Cat}_{\Bbbk}$, rather than $\mathbf{Cat}$, we obtain an analogous notion for $\Bbbk$-linear functors, mirroring the change in the set of absolute weights:
\begin{definition}\label{LiberalFunctors}
 A $\Bbbk$-linear functor $\euler{F}: \mathcal{A} \rightarrow \mathcal{B}$ is said to be {\it liberal} if every object of $\mathcal{B}$ is a direct summand of a finite direct sum of objects of the form $\euler{F}(X)$, for $X \in \mathcal{A}$.
\end{definition}

We now give the $\Bbbk$-linear analogue of \cite[Proposition~4.1]{AEBSV}:

\begin{proposition}\label{SufficientCondition}
 Let $\euler{F}: \mathcal{A} \rightarrow \mathcal{B}$ be a $\Bbbk$-linear functor, with $\mathcal{B}$ additive and idempotent split. The functor $\euler{F}^{\ast}$ is faithful if and only if $\euler{F}$ is liberal.
\end{proposition}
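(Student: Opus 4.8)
The plan is to prove the two implications separately, using for the reverse direction the standard fact that a right adjoint is faithful exactly when its counit is a (pointwise) epimorphism. Throughout I exploit that, since $\euler{F}^{\ast}$ is $\Bbbk$-linear, it is faithful if and only if it reflects zero morphisms, i.e.\ $\euler{F}^{\ast}\phi = 0$ forces $\phi = 0$.

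For the direction ``$\euler{F}$ liberal $\Rightarrow$ $\euler{F}^{\ast}$ faithful'' I would take $\phi \colon \euler{P} \to \euler{P}'$ with $\euler{F}^{\ast}\phi = 0$, that is $\phi_{\euler{F}(X)} = 0$ for all $X \in \mathcal{A}$, and show $\phi_Y = 0$ for each $Y \in \mathcal{B}$. By liberality I choose $Z = \bigoplus_{k} \euler{F}(X_k)$ together with $s \colon Y \to Z$ and $r \colon Z \to Y$ satisfying $r s = \on{id}_Y$. Since presheaves are $\Bbbk$-linear they carry the biproduct $Z$ to a biproduct, so writing $\pi_k \colon Z \to \euler{F}(X_k)$ for the projections we get $\on{id}_{\euler{P}(Z)} = \sum_k \euler{P}(\pi_k)\euler{P}(\iota_k)$; naturality of $\phi$ along each $\pi_k$ then gives $\phi_Z \circ \euler{P}(\pi_k) = \euler{P}'(\pi_k)\circ \phi_{\euler{F}(X_k)} = 0$, whence $\phi_Z = 0$. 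Finally contravariant functoriality gives $\euler{P}(s)\euler{P}(r) = \on{id}_{\euler{P}(Y)}$, and naturality of $\phi$ along $s$ yields $\phi_Y = \euler{P}'(s)\circ \phi_Z \circ \euler{P}(r) = 0$.

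For the converse ``$\euler{F}^{\ast}$ faithful $\Rightarrow$ $\euler{F}$ liberal'' I use the adjunction $\euler{F}_{!}\dashv \euler{F}^{\ast}$ recalled before the statement. A right adjoint is faithful iff every component of its counit $\epsilon \colon \euler{F}_{!}\euler{F}^{\ast} \Rightarrow \on{Id}$ is an epimorphism; as $[\mathcal{B}^{\on{op}},\mathbf{Vec}_{\Bbbk}]$ is abelian with colimits computed pointwise, this means each $(\epsilon_{\euler{P}})_{Y'}$ is surjective. I apply this to the representable $h_Y = \Hom{-,Y}_{\mathcal{B}}$. Using the coend formula for $\euler{F}_{!}$ and $\euler{F}^{\ast}h_Y = \Hom{\euler{F}(-),Y}_{\mathcal{B}}$ one computes
\[
\euler{F}_{!}\euler{F}^{\ast}h_Y (Y') = \int^{X} \Hom{\euler{F}(X),Y}_{\mathcal{B}} \kotimes \Hom{Y',\euler{F}(X)}_{\mathcal{B}},
\]
and the counit is the composition pairing $[g \otimes t] \mapsto g \circ t$. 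Evaluating surjectivity at $Y' = Y$ shows $\on{id}_Y$ lies in the image; since an element of a coend is a finite sum of elementary tensors, this yields objects $X_1,\dots,X_n$ and morphisms $t_k \colon Y \to \euler{F}(X_k)$, $g_k \colon \euler{F}(X_k) \to Y$ with $\sum_k g_k t_k = \on{id}_Y$. Assembling $s = (t_k)_k \colon Y \to \bigoplus_k \euler{F}(X_k)$ and $g = (g_k)_k \colon \bigoplus_k \euler{F}(X_k) \to Y$ gives $g s = \on{id}_Y$, exhibiting $Y$ as a retract of a finite direct sum of objects in the image of $\euler{F}$; since $\mathcal{B}$ is idempotent split, this retract is a genuine direct summand, so $\euler{F}$ is liberal.

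The main obstacle is the converse: correctly identifying the counit of $\euler{F}_{!}\dashv \euler{F}^{\ast}$ at a representable presheaf with the composition pairing, and extracting the finite family realizing $\on{id}_Y$ from the pointwise surjectivity of the counit together with the coend (coequalizer) description. The idempotent-splitting hypothesis enters only at the very end, to upgrade the retraction into a direct-summand decomposition as demanded by the definition of liberality, while additivity of $\mathcal{B}$ together with $\Bbbk$-linearity of presheaves is what licenses the finite biproduct manipulations used in both directions.
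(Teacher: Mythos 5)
Your proof is correct and follows essentially the same route as the paper's: the forward direction is the same naturality-plus-splitting argument (merely rephrased via reflection of zero morphisms, which is equivalent to faithfulness by $\Bbbk$-linearity), and the converse is the paper's argument verbatim in substance---faithfulness of the right adjoint $\euler{F}^{\ast}$ gives a pointwise epimorphic counit, which evaluated on the representable $\Hom{-,Y}$ at $Y$ produces a finite family $g_k \circ t_k$ summing to $\on{id}_Y$ and hence a retraction from $\bigoplus_k \euler{F}(X_k)$. Your explicit remark that idempotent splitting upgrades the retract to a direct summand is a correct (and slightly more careful) reading of the paper's final step.
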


\begin{proof}
Assume that $\euler{F}$ is liberal.
 Let $\euler{P,Q} \in [\mathcal{B}^{\on{op}},\mathbf{Vec}_{\Bbbk}]$, and let
$\begin{tikzcd}[ampersand replacement=\&,sep=scriptsize]
	{\euler{P}} \& {\euler{Q}}
	\arrow["\alpha", shift left=2, from=1-1, to=1-2]
	\arrow["\beta"', shift right=2, from=1-1, to=1-2]
\end{tikzcd}$ be a pair of transformations such that $\alpha \neq \beta$. Let $Z \in \mathcal{B}$ be such that $\alpha_{Z} \neq \beta_{Z}$. Let $(\varphi_{i})_{i=1}^{m}: \bigoplus_{i=1}^{m}\euler{F}(X_{i}) \twoheadrightarrow Z$ be a split epimorphism. Naturality of $\alpha$ yields the commutativity of
\[\begin{tikzcd}[ampersand replacement=\&,sep=scriptsize]
	{\euler{P}(\bigoplus_{i=1}^{m}\euler{F}(X_{i}))} \& {\euler{P}(Z)} \\
	{\euler{Q}(\bigoplus_{i=1}^{m}\euler{F}(X_{i}))} \& {\euler{Q}(Z)}
	\arrow["{\euler{P}((\varphi_{i}))}", shift left=1, from=1-1, to=1-2]
	\arrow["{\alpha_{Z}}", from=1-2, to=2-2]
	\arrow["{\alpha_{\bigoplus_{i=1}^{m}\euler{F}(X_{i})}}"', shift right=1, from=1-1, to=2-1]
	\arrow["{\euler{Q}((\varphi_{i}))}"', shift right=1, from=2-1, to=2-2]
\end{tikzcd}
\quad \text{ and of } \quad
\begin{tikzcd}[ampersand replacement=\&,sep=scriptsize]
	{\euler{P}(\bigoplus_{i=1}^{m}\euler{F}(X_{i}))} \& {\euler{P}(Z)} \\
	{\euler{Q}(\bigoplus_{i=1}^{m}\euler{F}(X_{i}))} \& {\euler{Q}(Z)}
	\arrow["{\euler{P}((\varphi_{i}))}", shift left=1, from=1-1, to=1-2]
	\arrow["{\beta_{Z}}", from=1-2, to=2-2]
	\arrow["{\beta_{\bigoplus_{i=1}^{m}\euler{F}(X_{i})}}"', shift right=1, from=1-1, to=2-1]
	\arrow["{\euler{Q}((\varphi_{i}))}"', shift right=1, from=2-1, to=2-2]
\end{tikzcd}
\]
Since $(\varphi_{i})$ is a split epimorphism, so is $\euler{P}((\varphi_{i}))$. The assumption $\alpha_{Z} \neq \beta_{Z}$ shows $\alpha_{Z} \circ (\varphi_{i}) \neq \beta_{Z} \circ (\varphi_{i})$. The naturality squares above then yield $\alpha_{\bigoplus_{i=1}^{m}\euler{F}(X_{i})} \neq \beta_{\bigoplus_{i=1}^{m}\euler{F}(X_{i})}$. Since $\alpha_{\bigoplus_{i=1}^{m}\euler{F}(X_{i})} = \bigoplus_{i=1}^{m}\alpha_{\euler{F}(X_{i})}$ and similarly for $\beta$, we find $\bigoplus_{i=1}^{m}\alpha_{\euler{F}(X_{i})} \neq \bigoplus_{i=1}^{m}\beta_{\euler{F}(X_{i})}$, which is the case if and only if there is some $j \in \setj{1,\ldots,m}$ such that $\alpha_{\euler{F}(X_{j})} \neq \beta_{\euler{F}(X_{j})}$. This shows $\euler{F}^{\ast}(\alpha) \neq \euler{F}^{\ast}(\beta)$.

For the converse, assume that $\euler{F}^{\ast}$ is faithful. This is equivalent to the counit $\euler{\varepsilon^{F}}: \euler{F_{!}F^{\ast}} \Rightarrow \on{Id}_{[\mathcal{B}^{\on{op}},\mathbf{Vec}_{\Bbbk}]}$ of the adjunction $(\euler{F_{!},F^{\ast}, \eta^{F},\varepsilon^{F}})$ being an epimorphism (see e.g. \cite[IV.3]{MacL}), and thus, equivalently, a pointwise epimorphism. For $\euler{P} \in [\mathcal{B}^{\on{op}},\mathbf{Vec}_{\Bbbk}]$ and $Y \in \mathcal{B}$, the component $\euler{\varepsilon^{F}_{P}}$ is given by the map
\[
 \int^{X \in \mathcal{A}} \euler{P}(\euler{F}(X)) \kotimes \Hom{Y,\euler{F}(X)} \rightarrow \euler{P}(Y),
\]
which, precomposed with the projection $\coprod_{X \in \mathcal{A}} \euler{P}(\euler{F}(X)) \kotimes \Hom{Y,\euler{F}(X)} \rightarrow \int^{X \in \mathcal{A}} \euler{P}(\euler{F}(X)) \kotimes \Hom{Y,\euler{F}(X)}$, gives the map corresponding to the collection of morphisms
\[
\begin{aligned}
  \euler{P}(\euler{F}(X)) \kotimes \Hom{Y,\euler{F}(X)} \rightarrow &\euler{P}(Y) \\
  v \otimes f \longmapsto &\euler{P}(f)(v)
\end{aligned}
\]
 indexed by $X$. Setting $\euler{P} = \Hom{-,Y}$, we find the epimorphism
 \[
 \begin{aligned}
  \coprod_{X \in \mathcal{A}} \Hom{\euler{F}(X),Y} \kotimes \Hom{Y,\euler{F}(X)} &\twoheadrightarrow \Hom{Y,Y} \\
  f \otimes g &\mapsto g \circ f
 \end{aligned}
 \]
 An element in the domain of the above morphism is of the form $\sum_{i=1}^{n} f_{i} \otimes g_{i}$, for some $f_{i}: \euler{F}(X_{i}) \rightarrow Y$ and $g_{i}: Y \rightarrow \euler{F}(X_{i}), \, X_{i} \in \mathcal{A}$ and $n \in \mathbb{Z}_{\geq 0}$. Since the sum is finite, this gives a factorization of $\on{id}_{Y}$ through $\bigoplus_{i=1}^{n} \euler{F}(X_{i})$, showing that the former is a summand of the latter.
\end{proof}

\begin{remark}
 In the case the codomain is not additive or idempotent split, we may modify Definition~\ref{LiberalFunctors} and Proposition~\ref{SufficientCondition} to refer to the Cauchy completion of $\mathcal{B}$, universally endowing it with finite direct sums and summands, rather than $\mathcal{B}$ itself; alternatively, we may require $\Hom{-,Y}$ to be a direct summand of a finite direct sum of those of the form $\Hom{-,\euler{F}(X_{i})}$.
\end{remark}

Combining Proposition~\ref{Soundness} with Proposition~\ref{SufficientCondition}, we obtain the following statement:

\begin{corollary}
 Let $\csym{S}$ be a semigroup category. If $(\mathrm{F},\mathrm{F}^{\blackdiamond}, \eta^{l},\eta^{r}, \varepsilon^{l},\varepsilon^{r})$ is an adjunction in $\csym{S}$ such that $\mathrm{F}\boxtimes- $ is liberal, then
\begin{equation}\label{LeftUnitality}
  \on{Id}_{[\ccf{S}^{\on{op}},\mathbf{Vec}_{\Bbbk}]} \simeq
\on{Coeq}\Bigg(
\begin{tikzcd}[ampersand replacement=\&]
	{\Hom{-,\mathrm{FF^{\blackdiamond}FF^{\blackdiamond}}} \oast -} \&\&\& {\Hom{-,\mathrm{FF^{\blackdiamond}}} \oast -}
	\arrow["{(\varepsilon^{\mathrm{F},r}\mathrm{F}\mathrm{F}^{\blackdiamond})_{!}}", shift left=3, from=1-1, to=1-4]
	\arrow["{(\mathrm{FF^{\blackdiamond}}\varepsilon^{\mathrm{F},r})_{!}}"', shift right=3, from=1-1, to=1-4]
\end{tikzcd}\Bigg)
\end{equation}
Similarly, if the functor $-\boxtimes \mathrm{F}^{\blackdiamond}$ is liberal, then
\begin{equation}\label{RightUnitality}
\begin{tikzcd}
  \on{Id}_{[\ccf{S}^{\on{op}},\mathbf{Vec}_{\Bbbk}]} \simeq
\on{Coeq}\Bigg(
	{- \oast \Hom{-,\mathrm{FF^{\blackdiamond}FF^{\blackdiamond}}}} &&& {-\oast \Hom{-,\mathrm{FF^{\blackdiamond}}}}
	\arrow["{(\varepsilon^{\mathrm{F},l}\mathrm{F}\mathrm{F}^{\blackdiamond})_{!}}", shift left=3, from=1-1, to=1-4]
	\arrow["{(\mathrm{FF^{\blackdiamond}}\varepsilon^{\mathrm{F},l})_{!}}"', shift right=3, from=1-1, to=1-4]
\end{tikzcd}\Bigg)
\end{equation}
\end{corollary}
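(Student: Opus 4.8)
\textbf{The strategy} is to recognise the left-hand side as the bar resolution of a comonad coming from a resolvent pair, and then to identify that comonad, together with the two face maps, with the Day-convolution data in the statement. First I would set $\euler{F} = \mathrm{F}\boxtimes -\colon \csym{S}\rightarrow\csym{S}$ and pass to the induced adjoint triple $(\euler{F}_{!},\euler{F}^{\ast},\euler{F}_{\ast})$ on $[\csym{S}^{\on{op}},\mathbf{Vec}_{\Bbbk}]$. Since $\mathrm{F}\boxtimes -$ is liberal by hypothesis, Proposition~\ref{SufficientCondition} gives that $\euler{F}^{\ast}$ is faithful; as $\euler{F}^{\ast}$ is also exact (it is both continuous and cocontinuous), the pair $(\euler{F}_{!},\euler{F}^{\ast})$ is resolvent in the sense of Definition~\ref{ResolventPair}. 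Writing $\euler{C} = \euler{F}_{!}\euler{F}^{\ast}$ for the associated comonad, Corollary~\ref{ResolvingIdentity} yields $\on{Id}_{[\ccf{S}^{\on{op}},\mathbf{Vec}_{\Bbbk}]}\simeq\on{coeq}(\euler{C}^{2}\rightrightarrows\euler{C})$ with face maps $\euler{C}\varepsilon$ and $\varepsilon\euler{C}$, where $\varepsilon$ is the counit of $(\euler{F}_{!},\euler{F}^{\ast})$. It then remains to identify $\euler{C}$, $\euler{C}^{2}$ and these two maps with the data in the statement.

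Second, I would identify the functors. From $\euler{F}_{!}(\euler{P})(Y)=\int^{X}\euler{P}(X)\kotimes\Hom{Y,\mathrm{F}\boxtimes X}$ and the Day convolution formula of Theorem~\ref{UniversalPropertyDay}, a Yoneda reduction in the multiplication variable gives $\euler{F}_{!}\simeq\Hom{-,\mathrm{F}}\oast -$. Dually, the density presentation $\euler{P}(\mathrm{F}\boxtimes Y)\simeq\int^{\mathrm{K}}\Hom{\mathrm{F}\boxtimes Y,\mathrm{K}}\kotimes\euler{P}(\mathrm{K})$ combined with the natural isomorphism $\Hom{\mathrm{F}\boxtimes Y,\mathrm{K}}\simeq\Hom{Y,\mathrm{F}^{\blackdiamond}\boxtimes\mathrm{K}}$ of Corollary~\ref{HomIsos} gives $\euler{F}^{\ast}\simeq\Hom{-,\mathrm{F}^{\blackdiamond}}\oast -$. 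Since $\yo_{\ccf{S}}$ is a strong semigroup functor for Day convolution, $\Hom{-,\mathrm{F}}\oast\Hom{-,\mathrm{F}^{\blackdiamond}}\simeq\Hom{-,\mathrm{FF^{\blackdiamond}}}$, so that $\euler{C}\simeq\Hom{-,\mathrm{FF^{\blackdiamond}}}\oast -$ and, iterating, $\euler{C}^{2}\simeq\Hom{-,\mathrm{FF^{\blackdiamond}FF^{\blackdiamond}}}\oast -$.

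Third, and this is where the real work lies, I would identify the two face maps. Under the isomorphisms above, the Kan-extension adjunction $(\euler{F}_{!},\euler{F}^{\ast})$ is identified with the adjunction attached to the dual pair $(\Hom{-,\mathrm{F}},\Hom{-,\mathrm{F}^{\blackdiamond}})$ in the monoidal category $[\csym{S}^{\on{op}},\mathbf{Vec}_{\Bbbk}]$ of Theorem~\ref{PromonoidalUnital} (via Observation~\ref{RigidMonoidal}); hence its counit is $\mathrm{ev}\oast -$ for the duality counit $\mathrm{ev}\colon\Hom{-,\mathrm{FF^{\blackdiamond}}}\rightarrow\Hom{\yo_{\mathbf{B}(\ccf{S})}(-),\on{Id}}$, which, by the explicit description of the monoidal unit in Theorem~\ref{PromonoidalUnital}, is postcomposition with the module counit $\varepsilon^{\mathrm{F},r}$. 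Whiskering $\varepsilon$ by $\euler{C}$ on the right applies $\mathrm{ev}$ to the outer tensor factor of $\Hom{-,\mathrm{FF^{\blackdiamond}}}\oast\Hom{-,\mathrm{FF^{\blackdiamond}}}$; as $\varepsilon^{\mathrm{F},r}$ is a right $\csym{S}$-module transformation, its component at $\mathrm{FF^{\blackdiamond}}$ is $\varepsilon^{\mathrm{F},r}_{\mathrm{FF^{\blackdiamond}}}\boxtimes -$, so this becomes convolution with the internal morphism $\varepsilon^{\mathrm{F},r}_{\mathrm{FF^{\blackdiamond}}}$, i.e. $(\varepsilon^{\mathrm{F},r}\mathrm{FF^{\blackdiamond}})_{!}$; whiskering on the left applies $\mathrm{ev}$ to the inner factor and yields $(\mathrm{FF^{\blackdiamond}}\varepsilon^{\mathrm{F},r})_{!}$ (the coequalizer being insensitive to the labelling of its two parallel arrows). \emph{The main obstacle} is exactly this last bookkeeping: tracking the counit of $(\euler{F}_{!},\euler{F}^{\ast})$ through the chain of coend isomorphisms, verifying that it coincides with the duality counit induced by $\varepsilon^{\mathrm{F},r}$ rather than being merely abstractly isomorphic to it, and matching the two whiskering sides to the inner and outer factors and thereby to the two module whiskerings of $\varepsilon^{\mathrm{F},r}$.

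Finally, the second (right-hand unitality) isomorphism follows by the same argument with $\euler{F}$ replaced by $-\boxtimes\mathrm{F}^{\blackdiamond}$: then $\euler{F}_{!}\simeq -\oast\Hom{-,\mathrm{F}^{\blackdiamond}}$ and $\euler{F}^{\ast}\simeq -\oast\Hom{-,\mathrm{F}}$, using the other isomorphism of Corollary~\ref{HomIsos} and the left-module adjunction $(-\boxtimes\mathrm{F}^{\blackdiamond},-\boxtimes\mathrm{F},\eta^{l},\varepsilon^{l})$ of Axiom~\ref{LeftAdj}, so that $\euler{C}\simeq -\oast\Hom{-,\mathrm{FF^{\blackdiamond}}}$ with counit induced by $\varepsilon^{\mathrm{F},l}$, producing the maps $(\varepsilon^{\mathrm{F},l}\mathrm{FF^{\blackdiamond}})_{!}$ and $(\mathrm{FF^{\blackdiamond}}\varepsilon^{\mathrm{F},l})_{!}$.
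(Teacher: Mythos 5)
Your proposal is correct, and its core is the same as the paper's: realize the identity functor as the bar coequalizer of the comonad attached to a resolvent pair (Proposition~\ref{SufficientCondition} for faithfulness, Proposition~\ref{Soundness}/Corollary~\ref{ResolvingIdentity} for the coequalizer), then use strong monoidality of the Yoneda embedding to rewrite the comonad as Day convolution with $\Hom{-,\mathrm{FF^{\blackdiamond}}}$. Where you diverge is the choice of adjunction presenting the comonad, and this is precisely what creates your flagged ``main obstacle''. You take the Kan-extension adjunction $(\euler{F}_{!},\euler{F}^{\ast})$ for $\euler{F}=\mathrm{F}\boxtimes-$, and must then chase its counit through coend isomorphisms to match the face maps with $(\varepsilon^{\mathrm{F},r}\mathrm{FF^{\blackdiamond}})_{!}$ and $(\mathrm{FF^{\blackdiamond}}\varepsilon^{\mathrm{F},r})_{!}$. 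The paper sidesteps this: it applies $(-)_{!}$, pseudofunctorially, to the adjunction $(\mathrm{F}\boxtimes-,\mathrm{F}^{\blackdiamond}\boxtimes-,\eta^{\mathrm{F},r},\varepsilon^{\mathrm{F},r})$ itself, obtaining the resolvent pair $((\mathrm{F}\boxtimes-)_{!},(\mathrm{F}^{\blackdiamond}\boxtimes-)_{!},\eta^{\mathrm{F},r}_{!},\varepsilon^{\mathrm{F},r}_{!})$, whose counit is $\varepsilon^{\mathrm{F},r}_{!}$ by construction, so the two face maps of the bar coequalizer are literally the whiskerings $(\mathrm{FF^{\blackdiamond}}\varepsilon^{\mathrm{F},r})_{!}$ and $(\varepsilon^{\mathrm{F},r}\mathrm{FF^{\blackdiamond}})_{!}$ with no counit-tracking needed; faithfulness of the right adjoint still comes from liberality, since uniqueness of right adjoints identifies $(\mathrm{F}^{\blackdiamond}\boxtimes-)_{!}$ with $(\mathrm{F}\boxtimes-)^{\ast}$ --- the same identification you establish via Corollary~\ref{HomIsos} and co-Yoneda. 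In other words, the uniqueness-of-adjoints observation you invoke implicitly would let you replace your entire third step by a one-line appeal to pseudofunctoriality of $(-)_{!}$. Your treatment of the second isomorphism is exactly the paper's: by Axiom~\ref{LeftAdj} the left adjoint is $-\boxtimes\mathrm{F}^{\blackdiamond}$, so it is liberality of $-\boxtimes\mathrm{F}^{\blackdiamond}$ (not of $-\boxtimes\mathrm{F}$) that yields the resolvent pair there.
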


\begin{proof}
 From Proposition~\ref{SufficientCondition} we find that the (ordinary) adjunction $((\mathrm{F} \boxtimes -)_{!},(\mathrm{F}^{\blackdiamond})_{!}, \eta^{\mathrm{F},r}_{!},\varepsilon^{\mathrm{F},r}_{!})$ gives a resolvent pair. Since the Yoneda embedding is strong monoidal, we have $(\mathrm{F} \boxtimes -)_{!} \simeq \Hom{-,\mathrm{F}} \oast -$; the first claim now follows from Proposition~\ref{Soundness}. The second claim follows similarly - observe that we require $- \boxtimes \mathrm{F}^{\blackdiamond}$ and not $-\boxtimes \mathrm{F}$ to be liberal, since the former is left adjoint to the latter.
\end{proof}

\begin{lemma}
 If $\csym{S}$ is a rigid semigroup category, then $\mathrm{F} \boxtimes -$ is liberal if and only if $- \boxtimes \mathrm{F}^{\blackdiamond}$ is liberal.
\end{lemma}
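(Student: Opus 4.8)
The plan is to exploit the fact that, for a rigid $\csym{S}$, taking right duals is a contravariant equivalence that interchanges the two families of objects appearing in the two liberality conditions. By Proposition~\ref{Dualize}, the functor $(-)^{\blackdiamond}: \csym{S}^{\on{rev,opp}} \to \csym{S}$ is a full, faithful semigroup equivalence with quasi-inverse ${}^{\blackdiamond}(-)$. Being a semigroup functor out of $\csym{S}^{\on{rev,opp}}$, it satisfies $(\mathrm{F}\boxtimes X)^{\blackdiamond} \simeq X^{\blackdiamond}\boxtimes \mathrm{F}^{\blackdiamond}$, and dually ${}^{\blackdiamond}(Z \boxtimes \mathrm{F}^{\blackdiamond}) \simeq \mathrm{F}\boxtimes {}^{\blackdiamond}Z$ (using ${}^{\blackdiamond}(\mathrm{F}^{\blackdiamond}) \simeq \mathrm{F}$, which holds since the two duality functors are mutually quasi-inverse). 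As a $\Bbbk$-linear anti-equivalence, $(-)^{\blackdiamond}$ moreover preserves finite biproducts and carries retracts to retracts: if $Y$ is a retract of $Z$ via $s, r$ with $rs = \on{id}_{Y}$, then applying $(-)^{\blackdiamond}$ exhibits $Y^{\blackdiamond}$ as a retract of $Z^{\blackdiamond}$.

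First I would prove the forward implication. Assume $\mathrm{F}\boxtimes -$ is liberal and let $W \in \csym{S}$ be arbitrary. Put $Y := {}^{\blackdiamond}W$, so that $Y^{\blackdiamond} \simeq W$. By liberality (Definition~\ref{LiberalFunctors}), $Y$ is a retract of a finite direct sum $\bigoplus_{i} \mathrm{F}\boxtimes X_{i}$. Applying $(-)^{\blackdiamond}$ and using the facts recorded above, $W \simeq Y^{\blackdiamond}$ becomes a retract of $\bigoplus_{i} (\mathrm{F}\boxtimes X_{i})^{\blackdiamond} \simeq \bigoplus_{i} X_{i}^{\blackdiamond}\boxtimes \mathrm{F}^{\blackdiamond}$, which is a finite direct sum of objects of the form $Z \boxtimes \mathrm{F}^{\blackdiamond}$. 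As $W$ was arbitrary, $- \boxtimes \mathrm{F}^{\blackdiamond}$ is liberal.

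The converse is entirely symmetric: assuming $- \boxtimes \mathrm{F}^{\blackdiamond}$ is liberal, for arbitrary $W$ I would set $Y := W^{\blackdiamond}$ so that ${}^{\blackdiamond}Y \simeq W$, write $Y$ as a retract of some $\bigoplus_{j} Z_{j} \boxtimes \mathrm{F}^{\blackdiamond}$, and apply the quasi-inverse ${}^{\blackdiamond}(-)$; the anti-homomorphism identity ${}^{\blackdiamond}(Z_{j} \boxtimes \mathrm{F}^{\blackdiamond}) \simeq \mathrm{F}\boxtimes {}^{\blackdiamond}Z_{j}$ then exhibits $W$ as a retract of the finite direct sum $\bigoplus_{j} \mathrm{F}\boxtimes {}^{\blackdiamond}Z_{j}$, so $\mathrm{F}\boxtimes -$ is liberal.

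There is no genuine obstacle here; the content is purely formal once the duality equivalence of Proposition~\ref{Dualize} is in hand. The only points requiring a little care are bookkeeping ones: verifying that the contravariant functor $(-)^{\blackdiamond}$ indeed sends finite direct sums and retracts to finite direct sums and retracts, and tracking the order-reversal in $(\mathrm{F}\boxtimes X)^{\blackdiamond}\simeq X^{\blackdiamond}\boxtimes \mathrm{F}^{\blackdiamond}$ so that the distinguished factor $\mathrm{F}^{\blackdiamond}$ lands on the correct (right-hand) side, matching the functor $- \boxtimes \mathrm{F}^{\blackdiamond}$.
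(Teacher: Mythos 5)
Your proof is correct and follows essentially the same route as the paper: both arguments apply the duality (anti-)equivalence of Proposition~\ref{Dualize} to the left dual ${}^{\blackdiamond}\mathrm{H}$ of an arbitrary object, using $(\mathrm{F}\boxtimes \mathrm{K})^{\blackdiamond}\simeq \mathrm{K}^{\blackdiamond}\boxtimes\mathrm{F}^{\blackdiamond}$ to transport the summand decomposition. Your write-up is slightly more careful about finite direct sums and retracts, and spells out the converse direction, which the paper leaves implicit.
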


\begin{proof}
 Assume $\mathrm{F} \boxtimes -$ is liberal. By definition, any $\mathrm{H} \in \csym{S}$ is a direct summand of $\mathrm{F} \boxtimes \mathrm{G}$ for some $\mathrm{G} \in \csym{S}$. Let $\mathrm{K}$ be such that, for the $\mathrm{H}$ above, ${}^{\blackdiamond}\!\mathrm{H}$ is a direct summand of $\mathrm{F}\boxtimes \mathrm{K}$. Then, by Proposition~\ref{Dualize}, $({}^{\blackdiamond}\!\mathrm{H})^{\blackdiamond} \simeq \mathrm{H}$ is a direct summand of $\mathrm{K}^{\blackdiamond}\mathrm{F}^{\blackdiamond}$.
\end{proof}
A crucial application of Axiom~\ref{Involution} allows us to ``internalize'' Diagram~\ref{LeftUnitality} and Diagram~\ref{RightUnitality}: we find a diagram inside $[\csym{S}^{\on{op}},\mathbf{Vec}_{\Bbbk}]$ such that tensoring with it from the left gives Diagram~\ref{LeftUnitality}, and tensoring with it from the right gives Diagram~\ref{RightUnitality}.
\begin{theorem}\label{LiberalUnitBar}
 Let $\csym{S}$ be a rigid semigroup category and let $\mathrm{F}$ be such that $\mathrm{F} \boxtimes -$ is liberal. Then
\begin{equation}\label{UnitResolution}
  \mathbb{1}_{[\ccf{S}^{\on{op}},\mathbf{Vec}_{\Bbbk}]} \simeq
  \on{Coeq}\Bigg(
\begin{tikzcd}
	{\Hom{-,\mathrm{FF^{\blackdiamond}FF^{\blackdiamond}}}} && {\Hom{-,\mathrm{FF^{\blackdiamond}}}}
	\arrow["{\Hom{-,\varepsilon^{\mathrm{F},r}_{\mathrm{F}}\mathrm{F}^{\blackdiamond}}}", shift left=3, from=1-1, to=1-3]
	\arrow["{\Hom{-,\mathrm{F}\varepsilon^{\mathrm{F},l}_{\mathrm{F}^{\blackdiamond}}}}"', shift right=3, from=1-1, to=1-3]
\end{tikzcd}\Bigg)
\end{equation}

\end{theorem}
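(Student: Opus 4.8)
The plan is to build the candidate unit $\mathbb{U} := \on{Coeq}(D)$, where $D$ denotes the parallel pair appearing in Diagram~\eqref{UnitResolution}, and then to verify directly the two conditions of Proposition~\ref{EGNOUnit}: that $\mathbb{U} \oast -$ and $- \oast \mathbb{U}$ are each naturally isomorphic to $\on{Id}_{[\ccf{S}^{\on{op}},\mathbf{Vec}_{\Bbbk}]}$. The engine of the whole argument is that, by Theorem~\ref{UniversalPropertyDay}, $\oast$ is cocontinuous in each variable, so both $\mathbb{U} \oast -$ and $- \oast \mathbb{U}$ preserve the coequalizer defining $\mathbb{U}$. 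It therefore suffices to identify the diagram $D \oast -$ with the parallel pair of Diagram~\eqref{LeftUnitality}, and the diagram $- \oast D$ with that of Diagram~\eqref{RightUnitality}.

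First I would treat left tensoring. Since the Yoneda embedding is strong monoidal, $\Hom{-,\mathrm{FF^{\blackdiamond}}} \oast - \simeq ((\mathrm{FF^{\blackdiamond}}) \boxtimes -)_{!} = \euler{C}$, the comonad of the resolvent pair that produced Diagram~\eqref{LeftUnitality}, and likewise $\Hom{-,\mathrm{FF^{\blackdiamond}FF^{\blackdiamond}}} \oast - \simeq \euler{C}^{2}$. It then remains to match the two structure maps. The leg $\Hom{-,\varepsilon^{\mathrm{F},r}_{\mathrm{F}}\mathrm{F}^{\blackdiamond}} \oast -$ is, on objects, $\varepsilon^{\mathrm{F},r}_{\mathrm{F}}\boxtimes \mathrm{F}^{\blackdiamond} \boxtimes -$, which is precisely the whiskering $\varepsilon\euler{C}$ (collapse of the outer copy of $\mathrm{FF^{\blackdiamond}}$), using that $\varepsilon^{\mathrm{F},r}$ is a right $\csym{S}$-module transformation. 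For the other leg I would invoke Axiom~\ref{Involution} in the form $\varepsilon^{\mathrm{F},l}_{\mathrm{F}^{\blackdiamond}}\boxtimes - = \mathrm{F}^{\blackdiamond}\varepsilon^{\mathrm{F},r}$ (the identity already extracted in the proof of Theorem~\ref{Ansatzes}); whiskering with $\mathrm{F}$ gives $\mathrm{F}\varepsilon^{\mathrm{F},l}_{\mathrm{F}^{\blackdiamond}}\boxtimes - = \mathrm{FF^{\blackdiamond}}\boxtimes \varepsilon^{\mathrm{F},r} = \euler{C}\varepsilon$. Thus $D \oast -$ is exactly the parallel pair of Diagram~\eqref{LeftUnitality}, and cocontinuity together with Diagram~\eqref{LeftUnitality} yields $\mathbb{U} \oast - \simeq \on{Id}$.

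The right-tensoring case is entirely symmetric. Here I would first invoke the preceding lemma, which guarantees that $- \boxtimes \mathrm{F}^{\blackdiamond}$ is also liberal, so that Diagram~\eqref{RightUnitality} is available. Using $- \oast \Hom{-,\mathrm{G}} \simeq (- \boxtimes \mathrm{G})_{!}$ and the mirror half of Axiom~\ref{Involution}, I would identify $- \oast D$ with the parallel pair of Diagram~\eqref{RightUnitality}, obtaining $- \oast \mathbb{U} \simeq \on{Id}$. Proposition~\ref{EGNOUnit} then forces $\mathbb{U} \simeq \mathbb{1}_{[\ccf{S}^{\on{op}},\mathbf{Vec}_{\Bbbk}]}$.

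The main obstacle is the bookkeeping in matching the two structure maps. The internal diagram $D$ is deliberately written with one leg governed by the right counit $\varepsilon^{\mathrm{F},r}$ and the other by the left counit $\varepsilon^{\mathrm{F},l}$, and it is exactly this built-in asymmetry that allows the single diagram $D$ to reproduce Diagram~\eqref{LeftUnitality} under left tensoring and Diagram~\eqref{RightUnitality} under right tensoring. Verifying this interchangeability is where the left--right compatibility of Axiom~\ref{Involution} is indispensable; without it (as in the axiomatization of \cite{Hou}) the two legs could not be swapped and the bar complex could not be ``internalized'' as a single diagram in $[\csym{S}^{\on{op}},\mathbf{Vec}_{\Bbbk}]$.
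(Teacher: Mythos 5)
Your proposal is correct and takes essentially the same route as the paper's own proof: both identify the two legs of the internal parallel pair with the whiskered counits $\varepsilon\euler{C}$ and $\euler{C}\varepsilon$ of Diagrams~\eqref{LeftUnitality} and~\eqref{RightUnitality}, using that the counits are $\csym{S}$-module transformations together with Axiom~\ref{Involution}, and then conclude via cocontinuity of $\oast$ and Proposition~\ref{EGNOUnit}. Your explicit invocation of the lemma that $-\boxtimes \mathrm{F}^{\blackdiamond}$ is liberal (needed to have Diagram~\eqref{RightUnitality} available) is a detail the paper leaves implicit, but it is the same argument.
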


\begin{proof}
 Since the units and counits in the adjunction $(\mathrm{F},\mathrm{F}^{\blackdiamond}, \eta^{l},\eta^{r}, \varepsilon^{l},\varepsilon^{r})$ are $\csym{S}$-module transformations, we have $\varepsilon^{\mathrm{F},r}\mathrm{FF^{\blackdiamond}} = \varepsilon^{\mathrm{F},r}_{\mathrm{F}}\mathrm{F}^{\blackdiamond} \boxtimes -$, and similarly, $\mathrm{FF^{\blackdiamond}}\varepsilon^{\mathrm{F},l} = - \boxtimes \mathrm{F}\varepsilon^{\mathrm{F},l}_{\mathrm{F}^{\blackdiamond}}$.
 Using Axiom~\ref{Involution}, we also find $\mathrm{FF^{\blackdiamond}}\varepsilon^{\mathrm{F},r} = \mathrm{F}\varepsilon^{\mathrm{F},l}_{\mathrm{F}^{\blackdiamond}} \boxtimes -$ and $\varepsilon^{\mathrm{F},l}\mathrm{FF^{\blackdiamond}} = -\boxtimes \varepsilon^{\mathrm{F},r}_{\mathrm{F}}\mathrm{F}^{\blackdiamond}$. Using these identifications, we may rewrite Diagram~\eqref{LeftUnitality} as
\begin{equation}\label{COast}
\on{Id}_{[\ccf{S}^{\on{op}},\mathbf{Vec}_{\Bbbk}]} \simeq
\on{Coeq}\Bigg(
\begin{tikzcd}
	{\Hom{-,\mathrm{FF^{\blackdiamond}FF^{\blackdiamond}}} \oast -} &&& {\Hom{-,\mathrm{FF^{\blackdiamond}}} \oast -}
	\arrow["{\Hom{-,\varepsilon^{\mathrm{F},r}_{\mathrm{F}}\mathrm{F}^{\blackdiamond}}\oast-}", shift left=3, from=1-1, to=1-4]
	\arrow["{\Hom{-,\mathrm{F}\varepsilon^{\mathrm{F},l}_{F^{\blackdiamond}}}\oast -}"', shift right=3, from=1-1, to=1-4]
\end{tikzcd}
\Bigg)
\end{equation}
and Diagram~\eqref{RightUnitality} as
\begin{equation}\label{OastC}
\on{Id}_{[\ccf{S}^{\on{op}},\mathbf{Vec}_{\Bbbk}]} \simeq
\on{Coeq}\Bigg(
\begin{tikzcd}
	{-\oast \Hom{-,\mathrm{FF^{\blackdiamond}FF^{\blackdiamond}}}} &&& {\oast \Hom{-,\mathrm{FF^{\blackdiamond}}}}
	\arrow["{- \oast \Hom{-,\varepsilon^{\mathrm{F},r}_{\mathrm{F}}\mathrm{F}^{\blackdiamond}}}"', shift right=3, from=1-1, to=1-4]
	\arrow["{-\oast \Hom{-,\mathrm{F}\varepsilon^{\mathrm{F},l}_{F^{\blackdiamond}}}}", shift left=3, from=1-1, to=1-4]
\end{tikzcd}\Bigg).
\end{equation}
Write $C$ for the right-hand side in Diagram~\eqref{UnitResolution}. Using the cocontinuity of $- \oast -$ in both variables together with Diagram~\eqref{COast} and Diagram~\eqref{OastC}, we conclude that $C \oast - \simeq \on{Id}_{[\ccf{S}^{\on{op}},\mathbf{Vec}_{\Bbbk}]}$ and $- \oast C \simeq \on{Id}_{[\ccf{S}^{\on{op}},\mathbf{Vec}_{\Bbbk}]}$. Proposition~\ref{EGNOUnit} now implies $C \simeq \mathbb{1}_{[\ccf{S}^{\on{op}},\mathbf{Vec}_{\Bbbk}]}$, as claimed.
\end{proof}

\begin{remark}
Identifying \eqref{UnitResolution} as a diagram in the image of the Yoneda embedding, we may write
\[
 \mathbb{1} \simeq \on{Coker}\Big(
\begin{tikzcd}
	{\mathrm{FF^{\blackdiamond}FF^{\blackdiamond}}} && {\mathrm{FF^{\blackdiamond}}}
	\arrow["{\varepsilon^{\mathrm{F},r}_{\mathrm{F}}\mathrm{F}^{\blackdiamond}-\mathrm{F}\varepsilon^{\mathrm{F},l}_{\mathrm{F}^{\blackdiamond}}}", from=1-1, to=1-3]
\end{tikzcd}\Big)
\]
\end{remark}

\subsection{Lifting of module categories}\label{s32}

One of the motivating observations for this chapter is the following analogue of Proposition~\ref{EGNOUnit} for module categories:
\begin{proposition}[{\cite[Proposition~2.4.3 and Chapter~7.1]{EGNO}}]\label{AnotherEGNOLemma}
 Given monoidal categories $\csym{C},\csym{D}$, a semigroup functor $\mathbf{F}$ from $\csym{C}$ to $\csym{D}$ is monoidal if and only if $\mathbf{F}(\mathbb{1}_{\ccf{C}}) \simeq \mathbb{1}_{\ccf{D}}$.
\end{proposition}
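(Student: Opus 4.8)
The plan is to treat the two implications separately; the forward one is formal and the converse carries all the content. If $\mathbf{F}$ is monoidal, its unitality datum is by definition an isomorphism $\mathbb{1}_{\ccf{D}} \xiso \mathbf{F}(\mathbb{1}_{\ccf{C}})$, so in particular $\mathbf{F}(\mathbb{1}_{\ccf{C}}) \simeq \mathbb{1}_{\ccf{D}}$. For the converse, write $\mathrm{J}_{X,Y}: \mathbf{F}(X) \boxtimes \mathbf{F}(Y) \xiso \mathbf{F}(X \boxtimes Y)$ for the (invertible) tensor constraint of the semigroup functor $\mathbf{F}$ and set $L := \mathbf{F}(\mathbb{1}_{\ccf{C}})$. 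Composing $\mathrm{J}_{\mathbb{1},\mathbb{1}}$ with $\mathbf{F}$ applied to the unitor $\mathbb{1}_{\ccf{C}} \boxtimes \mathbb{1}_{\ccf{C}} \xiso \mathbb{1}_{\ccf{C}}$ yields an isomorphism $\mathrm{m} := \mathbf{F}(\mathsf{l}^{\ccf{C}}_{\mathbb{1}}) \circ \mathrm{J}_{\mathbb{1},\mathbb{1}}: L \boxtimes L \xiso L$; the associativity coherence of $\mathrm{J}$ together with the pentagon in $\csym{C}$ makes $\mathrm{m}$ associative. By hypothesis $L \simeq \mathbb{1}_{\ccf{D}}$, so by Proposition~\ref{EGNOUnit} both $L \boxtimes -$ and $- \boxtimes L$ are autoequivalences of $\csym{D}$, and in particular fully faithful.

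Next I would construct the unit constraint. Let $\mathsf{l}^{\ccf{D}}$ denote the left unitor of $\csym{D}$. Since $- \boxtimes L$ is fully faithful, the morphism $\mathrm{m}^{-1} \circ \mathsf{l}^{\ccf{D}}_{L} \colon \mathbb{1}_{\ccf{D}} \boxtimes L \to L \boxtimes L$ is equal to $\mathrm{u} \boxtimes \on{id}_{L}$ for a unique $\mathrm{u} \colon \mathbb{1}_{\ccf{D}} \to L$; equivalently, $\mathrm{u}$ is the unique morphism with $\mathrm{m} \circ (\mathrm{u} \boxtimes \on{id}_{L}) = \mathsf{l}^{\ccf{D}}_{L}$. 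Moreover $\mathrm{u}$ is an isomorphism, because $\mathrm{u} \boxtimes \on{id}_{L}$ is a composite of isomorphisms and $- \boxtimes L$ reflects isomorphisms.

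It remains --- and this is the main obstacle --- to verify that $\mathrm{u}$ upgrades $(\mathbf{F},\mathrm{J})$ to a monoidal functor, i.e. that the two unitality triangles commute: for every $X \in \csym{C}$,
\[
 \mathbf{F}(\mathsf{l}^{\ccf{C}}_{X}) \circ \mathrm{J}_{\mathbb{1},X} \circ (\mathrm{u} \boxtimes \on{id}_{\mathbf{F}(X)}) = \mathsf{l}^{\ccf{D}}_{\mathbf{F}(X)},
\]
together with its right-handed analogue. I would establish the left identity by tensoring the defining relation $\mathrm{m} \circ (\mathrm{u} \boxtimes \on{id}_{L}) = \mathsf{l}^{\ccf{D}}_{L}$ on the right with $\mathbf{F}(X)$ and rewriting the result through the hexagon coherence of $\mathrm{J}$ and the triangle axioms of $\csym{C}$ and $\csym{D}$, then cancelling the invertible left action of $L$ using that $L \boxtimes -$ is an equivalence; the right identity is symmetric, using $- \boxtimes L$. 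The genuine difficulty is purely in this coherence bookkeeping: every rewrite is forced, but one must assemble them so as to land exactly on $\mathsf{l}^{\ccf{D}}_{\mathbf{F}(X)}$ rather than on a conjugate of it, and it is here that the full associativity coherence of $\mathrm{J}$ is consumed. Finally, uniqueness of the resulting monoidal structure is automatic, since any admissible unit constraint satisfies the same relation characterising $\mathrm{u}$.
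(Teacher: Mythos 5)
The paper itself does not prove this statement: it is imported verbatim from \cite[Proposition~2.4.3]{EGNO}, so your proposal has to be measured against the standard argument, which it largely reconstructs correctly. The forward direction is indeed formal; for the converse, making $L = \mathbf{F}(\mathbb{1}_{\ccf{C}})$ into an idempotent object via $\mathrm{m} = \mathbf{F}(\mathsf{l}^{\ccf{C}}_{\mathbb{1}})\circ \mathrm{J}_{\mathbb{1},\mathbb{1}}$, extracting $\mathrm{u}$ from full faithfulness of $-\boxtimes L$, observing that $\mathrm{u}$ is invertible because fully faithful functors reflect isomorphisms, and proving the left unit triangle for all $X$ from the hexagon for $\mathrm{J}$ at $(\mathbb{1},\mathbb{1},X)$, naturality, and the triangle axioms --- all of this goes through, and the coherence bookkeeping you defer does close up.

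The gap is the sentence ``the right identity is symmetric, using $-\boxtimes L$.'' It is not symmetric, because your $\mathrm{u}$ is defined by a left-handed equation, $\mathrm{m}\circ(\mathrm{u}\boxtimes \on{id}_{L}) = \mathsf{l}^{\ccf{D}}_{L}$. The mirror-image computation proves the right triangle only for the morphism $\mathrm{u}'$ defined by the right-handed equation $\mathrm{m}\circ(\on{id}_{L}\boxtimes \mathrm{u}') = \mathsf{r}^{\ccf{D}}_{L}$, and monoidality of $\mathbf{F}$ requires a \emph{single} isomorphism satisfying \emph{both} triangles. Your closing uniqueness remark does not repair this: it shows that any admissible unit constraint must equal $\mathrm{u}$, which presupposes existence rather than proving it. Two standard fixes: (a) show $\mathrm{u}=\mathrm{u}'$ by an Eckmann--Hilton-type computation, evaluating $\mathrm{m}\circ(\mathrm{u}\boxtimes \mathrm{u}')$ in two ways via naturality of the unitors to get $\mathrm{u}'\circ \mathsf{l}^{\ccf{D}}_{\mathbb{1}} = \mathrm{u}\circ \mathsf{r}^{\ccf{D}}_{\mathbb{1}}$, and then use $\mathsf{l}^{\ccf{D}}_{\mathbb{1}} = \mathsf{r}^{\ccf{D}}_{\mathbb{1}}$; or (b) derive the right triangle for your $\mathrm{u}$ from the already-established left triangle: the hexagon at $(X,\mathbb{1},Y)$ together with naturality of $\mathrm{J}$ and the triangle axioms of $\csym{C}$ and $\csym{D}$ gives $\big(\mathbf{F}(\mathsf{r}^{\ccf{C}}_{X})\circ \mathrm{J}_{X,\mathbb{1}}\circ(\on{id}_{\mathbf{F}(X)}\boxtimes \mathrm{u})\big)\boxtimes \on{id}_{\mathbf{F}(Y)} = \mathsf{r}^{\ccf{D}}_{\mathbf{F}(X)}\boxtimes \on{id}_{\mathbf{F}(Y)}$, and taking $Y=\mathbb{1}$ and cancelling by faithfulness of $-\boxtimes L$ finishes. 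This left/right coincidence is not a pedantic point in the context of this paper: the failure of left-handed and right-handed unit data to agree for free is exactly Houston's asymmetry problem that Subsection~\ref{s21} is devoted to resolving.
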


A special case of this is that of $\csym{C}$-module categories. To avoid possible confusion, we introduce some non-standard terminology allowing us to separate the monoidal case from the semigroup case:
\begin{terminology}\label{dispel}
 We refer to module categories over a monoidal category $\csym{C}$, in the usual sense of \cite[Definition~7.1.1]{EGNO}, as {\it monoidal $\csym{C}$-module categories}. A module category over $\csym{C}$ in the sense of Definition~\ref{SemigroupModules}, thus not requiring $\mathbb{1}_{\ccf{C}}$ to act isomorphically to the identity functor, will be referred to as a {\it semigroup $\csym{C}$-module category}. Thus, in parallel to semigroups and monoids, the monoidal notion is a special, unital case of the semigroup notion.
\end{terminology}

\begin{definition}\label{MV}
Given a semigroup category $\csym{S}$ and a semigroup $\csym{S}$-module category $\mathbf{M}$, Yoneda extension endows the category $[\mathbf{M}^{\on{op}},\mathbf{Vec}_{\Bbbk}]$ with a semigroup $\csym{S}$-module structure, via $\mathrm{F} \star_{[\mathbf{M}^{\on{op}},\mathbf{Vec}_{\Bbbk}]} - = (\mathrm{F} \star_{\mathbf{M}} -)_{!}$. Extending the corresponding semigroup functor $\csym{S} \rightarrow \mathbf{Cat}_{\Bbbk}(\mathbf{M},\mathbf{M})$ via the universal property in Theorem~\ref{UniversalPropertyDay}, we obtain a cocontinuous semigroup functor $[\csym{S}^{\on{op}},\mathbf{Vec}_{\Bbbk}] \rightarrow \mathbf{Cat}_{\Bbbk}([\mathbf{M}^{\on{op}},\mathbf{Vec}],[\mathbf{M}^{\on{op}},\mathbf{Vec}])$, which makes $[\mathbf{M}^{\on{op}},\mathbf{Vec}_{\Bbbk}]$ into a semigroup $[\csym{S}^{\on{op}},\mathbf{Vec}_{\Bbbk}]$-module category.
\end{definition}

If $\csym{S}$ is a monoidal category and $\mathbf{M}$ a monoidal $\csym{S}$-module category, then the presheaf category $[\mathbf{M}^{\on{op}},\mathbf{Vec}_{\Bbbk}]$ is a monoidal $[\csym{S}^{\on{op}},\mathbf{Vec}_{\Bbbk}]$-module category. Recall from Theorem~\ref{PromonoidalUnital} and Theorem~\ref{LiberalUnitBar} that if $\csym{S}$ is rigid, then $[\csym{S}^{\on{op}},\mathbf{Vec}_{\Bbbk}]$ is monoidal, and if there is an object $\mathrm{F}$ such that $\mathrm{F} \boxtimes -$ is liberal, then the monoidal unit in $[\csym{S}^{\on{op}},\mathbf{Vec}_{\Bbbk}]$ admits a resolution via a bar complex in $\csym{S}$. We now study the problem of finding sufficient conditions for $\mathbf{M}$ to make $[\mathbf{M}^{\on{op}},\mathbf{Vec}_{\Bbbk}]$ into a monoidal, and not only semigroup, $[\csym{S}^{\on{op}},\mathbf{Vec}_{\Bbbk}]$-module category.

We now study the problem of preservation of adjunctions in semigroup module categories and, more generally, under general semigroup functors.

\begin{definition}\label{PairsPreserved}
 Let $\csym{S},\csym{S}'$ be semigroup categories and let $(\mathrm{F},\mathrm{F}^{\blackdiamond}, \eta^{\mathrm{F},l},\eta^{\mathrm{F},r}, \varepsilon^{\mathrm{F},l},\varepsilon^{\mathrm{F},r})$ be an adjunction in $\csym{S}$. We say that a semigroup functor $\mathbf{G}: \csym{S} \rightarrow \csym{S}'$ {\it preserves the adjoint pair} $(\mathrm{F},\mathrm{F}^{\blackdiamond})$ if $(\mathbf{G}(\mathrm{F}),\mathbf{G}(\mathrm{F}^{\blackdiamond}))$ is an adjoint pair.

 If $\mathbf{M}$ is a semigroup $\csym{S}$-module category and $\mathbf{M}(-): \csym{S} \rightarrow \mathbf{Cat}(\mathbf{M},\mathbf{M})$ is its associated semigroup functor, we say that $\mathbf{M}$ preserves the adjoint pair $(\mathrm{F},\mathrm{F}^{\blackdiamond})$ if $\mathbf{M}(-)$ does.
 Thus, $\mathbf{M}$ preserves the adjoint pair $(\mathrm{F},\mathrm{F}^{\blackdiamond})$ if the pair $(\mathbf{M}\mathrm{F},\mathbf{M}\mathrm{F}^{\blackdiamond})$ of functors is an adjoint pair in the ordinary sense.
\end{definition}

Example~\ref{NoAdjunctions} below illustrates the fact that a semigroup functor does not necessarily preserve adjoint pairs in the sense of Definition~\ref{PairsPreserved}, even if both the domain and the codomain are in fact monoidal categories.
\begin{example}\label{NoAdjunctions}
 Let $\mathsf{C}^{s}(\setj{\ast})$ denote the strict monoidal category obtained by applying the construction of Example~\ref{NaiveCat} to the one-element monoid $\setj{\ast}$. Clearly, this monoidal category is rigid.

 Given a category $\mathbf{M}$, a functor from $\mathsf{C}^{s}(\setj{\ast})$ to $\mathbf{Cat}(\mathbf{M},\mathbf{M})$ is given by a choice of a functor $\mathbf{M}(\ast): \mathbf{M} \rightarrow \mathbf{M}$.

 A monoidal $\mathsf{C}^{s}(\setj{\ast})$-module category $\mathbf{M}$ is a monoidal functor $\mathsf{C}^{s}(\setj{\ast}) \rightarrow \mathbf{Cat}(\mathbf{M},\mathbf{M})$. As such, it is specified by a functor $\mathbf{M}(\ast): \mathbf{M} \rightarrow \mathbf{M}$, together with a coherent isomorphism $\mathbf{M}(\ast) \circ \mathbf{M}(\ast) \simeq \mathbf{M}(\ast)$, and such that $\mathbf{M}(\ast) \simeq \on{Id}_{\mathbf{M}}$.

 A semigroup $\mathsf{C}^{s}(\setj{\ast})$-module category $\mathbf{M}$ is a semigroup functor $\mathsf{C}^{s}(\setj{\ast}) \rightarrow \mathbf{Cat}(\mathbf{M},\mathbf{M})$. As such, it is specified by a functor $\mathbf{M}(\ast): \mathbf{M} \rightarrow \mathbf{M}$, together with a coherent isomorphism $\mathbf{M}(\ast) \circ \mathbf{M}(\ast) \simeq \mathbf{M}(\ast)$, which does not necessarily satisfy $\mathbf{M}(\ast) \simeq \on{Id}_{\mathbf{M}}$.

 In view of the above description of semigroup $\mathsf{C}^{s}(\setj{\ast})$-module categories, any idempotent endofunctor of a category which is not self-adjoint gives a counterexample to the preservation of adjoint pairs being automatic.

 We now give an example of such a functor. Let $A$ be a basic, finite-dimensional, unital associative $\Bbbk$-algebra with a decomposition $1_{A} = \sum_{i=1}^{n} e_{i}$ into a sum of primitive, orthogonal idempotents. We define the endofunctor $\euler{S}_{A}$ of $A\!\on{-mod}$ by sending an $A$-module $M= \bigoplus_{i=1}^{n}e_{i}M$ to the $A$-module with the same underlying vector space and the same action of the idempotents $e_{1},\ldots,e_{n}$, but where $a\cdot m = 0$ for all $a \in \on{Rad}A, m \in M$. This functor is strictly idempotent, but it is easy to verify that it is not self-adjoint.
\end{example}

In order to prove the main results of this section, we need to introduce a variant of Definition~\ref{PairsPreserved} which requires an additional coherence condition.
Later, in Proposition~\ref{Djunctions}, we will show that preservation of adjoint pairs in the sense of Definition~\ref{PairsPreserved} is in fact equivalent to the a priori stronger, coherent variant introduced in Definition~\ref{Respect} below.

\begin{definition}\label{Respect}
 Let $\csym{S}$ be a semigroup category, and let $\mathbf{M}$ be a semigroup $\csym{S}$-module category.
 An adjunction $(\mathrm{F},\mathrm{F}^{\blackdiamond}, \eta^{\mathrm{F},l},\eta^{\mathrm{F},r}, \varepsilon^{\mathrm{F},l},\varepsilon^{\mathrm{F},r})$ in $\csym{S}$ is said to be {\it respected in $\mathbf{M}$} if there is an adjunction $(\mathbf{M}\mathrm{F}, \mathbf{M}\mathrm{F}^{\blackdiamond},\eta^{\mathrm{F},\mathbf{M}},\varepsilon^{\mathrm{F},\mathbf{M}})$ such that
\begin{equation}\label{CoherentlyRespectful}
\begin{aligned}
 \mathbf{M}\mathrm{H}\varepsilon^{\mathrm{F},\mathbf{M}}_{X} = \mathbf{M}(\varepsilon^{\mathrm{F},l}_{\mathrm{H}})_{X} \text{ and }
 \mathbf{M}\mathrm{H}\eta^{\mathrm{F},\mathbf{M}}_{X} = \mathbf{M}(\eta^{\mathrm{F},l}_{\mathrm{H}})_{X} \text{ for all }\mathrm{H} \in \csym{S} \text{ and } X \in \mathbf{M}. \\
\end{aligned}
\end{equation}
\end{definition}

 \begin{proposition}\label{Djunctions}
  Let $(\mathrm{F},\mathrm{F}^{\blackdiamond}, \eta^{\mathrm{F},l},\eta^{\mathrm{F},r}, \varepsilon^{\mathrm{F},l},\varepsilon^{\mathrm{F},r})$ be an adjunction in in a semigroup category $\csym{S}$, and let $\mathbf{M}$ be a semigroup $\csym{S}$-module category.

  The adjunction $(\mathrm{F},\mathrm{F}^{\blackdiamond}, \eta^{\mathrm{F},l},\eta^{\mathrm{F},r}, \varepsilon^{\mathrm{F},l},\varepsilon^{\mathrm{F},r})$ is respected in $\mathbf{M}$ if and only if the adjoint pair $(\mathbf{M}\mathrm{F},\mathbf{M}\mathrm{F}^{\blackdiamond})$ is preserved in $\mathbf{M}$, i.e. if the pair of functors $(\mathbf{M}\mathrm{F},\mathbf{M}\mathrm{F}^{\blackdiamond})$ is an adjoint pair.
 \end{proposition}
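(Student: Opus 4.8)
The forward implication is immediate: if the adjunction is respected in $\mathbf{M}$, then by Definition~\ref{Respect} there is an adjunction $(\mathbf{M}\mathrm{F},\mathbf{M}\mathrm{F}^{\blackdiamond},\eta^{\mathrm{F},\mathbf{M}},\varepsilon^{\mathrm{F},\mathbf{M}})$, so in particular $(\mathbf{M}\mathrm{F},\mathbf{M}\mathrm{F}^{\blackdiamond})$ is an adjoint pair. All the content is in the converse, which is a coherence statement: assuming only that $(\mathbf{M}\mathrm{F},\mathbf{M}\mathrm{F}^{\blackdiamond})$ is an adjoint pair, one must manufacture a unit and a counit satisfying the module-compatibility \eqref{CoherentlyRespectful}.

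The plan is to start from an arbitrary witnessing adjunction and correct it. Fix any unit $\widetilde{\eta}\colon \on{Id}_{\mathbf{M}}\Rightarrow \mathbf{M}\mathrm{F}^{\blackdiamond}\circ\mathbf{M}\mathrm{F}$ and counit $\widetilde{\varepsilon}\colon\mathbf{M}\mathrm{F}\circ\mathbf{M}\mathrm{F}^{\blackdiamond}\Rightarrow\on{Id}_{\mathbf{M}}$ exhibiting the adjoint pair. The essential point, and the reason the hypothesis is needed at all, is that $\widetilde{\eta}$ and $\widetilde{\varepsilon}$ are defined on all of $\mathbf{M}$, whereas the semigroup data $\eta^{\mathrm{F},r},\varepsilon^{\mathrm{F},r}$ act only on objects of $\csym{S}\star\mathbf{M}$, and there is no unit object of $\csym{S}$ at which to evaluate $\eta^{\mathrm{F},l},\varepsilon^{\mathrm{F},l}$; so without a globally defined adjunction one cannot even write down a candidate counit off $\csym{S}\star\mathbf{M}$. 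Recall next that the adjunction structures on the fixed pair $(\mathbf{M}\mathrm{F},\mathbf{M}\mathrm{F}^{\blackdiamond})$ form a torsor under the natural automorphisms of $\mathbf{M}\mathrm{F}^{\blackdiamond}$: replacing $\widetilde{\varepsilon}$ by $\widetilde{\varepsilon}\circ(\mathbf{M}\mathrm{F}\,\sigma^{-1})$ and $\widetilde{\eta}$ by $(\sigma\,\mathbf{M}\mathrm{F})\circ\widetilde{\eta}$, for a natural automorphism $\sigma$ of $\mathbf{M}\mathrm{F}^{\blackdiamond}$, again yields an adjunction and exhausts all of them. Thus the problem reduces to producing the single correcting automorphism $\sigma$.

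I would build $\sigma$ from the data already available globally, by whiskering $\widetilde{\eta}$ with the $\csym{S}$-level counit evaluated at $\mathrm{F}^{\blackdiamond}$: concretely, as the composite of $\widetilde{\eta}\,\mathbf{M}\mathrm{F}^{\blackdiamond}\colon \mathbf{M}\mathrm{F}^{\blackdiamond}\Rightarrow\mathbf{M}\mathrm{F}^{\blackdiamond}\circ\mathbf{M}\mathrm{F}\circ\mathbf{M}\mathrm{F}^{\blackdiamond}$ followed by $\mathbf{M}(\varepsilon^{\mathrm{F},l}_{\mathrm{F}^{\blackdiamond}})\colon\mathbf{M}\mathrm{F}^{\blackdiamond}\circ\mathbf{M}\mathrm{F}\circ\mathbf{M}\mathrm{F}^{\blackdiamond}\Rightarrow\mathbf{M}\mathrm{F}^{\blackdiamond}$, identifying $\mathbf{M}\mathrm{F}^{\blackdiamond}\circ\mathbf{M}\mathrm{F}\circ\mathbf{M}\mathrm{F}^{\blackdiamond}\simeq\mathbf{M}(\mathrm{F}^{\blackdiamond}\boxtimes\mathrm{F}\boxtimes\mathrm{F}^{\blackdiamond})$ through the module structure. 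This is a globally defined natural endomorphism of $\mathbf{M}\mathrm{F}^{\blackdiamond}$, and its invertibility is the first thing to check; I expect this to follow by exhibiting an inverse built from $\eta^{\mathrm{F},r}$ and collapsing the resulting composites using the triangle identities for $(\widetilde{\eta},\widetilde{\varepsilon})$ together with the cross zig-zags of Axiom~\ref{ZigZag}.

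With $\sigma$ in hand, set $\varepsilon^{\mathrm{F},\mathbf{M}}:=\widetilde{\varepsilon}\circ(\mathbf{M}\mathrm{F}\,\sigma^{-1})$ and $\eta^{\mathrm{F},\mathbf{M}}:=(\sigma\,\mathbf{M}\mathrm{F})\circ\widetilde{\eta}$; these automatically form an adjunction. It remains to verify the two equations of \eqref{CoherentlyRespectful}, and this is the main obstacle, as it must hold for all $\mathrm{H}\in\csym{S}$ and $X\in\mathbf{M}$ simultaneously with only the one parameter $\sigma$ at our disposal. I would whisker $\varepsilon^{\mathrm{F},\mathbf{M}}$ by $\mathbf{M}\mathrm{H}$, rewrite the resulting occurrence of $\mathbf{M}(\varepsilon^{\mathrm{F},l}_{\mathrm{F}^{\blackdiamond}})$ using naturality and the left-module property of $\varepsilon^{\mathrm{F},l}$, and then use Axiom~\ref{Involution} to trade the left-handed components for the right-handed ones $\varepsilon^{\mathrm{F},r},\eta^{\mathrm{F},r}$; the definition of $\sigma$ is engineered precisely so that after this trade the whiskered composite telescopes, via the triangle identities and the zig-zags of Axiom~\ref{ZigZag}, to $\mathbf{M}(\varepsilon^{\mathrm{F},l}_{\mathrm{H}})$. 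The companion identity for $\eta^{\mathrm{F},\mathbf{M}}$ should then be formal, since it is the mate of the counit identity under the adjunction and so follows from the counit case together with the triangle identities rather than from an independent choice. The crux of the whole argument is therefore that a single automorphism $\sigma$ can do double duty for both the unit and the counit, which is exactly what the left/right compatibility of Axiom~\ref{Involution} enforces.
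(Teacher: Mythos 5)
Your forward direction is fine, and the torsor observation is correct: adjunction structures on the fixed pair $(\mathbf{M}\mathrm{F},\mathbf{M}\mathrm{F}^{\blackdiamond})$ do form a torsor under $\on{Aut}(\mathbf{M}\mathrm{F}^{\blackdiamond})$, which is a genuinely different organization from the paper's proof (the paper writes down a corrected unit and counit explicitly and verifies the triangle identities diagrammatically, never isolating a correcting automorphism). But the proposal fails on two concrete points. First, your correction points the wrong way. Write $L=\mathbf{M}\mathrm{F}$, $R=\mathbf{M}\mathrm{F}^{\blackdiamond}$, and suppose an adjunction $(\eta^{0},\varepsilon^{0})$ satisfying \eqref{CoherentlyRespectful} exists, with $\widetilde{\eta}=(\rho L)\circ\eta^{0}$ for $\rho\in\on{Aut}(R)$. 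Then \eqref{CoherentlyRespectful} gives $\mathbf{M}(\varepsilon^{\mathrm{F},l}_{\mathrm{F}^{\blackdiamond}})=R\varepsilon^{0}$, and naturality of $\rho$ plus one triangle identity yield $\sigma=(R\varepsilon^{0})\circ(\rho LR)\circ(\eta^{0}R)=\rho\circ(R\varepsilon^{0})\circ(\eta^{0}R)=\rho$. So $\sigma$ is exactly the discrepancy, the coherent unit is $(\sigma^{-1}L)\circ\widetilde{\eta}$, while your $\eta^{\mathrm{F},\mathbf{M}}:=(\sigma L)\circ\widetilde{\eta}=(\rho^{2}L)\circ\eta^{0}$ moves \emph{away} from the coherent adjunction rather than back to it; since the argument must work for an arbitrary initial choice $(\widetilde{\eta},\widetilde{\varepsilon})$, the subsequent verification of \eqref{CoherentlyRespectful} cannot go through. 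The fix is to swap $\sigma$ and $\sigma^{-1}$ throughout.

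Second, and this is the essential gap: invertibility of $\sigma$ is not a deferrable check — it \emph{is} the theorem, and it cannot be obtained by comparison with a coherent adjunction without circularity. What the axioms give cheaply is one side only: for $\tau:=(R\widetilde{\varepsilon})\circ\mathbf{M}(\eta^{\mathrm{F},r}_{\mathrm{F}^{\blackdiamond}})$, pushing $\widetilde{\eta},\widetilde{\varepsilon}$ outward by naturality reduces $\sigma\circ\tau$ to $(R\widetilde{\varepsilon})\circ\mathbf{M}\big((\varepsilon^{\mathrm{F},l}_{\mathrm{F}^{\blackdiamond}}\boxtimes\mathrm{FF^{\blackdiamond}})\circ(\mathrm{F^{\blackdiamond}F}\boxtimes\eta^{\mathrm{F},r}_{\mathrm{F}^{\blackdiamond}})\big)\circ(\widetilde{\eta}R)$, and the inner morphism collapses to the identity by Axiom~\ref{Involution}, the module properties and Axiom~\ref{ZigZag}, leaving $(R\widetilde{\varepsilon})\circ(\widetilde{\eta}R)=\on{id}_{R}$. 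The symmetric attempt at $\tau\circ\sigma$ instead produces $(\mathrm{F^{\blackdiamond}F}\boxtimes\varepsilon^{\mathrm{F},l}_{\mathrm{F}^{\blackdiamond}})\circ(\eta^{\mathrm{F},r}_{\mathrm{F}^{\blackdiamond}}\boxtimes\mathrm{FF^{\blackdiamond}})$, whose cup and cap do not interleave, so no zig-zag applies and the routine argument stalls. Worse, a direct computation shows that the pair $\big((\tau L)\circ\widetilde{\eta},\,\widetilde{\varepsilon}\circ(L\sigma)\big)$ satisfies one triangle identity automatically, while its other triangle identity is \emph{equivalent} to $\tau\circ\sigma=\on{id}_{R}$. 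Hence "check $\sigma$ is invertible" is precisely the second triangle identity for the corrected adjunction, i.e.\ exactly the long verification via Axioms~\ref{ZigZag} and~\ref{Involution} that constitutes the paper's proof; your sketch omits it, so the torsor framing relocates the hard content rather than supplying it. (Your final claim is, by contrast, salvageable: once an honest adjunction satisfies the counit half of \eqref{CoherentlyRespectful}, the unit half does follow formally, by applying $\mathbf{M}$ to the triangle identity coming from Axiom~\ref{LeftAdj} and using injectivity of the bijection $\on{Nat}(\mathbf{M}\mathrm{H},\mathbf{M}\mathrm{H}RL)\to\on{Nat}(\mathbf{M}\mathrm{H}R,\mathbf{M}\mathrm{H}R)$, $\psi\mapsto(\mathbf{M}\mathrm{H}R\varepsilon)\circ(\psi R)$; the paper instead checks both halves directly.)
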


 \begin{proof}
  If $(\mathrm{F},\mathrm{F}^{\blackdiamond}, \eta^{\mathrm{F},l},\eta^{\mathrm{F},r}, \varepsilon^{\mathrm{F},l},\varepsilon^{\mathrm{F},r})$ is preserved in $\mathbf{M}$, then, by definition, $(\mathbf{M}\mathrm{F},\mathbf{M}\mathrm{F}^{\blackdiamond})$ is an adjoint pair.

  Assume that $(\mathbf{M}\mathrm{F},\mathbf{M}\mathrm{F}^{\blackdiamond})$ is an adjoint pair. Let $(\mathbf{M}\mathrm{F},\mathbf{M}\mathrm{F}^{\blackdiamond},\widehat{\eta},\widehat{\varepsilon})$ be an adjunction realizing $(\mathbf{M}\mathrm{F},\mathbf{M}\mathrm{F}^{\blackdiamond})$ as an adjoint pair. Define
  \[
   \eta = (\mathbf{M}\mathrm{F}^{\blackdiamond} \hcomp \widehat{\varepsilon} \hcomp \widehat{\varepsilon} \hcomp \mathbf{M}\mathrm{F}) \circ \left(\mathbf{M}\left(\mathrm{F^{\blackdiamond}F}\eta^{\mathrm{F},l}_{\mathrm{F}^{\blackdiamond}}\mathrm{F}\right)\right) \circ (\widehat{\eta} \hcomp \widehat{\eta})
  \]
  and
  \[
  \varepsilon = (\widehat{\varepsilon} \hcomp \widehat{\varepsilon}) \circ \left(\mathbf{M}\left(\mathrm{F^{\blackdiamond}F}\varepsilon^{\mathrm{F},l}_{\mathrm{F}^{\blackdiamond}}\mathrm{F}\right)\right) \circ (\mathbf{M}\mathrm{F} \hcomp \widehat{\eta} \hcomp \widehat{\eta} \hcomp \mathbf{M}\mathrm{F}^{\blackdiamond}).
  \]
 Clearly, the above assignments define natural transformations $\eta: \on{Id}_{\mathbf{M}} \Rightarrow \mathbf{M}(\mathrm{F^{\blackdiamond}F})$ and $\varepsilon: \mathbf{M}(\mathrm{FF^{\blackdiamond}}) \Rightarrow \on{Id}_{\mathbf{M}}$.
 We will use string diagrams to show that $(\mathbf{M}\mathrm{F},\mathbf{M}\mathrm{F}^{\blackdiamond}, \eta, \varepsilon)$ is an adjunction, via which $(\mathrm{F},\mathrm{F}^{\blackdiamond}, \eta^{\mathrm{F},l},\eta^{\mathrm{F},r}, \varepsilon^{\mathrm{F},l},\varepsilon^{\mathrm{F},r})$ is respected in $\mathbf{M}$.

 Let $X \in \mathbf{M}$. We represent it diagrammatically as a thick strand right-most in the diagram. This justifies the following notation:
{\setlength{\abovedisplayskip}{3pt}
\setlength{\belowdisplayskip}{3pt}
 \begin{equation}
  \begin{tikzpicture}[scale=0.5]
	\begin{pgfonlayer}{nodelayer}
		\node [style=none] (1) at (-0.75, 2) {};
		\node [style=none] (2) at (0, 1) {};
		\node [style=none] (3) at (0.75, 2) {};
		\node [style=none] (4) at (1.5, 2) {};
		\node [style=none] (5) at (1.5, 0) {};
		\node [style=none] (6) at (-1.25, 1) {$=$};
		\node [style=none] (7) at (-2, 1) {$\widehat{\eta}_{X}$};
	\end{pgfonlayer}
	\begin{pgfonlayer}{edgelayer}
		\draw [in=0, out=-90, looseness=1.25] (3.center) to (2.center);
		\draw [in=-180, out=-90, looseness=1.25] (1.center) to (2.center);
		\draw [style=thickstrand] (5.center) to (4.center);
	\end{pgfonlayer}
\end{tikzpicture}
  \quad \text{ and } \quad
 \begin{tikzpicture}[scale=0.5]
	\begin{pgfonlayer}{nodelayer}
		\node [style=none] (1) at (-0.75, 0) {};
		\node [style=none] (2) at (0, 1) {};
		\node [style=none] (3) at (0.75, 0) {};
		\node [style=none] (4) at (2, 2) {};
		\node [style=none] (5) at (2, 0) {};
		\node [style=none] (6) at (-1, 1) {=};
		\node [style=none] (7) at (-1.75, 1) {$\widehat{\varepsilon}_{X}$};
	\end{pgfonlayer}
	\begin{pgfonlayer}{edgelayer}
		\draw [in=0, out=90, looseness=1.25] (3.center) to (2.center);
		\draw [in=180, out=90, looseness=1.25] (1.center) to (2.center);
		\draw [style=thickstrand] (5.center) to (4.center);
	\end{pgfonlayer}
\end{tikzpicture}
\end{equation}}
Further, we adapt the notation of Example~\ref{FreeStrings}. Thus, for example, we have
{\setlength{\abovedisplayskip}{3pt}
\setlength{\belowdisplayskip}{3pt}
\[
\begin{tikzpicture}[scale=0.5]
	\begin{pgfonlayer}{nodelayer}
		\node [style=none, label={below:$\mathrm{F}$}] (1) at (-0.75, 0) {};
		\node [style=none] (2) at (0, 1) {};
		\node [style=none, label={below:$\mathrm{F}^{\blackdiamond}$}] (3) at (0.75, 0) {};
		\node [style=none] (4) at (2, 2) {};
		\node [style=none, label={below:$X$}] (5) at (2, 0) {};
		\node [style=none] (6) at (-3.25, 1) {=};
		\node [style=none] (7) at (-5.75, 1) {$\mathbf{M}(\varepsilon^{l,\mathrm{F}}_{\mathrm{G}})_{X}$};
		\node [style=none, label={below:$\mathrm{G}$}] (8) at (-2, 0) {};
		\node [style=none] (9) at (-2, 2) {};
		\node [style=none] (10) at (-2, 1.5) {};
	\end{pgfonlayer}
	\begin{pgfonlayer}{edgelayer}
		\draw [in=0, out=90, looseness=1.25] (3.center) to (2.center);
		\draw [in=180, out=90, looseness=1.25] (1.center) to (2.center);
		\draw [style=thickstrand] (5.center) to (4.center);
		\draw (8.center) to (9.center);
		\draw [style=bieski, in=0, out=90, looseness=0.75] (2.center) to (10.center);
	\end{pgfonlayer}
\end{tikzpicture}
\]
}
Using this notation, we find
{\setlength{\abovedisplayshortskip}{3pt}
\setlength{\belowdisplayshortskip}{3pt}\[
\begin{tikzpicture}[scale=0.5]
	\begin{pgfonlayer}{nodelayer}
		\node [style=none] (1) at (-0.25, 1.5) {};
		\node [style=none] (2) at (0.5, 0.5) {};
		\node [style=none] (3) at (1.25, 1.5) {};
		\node [style=none] (5) at (2, 0) {};
		\node [style=none] (6) at (-5.25, 1.5) {};
		\node [style=none] (7) at (-4.5, 0.5) {};
		\node [style=none] (8) at (-3.75, 1.5) {};
		\node [style=none] (9) at (-2.75, 3) {};
		\node [style=none] (10) at (-2, 2) {};
		\node [style=none] (11) at (-1.25, 3) {};
		\node [style=none] (12) at (-0.25, 3) {};
		\node [style=none] (13) at (1.25, 3) {};
		\node [style=none] (14) at (-3.75, 3) {};
		\node [style=none] (15) at (-5.25, 3) {};
		\node [style=none] (16) at (-3.25, 3.75) {};
		\node [style=none] (17) at (-2.75, 3) {};
		\node [style=none] (18) at (-3.25, 3.75) {};
		\node [style=none] (19) at (-1.25, 3) {};
		\node [style=none] (20) at (-0.75, 3.75) {};
		\node [style=none] (21) at (-0.25, 3) {};
		\node [style=none] (22) at (-0.75, 3.75) {};
		\node [style=none] (23) at (1.25, 5) {};
		\node [style=none] (24) at (-5.25, 5) {};
		\node [style=none] (25) at (2, 5) {};
		\node [style=none] (26) at (-6, 3) {$=$};
		\node [style=none] (27) at (-6.75, 3) {$\eta_{X}$};
	\end{pgfonlayer}
	\begin{pgfonlayer}{edgelayer}
		\draw [in=0, out=-90, looseness=1.25] (3.center) to (2.center);
		\draw [in=-180, out=-90, looseness=1.25] (1.center) to (2.center);
		\draw [in=0, out=-90, looseness=1.25] (8.center) to (7.center);
		\draw [in=-180, out=-90, looseness=1.25] (6.center) to (7.center);
		\draw [in=0, out=-90, looseness=1.25] (11.center) to (10.center);
		\draw [in=-180, out=-90, looseness=1.25] (9.center) to (10.center);
		\draw [style=bieski, in=0, out=-90, looseness=0.75] (10.center) to (8.center);
		\draw (14.center) to (8.center);
		\draw (15.center) to (6.center);
		\draw (12.center) to (1.center);
		\draw (13.center) to (3.center);
		\draw [in=180, out=90] (14.center) to (16.center);
		\draw [in=0, out=90] (17.center) to (18.center);
		\draw [in=180, out=90] (19.center) to (20.center);
		\draw [in=0, out=90] (21.center) to (22.center);
		\draw (15.center) to (24.center);
		\draw (13.center) to (23.center);
		\draw [style=thickstrand] (5.center) to (25.center);
	\end{pgfonlayer}
\end{tikzpicture}
\]}
The following shows that Equation~\ref{CoherentlyRespectful} holds for $\eta$:
{\setlength{\abovedisplayskip}{3pt}
\setlength{\belowdisplayskip}{3pt}\[
\begin{tikzpicture}[scale=0.5]
	\begin{pgfonlayer}{nodelayer}
		\node [style=none] (1) at (-0.25, 1.5) {};
		\node [style=none] (2) at (0.5, 0.5) {};
		\node [style=none] (3) at (1.25, 1.5) {};
		\node [style=none] (5) at (2, 0) {};
		\node [style=none] (6) at (-5.25, 1.5) {};
		\node [style=none] (7) at (-4.5, 0.5) {};
		\node [style=none] (8) at (-3.75, 1.5) {};
		\node [style=none] (9) at (-2.75, 3) {};
		\node [style=none] (10) at (-2, 2) {};
		\node [style=none] (11) at (-1.25, 3) {};
		\node [style=none] (12) at (-0.25, 3) {};
		\node [style=none] (13) at (1.25, 3) {};
		\node [style=none] (14) at (-3.75, 3) {};
		\node [style=none] (15) at (-5.25, 3) {};
		\node [style=none] (17) at (-2.75, 3) {};
		\node [style=none] (18) at (-3.25, 3.75) {};
		\node [style=none] (19) at (-1.25, 3) {};
		\node [style=none] (21) at (-0.25, 3) {};
		\node [style=none] (22) at (-0.75, 3.75) {};
		\node [style=none] (23) at (1.25, 5) {};
		\node [style=none] (24) at (-5.25, 5) {};
		\node [style=none] (25) at (2, 5) {};
		\node [style=none] (26) at (-6, 5) {};
		\node [style=none] (27) at (-6, 0) {};
		\node [style=none] (28) at (3, 2.5) {$=$};
		\node [style=none] (29) at (9.75, 2.25) {};
		\node [style=none] (30) at (10.5, 1.5) {};
		\node [style=none] (31) at (11.25, 2.25) {};
		\node [style=none] (32) at (12, 0) {};
		\node [style=none] (33) at (4.75, 2.25) {};
		\node [style=none] (34) at (5.5, 1.5) {};
		\node [style=none] (35) at (6.25, 2.25) {};
		\node [style=none] (37) at (8, 1.25) {};
		\node [style=none] (40) at (11.25, 3.25) {};
		\node [style=none] (41) at (6.25, 3.25) {};
		\node [style=none] (42) at (4.75, 3.25) {};
		\node [style=none] (44) at (7.25, 3.25) {};
		\node [style=none] (45) at (6.75, 4.25) {};
		\node [style=none] (46) at (8.75, 3.25) {};
		\node [style=none] (48) at (9.75, 3.25) {};
		\node [style=none] (49) at (9.25, 4.25) {};
		\node [style=none] (50) at (11.25, 5) {};
		\node [style=none] (51) at (4.75, 5) {};
		\node [style=none] (52) at (12, 5) {};
		\node [style=none] (53) at (4, 5) {};
		\node [style=none] (54) at (4, 0.5) {};
		\node [style=none] (55) at (4, 0) {};
		\node [style=none] (56) at (13, 2.5) {$=$};
		\node [style=none] (57) at (14, 0) {};
		\node [style=none] (58) at (14, 5) {};
		\node [style=none] (59) at (14.75, 2.25) {};
		\node [style=none] (62) at (14, 0.5) {};
		\node [style=none] (63) at (16.25, 2.25) {};
		\node [style=none] (64) at (15.5, 1.5) {};
		\node [style=none] (65) at (17, 0) {};
		\node [style=none] (66) at (17, 5) {};
		\node [style=none] (67) at (16.25, 5) {};
		\node [style=none] (68) at (14.75, 5) {};
	\end{pgfonlayer}
	\begin{pgfonlayer}{edgelayer}
		\draw [in=0, out=-90, looseness=1.25] (3.center) to (2.center);
		\draw [in=-180, out=-90, looseness=1.25] (1.center) to (2.center);
		\draw [in=0, out=-90, looseness=1.25] (8.center) to (7.center);
		\draw [in=-180, out=-90, looseness=1.25] (6.center) to (7.center);
		\draw [in=0, out=-90, looseness=1.25] (11.center) to (10.center);
		\draw [in=-180, out=-90, looseness=1.25] (9.center) to (10.center);
		\draw [style=bieski, in=0, out=-90, looseness=0.75] (10.center) to (8.center);
		\draw (14.center) to (8.center);
		\draw (15.center) to (6.center);
		\draw (12.center) to (1.center);
		\draw (13.center) to (3.center);
		\draw [in=0, out=90] (17.center) to (18.center);
		\draw [in=0, out=90] (21.center) to (22.center);
		\draw (15.center) to (24.center);
		\draw (13.center) to (23.center);
		\draw [style=thickstrand] (5.center) to (25.center);
		\draw (27.center) to (26.center);
		\draw [in=0, out=-90, looseness=1.25] (31.center) to (30.center);
		\draw [in=-180, out=-90, looseness=1.25] (29.center) to (30.center);
		\draw [in=0, out=-90, looseness=1.25] (35.center) to (34.center);
		\draw [in=-180, out=-90, looseness=1.25] (33.center) to (34.center);
		\draw (41.center) to (35.center);
		\draw (42.center) to (33.center);
		\draw (40.center) to (31.center);
		\draw [in=0, out=90] (44.center) to (45.center);
		\draw [in=0, out=90] (48.center) to (49.center);
		\draw (42.center) to (51.center);
		\draw (40.center) to (50.center);
		\draw [style=thickstrand] (32.center) to (52.center);
		\draw (54.center) to (53.center);
		\draw [in=180, out=90] (14.center) to (18.center);
		\draw [in=180, out=90] (19.center) to (22.center);
		\draw [in=180, out=90] (41.center) to (45.center);
		\draw [in=180, out=90] (46.center) to (49.center);
		\draw (54.center) to (55.center);
		\draw [style=bieski, in=0, out=-90, looseness=0.50] (37.center) to (54.center);
		\draw [in=180, out=-90] (44.center) to (37.center);
		\draw [in=0, out=-90] (46.center) to (37.center);
		\draw (48.center) to (29.center);
		\draw (57.center) to (58.center);
		\draw [in=0, out=-90, looseness=1.25] (63.center) to (64.center);
		\draw [in=-180, out=-90, looseness=1.25] (59.center) to (64.center);
		\draw [style=bieski, in=0, out=-90, looseness=1.25] (64.center) to (62.center);
		\draw [style=thickstrand] (66.center) to (65.center);
		\draw (68.center) to (59.center);
		\draw (67.center) to (63.center);
	\end{pgfonlayer}
\end{tikzpicture}
\]}
The first equality follows from naturality of $\widehat{\eta}$, and the second follows by straightening two zigzags, using the triangle identities for $(\mathbf{M}\mathrm{F},\mathbf{M}\mathrm{F}^{\blackdiamond},\widehat{\eta},\widehat{\varepsilon})$.
Equation~\ref{CoherentlyRespectful} for $\varepsilon$ follows similarly.
 We now show that $\eta, \varepsilon$ satisfy the triangle identities for an adjunction:
{\setlength{\abovedisplayskip}{3pt}
\setlength{\belowdisplayskip}{3pt}\[
 \scalebox{.787}{$
\begin{aligned}
 &
 \begin{tikzpicture}[scale=0.5]
	\begin{pgfonlayer}{nodelayer}
		\node [style=none] (32) at (-2.75, 9) {};
		\node [style=none] (35) at (-3.25, 10.5) {};
		\node [style=none] (36) at (-5.75, 9) {};
		\node [style=none] (37) at (-6.75, 9) {};
		\node [style=none] (38) at (-6.25, 10.5) {};
		\node [style=none] (41) at (-4.75, 8.5) {};
		\node [style=none] (43) at (-5.75, 7) {};
		\node [style=none] (44) at (-3.75, 9) {};
		\node [style=none] (46) at (-5.5, 6) {};
		\node [style=none] (47) at (-5.25, 7) {};
		\node [style=none] (48) at (-4.25, 7) {};
		\node [style=none] (49) at (-4, 6) {};
		\node [style=none] (50) at (-3.75, 7) {};
		\node [style=none] (53) at (1.25, 2.5) {};
		\node [style=none] (54) at (0.75, 1) {};
		\node [style=none] (55) at (-1.75, 2.5) {};
		\node [style=none] (56) at (-2.75, 2.5) {};
		\node [style=none] (57) at (-2.25, 1) {};
		\node [style=none] (58) at (-0.75, 3) {};
		\node [style=none] (59) at (-1.75, 4.5) {};
		\node [style=none] (60) at (0.25, 2.5) {};
		\node [style=none] (61) at (-1.5, 5.5) {};
		\node [style=none] (62) at (-1.25, 4.5) {};
		\node [style=none] (63) at (-0.25, 4.5) {};
		\node [style=none] (64) at (0, 5.5) {};
		\node [style=none] (65) at (0.25, 4.5) {};
		\node [style=none] (66) at (1.25, 7.5) {};
		\node [style=none] (68) at (1.25, 11) {};
		\node [style=none] (69) at (2, 0) {};
		\node [style=none] (70) at (2, 11) {};
		\node [style=none] (71) at (3, 6) {$\stackrel{(1)}{=}$};
		\node [style=none] (73) at (8, 9) {};
		\node [style=none] (74) at (7.5, 10.5) {};
		\node [style=none] (75) at (5, 9) {};
		\node [style=none] (76) at (4, 9) {};
		\node [style=none] (77) at (4.5, 10.5) {};
		\node [style=none] (78) at (6, 8.5) {};
		\node [style=none] (79) at (5, 7) {};
		\node [style=none] (80) at (7, 9) {};
		\node [style=none] (81) at (5.25, 6) {};
		\node [style=none] (82) at (5.5, 7) {};
		\node [style=none] (83) at (6.5, 7) {};
		\node [style=none] (84) at (6.75, 6) {};
		\node [style=none] (85) at (7, 7) {};
		\node [style=none] (87) at (12, 2.5) {};
		\node [style=none] (88) at (11.5, 1) {};
		\node [style=none] (89) at (9, 2.5) {};
		\node [style=none] (90) at (8, 2.5) {};
		\node [style=none] (91) at (8.5, 1) {};
		\node [style=none] (92) at (10, 3) {};
		\node [style=none] (93) at (9, 4.5) {};
		\node [style=none] (94) at (11, 2.5) {};
		\node [style=none] (95) at (9.25, 5.5) {};
		\node [style=none] (96) at (9.5, 4.5) {};
		\node [style=none] (97) at (10.5, 4.5) {};
		\node [style=none] (98) at (10.75, 5.5) {};
		\node [style=none] (99) at (11, 4.5) {};
		\node [style=none] (100) at (12, 7.5) {};
		\node [style=none] (102) at (12, 11) {};
		\node [style=none] (103) at (12.75, 0) {};
		\node [style=none] (104) at (12.75, 11) {};
		\node [style=none] (105) at (15, 0) {};
		\node [style=none] (106) at (16, 5) {};
		\node [style=none] (107) at (15, 5) {};
		\node [style=none] (108) at (15.5, 6.5) {};
		\node [style=none] (109) at (17, 4.5) {};
		\node [style=none] (110) at (16, 3) {};
		\node [style=none] (111) at (18, 5) {};
		\node [style=none] (112) at (16.25, 2) {};
		\node [style=none] (113) at (16.5, 3) {};
		\node [style=none] (114) at (17.5, 3) {};
		\node [style=none] (115) at (17.75, 2) {};
		\node [style=none] (116) at (18, 3) {};
		\node [style=none] (117) at (21, 5) {};
		\node [style=none] (118) at (20.5, 3.5) {};
		\node [style=none] (119) at (18, 5) {};
		\node [style=none] (120) at (19, 5.5) {};
		\node [style=none] (121) at (18, 7) {};
		\node [style=none] (122) at (20, 5) {};
		\node [style=none] (123) at (18.25, 8) {};
		\node [style=none] (124) at (18.5, 7) {};
		\node [style=none] (125) at (19.5, 7) {};
		\node [style=none] (126) at (19.75, 8) {};
		\node [style=none] (127) at (20, 7) {};
		\node [style=none] (130) at (21.75, 0.25) {};
		\node [style=none] (131) at (21.75, 11) {};
		\node [style=none] (132) at (21, 11) {};
		\node [style=none] (133) at (14, 6) {$\stackrel{(2)}{=}$};
		\node [style=none] (134) at (-6.75, 0) {};
		\node [style=none] (135) at (4, 0) {};
	\end{pgfonlayer}
	\begin{pgfonlayer}{edgelayer}
		\draw [in=0, out=90] (32.center) to (35.center);
		\draw [in=-180, out=90] (37.center) to (38.center);
		\draw [in=0, out=90] (36.center) to (38.center);
		\draw [style=bieski, in=0, out=90, looseness=1.25] (41.center) to (36.center);
		\draw (36.center) to (43.center);
		\draw [in=-180, out=90] (44.center) to (35.center);
		\draw [in=0, out=-90, looseness=0.75] (47.center) to (46.center);
		\draw [in=-180, out=90] (47.center) to (41.center);
		\draw [in=-180, out=-90, looseness=0.75] (43.center) to (46.center);
		\draw [in=0, out=-90, looseness=0.75] (50.center) to (49.center);
		\draw [in=-180, out=-90, looseness=0.75] (48.center) to (49.center);
		\draw [in=0, out=90] (48.center) to (41.center);
		\draw (44.center) to (50.center);
		\draw [in=0, out=-90] (53.center) to (54.center);
		\draw [in=180, out=-90] (56.center) to (57.center);
		\draw [in=0, out=-90] (55.center) to (57.center);
		\draw [style=bieski, in=0, out=-90, looseness=1.25] (58.center) to (55.center);
		\draw (55.center) to (59.center);
		\draw [in=180, out=-90] (60.center) to (54.center);
		\draw [in=0, out=90, looseness=0.75] (62.center) to (61.center);
		\draw [in=180, out=-90] (62.center) to (58.center);
		\draw [in=180, out=90, looseness=0.75] (59.center) to (61.center);
		\draw [in=0, out=90, looseness=0.75] (65.center) to (64.center);
		\draw [in=180, out=90, looseness=0.75] (63.center) to (64.center);
		\draw [in=0, out=-90] (63.center) to (58.center);
		\draw (60.center) to (65.center);
		\draw (53.center) to (66.center);
		\draw (68.center) to (66.center);
		\draw [style=thickstrand] (70.center) to (69.center);
		\draw [in=0, out=90] (73.center) to (74.center);
		\draw [in=-180, out=90] (76.center) to (77.center);
		\draw [in=0, out=90] (75.center) to (77.center);
		\draw [style=bieski, in=0, out=90, looseness=1.25] (78.center) to (75.center);
		\draw (75.center) to (79.center);
		\draw [in=-180, out=90] (80.center) to (74.center);
		\draw [in=0, out=-90, looseness=0.75] (82.center) to (81.center);
		\draw [in=-180, out=90] (82.center) to (78.center);
		\draw [in=-180, out=-90, looseness=0.75] (79.center) to (81.center);
		\draw [in=0, out=-90, looseness=0.75] (85.center) to (84.center);
		\draw [in=-180, out=-90, looseness=0.75] (83.center) to (84.center);
		\draw [in=0, out=90] (83.center) to (78.center);
		\draw (80.center) to (85.center);
		\draw [in=0, out=-90] (87.center) to (88.center);
		\draw [in=180, out=-90] (90.center) to (91.center);
		\draw [in=0, out=-90] (89.center) to (91.center);
		\draw (89.center) to (93.center);
		\draw [in=180, out=-90] (94.center) to (88.center);
		\draw [in=0, out=90, looseness=0.75] (96.center) to (95.center);
		\draw [in=180, out=-90] (96.center) to (92.center);
		\draw [in=180, out=90, looseness=0.75] (93.center) to (95.center);
		\draw [in=0, out=90, looseness=0.75] (99.center) to (98.center);
		\draw [in=180, out=90, looseness=0.75] (97.center) to (98.center);
		\draw [in=0, out=-90] (97.center) to (92.center);
		\draw (94.center) to (99.center);
		\draw (87.center) to (100.center);
		\draw (102.center) to (100.center);
		\draw [style=thickstrand] (104.center) to (103.center);
		\draw [style=bieski, in=-180, out=-90, looseness=1.25] (92.center) to (94.center);
		\draw [in=-180, out=90] (107.center) to (108.center);
		\draw [in=0, out=90] (106.center) to (108.center);
		\draw (105.center) to (107.center);
		\draw [style=bieski, in=0, out=90, looseness=1.25] (109.center) to (106.center);
		\draw (106.center) to (110.center);
		\draw [in=0, out=-90, looseness=0.75] (113.center) to (112.center);
		\draw [in=-180, out=90] (113.center) to (109.center);
		\draw [in=-180, out=-90, looseness=0.75] (110.center) to (112.center);
		\draw [in=0, out=-90, looseness=0.75] (116.center) to (115.center);
		\draw [in=-180, out=-90, looseness=0.75] (114.center) to (115.center);
		\draw [in=0, out=90] (114.center) to (109.center);
		\draw (111.center) to (116.center);
		\draw [in=0, out=-90] (117.center) to (118.center);
		\draw (119.center) to (121.center);
		\draw [in=180, out=-90] (122.center) to (118.center);
		\draw [in=0, out=90, looseness=0.75] (124.center) to (123.center);
		\draw [in=180, out=-90] (124.center) to (120.center);
		\draw [in=180, out=90, looseness=0.75] (121.center) to (123.center);
		\draw [in=0, out=90, looseness=0.75] (127.center) to (126.center);
		\draw [in=180, out=90, looseness=0.75] (125.center) to (126.center);
		\draw [in=0, out=-90] (125.center) to (120.center);
		\draw (122.center) to (127.center);
		\draw [style=thickstrand] (131.center) to (130.center);
		\draw [style=bieski, in=-180, out=-90, looseness=1.25] (120.center) to (122.center);
		\draw (117.center) to (132.center);
		\draw (56.center) to (32.center);
		\draw (37.center) to (134.center);
		\draw (76.center) to (135.center);
		\draw (73.center) to (90.center);
	\end{pgfonlayer}
\end{tikzpicture}
 \\
 &
\begin{tikzpicture}[scale=0.5]
	\begin{pgfonlayer}{nodelayer}
		\node [style=none] (106) at (1.75, 9) {};
		\node [style=none] (107) at (0.75, 9) {};
		\node [style=none] (108) at (1.25, 10.5) {};
		\node [style=none] (109) at (2.75, 8.5) {};
		\node [style=none] (110) at (1.75, 7) {};
		\node [style=none] (112) at (2, 6) {};
		\node [style=none] (113) at (2.25, 7) {};
		\node [style=none] (114) at (3.25, 7) {};
		\node [style=none] (117) at (5.75, 5) {};
		\node [style=none] (118) at (5.25, 3.5) {};
		\node [style=none] (120) at (3.75, 5.5) {};
		\node [style=none] (122) at (4.75, 5) {};
		\node [style=none] (124) at (3.25, 7) {};
		\node [style=none] (125) at (4.25, 7) {};
		\node [style=none] (126) at (4.5, 8) {};
		\node [style=none] (127) at (4.75, 7) {};
		\node [style=none] (130) at (6.5, 0) {};
		\node [style=none] (131) at (6.5, 11) {};
		\node [style=none] (132) at (5.75, 11) {};
		\node [style=none] (133) at (0, 6) {$\stackrel{(3)}{=}$};
		\node [style=none] (134) at (0.75, 0) {};
		\node [style=none] (135) at (9, 9) {};
		\node [style=none] (136) at (8, 9) {};
		\node [style=none] (137) at (8.5, 10.5) {};
		\node [style=none] (138) at (10, 8) {};
		\node [style=none] (139) at (9, 7) {};
		\node [style=none] (140) at (9.25, 4.75) {};
		\node [style=none] (141) at (9.5, 7) {};
		\node [style=none] (142) at (10.5, 7) {};
		\node [style=none] (143) at (13, 5) {};
		\node [style=none] (144) at (12.5, 3.5) {};
		\node [style=none] (145) at (11, 6) {};
		\node [style=none] (146) at (12, 5) {};
		\node [style=none] (147) at (10.5, 7) {};
		\node [style=none] (148) at (11.5, 7) {};
		\node [style=none] (150) at (12, 7) {};
		\node [style=none] (151) at (13.75, 0) {};
		\node [style=none] (152) at (13.75, 11) {};
		\node [style=none] (153) at (13, 6) {};
		\node [style=none] (154) at (7.25, 6) {$\stackrel{(4)}{=}$};
		\node [style=none] (155) at (8, 0) {};
		\node [style=none] (156) at (13, 11) {};
		\node [style=none] (157) at (9.5, 5.25) {};
		\node [style=none] (158) at (9, 5.25) {};
		\node [style=none] (159) at (12, 9) {};
		\node [style=none] (160) at (11.5, 9) {};
		\node [style=none] (161) at (11.75, 9.5) {};
		\node [style=none] (162) at (11.5, 9) {};
		\node [style=none] (163) at (16, 9) {};
		\node [style=none] (164) at (15, 9) {};
		\node [style=none] (165) at (15.5, 10.5) {};
		\node [style=none] (167) at (16, 7) {};
		\node [style=none] (168) at (16.25, 4.75) {};
		\node [style=none] (171) at (18, 5.25) {};
		\node [style=none] (172) at (17.5, 3.75) {};
		\node [style=none] (178) at (18.75, 0) {};
		\node [style=none] (179) at (18.75, 11) {};
		\node [style=none] (180) at (18, 6.25) {};
		\node [style=none] (181) at (14.25, 6) {$\stackrel{(5)}{=}$};
		\node [style=none] (182) at (15, 0) {};
		\node [style=none] (184) at (16.5, 5.25) {};
		\node [style=none] (185) at (16, 5.25) {};
		\node [style=none] (186) at (17, 5.25) {};
		\node [style=none] (187) at (16.5, 5.25) {};
		\node [style=none] (188) at (16.75, 5.75) {};
		\node [style=none] (189) at (16.5, 5.25) {};
		\node [style=none] (190) at (18, 11) {};
		\node [style=none] (192) at (20, 11) {};
		\node [style=none] (198) at (20.75, 0) {};
		\node [style=none] (199) at (20.75, 11) {};
		\node [style=none] (201) at (19.25, 6) {$\stackrel{(6)}{=}$};
		\node [style=none] (202) at (20, 0) {};
	\end{pgfonlayer}
	\begin{pgfonlayer}{edgelayer}
		\draw [in=-180, out=90] (107.center) to (108.center);
		\draw [in=0, out=90] (106.center) to (108.center);
		\draw [style=bieski, in=0, out=90, looseness=1.25] (109.center) to (106.center);
		\draw (106.center) to (110.center);
		\draw [in=0, out=-90, looseness=0.75] (113.center) to (112.center);
		\draw [in=-180, out=90] (113.center) to (109.center);
		\draw [in=-180, out=-90, looseness=0.75] (110.center) to (112.center);
		\draw [in=0, out=90] (114.center) to (109.center);
		\draw [in=0, out=-90] (117.center) to (118.center);
		\draw [in=180, out=-90] (122.center) to (118.center);
		\draw [in=180, out=-90] (124.center) to (120.center);
		\draw [in=0, out=90, looseness=0.75] (127.center) to (126.center);
		\draw [in=180, out=90, looseness=0.75] (125.center) to (126.center);
		\draw [in=0, out=-90] (125.center) to (120.center);
		\draw (122.center) to (127.center);
		\draw [style=thickstrand] (131.center) to (130.center);
		\draw [style=bieski, in=-180, out=-90, looseness=1.25] (120.center) to (122.center);
		\draw (117.center) to (132.center);
		\draw (134.center) to (107.center);
		\draw [in=-180, out=90] (136.center) to (137.center);
		\draw [in=0, out=90] (135.center) to (137.center);
		\draw (135.center) to (139.center);
		\draw [in=-180, out=90] (141.center) to (138.center);
		\draw [in=0, out=90] (142.center) to (138.center);
		\draw [in=0, out=-90] (143.center) to (144.center);
		\draw [in=180, out=-90] (146.center) to (144.center);
		\draw [in=180, out=-90] (147.center) to (145.center);
		\draw [in=0, out=-90] (148.center) to (145.center);
		\draw (146.center) to (150.center);
		\draw [style=thickstrand] (152.center) to (151.center);
		\draw (143.center) to (153.center);
		\draw (155.center) to (136.center);
		\draw (153.center) to (156.center);
		\draw (139.center) to (158.center);
		\draw (141.center) to (157.center);
		\draw [in=0, out=-90, looseness=1.25] (157.center) to (140.center);
		\draw [in=-90, out=-180, looseness=1.25] (140.center) to (158.center);
		\draw [style=bieski, in=0, out=-90, looseness=0.75] (145.center) to (157.center);
		\draw (148.center) to (160.center);
		\draw (150.center) to (159.center);
		\draw [in=180, out=90, looseness=1.25] (162.center) to (161.center);
		\draw [in=0, out=90, looseness=1.25] (159.center) to (161.center);
		\draw [style=bieski, in=180, out=90] (138.center) to (162.center);
		\draw [in=-180, out=90] (164.center) to (165.center);
		\draw [in=0, out=90] (163.center) to (165.center);
		\draw (163.center) to (167.center);
		\draw [in=0, out=-90] (171.center) to (172.center);
		\draw [style=thickstrand] (179.center) to (178.center);
		\draw (171.center) to (180.center);
		\draw (182.center) to (164.center);
		\draw (167.center) to (185.center);
		\draw [in=0, out=-90, looseness=1.25] (184.center) to (168.center);
		\draw [in=-90, out=-180, looseness=1.25] (168.center) to (185.center);
		\draw [in=180, out=90, looseness=1.25] (189.center) to (188.center);
		\draw [in=0, out=90, looseness=1.25] (186.center) to (188.center);
		\draw [in=180, out=-90] (186.center) to (172.center);
		\draw (180.center) to (190.center);
		\draw [style=thickstrand] (199.center) to (198.center);
		\draw (202.center) to (192.center);
	\end{pgfonlayer}
\end{tikzpicture}
\end{aligned}$}
\]}
The equalities above follow by:
\begin{enumerate}
 \item Axiom~\ref{Involution};
 \item Triangle identities for $\widehat{\eta}, \widehat{\varepsilon}$;
 \item Triangle identities for $\widehat{\eta}, \widehat{\varepsilon}$;
 \item Isotopy and Axiom~\ref{Involution};
 \item Axiom~\ref{ZigZag};
 \item Triangle identities for $\widehat{\eta}, \widehat{\varepsilon}$, twice.
\end{enumerate}

 \end{proof}

Finally, we need the latter condition of Theorem~\ref{LiberalUnitBar} to carry over to $\mathbf{M}$:
\begin{notation}
 For a semigroup $\csym{S}$-module category $\mathbf{M}$ over a semigroup category $\csym{S}$, we write $\csym{S} \star \mathbf{M} = \mathbf{M}$ if every object $X$ of $\mathbf{M}$ is a direct summand of a finite direct sum of objects of the form $\mathbf{M}\mathrm{F}(Y)$, for $Y \in \mathbf{M}$ and $\mathrm{F} \in \csym{S}$.
\end{notation}

The next theorem concerns module categories which, as a category, are of the form $[\mathcal{A}^{\on{op}},\mathbf{Vec}_{\Bbbk}]$, i.e. the category of presheaves over a small category $\mathcal{A}$. In Theorem~\ref{AbelianizationCorrespondence}, we refer to module categories of this form, where additionally the action functor is cocontinuous in both variables, as {\it module presheaf categories}.

\begin{theorem}\label{AbelianizationCorrespondence}
 Let $\csym{S}$ be a rigid semigroup category with an object $\mathrm{F}$ such that $\mathrm{F} \boxtimes -$ is liberal.
 If $\mathbf{M}$ is a semigroup $\csym{S}$-module category such that $\csym{S}\star \mathbf{M} = \mathbf{M}$, and such that the adjunction $(\mathrm{F},\mathrm{F}^{\blackdiamond}, \eta^{\mathrm{F},l},\eta^{\mathrm{F},r}, \varepsilon^{\mathrm{F},l},\varepsilon^{\mathrm{F},r})$ is respected in $\mathbf{M}$, then $[\mathbf{M}^{\on{op}},\mathbf{Vec}_{\Bbbk}]$ is a monoidal $[\csym{S}^{\on{op}},\mathbf{Vec}_{\Bbbk}]$-module category. Furthermore, there is a bijection
  \begin{equation}\label{Bijections}
  \begin{aligned}
 \left\{\rule{0cm}{1cm}\right.
  \begin{aligned}
  &\text{Cauchy complete semigroup }\csym{S}\text{-module categories} \\
  &\text{respecting }
  (\mathrm{F},\mathrm{F}^{\blackdiamond}, \eta^{\mathrm{F},l},\eta^{\mathrm{F},r}, \varepsilon^{\mathrm{F},l},\varepsilon^{\mathrm{F},r}) \text{ such that} \\
  &\csym{S}\star \mathbf{M} = \mathbf{M}
  \end{aligned}
\left.\rule{0cm}{1cm}\right\}/&\simeq
 \longleftrightarrow
  \Bigg\{
  \begin{aligned}
  &\text{Monoidal }[\csym{S}^{\on{op}},\mathbf{Vec}_{\Bbbk}]\text{-module presheaf} \\
  &\text{categories where }\Hom{-,\mathrm{F}^{\blackdiamond}} \text{ acts faithfully}
  \end{aligned}
\Bigg\}/\simeq
 \\
 &\mathbf{M} \longmapsto  [\mathbf{M}^{\on{op}},\mathbf{Vec}_{\Bbbk}] \\
 &\mathbf{N}\!\on{-f.g.proj} \longmapsfrom \mathbf{N}
 \end{aligned}
  \end{equation}
  Where the $[\csym{S}^{\on{op}},\mathbf{Vec}_{\Bbbk}]$-module category structure on $[\mathbf{M}^{\on{op}},\mathbf{Vec}_{\Bbbk}]$ is obtained via cocontinuous extension described following Terminology~\ref{dispel}, and the $\csym{S}$-module structure on $\mathbf{N}\!\on{-f.g.proj}$ is by restriction.
\end{theorem}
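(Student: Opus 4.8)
The plan is to establish the two assertions in order: first that $[\mathbf{M}^{\on{op}},\mathbf{Vec}_{\Bbbk}]$ is a monoidal module category, and then to leverage this into the stated bijection. For the monoidal claim, by Proposition~\ref{AnotherEGNOLemma} applied to the cocontinuous semigroup action functor $[\csym{S}^{\on{op}},\mathbf{Vec}_{\Bbbk}] \to \mathbf{Cat}_{\Bbbk}([\mathbf{M}^{\on{op}},\mathbf{Vec}_{\Bbbk}],[\mathbf{M}^{\on{op}},\mathbf{Vec}_{\Bbbk}])$ of Definition~\ref{MV}, it suffices to show that $\mathbb{1}_{[\csym{S}^{\on{op}},\mathbf{Vec}_{\Bbbk}]}$ acts as $\on{Id}$. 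Since this action is cocontinuous and, by Theorem~\ref{LiberalUnitBar}, $\mathbb{1}$ is the coequalizer of the bar complex $\Hom{-,\mathrm{FF^{\blackdiamond}FF^{\blackdiamond}}} \rightrightarrows \Hom{-,\mathrm{FF^{\blackdiamond}}}$, the functor $\mathbb{1}\oast-$ is the coequalizer of the images of that complex. I would identify this image coequalizer with the comonad resolution of the identity provided by Corollary~\ref{ResolvingIdentity}.

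First I would produce the resolvent pair. Writing $\euler{F} = (\mathbf{M}\mathrm{F})_{!}$ and $\euler{G} = (\mathbf{M}\mathrm{F}^{\blackdiamond})_{!}$ on the abelian category $[\mathbf{M}^{\on{op}},\mathbf{Vec}_{\Bbbk}]$, the hypothesis that the adjunction is respected in $\mathbf{M}$ gives $\mathbf{M}\mathrm{F}\dashv\mathbf{M}\mathrm{F}^{\blackdiamond}$; as $(-)_{!}$ is a pseudofunctor it preserves this adjunction, so $\euler{F}\dashv\euler{G}$ and $\euler{C}:=\euler{F}\euler{G}\simeq(\mathbf{M}(\mathrm{F}\boxtimes\mathrm{F}^{\blackdiamond}))_{!}$ is a comonad. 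To see that $(\euler{F},\euler{G})$ is resolvent, I would first note that $\mathbf{M}\mathrm{F}$ is liberal: given $X\in\mathbf{M}$, the hypothesis $\csym{S}\star\mathbf{M}=\mathbf{M}$ exhibits $X$ as a summand of some $\mathrm{G}\star Y$, and liberality of $\mathrm{F}\boxtimes-$ exhibits $\mathrm{G}$ as a summand of $\mathrm{F}\boxtimes\mathrm{G}'$, whence $X$ is a summand of $\mathbf{M}\mathrm{F}(\mathrm{G}'\star Y)$. By Proposition~\ref{SufficientCondition}, $(\mathbf{M}\mathrm{F})^{\ast}$ is faithful; uniqueness of right adjoints identifies it with $\euler{G}$, which is moreover exact, being simultaneously continuous (as a right adjoint) and cocontinuous (as a $(-)_{!}$). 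Thus Corollary~\ref{ResolvingIdentity} applies.

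The principal obstacle is matching the images of the two bar maps with the whiskerings $\euler{C}\varepsilon$ and $\varepsilon\euler{C}$ of the comonad counit $\varepsilon=(\varepsilon^{\mathrm{F},\mathbf{M}})_{!}$. For $\mathrm{F}\varepsilon^{\mathrm{F},l}_{\mathrm{F}^{\blackdiamond}}$ this is direct: the respectfulness coherence \eqref{CoherentlyRespectful} (with $\mathrm{H}=\mathrm{F}^{\blackdiamond}$) gives $\mathbf{M}(\varepsilon^{\mathrm{F},l}_{\mathrm{F}^{\blackdiamond}})=\mathbf{M}\mathrm{F}^{\blackdiamond}\cdot\varepsilon^{\mathrm{F},\mathbf{M}}$, hence $\mathbf{M}(\mathrm{F}\varepsilon^{\mathrm{F},l}_{\mathrm{F}^{\blackdiamond}})=\mathbf{M}\mathrm{F}\,\mathbf{M}\mathrm{F}^{\blackdiamond}\,\varepsilon^{\mathrm{F},\mathbf{M}}$, and applying $(-)_{!}$ yields $\euler{C}\varepsilon$. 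For $\varepsilon^{\mathrm{F},r}_{\mathrm{F}}\mathrm{F}^{\blackdiamond}$ I would invoke Axiom~\ref{Involution} in the form $\varepsilon^{\mathrm{F},l}_{\mathrm{G}}\boxtimes\mathrm{F}=\mathrm{G}\boxtimes\varepsilon^{\mathrm{F},r}_{\mathrm{F}}$, combine it with \eqref{CoherentlyRespectful} to obtain, for every $\mathrm{G}$, the equality of whiskerings $\mathbf{M}\mathrm{G}\cdot(\varepsilon^{\mathrm{F},\mathbf{M}}\,\mathbf{M}\mathrm{F})=\mathbf{M}\mathrm{G}\cdot\mathbf{M}(\varepsilon^{\mathrm{F},r}_{\mathrm{F}})$, then specialize to $\mathrm{G}=\mathrm{F}^{\blackdiamond}$ and apply the faithful functor $\euler{G}=(\mathbf{M}\mathrm{F}^{\blackdiamond})_{!}$ to cancel the outer whiskering; this gives $(\mathbf{M}\varepsilon^{\mathrm{F},r}_{\mathrm{F}})_{!}=\varepsilon\euler{F}$, so the second bar map maps to $\varepsilon\euler{C}$. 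With both maps matched, Corollary~\ref{ResolvingIdentity} identifies the coequalizer with $\on{Id}$, and cocontinuity of the action in both variables makes $[\mathbf{M}^{\on{op}},\mathbf{Vec}_{\Bbbk}]$ a monoidal module presheaf category.

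For the bijection I would check well-definedness and mutual inverseness on equivalence classes. The assignment $\mathbf{M}\mapsto[\mathbf{M}^{\on{op}},\mathbf{Vec}_{\Bbbk}]$ lands on the right, since $\Hom{-,\mathrm{F}^{\blackdiamond}}$ acts as $\euler{G}$, which is faithful; conversely, faithfulness of this action means $(\mathbf{M}\mathrm{F})^{\ast}$ is faithful, i.e. $\mathbf{M}\mathrm{F}$ is liberal, which is precisely $\csym{S}\star\mathbf{M}=\mathbf{M}$, so the two side-conditions correspond. For $\mathbf{N}\mapsto\mathbf{N}\!\on{-f.g.proj}$ I would observe that each $\Hom{-,\mathrm{G}}$ acts on $\mathbf{N}=[\mathcal{A}^{\on{op}},\mathbf{Vec}_{\Bbbk}]$ by a cocontinuous functor sending representables to representables, so the finitely generated projectives (summands of finite sums of representables) are preserved and inherit an $\csym{S}$-module structure; monoidality of $\mathbf{N}$ makes $(\Hom{-,\mathrm{F}},\Hom{-,\mathrm{F}^{\blackdiamond}})$ an adjoint pair, hence $(\mathbf{M}\mathrm{F},\mathbf{M}\mathrm{F}^{\blackdiamond})$ is adjoint on the restriction, and Proposition~\ref{Djunctions} upgrades this to respectfulness. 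The two round-trips are identities because the finitely generated projectives of $[\mathbf{M}^{\on{op}},\mathbf{Vec}_{\Bbbk}]$ form the Cauchy completion of $\mathbf{M}$, equal to $\mathbf{M}$ by Cauchy completeness, while for $\mathbf{N}=[\mathcal{A}^{\on{op}},\mathbf{Vec}_{\Bbbk}]$ one has $\mathbf{N}\!\on{-f.g.proj}\simeq\overline{\mathcal{A}}$ and $[\overline{\mathcal{A}}^{\on{op}},\mathbf{Vec}_{\Bbbk}]\simeq\mathbf{N}$. The remaining care, which I expect to be the most delicate bookkeeping after the map-matching above, is to verify that all of these equivalences respect the respective module structures; this follows from cocontinuity of the actions together with their agreement on representables.
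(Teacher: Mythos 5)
Your proposal is correct and follows essentially the same route as the paper's proof: liberality of $\mathbf{M}\mathrm{F}$ from $\csym{S}\star\mathbf{M}=\mathbf{M}$, the resolvent pair $((\mathbf{M}\mathrm{F})_{!},(\mathbf{M}\mathrm{F}^{\blackdiamond})_{!})$ obtained from the respected adjunction and Proposition~\ref{SufficientCondition}, the identification of the bar-complex maps with the comonad counit whiskerings via Equation~\ref{CoherentlyRespectful} and Axiom~\ref{Involution} (a step the paper compresses into ``one easily shows''), and then the Cauchy-completion and cocontinuous-extension arguments for the bijection. One small repair: the action of $\Hom{-,\mathrm{G}}$ on a module presheaf category $\mathbf{N}$ need not send representables to representables, so preservation of finitely generated projectives should instead be deduced from the fact that this action has an exact, cocontinuous right adjoint (the action of $\Hom{-,\mathrm{G}^{\blackdiamond}}$), hence preserves compact projectives --- which is the paper's argument and the same adjunction mechanism you already invoke for the pair $(\mathrm{F},\mathrm{F}^{\blackdiamond})$.
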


\begin{proof}
 If $\csym{S} \star \mathbf{M} = \mathbf{M}$, then $\mathbf{M}\mathrm{F}$ is liberal. Indeed, for any $X \in \mathbf{M}$, there are by assumption some $\mathrm{G} \in \csym{S}$ and $Y \in \mathbf{M}$ such that $X$ is a direct summand of $\mathbf{M}\mathrm{G}(Y)$. Since $\mathrm{F} \boxtimes -$ is liberal, the object $\mathrm{G}$ is a direct summand of $\mathrm{F}\boxtimes \mathrm{H}$, for some $\mathrm{H} \in \csym{S}$. Thus, $\mathbf{M}\mathrm{G}(Y)$, and by transitivity also $X$, is a direct summand of $\mathbf{M}(\mathrm{F} \boxtimes \mathrm{H})(Y) \simeq \mathbf{M}\mathrm{F}(\mathbf{M}(\mathrm{H})(Y))$.

 Since the adjunction $(\mathrm{F},\mathrm{F}^{\blackdiamond}, \eta^{\mathrm{F},l},\eta^{\mathrm{F},r}, \varepsilon^{\mathrm{F},l},\varepsilon^{\mathrm{F},r})$ is respected in $\mathbf{M}$, we have an adjoint pair $(\mathbf{M}\mathrm{F}, \mathbf{M}\mathrm{F}^{\blackdiamond})$ of endofunctors of $\mathbf{M}$. Since $\mathbf{M}\mathrm{F}$ is liberal, the right adjoint in the adjoint pair $([\mathbf{M}^{\on{op}},\mathbf{Vec}_{\Bbbk}]\Hom{-,\mathrm{F}},[\mathbf{M}^{\on{op}},\mathbf{Vec}_{\Bbbk}]\Hom{-,\mathrm{F}^{\blackdiamond}})$ is exact and faithful, and thus the pair is resolvent, so the coequalizer of
\begin{equation}\label{IsoToId}
\begin{tikzcd}
	{([\mathbf{M}^{\on{op}},\mathbf{Vec}_{\Bbbk}]\Hom{-,\mathrm{F}})\circ ([\mathbf{M}^{\on{op}},\mathbf{Vec}_{\Bbbk}]\Hom{-,\mathrm{F}^{\blackdiamond}})\circ ([\mathbf{M}^{\on{op}},\mathbf{Vec}_{\Bbbk}]\Hom{-,\mathrm{F}})\circ ([\mathbf{M}^{\on{op}},\mathbf{Vec}_{\Bbbk}]\Hom{-,\mathrm{F}^{\blackdiamond}})} \\
	{([\mathbf{M}^{\on{op}},\mathbf{Vec}_{\Bbbk}]\Hom{-,\mathrm{F}})\circ ([\mathbf{M}^{\on{op}},\mathbf{Vec}_{\Bbbk}]\Hom{-,\mathrm{F}^{\blackdiamond}})}
	\arrow["{(\varepsilon^{\mathrm{F},\mathbf{M}})_{!} \circ ([\mathbf{M}^{\on{op}},\mathbf{Vec}_{\Bbbk}]\Hom{-,\mathrm{F}})\circ ([\mathbf{M}^{\on{op}},\mathbf{Vec}_{\Bbbk}]\Hom{-,\mathrm{F}^{\blackdiamond}})}"', shift right=4, from=1-1, to=2-1]
	\arrow["{([\mathbf{M}^{\on{op}},\mathbf{Vec}_{\Bbbk}]\Hom{-,\mathrm{F}})\circ ([\mathbf{M}^{\on{op}},\mathbf{Vec}_{\Bbbk}]\Hom{-,\mathrm{F}^{\blackdiamond}}) \circ (\varepsilon^{\mathrm{F},\mathbf{M}})_{!}}", shift left=4, from=1-1, to=2-1]
\end{tikzcd}
\end{equation}
is isomorphic to the identity functor $\on{Id}_{[\mathbf{M}^{\on{op}},\mathbf{Vec}_{\Bbbk}]}$. Combining Definition~\ref{Respect} with the use of coherence isomorphisms
\[
[\mathbf{M}^{\on{op}},\mathbf{Vec}_{\Bbbk}](\mathrm{Q}\oast \mathrm{P}) \simeq [\mathbf{M}^{\on{op}},\mathbf{Vec}_{\Bbbk}](\mathrm{Q}) \circ [\mathbf{M}^{\on{op}},\mathbf{Vec}_{\Bbbk}](\mathrm{P}),
\]
one easily shows that Diagram~\eqref{IsoToId} is isomorphic to
\[\begin{tikzcd}
	{[\mathbf{M}^{\on{op}},\mathbf{Vec}_{\Bbbk}]\Hom{-,\mathrm{FF^{\blackdiamond}FF^{\blackdiamond}}}} &&& {[\mathbf{M}^{\on{op}},\mathbf{Vec}_{\Bbbk}]\Hom{-,\mathrm{FF^{\blackdiamond}}}}
	\arrow["{[\mathbf{M}^{\on{op}},\mathbf{Vec}_{\Bbbk}]{\Hom{-,\varepsilon^{\mathrm{F},r}_{\mathrm{F}}\mathrm{F}^{\blackdiamond}}}}", shift left=3, from=1-1, to=1-4]
	\arrow["{[\mathbf{M}^{\on{op}},\mathbf{Vec}_{\Bbbk}]{\Hom{-,\mathrm{F}\varepsilon^{\mathrm{F},l}_{\mathrm{F}^{\blackdiamond}}}}}"', shift right=3, from=1-1, to=1-4]
\end{tikzcd}\]
From the cocontinuity of the action, we obtain the second isomorphism in
\[
\begin{aligned}
 \on{Id}_{[\mathbf{M}^{\on{op}},\mathbf{Vec}_{\Bbbk}]} &\simeq \on{Coeq}
 \Bigg(
 \begin{tikzcd}
	{[\mathbf{M}^{\on{op}},\mathbf{Vec}_{\Bbbk}]\Hom{-,\mathrm{FF^{\blackdiamond}FF^{\blackdiamond}}}} &&& {[\mathbf{M}^{\on{op}},\mathbf{Vec}_{\Bbbk}]\Hom{-,\mathrm{FF^{\blackdiamond}}}}
	\arrow["{[\mathbf{M}^{\on{op}},\mathbf{Vec}_{\Bbbk}]{\Hom{-,\varepsilon^{\mathrm{F},r}_{\mathrm{F}}\mathrm{F}^{\blackdiamond}}}}", shift left=3, from=1-1, to=1-4]
	\arrow["{[\mathbf{M}^{\on{op}},\mathbf{Vec}_{\Bbbk}]{\Hom{-,\mathrm{F}\varepsilon^{\mathrm{F},l}_{\mathrm{F}^{\blackdiamond}}}}}"', shift right=3, from=1-1, to=1-4]
\end{tikzcd}
 \Bigg)\\
 &\simeq [\mathbf{M}^{\on{op}},\mathbf{Vec}_{\Bbbk}]\Bigg(\on{Coeq}\Bigg(
 \begin{tikzcd}
	{\Hom{-,\mathrm{FF^{\blackdiamond}FF^{\blackdiamond}}}} &&& {\Hom{-,\mathrm{FF^{\blackdiamond}}}}
	\arrow["{{\Hom{-,\varepsilon^{\mathrm{F},r}_{\mathrm{F}}\mathrm{F}^{\blackdiamond}}}}", shift left=3, from=1-1, to=1-4]
	\arrow["{{\Hom{-,\mathrm{F}\varepsilon^{\mathrm{F},l}_{\mathrm{F}^{\blackdiamond}}}}}"', shift right=3, from=1-1, to=1-4]
\end{tikzcd}
\Bigg)\Bigg)
\end{aligned}
\]
By Theorem~\ref{LiberalUnitBar}, the last term is isomorphic to $[\mathbf{M}^{\on{op}},\mathbf{Vec}_{\Bbbk}]\left(\mathbb{1}_{[\ccf{S}^{\on{op}},\mathbf{Vec}_{\Bbbk}]}\right)$. Thus, we have established that
\[
\on{Id}_{[\mathbf{M}^{\on{op}},\mathbf{Vec}_{\Bbbk}]} \simeq [\mathbf{M}^{\on{op}},\mathbf{Vec}_{\Bbbk}]\left(\mathbb{1}_{[\ccf{S}^{\on{op}},\mathbf{Vec}_{\Bbbk}]}\right)
\]
and, by Proposition~\ref{AnotherEGNOLemma}, $[\mathbf{M}^{\on{op}},\mathbf{Vec}_{\Bbbk}]$ is a monoidal $[\csym{S}^{\on{op}},\mathbf{Vec}_{\Bbbk}]$-module category.

To establish the bijective correspondence \eqref{Bijections}, denote the indicated map from the left-hand side to the right-hand side by $\Omega$, and that from the right-hand side to the left-hand side by $\Psi$.

To see that the map $\Psi$ is well-defined, recall that a monoidal module category respects all adjunctions, and so for $\mathbf{N}$ in the right-hand side, the functor $\mathbf{N}\mathrm{F}$ has a left and a right adjoint, and thus sends finitely generated projective objects to finitely generated projective objects. Thus, $\mathbf{N}\!\on{-f.g.proj}$ is a semigroup module $\csym{S}$-subcategory of $\mathbf{N}$. Further, this shows that the adjunction $(\mathrm{F},\mathrm{F}^{\blackdiamond}, \eta^{\mathrm{F},l},\eta^{\mathrm{F},r}, \varepsilon^{\mathrm{F},l},\varepsilon^{\mathrm{F},r})$ is respected in $\mathbf{N}\!\on{-f.g.proj}$.

Since $\mathbf{N}\Hom{-,\mathrm{F}^{\blackdiamond}}$ is faithful, the transformation $\varepsilon^{\mathrm{F},\mathbf{N}}$ is an epimorphism, so, for any $P \in \mathbf{N}\!\on{-f.g.proj}$, the morphism
\[
 (\mathbf{N}\Hom{-,\mathrm{FF^{\blackdiamond}}})(P) \xrightarrow{\varepsilon^{\mathrm{F},\mathbf{N}}} P
\]
is epi. Since $P$ is projective, this morphism is a split epimorphism, which shows that $\csym{S}\star \mathbf{N}\!\on{-f.g.proj} = \mathbf{N}\!\on{-f.g.proj}$, and so $\Psi$ is well-defined.

By the definition of the semigroup $[\csym{S}^{\on{op}},\mathbf{Vec}_{\Bbbk}]$-module structure on $[\mathbf{M}^{\on{op}},\mathbf{Vec}_{\Bbbk}]$, the equivalence
\[
[\mathbf{M}^{\on{op}},\mathbf{Vec}_{\Bbbk}]\!\on{-f.g.proj} \simeq \mathbf{M}
\]
is an equivalence of semigroup $\csym{S}$-module categories. This shows that $\Psi\circ \Omega = \on{Id}$. Further, since cocontinuous extensions of functors to categories of presheaves are essentially unique, and a cocontinuous functor from a presheaf category is determined on finitely generated projectives, the equivalence
\[
 [\mathbf{N}\!\on{-f.g.proj},\mathbf{Vec}_{\Bbbk}] \simeq \mathbf{N}
\]
is an equivalence of monoidal $[\csym{S}^{\on{op}},\mathbf{Vec}_{\Bbbk}]$-module categories, establishing the claim.
\end{proof}

\begin{definition}
 Given a rigid semigroup category $\csym{S}$, let $\widetilde{\csym{S}}$ be the monoidal subcategory of $[\csym{S}^{\on{op}},\mathbf{Vec}_{\Bbbk}]$, where $\on{Ob}\widetilde{\csym{S}} = \setj{\Hom{-,\mathrm{F}} \; | \; \mathrm{F} \in \csym{S}} \cup \setj{\mathbb{1}_{\widetilde{\ccf{S}}}}$, where $\mathbb{1}_{\widetilde{\ccf{S}}}$ is the coequalizer in the right-hand side of \eqref{UnitResolution}.
\end{definition}

\begin{remark}
 If $\csym{S}$ already is monoidal, then, since $\varepsilon^{\mathrm{F},l}$ is epi, we have $\mathbb{1}_{\widetilde{\ccf{S}}} \simeq \mathbb{1}_{\ccf{S}}$ - the operation of adjoining a unit object which sends $\csym{S}$ to $\widetilde{\csym{S}}$ is weakly idempotent.
\end{remark}

We may now reformulate Theorem~\ref{AbelianizationCorrespondence} in terms of a lifting problem, parallel to \cite[Theorem~3.16]{KMZ}:

\begin{corollary}
 There is a bijection
   \begin{equation}
  \begin{aligned}
 \left\{\rule{0cm}{1cm}\right.
  \begin{aligned}
  &\text{Cauchy complete semigroup }\csym{S}\text{-module categories} \\
  &\text{respecting }
  (\mathrm{F},\mathrm{F}^{\blackdiamond}, \eta^{\mathrm{F},l},\eta^{\mathrm{F},r}, \varepsilon^{\mathrm{F},l},\varepsilon^{\mathrm{F},r}) \text{ such that} \\
  &\csym{S}\star \mathbf{M} = \mathbf{M}
  \end{aligned}
\left.\rule{0cm}{1cm}\right\}/&\simeq
 \longrightarrow
  \Bigg\{
  \begin{aligned}
  &\text{Cauchy complete monoidal }\widetilde{\csym{S}}\text{-module} \\
  &\text{categories such that } \csym{S}\star \mathbf{M} = \mathbf{M}
  \end{aligned}
\Bigg\}/\simeq
 \\
 &\mathbf{M} \longmapsto  [\mathbf{M}^{\on{op}},\mathbf{Vec}_{\Bbbk}]\!\on{-f.g.proj} \\
 &\mathbf{N}_{|\ccf{S}} \longmapsfrom  \mathbf{N}
 \end{aligned}
  \end{equation}
\end{corollary}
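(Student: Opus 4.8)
The plan is to deduce the stated bijection from Theorem~\ref{AbelianizationCorrespondence} by factoring its bijection through an intermediate Cauchy-completion equivalence $\Lambda$ between the right-hand side of that theorem --- monoidal $[\csym{S}^{\on{op}},\mathbf{Vec}_{\Bbbk}]$-module presheaf categories $\mathbf{N}$ on which $\Hom{-,\mathrm{F}^{\blackdiamond}}$ acts faithfully --- and the right-hand side of the present statement --- Cauchy complete monoidal $\widetilde{\csym{S}}$-module categories $\mathbf{P}$ with $\csym{S}\star\mathbf{P}=\mathbf{P}$. I would define $\Lambda$ to send $\mathbf{N}$ to its full subcategory $\mathbf{N}\!\on{-f.g.proj}$ of finitely generated projectives, equipped with the restriction along $\widetilde{\csym{S}}\hookrightarrow[\csym{S}^{\on{op}},\mathbf{Vec}_{\Bbbk}]$ of the given action, and its inverse to send $\mathbf{P}$ to $[\mathbf{P}^{\on{op}},\mathbf{Vec}_{\Bbbk}]$ with the $[\csym{S}^{\on{op}},\mathbf{Vec}_{\Bbbk}]$-action obtained by extending the $\csym{S}$-action cocontinuously via Theorem~\ref{UniversalPropertyDay}. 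Writing $\Omega$ and $\Psi$ for the forward and backward maps of Theorem~\ref{AbelianizationCorrespondence}, the composite $\Lambda\circ\Omega$ sends $\mathbf{M}\mapsto[\mathbf{M}^{\on{op}},\mathbf{Vec}_{\Bbbk}]\!\on{-f.g.proj}$ and $\Psi\circ\Lambda^{-1}$ sends $\mathbf{N}\mapsto\mathbf{N}_{|\ccf{S}}$, which are exactly the asserted maps; so it suffices to prove $\Lambda$ is a well-defined equivalence.

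For $\Lambda$ in the forward direction, $\mathbf{N}\!\on{-f.g.proj}$ is Cauchy complete, being the Cauchy completion of the indexing category of $\mathbf{N}$. Each representable $\Hom{-,\mathrm{G}}$ acts by a cocontinuous functor whose right adjoint is the action of the dual $\Hom{-,\mathrm{G}^{\blackdiamond}}$ (duals of representables are representable, since the Yoneda embedding is strong monoidal and monoidal functors preserve duals); as this right adjoint is again cocontinuous, the action of $\Hom{-,\mathrm{G}}$ preserves finitely generated projectives. The unit $\mathbb{1}_{\widetilde{\ccf{S}}}$, which equals $\mathbb{1}_{[\csym{S}^{\on{op}},\mathbf{Vec}_{\Bbbk}]}$ by Theorem~\ref{LiberalUnitBar}, acts as the identity on the \emph{monoidal} module category $\mathbf{N}$, so $\mathbf{N}\!\on{-f.g.proj}$ is again a monoidal $\widetilde{\csym{S}}$-module category. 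The remaining condition $\csym{S}\star\mathbf{N}\!\on{-f.g.proj}=\mathbf{N}\!\on{-f.g.proj}$ is precisely the computation already given for the well-definedness of $\Psi$ in the proof of Theorem~\ref{AbelianizationCorrespondence}, using that $\varepsilon^{\mathrm{F},\mathbf{N}}$ is epi because $\Hom{-,\mathrm{F}^{\blackdiamond}}$ acts faithfully.

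The substantive direction is $\Lambda^{-1}$. Since $\widetilde{\csym{S}}$ is rigid and $\mathbf{P}$ is a monoidal $\widetilde{\csym{S}}$-module category, the pair $(\mathbf{P}\mathrm{F},\mathbf{P}\mathrm{F}^{\blackdiamond})$ is adjoint, and $\csym{S}\star\mathbf{P}=\mathbf{P}$ makes $\mathbf{P}\mathrm{F}$ liberal by the same reasoning as in Theorem~\ref{AbelianizationCorrespondence}. Extending the $\csym{S}$-action to $[\mathbf{P}^{\on{op}},\mathbf{Vec}_{\Bbbk}]$ via Theorem~\ref{UniversalPropertyDay}, the representable $\Hom{-,\mathrm{F}}$ acts by $(\mathbf{P}\mathrm{F})_{!}$, whose right adjoint $(\mathbf{P}\mathrm{F})^{\ast}\simeq(\mathbf{P}\mathrm{F}^{\blackdiamond})_{!}$ is the action of $\Hom{-,\mathrm{F}^{\blackdiamond}}$; by Proposition~\ref{SufficientCondition} liberality makes this right adjoint faithful, so $(\mathbf{P}\mathrm{F})_{!}\dashv(\mathbf{P}\mathrm{F})^{\ast}$ is resolvent and $\Hom{-,\mathrm{F}^{\blackdiamond}}$ acts faithfully. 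The key point is that the resulting cocontinuous $[\csym{S}^{\on{op}},\mathbf{Vec}_{\Bbbk}]$-action is \emph{monoidal}: realizing $\mathbb{1}_{\widetilde{\ccf{S}}}$ as the coequalizer \eqref{UnitResolution} and invoking cocontinuity of the action, the unit acts as the coequalizer of the comonad bar complex for $(\mathbf{P}\mathrm{F})_{!}(\mathbf{P}\mathrm{F})^{\ast}$, which is $\on{Id}_{[\mathbf{P}^{\on{op}},\mathbf{Vec}_{\Bbbk}]}$ by Corollary~\ref{ResolvingIdentity}; Proposition~\ref{AnotherEGNOLemma} then yields monoidality.

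Finally, that $\Lambda$ and $\Lambda^{-1}$ are mutually inverse follows from the standard facts that a presheaf category is the free cocompletion of its subcategory of finitely generated projectives and that a Cauchy complete $\mathbf{P}$ is recovered as the finitely generated projectives in $[\mathbf{P}^{\on{op}},\mathbf{Vec}_{\Bbbk}]$, with the module structures matching by the essential uniqueness of cocontinuous extensions --- the same round-trip verification that concludes the proof of Theorem~\ref{AbelianizationCorrespondence}. The main obstacle is the monoidality check for $\Lambda^{-1}$: showing that the abstractly adjoined unit $\mathbb{1}_{\widetilde{\ccf{S}}}$, which is only a formal coequalizer inside $[\csym{S}^{\on{op}},\mathbf{Vec}_{\Bbbk}]$, acts invisibly once the action is extended to $[\mathbf{P}^{\on{op}},\mathbf{Vec}_{\Bbbk}]$ --- this is exactly where the liberality hypothesis and the bar-complex resolution of the unit do the essential work.
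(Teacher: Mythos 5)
Your proposal is correct and matches the paper's own route: the paper states this corollary without a separate proof, presenting it as a direct reformulation of Theorem~\ref{AbelianizationCorrespondence}, and your argument is precisely that reformulation, transporting the theorem's bijection along the restriction/extension equivalence between monoidal $[\csym{S}^{\on{op}},\mathbf{Vec}_{\Bbbk}]$-module presheaf categories and Cauchy complete monoidal $\widetilde{\csym{S}}$-module categories. The individual verifications you carry out (action of representables preserving finitely generated projectives, the split-epimorphism argument giving $\csym{S}\star\mathbf{N}\!\on{-f.g.proj}=\mathbf{N}\!\on{-f.g.proj}$, monoidality of the cocontinuously extended action via the bar-complex presentation of $\mathbb{1}_{\widetilde{\ccf{S}}}$ and Proposition~\ref{AnotherEGNOLemma}, and the round-trip identification by essential uniqueness of cocontinuous extensions) are exactly the steps already performed in the proof of that theorem, with the coherence needed to identify the comonad bar complex with the image of \eqref{UnitResolution} supplied, as you implicitly use, by Proposition~\ref{Djunctions}.
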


\begin{remark}
 If the semigroup category $\csym{S}$ is {\it finitary}, i.e. there is a finite-dimensional algebra $B$ such that there is an equivalence of categories $B\!\on{-proj} \simeq \csym{S}$, then we may replace categories of presheaves in the above arguments with categories of finite-dimensional presheavs, i.e. instead of considering $[\csym{S}^{\on{op}},\mathbf{Vec}_{\Bbbk}]$, we consider $[\csym{S}^{\on{op}},\mathbf{vec}_{\Bbbk}]$. Indeed, in the finitary case, the latter becomes a monoidal subcategory of the former, and one can view it as free cocompletion under {\it finite} colimits. This cocompletion is equivalent to the projective abelianization of $\csym{S}$ in \cite{KMZ}. See \cite[Corollary~10.21]{St} and \cite[Corollary~10.23]{St} for details.
\end{remark}

\begin{remark}
 The statement of \cite[Theorem~3.16]{KMZ}, and, as a consequence, also of \cite[Corollary~3.17]{KMZ}, is missing the assumption that the $\widetilde{\csym{S}}$-module categories in the bijection above are also required to satisfy the assumption $\csym{S}\star \mathbf{M} = \mathbf{M}$. To see that the statement does not necessarily hold without this assumption, let $A$ be a finite-dimensional $\Bbbk$-algebra and consider the semigroup subcategory $A\!\on{-proj-}\!A = \on{add}\setj{A \kotimes A}$ of the monoidal category $A\!\on{-bimod-}\!A$.
 We have $[A\!\on{-proj-}A,\mathbf{vec}_{\Bbbk}] \simeq A\!\on{-bimod-}\!A$, and so $\widetilde{A\!\on{-proj-}\!A} \simeq \on{add}\setj{A\kotimes A, A}$ as monoidal categories. The monoidal category $\widetilde{A\!\on{-proj-}\!A}$ can in particular be identified with the $2$-category $\csym{C}_{\!A}$ studied in \cite[Section~5]{MM5}.

 Assume that $A$ is not separable, so that the regular bimodule ${}_{A}A_{A}$ is not a direct summand of $A \kotimes A$. The quotient of $\on{add}\setj{A\kotimes A, A}$ by the monoidal ideal generated by $\setj{\on{id}_{A \kotimes A}} \cup \on{Rad}\on{End}_{A\!\on{-bimod-}\!A}(A)$ is equivalent to $\mathbf{vec}_{\Bbbk}$. This gives a monoidal functor $\widetilde{A\!\on{-proj-}\!A} \rightarrow \mathbf{vec}_{\Bbbk}$, which endows $\mathbf{vec}_{\Bbbk}$ with the structure of a $\widetilde{A\!\on{-proj-}\!A}$-module category, where $A\kotimes A$ acts as zero, but $A$ acts isomorphically to identity. Restricting to a semigroup $(A\!\on{-proj-}\!A)$-module category, we get a zero action, and extending back to a $\widetilde{A\!\on{-proj-}\!A}$-module category yields a nonunital semigroup module category where also $A$ acts as zero. Hence the bijection would fail if we allowed this module category. We remark that this module category should be identified with the cell $2$-representation $\mathbf{C}_{\mathcal{L}_{0}}$ of $\widetilde{A\!\on{-proj-}\!A}$ associated to the left cell $\setj{A}$ - see \cite[Section~5]{MM5}.

 We now consider this problem using the setup of \cite{KMZ}. For simplicity, let us assume that $A$ is local. Then a bilax unit for $A\!\on{-proj-}\!A$ is $A \kotimes A$, together with the structure maps described in \cite[Section~4.1]{KMZ}. The epimorphism $\xi_{\mathtt{i}}$ introduced in \cite[Lemma~3.8]{KMZ} is given by the multiplication map $A \kotimes A \rightarrow A$. The argument that leads \cite{KMZ} to omit the condition $\csym{S} \star \mathbf{M} = \mathbf{M}$ is the assumption that this epimorphism should become non-zero in any $\widetilde{A\!\on{-proj-}\!A}$-module category. This is not true for the module category $\mathbf{C}_{\mathcal{L}_{0}}$ described above. Note that it would be true for any module category which extends to a $[A\!\on{-proj-}\!A,\mathbf{vec}_{\Bbbk}]$-module category whose action functor is right exact in both variables.
\end{remark}

\subsection{The non-liberal case}\label{s33}

In this section, we aim to show that the assumption of the existence of an object $\mathrm{F} \in \csym{S}$ such that $\mathrm{F} \boxtimes -$ is liberal can be removed, if we replace our use of resolvent pairs and comonad cohomology with a simple calculation.

\begin{theorem}\label{UnitGeneralCase}
 Let $\csym{S}$ be a rigid semigroup category. We have
 \[
  \mathbb{1}_{[\ccf{S}^{\on{op}},\mathbf{Vec}_{\Bbbk}]} \simeq
   \on{Coeq} \Bigg(
\begin{tikzcd}
	{\coprod_{\mathrm{F,G} \in \ccf{S}} \Hom{-,\mathrm{FF^{\blackdiamond}GG^{\blackdiamond}}}} &&& {\coprod_{\mathrm{F} \in \ccf{S}} \Hom{-,\mathrm{FF^{\blackdiamond}}}}
	\arrow["{(\Hom{-,\varepsilon^{\mathrm{F},r}_{\mathrm{G}}\mathrm{G}^{\blackdiamond}})_{\mathrm{F,G}}}", shift left=2, from=1-1, to=1-4]
	\arrow["{({\Hom{-,\mathrm{F}\varepsilon^{\mathrm{F},l}_{\mathrm{F}^{\blackdiamond}}}})_{\mathrm{F,G}}}"', shift right=2, from=1-1, to=1-4]
\end{tikzcd} \Bigg)
 \]
\end{theorem}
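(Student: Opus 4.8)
The plan is to identify the coequalizer directly with the monoidal unit $\mathbb{1}_{[\ccf{S}^{\on{op}},\mathbf{Vec}_{\Bbbk}]}$, which by Theorem~\ref{PromonoidalUnital} is the presheaf $U$ given by $U(\mathrm{H}) = \Hom{\mathrm{H}\boxtimes-,\on{Id}}_{\on{Mod-}\!\ccf{S}(\ccf{S},\ccf{S})}$. Writing $\overline{\mathrm{K}} = \mathrm{K}\boxtimes-$ for the image of $\mathrm{K}$ under $\yo_{\mathbf{B}(\ccf{S})}$, I would introduce the cocone $\theta\colon \coprod_{\mathrm{F}}\Hom{-,\mathrm{FF^{\blackdiamond}}}\to U$ whose $\mathrm{F}$-component sends $\phi\colon\mathrm{H}\to\mathrm{FF^{\blackdiamond}}$ to the right $\csym{S}$-module transformation $\varepsilon^{\mathrm{F},r}\circ(\phi\boxtimes-)\colon\overline{\mathrm{H}}\to\on{Id}$. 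The goal is to show that $\theta$ coequalizes the two parallel maps and that the induced map $\bar\theta\colon C\to U$, with $C$ the coequalizer, is an isomorphism of presheaves; since $U\simeq\mathbb{1}$, this proves the theorem. This is the ``simple calculation'' replacing the comonad-cohomology machinery of Subsection~\ref{s31}, which is unavailable without a liberal object.

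First I would verify coequalization. On the $(\mathrm{F},\mathrm{G})$-summand the two maps collapse, respectively, the $\mathrm{FF^{\blackdiamond}}$-block via $\varepsilon^{\mathrm{F},r}$ (landing in the $\mathrm{G}$-summand) and the $\mathrm{GG^{\blackdiamond}}$-block via $\varepsilon^{\mathrm{G},l}$ (landing in the $\mathrm{F}$-summand); the simultaneous appearance of a right and a left counit is reconciled through Axiom~\ref{Involution}, precisely as in the passage from Diagram~\eqref{LeftUnitality} to Diagram~\eqref{UnitResolution} in Theorem~\ref{LiberalUnitBar}. Using that $\varepsilon^{\mathrm{F},r}$ and $\varepsilon^{\mathrm{G},r}$ are $\csym{S}$-module transformations, together with Axiom~\ref{Involution} to rewrite the left-handed counit as a left whiskering, both composites $\theta\circ(\text{top})$ and $\theta\circ(\text{bottom})$ reduce to $(\varepsilon^{\mathrm{F},r}\ast\varepsilon^{\mathrm{G},r})\circ(\phi\boxtimes-)$, where $\ast$ is horizontal composition in $\on{Mod-}\!\csym{S}(\csym{S},\csym{S})$. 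The equality of the two ways of forming this horizontal composite is exactly the interchange law for the duality counits, so $\theta$ factors as $\bar\theta\circ q$ through the quotient $q$, and $\bar\theta$ is a morphism of presheaves.

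Next I would establish that $\bar\theta$ is pointwise surjective. Given $\alpha\in U(\mathrm{H})$, Corollary~\ref{Simplify} (applied with its object taken to be $\mathrm{H}$) exhibits $\alpha=\varepsilon^{\mathrm{H},r}\circ(\text{sandwich})$, where the sandwich is a composite of right $\csym{S}$-module transformations with source $\overline{\mathrm{H}}$ and target $\overline{\mathrm{HH^{\blackdiamond}}}$. By fullness of $\yo_{\mathbf{B}(\ccf{S})}$ (Proposition~\ref{StrictifyingEmbedding}) this sandwich equals $\phi_{\alpha}\boxtimes-$ for a unique $\phi_{\alpha}\colon\mathrm{H}\to\mathrm{HH^{\blackdiamond}}$, so $\alpha=\theta_{\mathrm{H}}(\phi_{\alpha})$ lies in the image of the $\mathrm{F}=\mathrm{H}$ summand; moreover $\alpha\mapsto\phi_{\alpha}$ is $\Bbbk$-linear, since $\alpha$ occurs exactly once in the formula of Corollary~\ref{Simplify}.

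Finally, pointwise injectivity rests on the single relation $q(\phi)=q(\phi_{\theta(\phi)})$ in $C(\mathrm{H})$ for every $\phi\colon\mathrm{H}\to\mathrm{FF^{\blackdiamond}}$, identifying the class of $\phi$ in the $\mathrm{F}$-summand with that of the canonical representative $\phi_{\theta(\phi)}$ in the $\mathrm{H}$-summand. This is the main obstacle: I would prove it by applying the coequalizer relation to a morphism $\psi\colon\mathrm{H}\to\mathrm{FF^{\blackdiamond}HH^{\blackdiamond}}$ built from $\phi$ and the unit of the $\mathrm{H}$-duality, and then straightening the resulting zig-zags using Axiom~\ref{ZigZag} and Axiom~\ref{Involution}, in the same graphical spirit as the proof of Theorem~\ref{Ansatzes}. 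Granting this relation, any $x=q(\sum_i\phi_i)\in C(\mathrm{H})$ with $\bar\theta(x)=\sum_i\theta(\phi_i)=0$ satisfies $x=q\big(\sum_i\phi_{\theta(\phi_i)}\big)=q\big(\phi_{\sum_i\theta(\phi_i)}\big)=q(\phi_0)=0$, using linearity of $\alpha\mapsto\phi_{\alpha}$. Thus $\bar\theta$ is a pointwise isomorphism, hence an isomorphism of presheaves, and $C\simeq U\simeq\mathbb{1}_{[\ccf{S}^{\on{op}},\mathbf{Vec}_{\Bbbk}]}$, as claimed.
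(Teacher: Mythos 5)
Your route is genuinely different from the paper's: the paper never identifies the coequalizer $C$ pointwise with the presheaf $\Hom{\yo_{\mathbf{B}(\ccf{S})}(-),\on{Id}}$. Instead, for each $\mathrm{H}$ it studies $C \oast \Hom{-,\mathrm{H}}$, builds a split epimorphism onto $\Hom{-,\mathrm{H}}$ whose section (constructed from $\eta^{\mathrm{H},l}_{\mathrm{H}}$) is then shown to be an epimorphism, and concludes $C \simeq \mathbb{1}_{[\ccf{S}^{\on{op}},\mathbf{Vec}_{\Bbbk}]}$ by cocontinuity of $\oast$ and Proposition~\ref{EGNOUnit}, never pinning down $C$ as any particular presheaf. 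Your first two steps (coequalization of $\theta$, and surjectivity via Corollary~\ref{Simplify} together with fullness and faithfulness of $\yo_{\mathbf{B}(\ccf{S})}$) are correct. The genuine gap sits exactly at the step you flag as the main obstacle, and it is not just an unfinished computation: the morphism $\psi\colon \mathrm{H}\to\mathrm{FF^{\blackdiamond}}\boxtimes\mathrm{HH^{\blackdiamond}}$ you propose cannot be built from $\phi$ and the unit of the $\mathrm{H}$-duality. In this axiomatics the units insert $\mathrm{H^{\blackdiamond}}\boxtimes\mathrm{H}$ (dual first: $\eta^{l}\colon \on{Id}\to -\boxtimes(\mathrm{H^{\blackdiamond}H})$, $\eta^{r}\colon \on{Id}\to(\mathrm{H^{\blackdiamond}H})\boxtimes -$), never $\mathrm{H}\boxtimes\mathrm{H^{\blackdiamond}}$; a block $\mathrm{HH^{\blackdiamond}}$ can only arise by inserting $\mathrm{H^{\blackdiamond}H}$ immediately to the right of an already existing copy of $\mathrm{H}$ and regrouping, $\mathrm{H}\boxtimes(\mathrm{H^{\blackdiamond}H}) = (\mathrm{HH^{\blackdiamond}})\boxtimes\mathrm{H}$. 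Once you have applied $\phi$, that copy of $\mathrm{H}$ is gone, so no $\mathrm{HH^{\blackdiamond}}$-block can be appended to the right of $\mathrm{FF^{\blackdiamond}}$. The repair is to put the blocks in the opposite order: take
\[
\psi' \;=\; (\mathrm{HH^{\blackdiamond}}\boxtimes\phi)\circ\eta^{\mathrm{H},l}_{\mathrm{H}}\colon \mathrm{H}\longrightarrow \mathrm{HH^{\blackdiamond}}\boxtimes\mathrm{FF^{\blackdiamond}},
\]
an element of the $(\mathrm{H},\mathrm{F})$-indexed summand. One leg of the coequalizer sends $\psi'$ to $\varepsilon^{\mathrm{H},r}_{\mathrm{FF^{\blackdiamond}}}\circ(\mathrm{HH^{\blackdiamond}}\boxtimes\phi)\circ\eta^{\mathrm{H},l}_{\mathrm{H}} = \phi\circ\varepsilon^{\mathrm{H},r}_{\mathrm{H}}\circ\eta^{\mathrm{H},l}_{\mathrm{H}} = \phi$ (naturality of $\varepsilon^{\mathrm{H},r}$, then Axiom~\ref{ZigZag}), and the other leg sends it to $r(\phi) := (\mathrm{H}\boxtimes\varepsilon^{\mathrm{F},l}_{\mathrm{H^{\blackdiamond}}})\circ(\mathrm{HH^{\blackdiamond}}\boxtimes\phi)\circ\eta^{\mathrm{H},l}_{\mathrm{H}}$ in the $\mathrm{H}$-summand. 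This is precisely the $\eta^{\mathrm{H},l}_{\mathrm{H}}$-trick behind the paper's section $\sigma$, and it is the reason the paper carries the extra tensor factor $\Hom{-,\mathrm{H}}$ throughout.

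Even after this repair a second ingredient is missing: the canonical representative you actually obtain, $r(\phi)$, is a \emph{left}-counit expression, not the right-counit representative $\phi_{\theta(\phi)}$ of Corollary~\ref{Simplify}, and your injectivity argument requires knowing that it depends only on $\theta(\phi)=\varepsilon^{\mathrm{F},r}\circ(\phi\boxtimes-)$, linearly across summands. You cannot deduce this from the observation that both representatives have the same image under $\theta$: postcomposition with $\varepsilon^{\mathrm{H},r}$ is far from injective (already for $\mathbf{vec}_{\Bbbk}$ and $\mathrm{H}=V$, the map $\on{Hom}(V,V\otimes V^{\ast})\to\on{Hom}(V,\Bbbk)$, $\psi\mapsto\on{ev}\circ\psi$, has a large kernel when $\dim V\geq 2$). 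What is needed is a pointwise form of the Independence of Ansatz (Theorem~\ref{Ansatzes}): the functional $\varepsilon^{\mathrm{F},l}\circ(-\boxtimes\phi)$ is determined by $\varepsilon^{\mathrm{F},r}\circ(\phi\boxtimes-)$. This does follow from Axiom~\ref{Involution} plus faithfulness of $\yo_{\mathbf{B}(\ccf{S})}$, since
\[
\bigl(\varepsilon^{\mathrm{F},l}_{X}\circ(X\boxtimes\phi)\bigr)\boxtimes Y \;=\; X\boxtimes\bigl(\varepsilon^{\mathrm{F},r}_{Y}\circ(\phi\boxtimes Y)\bigr)
\quad\text{for all } X,Y,
\]
so vanishing of the right-hand data forces vanishing of the left-hand morphisms. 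With these two corrections your argument closes up and gives a legitimate alternative to the paper's proof; as written, however, the decisive step rests on a construction that cannot exist and on an unstated left--right comparison.
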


\begin{proof}
Consider the diagram
\begin{equation}\label{ToCoeq}
\begin{tikzcd}
	{\coprod_{\mathrm{F,G} \in \ccf{S}} \Hom{-,\mathrm{FF^{\blackdiamond}GG^{\blackdiamond}}} \oast \Hom{-,\mathrm{H}}} &&& {\coprod_{\mathrm{F} \in \ccf{S}} \Hom{-,\mathrm{FF^{\blackdiamond}}}\oast \Hom{-,\mathrm{H}}}
	\arrow["{(\Hom{-,\varepsilon^{\mathrm{F},r}_{\mathrm{G}}\mathrm{G}^{\blackdiamond}} \oast \Hom{-,\mathrm{H}})_{\mathrm{F,G}}}", shift left=2, from=1-1, to=1-4]
	\arrow["{({\Hom{-,\mathrm{F}\varepsilon^{\mathrm{F},l}_{\mathrm{F}^{\blackdiamond}}}}\oast \Hom{-,\mathrm{H}})_{\mathrm{F,G}}}"', shift right=2, from=1-1, to=1-4]
\end{tikzcd}
\end{equation}
and denote its coequalizer by $\euler{U}(\mathrm{H})$. Using Axiom~\ref{ZigZag}, we find that the composite morphism
\[\begin{tikzcd}
	{{\coprod_{\mathrm{F} \in \ccf{S}} \Hom{-,\mathrm{FF^{\blackdiamond}}}\oast \Hom{-,\mathrm{H}}}} & {\coprod_{\mathrm{F} \in \ccf{S}} \Hom{-,\mathrm{FF^{\blackdiamond}H}}} && {\Hom{-,\mathrm{H}}}
	\arrow["\simeq", from=1-1, to=1-2]
	\arrow["{{\left(\Hom{-,\varepsilon^{\mathrm{F},r}_{\mathrm{H}}}\right)_{\mathrm{F}}}}", shift left, from=1-2, to=1-4]
\end{tikzcd}\]
coequalizes Diagram~\eqref{ToCoeq}, and so we obtain the morphism $\theta: \euler{U}(\mathrm{H}) \rightarrow \mathrm{H}$, making the following diagram commute:
\[\begin{tikzcd}
	{{\coprod_{\mathrm{F} \in \ccf{S}} \Hom{-,\mathrm{FF^{\blackdiamond}}}\oast \Hom{-,\mathrm{H}}}} && {\euler{U}(\mathrm{H})} \\
	{\coprod_{\mathrm{F} \in \ccf{S}} \Hom{-,\mathrm{FF^{\blackdiamond}H}}} && {\Hom{-,\mathrm{H}}}
	\arrow[two heads, from=1-1, to=1-3]
	\arrow["{\exists! \theta_{\mathrm{H}}}", dashed, from=1-3, to=2-3]
	\arrow["{\Hom{-,(\varepsilon^{\mathrm{F},r}_{\mathrm{H}})_{\mathrm{F}}}}"', from=2-1, to=2-3]
	\arrow["\simeq"', from=1-1, to=2-1]
\end{tikzcd}\]
We now show that $\theta_{\mathrm{H}}$ is a split epi. Indeed, we have the following commutative diagram:
\[\begin{tikzcd}
	&& {{\coprod_{\mathrm{F} \in \ccf{S}} \Hom{-,\mathrm{FF^{\blackdiamond}}}\oast \Hom{-,\mathrm{H}}}} && {\euler{U}(\mathrm{H})} \\
	{\Hom{-,\mathrm{H}}} & {\Hom{-,\mathrm{HH^{\blackdiamond}H}}} & {\coprod_{\mathrm{F} \in \ccf{S}} \Hom{-,\mathrm{FF^{\blackdiamond}H}}} && {\Hom{-,\mathrm{H}}}
	\arrow[two heads, from=1-3, to=1-5]
	\arrow["{\exists! \theta_{\mathrm{H}}}", dashed, from=1-5, to=2-5]
	\arrow["{\Hom{-,(\varepsilon^{\mathrm{F},r}_{\mathrm{H}})_{\mathrm{F}}}}"', from=2-3, to=2-5]
	\arrow["\simeq", from=2-3, to=1-3]
	\arrow["{\Hom{-,\eta_{\mathrm{H}}^{\mathrm{H},l}}}", from=2-1, to=2-2]
	\arrow[hook, from=2-2, to=2-3]
\end{tikzcd}\]
where the shorter path equals $\on{id}_{\Hom{-,\mathrm{H}}}$, by Axiom~\ref{ZigZag}.

We now show that the section $\Hom{-,\mathrm{H}} \rightarrow \euler{U}(\mathrm{H})$ in the above diagram, which we will denote by $\sigma$, is an epimorphism.

Let $\mathrm{P} \in [\csym{S}^{\on{op}},\mathbf{Vec}_{\Bbbk}]$ and $\alpha: \euler{U}(\mathrm{H}) \rightarrow \mathrm{P}$ be such that $\alpha \circ \sigma = 0$. We want to show that also $\alpha = 0$. By definition, $\alpha$ corresponds to a map $\overline{\alpha}: \coprod_{\mathrm{F} \in \ccf{S}} \Hom{-,\mathrm{FF^{\blackdiamond}}}\oast \Hom{-,H} \rightarrow \mathrm{P}$, coequalizing the morphisms in Diagram~\eqref{ToCoeq}, and such that the composite
\begin{equation}\label{JustZero}
\begin{tikzcd}
	{\Hom{-,\mathrm{H}}} & {\Hom{-,\mathrm{HH^{\blackdiamond}H}}} & {\coprod_{\mathrm{F} \in \ccf{S}} \Hom{-,\mathrm{FF^{\blackdiamond}H}}} & {{\coprod_{\mathrm{F} \in \ccf{S}} \Hom{-,\mathrm{FF^{\blackdiamond}}}\oast \Hom{-,\mathrm{H}}}} & {\mathrm{P}}
	\arrow["{\overline{\alpha}}", two heads, from=1-4, to=1-5]
	\arrow["\simeq", from=1-3, to=1-4]
	\arrow["{\Hom{-,\eta_{\mathrm{H}}^{\mathrm{H},l}}}", from=1-1, to=1-2]
	\arrow[hook, from=1-2, to=1-3]
\end{tikzcd}
\end{equation}
is zero. It suffices to show that $\overline{\alpha} = 0$. Further, we claim that it suffices to show that $\overline{\alpha}_{\mathrm{H}}: \Hom{-,\mathrm{HH^{\blackdiamond}}} \oast \Hom{-,\mathrm{H}} \rightarrow \mathrm{P}$ is zero. In other words, we now show that $\overline{\alpha}_{\mathrm{H}} = 0$ implies $\overline{\alpha}_{\mathrm{G}} = 0$, for any $\mathrm{G}$. To see this, observe first that the coequalizing property of $\overline{\alpha}$ implies the commutativity of the following diagram:
\begin{equation}\label{ChangeGH}
\begin{tikzcd}
	& {\Hom{-,\mathrm{GG^{\blackdiamond}HH^{\blackdiamond}}}\oast \Hom{-,\mathrm{H}}} \\
	{\Hom{-,\mathrm{GG^{\blackdiamond}}}\oast \Hom{-,\mathrm{H}}} && {\Hom{-,\mathrm{HH^{\blackdiamond}}}\oast \Hom{-,\mathrm{H}}} \\
	& {\mathrm{P}}
	\arrow["{\overline{\alpha}_{\mathrm{G}}}", from=2-1, to=3-2]
	\arrow[""{name=0, anchor=center, inner sep=0}, "{\overline{\alpha}_{\mathrm{H}}}"', from=2-3, to=3-2]
	\arrow[""{name=1, anchor=center, inner sep=0}, "{\Hom{-,\mathrm{G}\varepsilon^{\mathrm{H},l}_{\mathrm{G}^{\blackdiamond}}}\oast \Hom{-,\mathrm{H}}}"', from=1-2, to=2-1]
	\arrow["{\Hom{-,\varepsilon^{\mathrm{G},r}_{\mathrm{H}}\mathrm{H}^{\blackdiamond}}}", from=1-2, to=2-3]
	\arrow["\circlearrowright"{description}, draw=none, from=0, to=1]
\end{tikzcd}
\end{equation}

So if $\overline{\alpha}_{\mathrm{H}} = 0$ then also $\overline{\alpha}_{\mathrm{G}} \circ {\Hom{-,\mathrm{G}\varepsilon^{\mathrm{H},l}_{\mathrm{G}^{\blackdiamond}}}\oast \Hom{-,\mathrm{H}}} = 0$. However, by Axiom~\ref{Involution}, the unique face of the following diagram commutes:
\[\begin{tikzcd}
	{\Hom{-,\mathrm{GG^{\blackdiamond}H}}} \\
	{\Hom{-,\mathrm{GG^{\blackdiamond}HH^{\blackdiamond}H}}} &&& {\Hom{-,\mathrm{GG^{\blackdiamond}HH^{\blackdiamond}H}}} \\
	{\Hom{-,\mathrm{GG^{\blackdiamond}HH^{\blackdiamond}}}\oast \Hom{-,\mathrm{H}}} &&& {\Hom{-,\mathrm{GG^{\blackdiamond}}}\oast \Hom{-,\mathrm{H}}} && {\mathrm{P}}
	\arrow["{\overline{\alpha}_{\mathrm{G}}}", from=3-4, to=3-6]
	\arrow["{\Hom{-,\mathrm{G}\varepsilon^{\mathrm{H},l}_{\mathrm{G}^{\blackdiamond}}}\oast \Hom{-,\mathrm{H}}}"', from=3-1, to=3-4]
	\arrow["\simeq"', from=2-1, to=3-1]
	\arrow["{\Hom{-,\mathrm{GG^{\blackdiamond}\eta^{\mathrm{H},l}_{\mathrm{H}}}}}"', from=1-1, to=2-1]
	\arrow["\simeq", from=2-4, to=3-4]
	\arrow["{\Hom{-,\mathrm{G}\mathrm{G}^{\blackdiamond}\varepsilon^{\mathrm{H},r}_{\mathrm{H}}}}", from=2-1, to=2-4]
\end{tikzcd}\]
The morphism defined by traversing the diagram is zero, since so is the bottom horizontal path. On the other hand,
by Axiom~\ref{ZigZag}, precomposing the above diagram with the isomorphism $\Hom{-,\mathrm{GG^{\blackdiamond}H}} \simeq \Hom{-,\mathrm{GG^{\blackdiamond}}} \oast \Hom{-,\mathrm{H}}$ yields $\overline{\alpha}_{\mathrm{G}}$, showing that indeed $\overline{\alpha}_{\mathrm{G}} = 0$.

We now show that $\overline{\alpha}_{H}$ is zero. Consider the diagram
\begin{equation}\label{HZero}
\begin{tikzcd}
	&&& {\Hom{-,\mathrm{H}}} \\
	{\Hom{-,\mathrm{HH^{\blackdiamond}H}}} && {\Hom{-,\mathrm{HH^{\blackdiamond}HH^{\blackdiamond}H}}} & {\Hom{-,\mathrm{HH^{\blackdiamond}H}}} & {\Hom{-,\mathrm{HH^{\blackdiamond}}}\oast \Hom{-,\mathrm{H}}} \\
	&& {\Hom{-,\mathrm{HH^{\blackdiamond}H}}} & {\Hom{-,\mathrm{HH^{\blackdiamond}}}\oast \Hom{-,\mathrm{H}}} & {\mathrm{P}}
	\arrow["{\Hom{-,\mathrm{HH^{\blackdiamond}}\eta^{\mathrm{H},l}_{\mathrm{H}}}}", from=2-1, to=2-3]
	\arrow[""{name=0, anchor=center, inner sep=0}, "{\Hom{-,\mathrm{H}\varepsilon^{\mathrm{H},l}_{\mathrm{H}^{\blackdiamond}}\mathrm{H}}}", from=2-3, to=3-3]
	\arrow[""{name=1, anchor=center, inner sep=0}, "{\Hom{-,\varepsilon^{\mathrm{H},r}_{\mathrm{H}}\mathrm{H^{\blackdiamond}H}}}", from=2-3, to=2-4]
	\arrow["{\overline{\alpha}_{\mathrm{H}}}", from=2-5, to=3-5]
	\arrow["{\overline{\alpha}_{\mathrm{H}}}"{description}, from=3-4, to=3-5]
	\arrow[""{name=2, anchor=center, inner sep=0}, "\simeq"{description}, from=2-4, to=2-5]
	\arrow[""{name=3, anchor=center, inner sep=0}, "\simeq", from=3-3, to=3-4]
	\arrow[""{name=4, anchor=center, inner sep=0}, "{=}"', from=2-1, to=3-3]
	\arrow[""{name=5, anchor=center, inner sep=0}, "{\Hom{-,\varepsilon^{\mathrm{H},r}_{\mathrm{H}}}}"{description}, curve={height=-18pt}, from=2-1, to=1-4]
	\arrow["{\Hom{-,\eta^{\mathrm{H},l}}_{\mathrm{H}}}", from=1-4, to=2-4]
	\arrow["{(1)}"{description}, draw=none, from=4, to=0]
	\arrow["{(2)}"{description}, draw=none, from=3, to=2]
	\arrow["{(3)}"{description}, draw=none, from=5, to=1]
\end{tikzcd}
\end{equation}

Face (1) commutes by Axiom~\ref{ZigZag}. Face (2) commutes since the morphisms in Diagram~\eqref{ToCoeq} are coequalized by $\overline{\alpha}$. Face (3) commutes by the naturality of $\varepsilon^{\mathrm{H},r}$; alternatively, by the naturality of $\eta^{\mathrm{H},l}$. Traversing the above diagram along the lower path yields $\overline{\alpha}_{\mathrm{H}}$; traversing the upper path gives the zero morphism, since the morphism in Diagram~\eqref{JustZero} is zero.

Thus, $\theta_{\mathrm{H}}$ is a split epi such that the section $\sigma_{\mathrm{H}}$ a split mono and an epi; hence, $\theta_{\mathrm{H}}$ and $\sigma_{\mathrm{H}}$ are mutually inverse isomorphisms. Further, $\theta_{\mathrm{H}}$ clearly is natural in $\mathrm{H}$, and so we conclude that
\[
 \Bigg( \on{Coeq} \Bigg(
\begin{tikzcd}
	{\coprod_{\mathrm{F,G} \in \ccf{S}} \Hom{-,\mathrm{FF^{\blackdiamond}GG^{\blackdiamond}}}} &&& {\coprod_{\mathrm{F} \in \ccf{S}} \Hom{-,\mathrm{FF^{\blackdiamond}}}}
	\arrow["{(\Hom{-,\varepsilon^{\mathrm{F},r}_{\mathrm{G}}\mathrm{G}^{\blackdiamond}})_{\mathrm{F,G}}}", shift left=2, from=1-1, to=1-4]
	\arrow["{({\Hom{-,\mathrm{F}\varepsilon^{\mathrm{F},l}_{\mathrm{F}^{\blackdiamond}}}})_{\mathrm{F,G}}}"', shift right=2, from=1-1, to=1-4]
\end{tikzcd} \Bigg) \Bigg) \oast \Hom{-,\mathrm{H}} \simeq \Hom{-,\mathrm{H}}
\]
for all $\mathrm{H}$, naturally in $\mathrm{H}$. Since
\[
 \Bigg( \on{Coeq} \Bigg(
\begin{tikzcd}
	{\coprod_{\mathrm{F,G} \in \ccf{S}} \Hom{-,\mathrm{FF^{\blackdiamond}GG^{\blackdiamond}}}} &&& {\coprod_{\mathrm{F} \in \ccf{S}} \Hom{-,\mathrm{FF^{\blackdiamond}}}}
	\arrow["{(\Hom{-,\varepsilon^{\mathrm{F},r}_{\mathrm{G}}\mathrm{G}^{\blackdiamond}})_{\mathrm{F,G}}}", shift left=2, from=1-1, to=1-4]
	\arrow["{({\Hom{-,\mathrm{F}\varepsilon^{\mathrm{F},l}_{\mathrm{F}^{\blackdiamond}}}})_{\mathrm{F,G}}}"', shift right=2, from=1-1, to=1-4]
\end{tikzcd} \Bigg) \Bigg) \oast -
\]
is cocontinuous, we conclude that
\[
\on{Id}_{[\csym{S}^{\on{op}},\mathbf{Vec}_{\Bbbk}]} \simeq
 \Bigg( \on{Coeq} \Bigg(
\begin{tikzcd}
	{\coprod_{\mathrm{F,G} \in \ccf{S}} \Hom{-,\mathrm{FF^{\blackdiamond}GG^{\blackdiamond}}}} &&& {\coprod_{\mathrm{F} \in \ccf{S}} \Hom{-,\mathrm{FF^{\blackdiamond}}}}
	\arrow["{(\Hom{-,\varepsilon^{\mathrm{F},r}_{\mathrm{G}}\mathrm{G}^{\blackdiamond}})_{\mathrm{F,G}}}", shift left=2, from=1-1, to=1-4]
	\arrow["{({\Hom{-,\mathrm{F}\varepsilon^{\mathrm{F},l}_{\mathrm{F}^{\blackdiamond}}}})_{\mathrm{F,G}}}"', shift right=2, from=1-1, to=1-4]
\end{tikzcd} \Bigg) \Bigg) \oast - .
\]
Similarly one shows that also
\[
 \on{Id}_{[\csym{S}^{\on{op}},\mathbf{Vec}_{\Bbbk}]} \simeq
 -\oast \Bigg( \on{Coeq} \Bigg(
\begin{tikzcd}
	{\coprod_{\mathrm{F,G} \in \ccf{S}} \Hom{-,\mathrm{FF^{\blackdiamond}GG^{\blackdiamond}}}} &&& {\coprod_{\mathrm{F} \in \ccf{S}} \Hom{-,\mathrm{FF^{\blackdiamond}}}}
	\arrow["{(\Hom{-,\varepsilon^{\mathrm{F},r}_{\mathrm{G}}\mathrm{G}^{\blackdiamond}})_{\mathrm{F,G}}}", shift left=2, from=1-1, to=1-4]
	\arrow["{({\Hom{-,\mathrm{F}\varepsilon^{\mathrm{F},l}_{\mathrm{F}^{\blackdiamond}}}})_{\mathrm{F,G}}}"', shift right=2, from=1-1, to=1-4]
\end{tikzcd} \Bigg) \Bigg).
\]
By Proposition~\ref{EGNOUnit}, we find
\[
\mathbb{1}_{[\ccf{S}^{\on{op}},\mathbf{Vec}_{\Bbbk}]} \simeq
\on{Coeq} \Bigg(
\begin{tikzcd}
	{\coprod_{\mathrm{F,G} \in \ccf{S}} \Hom{-,\mathrm{FF^{\blackdiamond}GG^{\blackdiamond}}}} &&& {\coprod_{\mathrm{F} \in \ccf{S}} \Hom{-,\mathrm{FF^{\blackdiamond}}}}
	\arrow["{(\Hom{-,\varepsilon^{\mathrm{F},r}_{\mathrm{G}}\mathrm{G}^{\blackdiamond}})_{\mathrm{F,G}}}", shift left=2, from=1-1, to=1-4]
	\arrow["{({\Hom{-,\mathrm{F}\varepsilon^{\mathrm{F},l}_{\mathrm{F}^{\blackdiamond}}}})_{\mathrm{F,G}}}"', shift right=2, from=1-1, to=1-4]
\end{tikzcd} \Bigg).
\]
\end{proof}

\begin{theorem}\label{Extension}
 Let $\csym{S}$ be a rigid semigroup category, and let $\mathbf{M}$ be a semigroup $\csym{S}$-module category such that $\csym{S}\star \mathbf{M} = \mathbf{M}$ and such that it respects all adjunctions in $\csym{S}$. Then $[\mathbf{M}^{\on{op}},\mathbf{Vec}_{\Bbbk}]$ is a monoidal $[\csym{S}^{\on{op}},\mathbf{Vec}_{\Bbbk}]$-module. Moreover, we obtain a bijection
  \begin{equation}
  \begin{aligned}
 \Bigg\{
  \begin{aligned}
  &\text{Cauchy complete semigroup }\csym{S}\text{-module categories}\\
  &\text{respecting adjunctions in }\csym{S},
  \text{ such that }
  \csym{S}\star \mathbf{M} = \mathbf{M}
  \end{aligned}
  \Bigg\}/&\simeq
 \longleftrightarrow
  \Bigg\{
  \begin{aligned}
  &\text{Monoidal }[\csym{S}^{\on{op}},\mathbf{Vec}_{\Bbbk}]\text{-module presheaf } \\
  &\text{categories such that }(\overline{\varepsilon}^{\mathrm{F},\mathbf{M}}_{!})_{\mathrm{F}} \text{ is epi}
  \end{aligned}
\Bigg\}/\simeq
 \\
 &\mathbf{M} \longmapsto  [\mathbf{M}^{\on{op}},\mathbf{Vec}_{\Bbbk}] \\
 &\mathbf{N}\!\on{-f.g.proj} \longmapsfrom \mathbf{N}
 \end{aligned}
  \end{equation}
\end{theorem}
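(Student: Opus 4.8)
The plan is to prove this as the non-liberal counterpart of Theorem~\ref{AbelianizationCorrespondence}, replacing the bar resolution of the unit from Theorem~\ref{LiberalUnitBar} by its object-wise substitute, Theorem~\ref{UnitGeneralCase}, and compensating for the absence of a single liberal object by exploiting the two hypotheses $\csym{S}\star\mathbf{M}=\mathbf{M}$ and ``respects all adjunctions'' together. I would first establish the monoidality statement and then the bijection. Throughout, write $\Theta$ for the cocontinuous semigroup functor $[\csym{S}^{\on{op}},\mathbf{Vec}_{\Bbbk}]\to\mathbf{Cat}_{\Bbbk}([\mathbf{M}^{\on{op}},\mathbf{Vec}_{\Bbbk}],[\mathbf{M}^{\on{op}},\mathbf{Vec}_{\Bbbk}])$ of Definition~\ref{MV}, and recall that Proposition~\ref{Djunctions} furnishes, for every object, coherent module units and counits $\eta^{\mathrm{F},\mathbf{M}},\varepsilon^{\mathrm{F},\mathbf{M}}$ satisfying \eqref{CoherentlyRespectful}.

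For monoidality, by Proposition~\ref{AnotherEGNOLemma} it suffices to produce a natural isomorphism $\Theta(\mathbb{1}_{[\ccf{S}^{\on{op}},\mathbf{Vec}_{\Bbbk}]})\simeq\on{Id}_{[\mathbf{M}^{\on{op}},\mathbf{Vec}_{\Bbbk}]}$. Since $\Theta$ is cocontinuous and $\mathbb{1}$ is the coequalizer of Theorem~\ref{UnitGeneralCase}, $\Theta(\mathbb{1})$ is the coequalizer of the image diagram; using the coherence isomorphisms $\Theta(\mathrm{Q}\oast\mathrm{P})\simeq\Theta(\mathrm{Q})\circ\Theta(\mathrm{P})$ and the respected-adjunction identities of Definition~\ref{Respect}, I would rewrite the two parallel maps in terms of the module counits $\varepsilon^{\mathrm{F},\mathbf{M}}$, which (mirroring the way Axiom~\ref{ZigZag} and Axiom~\ref{Involution} are used in Theorem~\ref{UnitGeneralCase}) coequalize the diagram and hence assemble into a natural augmentation $\tau\colon\Theta(\mathbb{1})\Rightarrow\on{Id}$. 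To see $\tau$ is invertible I would avoid rerunning the full chase of Theorem~\ref{UnitGeneralCase}: for any $\mathrm{F}_{0}\in\csym{S}$ the coherence isomorphism identifies $\tau$ restricted to the essential image of $\Theta(\Hom{-,\mathrm{F}_0})$ with $\Theta$ applied to the $\csym{S}$-level augmentation $\mathbb{1}\oast\Hom{-,\mathrm{F}_0}\xrightarrow{\sim}\Hom{-,\mathrm{F}_0}$, whose invertibility is the content of Theorem~\ref{UnitGeneralCase}. As $\Theta$ preserves isomorphisms, $\tau$ is invertible on every object of the form $\Theta(\Hom{-,\mathrm{F}_0})(\euler{P})$; since $\csym{S}\star\mathbf{M}=\mathbf{M}$ realizes each representable $\Hom{-,X}_{\mathbf{M}}$ as a retract of $\Hom{-,\mathbf{M}\mathrm{F}_0(Y)}_{\mathbf{M}}=\Theta(\Hom{-,\mathrm{F}_0})(\Hom{-,Y}_{\mathbf{M}})$, naturality of $\tau$ with respect to the defining idempotent gives invertibility on all representables, and cocontinuity propagates it to all presheaves.

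For the bijection I would follow the proof of Theorem~\ref{AbelianizationCorrespondence}, with joint epimorphy of $(\overline{\varepsilon}^{\mathrm{F},\mathbf{M}}_{!})_{\mathrm{F}}$ replacing the single faithfulness hypothesis. Well-definedness of $\mathbf{M}\mapsto[\mathbf{M}^{\on{op}},\mathbf{Vec}_{\Bbbk}]$ is the monoidality just proved, together with the observation that $\csym{S}\star\mathbf{M}=\mathbf{M}$ forces the extended counits to cover every object, hence to be jointly epimorphic. For the reverse map $\mathbf{N}\mapsto\mathbf{N}\!\on{-f.g.proj}$: a monoidal $[\csym{S}^{\on{op}},\mathbf{Vec}_{\Bbbk}]$-module category respects all adjunctions, so each $\mathbf{N}\mathrm{F}$ admits both adjoints and preserves finitely generated projectives, whence $\mathbf{N}\!\on{-f.g.proj}$ is a semigroup $\csym{S}$-module subcategory respecting all adjunctions; the epimorphy hypothesis makes each $\varepsilon^{\mathrm{F},\mathbf{N}}$ a split epimorphism onto projectives, giving $\csym{S}\star\mathbf{N}\!\on{-f.g.proj}=\mathbf{N}\!\on{-f.g.proj}$. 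The two round-trips are the module equivalences $[\mathbf{M}^{\on{op}},\mathbf{Vec}_{\Bbbk}]\!\on{-f.g.proj}\simeq\mathbf{M}$ (Cauchy completeness of $\mathbf{M}$) and $[\mathbf{N}\!\on{-f.g.proj}^{\on{op}},\mathbf{Vec}_{\Bbbk}]\simeq\mathbf{N}$ (essential uniqueness of cocontinuous extensions, determined on finitely generated projectives), exactly as in Theorem~\ref{AbelianizationCorrespondence}.

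The main obstacle I anticipate lies in the monoidality step: constructing the augmentation $\tau$ as a genuine natural transformation of endofunctors and verifying that, under the coherence data of the semigroup functor $\Theta$, its restriction along $\Theta(\Hom{-,\mathrm{F}_0})$ really coincides with $\Theta$ applied to the $\csym{S}$-level augmentation of Theorem~\ref{UnitGeneralCase}. Equivalently, if one instead transports that chase verbatim into $[\mathbf{M}^{\on{op}},\mathbf{Vec}_{\Bbbk}]$, the delicate point is that the single step genuinely using representability in $\csym{S}$ — the epimorphic section built from $\eta^{\mathrm{H},l}_{\mathrm{H}}$ — must survive when representability is replaced by the retract presentation coming from $\csym{S}\star\mathbf{M}=\mathbf{M}$, with \eqref{CoherentlyRespectful} and the triangle identities for $(\mathbf{M}\mathrm{F},\mathbf{M}\mathrm{F}^{\blackdiamond},\eta^{\mathrm{F},\mathbf{M}},\varepsilon^{\mathrm{F},\mathbf{M}})$ supplying the exact analogues of every invocation of Axiom~\ref{ZigZag} and Axiom~\ref{Involution}.
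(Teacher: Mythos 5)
Your proposal is correct, and its bijection half coincides with the paper's own proof: well-definedness in both directions, the finiteness extraction making the joint epimorphism split over a finitely generated projective, and the two round trips are handled exactly as in Theorem~\ref{AbelianizationCorrespondence}. Where you genuinely differ is the invertibility step in the monoidality half. The paper constructs the same augmentation (its $\theta^{\mathbf{M}}_{X}$ is your $\tau$ at representables) and then re-runs the entire proof of Theorem~\ref{UnitGeneralCase} inside $[\mathbf{M}^{\on{op}},\mathbf{Vec}_{\Bbbk}]$: a section of $\theta^{\mathbf{M}}_{X}$ is built from the retract presentation supplied by $\csym{S}\star\mathbf{M}=\mathbf{M}$ (Diagram~\eqref{ModuleSection}), and two further diagram chases show this section is itself an epimorphism. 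You instead pull invertibility back from the $\csym{S}$-level: you identify $\tau\hcomp\Theta(\Hom{-,\mathrm{F}_{0}})$ with $\Theta$ applied to the already-invertible augmentation of Theorem~\ref{UnitGeneralCase}, and propagate using closure of the iso-locus of a natural transformation under retracts and finite direct sums (note that $\csym{S}\star\mathbf{M}=\mathbf{M}$ only yields retracts of finite sums $\bigoplus_{i}\mathbf{M}\mathrm{F}_{i}(Y_{i})$, not of a single $\mathbf{M}\mathrm{F}_{0}(Y)$, but this changes nothing) and then cocontinuity. This trades the paper's two hardest chases for a purely formal argument, at the price of the compatibility check you flag; that check is real but costs no more than what the paper already assumes, since both sides are transformations of cocontinuous functors out of a coequalizer, so the identification reduces on representables to the single identity $\varepsilon^{\mathrm{F},\mathbf{M}}_{\mathrm{F}_{0}\star Y}=\varepsilon^{\mathrm{F},r}_{\mathrm{F}_{0}}\star Y$, which is exactly the identity the paper invokes for Face~(2) of Diagram~\eqref{ModuleSection}, and which also gives the coequalizing property defining $\tau$. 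One genuine warning here: this identity is not literally Equation~\eqref{CoherentlyRespectful}, which constrains post-composition of $\varepsilon^{\mathrm{F},\mathbf{M}}$ with the action functors rather than its components at acted objects. To derive it, combine Equation~\eqref{CoherentlyRespectful} with Axiom~\ref{Involution} to get $\mathbf{M}\mathrm{K}\big(\varepsilon^{\mathrm{F},\mathbf{M}}_{\mathrm{F}_{0}\star Y}\big)=\mathbf{M}\mathrm{K}\big(\varepsilon^{\mathrm{F},r}_{\mathrm{F}_{0}}\star Y\big)$ for all $\mathrm{K}$ up to coherence, and then cancel the actions, which is legitimate because under $\csym{S}\star\mathbf{M}=\mathbf{M}$ and respect of adjunctions the counits $\varepsilon^{\mathrm{K},\mathbf{M}}$ are jointly epimorphic. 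With that verification supplied, your route is complete and is, if anything, shorter than the paper's.
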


\begin{proof}
 First, we show that, given a semigroup $\csym{S}$-module category satisfying the assumptions of the theorem, the $[\csym{S}^{\on{op}},\mathbf{Vec}_{\Bbbk}]$-module category $[\mathbf{M}^{\on{op}},\mathbf{Vec}_{\Bbbk}]$, as described in in Definition~\ref{MV}, is a monoidal module category.

 The proof is analogous to that of Theorem~\ref{UnitGeneralCase}; we will describe the necessary minor modifications in detail.
 Given $X \in \mathbf{M}$, the morphism
\[\begin{tikzcd}
	{\coprod_{\mathrm{F}}\Hom{-,\mathrm{FF^{\blackdiamond}}} \ostar \Hom{-,X}} & {\coprod_{\mathrm{F}}\Hom{-,\mathrm{FF^{\blackdiamond}}\star X}} && {\Hom{-,X}}
	\arrow["{(\varepsilon^{\mathrm{F},\mathbf{M}}_{X})_{\mathrm{F}}}", from=1-2, to=1-4]
	\arrow["\simeq", from=1-1, to=1-2]
\end{tikzcd}\]
which we denote by $(\overline{\varepsilon}^{\mathrm{F},\mathbf{M}}_{X})_{\mathrm{F}}$ below, coequalizes the diagram
\begin{equation}\label{ModuleCoeq}
\begin{tikzcd}[ampersand replacement=\&]
	{\coprod_{\mathrm{F,G}}\Hom{-,\mathrm{FF^{\blackdiamond}GG^{\blackdiamond}}} \ostar \Hom{-,X}} \&\& {\coprod_{\mathrm{F}}\Hom{-,\mathrm{FF^{\blackdiamond}}} \ostar \Hom{-,X}}
	\arrow["{(\Hom{-,\varepsilon^{\mathrm{F},r}_{\mathrm{G}}\mathrm{G}^{\blackdiamond}})_{\mathrm{F,G}} \ostar \Hom{-,X}}", shift left=2, from=1-1, to=1-3]
	\arrow["{({\Hom{-,\mathrm{F}\varepsilon^{\mathrm{F},l}_{\mathrm{F}^{\blackdiamond}}}})_{\mathrm{F,G}} \ostar \Hom{-,X}}"', shift right=2, from=1-1, to=1-3]
\end{tikzcd}.
\end{equation}
Using the description of $\mathbb{1}_{[\ccf{S}^{\on{op}},\mathbf{Vec}_{\Bbbk}]}$ in Theorem~\ref{UnitGeneralCase}, we find a morphism $\theta^{\mathbf{M}}_{X}$ from $[\mathbf{M}^{\on{op}},\mathbf{Vec}_{\Bbbk}](\mathbb{1}_{[\ccf{S}^{\on{op}},\mathbf{Vec}_{\Bbbk}]})(\Hom{-,X})$ to $\Hom{-,X}$. Further, $\theta^{\mathbf{M}}_{X}$ is clearly natural in $X$. We now show that $\theta^{\mathbf{M}}_{X}$ is a split epimorphism, for any $X$. Using the assumption that $\csym{S}\star \mathbf{M} = \mathbf{M}$, we choose $\mathrm{H} \in \csym{S}$ and $Z \in \mathbf{M}$, a split mono $\iota: X \hookrightarrow \mathrm{H} \star Z$ and a split epi $\pi: \mathrm{H} \star Z \twoheadrightarrow X$. Consider the following diagram:
\begin{equation}\label{ModuleSection}
\begin{tikzcd}
	{\Hom{-,X}} & {\Hom{-,\mathrm{H} \star Z}} & {\Hom{-,\mathrm{HH^{\blackdiamond}H}\star Z}} & {\Hom{-,\mathrm{HH^{\blackdiamond}}\star X}} & {\coprod_{\mathrm{F}}\Hom{-,\mathrm{FF^{\blackdiamond}}\star X}} \\
	&& {\Hom{-,\mathrm{H}\star Z}} & {\Hom{-,X}} & {\coprod_{\mathrm{F}}\Hom{-,\mathrm{FF^{\blackdiamond}}} \ostar \Hom{-,X}}
	\arrow[""{name=0, anchor=center, inner sep=0}, "{\Hom{-,\varepsilon^{\mathrm{H},\mathbf{M}}_{X}}}"', from=1-4, to=2-4]
	\arrow["{\Hom{-,\iota}}", from=1-1, to=1-2]
	\arrow["{\Hom{-,\eta^{\mathrm{H},l}_{\mathrm{H}}\star Z}}", from=1-2, to=1-3]
	\arrow["{\Hom{-,\varepsilon^{\mathrm{H},r}_{\mathrm{H}}\star Z}}", from=1-3, to=2-3]
	\arrow[""{name=1, anchor=center, inner sep=0}, "{\on{id}}"', from=1-2, to=2-3]
	\arrow[""{name=2, anchor=center, inner sep=0}, "{\Hom{-,\mathrm{HH^{\blackdiamond}}\star \pi}}", from=1-3, to=1-4]
	\arrow[""{name=3, anchor=center, inner sep=0}, "{\Hom{-,\pi}}"', from=2-3, to=2-4]
	\arrow[hook, from=1-4, to=1-5]
	\arrow["\simeq", from=1-5, to=2-5]
	\arrow["{(\varepsilon^{\mathrm{F},\mathbf{M}}_{X})_{\mathrm{F}}}"{description}, from=1-5, to=2-4]
	\arrow[""{name=4, anchor=center, inner sep=0}, "{(\overline{\varepsilon}^{\mathrm{F},\mathbf{M}}_{X})_{\mathrm{F}}}", from=2-5, to=2-4]
	\arrow["{(1)}"{description}, draw=none, from=1, to=1-3]
	\arrow["{(3)}"{description}, shift left, draw=none, from=0, to=1-5]
	\arrow["{(4)}"{description, pos=0.6}, shift right, draw=none, from=4, to=1-5]
	\arrow["{(2)}"{description}, draw=none, from=3, to=2]
\end{tikzcd}
\end{equation}

Face (1) commutes by Axiom~\ref{ZigZag}. The adjunction $(\mathrm{H},\mathrm{H}^{\blackdiamond}, \eta^{\mathrm{H},l},\eta^{\mathrm{H},r}, \varepsilon^{\mathrm{H},l},\varepsilon^{\mathrm{H},r})$ is respected in $\mathbf{M}$, which yields $\Hom{-,\varepsilon^{\mathrm{H},r}_{\mathrm{H}} \star Z} = \Hom{-,\varepsilon^{\mathrm{H},\mathbf{M}}_{\mathrm{H}\star Z}}$. Using this equality, one sees that Face (2) commutes by the naturality of $\varepsilon^{\mathrm{H},\mathbf{M}}$. Face (3) commutes by the elementary properties of coproducts, and Face (4) commutes by the definition of ${(\overline{\varepsilon}^{\mathrm{F},\mathbf{M}}_{X})_{\mathrm{F}}}$. Traversing the diagram along its outer paths shows that ${(\overline{\varepsilon}^{\mathrm{F},\mathbf{M}}_{X})_{\mathrm{F}}}$, and hence also $\theta^{\mathbf{M}}$, is a split epi. Similarly to the proof of Theorem~\ref{UnitGeneralCase}, we now show that the section $\sigma$ of $\theta^{\mathbf{M}}$ obtained in Diagram~\eqref{ModuleSection} also is an epimorphism. We let $\alpha$ be a morphism from the coequalizer of Diagram~\eqref{ModuleCoeq} to an object $\mathrm{Q} \in [\mathbf{M}^{\on{op}},\mathbf{Vec}_{\Bbbk}]$, such that $\alpha \circ \sigma = 0$. Equivalently, we consider a morphism $\overline{\alpha}: \coprod_{\mathrm{F}} \Hom{-,\mathrm{FF^{\blackdiamond}}}\ostar \Hom{-,X} \rightarrow \mathrm{Q}$ coequalizing Diagram~\eqref{ModuleCoeq}, such that
\begin{equation}\label{Stonoga}
\begin{tikzcd}[row sep = small]
	{\Hom{-,\mathrm{H} \star Z}} & {\Hom{-,\mathrm{HH^{\blackdiamond}H}\star Z}} & {\Hom{-,\mathrm{HH^{\blackdiamond}}\star X}} & {\coprod_{\mathrm{F}}\Hom{-,\mathrm{FF^{\blackdiamond}}\star X}} & {\coprod_{\mathrm{F}}\Hom{-,\mathrm{FF^{\blackdiamond}}} \ostar \Hom{-,X}} \\
	{\Hom{-,X}} &&&& {\mathrm{Q}}
	\arrow["{\Hom{-,\iota}}", from=2-1, to=1-1]
	\arrow["{\Hom{-,\eta^{\mathrm{H},l}_{\mathrm{H}}\star Z}}", from=1-1, to=1-2]
	\arrow["{\Hom{-,\mathrm{HH^{\blackdiamond}}\star \pi}}", from=1-2, to=1-3]
	\arrow[hook, from=1-3, to=1-4]
	\arrow["\simeq", from=1-4, to=1-5]
	\arrow["{\overline{\alpha}}", from=1-5, to=2-5]
\end{tikzcd}
\end{equation}
is zero. We want to show that $\overline{\alpha}$ itself is zero. Similarly to the proof of Theorem~\ref{UnitGeneralCase} it suffices to show that $\alpha_{\mathrm{H}}$ is zero. Consider the following diagram:
\[\begin{tikzcd}[ampersand replacement=\&]
	{\Hom{-,\mathrm{GG^{\blackdiamond}}\star X}} \&\& {\Hom{-,\mathrm{GG^{\blackdiamond}}} \ostar \Hom{-,X}} \& {\mathrm{Q}} \\
	{\Hom{-,\mathrm{GG^{\blackdiamond}}\star\mathrm{H}Y}} \& {\Hom{-,\mathrm{GG^{\blackdiamond}}\star \mathrm{H}Y}} \& {\Hom{-,\mathrm{GG^{\blackdiamond}}} \ostar \Hom{-,\mathrm{H}Y}} \& {\Hom{-,\mathrm{GG^{\blackdiamond}}}\ostar \Hom{-,X}} \\
	\\
	{\Hom{-,\mathrm{GG^{\blackdiamond}}\star \mathrm{HH^{\blackdiamond}H}Y}} \&\& {\Hom{-,\mathrm{GG^{\blackdiamond}}\star \mathrm{HH^{\blackdiamond}}X}} \& {\Hom{-,\mathrm{GG^{\blackdiamond}HH^{\blackdiamond}}}\ostar \Hom{-,X}}
	\arrow[""{name=0, anchor=center, inner sep=0}, "\simeq"{description}, from=1-3, to=1-1]
	\arrow["{\Hom{-,\mathrm{GG^{\blackdiamond}\star \iota}}}"', from=1-1, to=2-1]
	\arrow["{\Hom{-,\mathrm{GG^{\blackdiamond}}\star \eta^{\mathrm{H},\mathbf{M}}_{\mathrm{H}\star Y}}}"', from=2-1, to=4-1]
	\arrow["{\Hom{-,\mathrm{GG^{\blackdiamond}}\star \mathrm{HH^{\blackdiamond}}\pi}}"', shift right, from=4-1, to=4-3]
	\arrow["{\on{id}}"{description}, from=2-1, to=2-2]
	\arrow[""{name=1, anchor=center, inner sep=0}, "{\Hom{-,\mathrm{GG^{\blackdiamond}}\star \varepsilon^{\mathrm{H},\mathbf{M}}_{\mathrm{H}\star Y}}}"{description}, from=4-1, to=2-2]
	\arrow["\simeq"', from=4-3, to=4-4]
	\arrow[""{name=2, anchor=center, inner sep=0}, "{\on{id}}"{description}, from=1-3, to=2-4]
	\arrow["{\Hom{-,\mathrm{GG^{\blackdiamond}}\varepsilon^{\mathrm{H},l}_{\mathrm{G}^{\blackdiamond}}}\ostar \Hom{-,X}}"', from=4-4, to=2-4]
	\arrow["{\overline{\alpha}_{\mathrm{G}}}"{description}, from=1-3, to=1-4]
	\arrow[""{name=3, anchor=center, inner sep=0}, "{\overline{\alpha}_{\mathrm{G}}}", from=2-4, to=1-4]
	\arrow[""{name=4, anchor=center, inner sep=0}, "\simeq", from=2-2, to=2-3]
	\arrow["{\Hom{-,\mathrm{GG^{\blackdiamond}}} \ostar \Hom{-,\pi}}"', shift right, from=2-3, to=2-4]
	\arrow["{\Hom{-,\mathrm{GG^{\blackdiamond}}} \ostar \Hom{-,\iota}}"', from=1-3, to=2-3]
	\arrow["{(1)}"{description}, draw=none, from=2-2, to=0]
	\arrow["{(2)}"{description}, draw=none, from=2-1, to=1]
	\arrow["{(3)}"{description}, draw=none, from=4-3, to=4]
	\arrow["{(4)}"{description, pos=0.3}, shift left, draw=none, from=2-3, to=2]
	\arrow["{(5)}"{description}, draw=none, from=2, to=3]
\end{tikzcd}\]
Here, Face (1) commutes due to the naturality of the coherence isomorphisms. Face (2) commutes due to the zigzag equations for $\eta^{\mathrm{H},\mathbf{M}}$ and $\varepsilon^{\mathrm{H},\mathbf{M}}$. Face (3) commutes due to the interchange law combined with the naturality of coherence isomorphisms and the adjunction $(\mathrm{H},\mathrm{H}^{\blackdiamond}, \eta^{\mathrm{H},l},\eta^{\mathrm{H},r}, \varepsilon^{\mathrm{H},l},\varepsilon^{\mathrm{H},r})$ being respected in $\mathbf{M}$. Face (4) commutes due to $\pi \circ \iota = \on{id}_{X}$ and Face (5) commutes trivially. The short outer path around the diagram yields $\overline{\alpha}_{\mathrm{G}}$; traversing the long outer path in the diagram yields the zero morphism if $\overline{\alpha}_{\mathrm{H}} = 0$. This follows similarly to the analogous claim in the proof of Theorem~\ref{UnitGeneralCase}, using a diagram analogous to Diagram~\eqref{ChangeGH}.

The analogue of Diagram~\eqref{HZero}, which shows that $\overline{\alpha}_{\mathrm{H}}$ is zero, is the following:
\[\begin{tikzcd}[ampersand replacement=\&,column sep=scriptsize]
	\& X \&\& {\mathrm{H}Z} \&\& {\mathrm{HH^{\blackdiamond}H}Z} \&\& {\mathrm{HH^{\blackdiamond}}X} \\
	{\mathrm{HH^{\blackdiamond}}X} \&\& {\mathrm{HH^{\blackdiamond}H}Z} \&\& {\mathrm{HH^{\blackdiamond}HH^{\blackdiamond}H}Z} \&\& {\mathrm{HH^{\blackdiamond}HH^{\blackdiamond}}X} \\
	\&\&\&\& {\mathrm{HH^{\blackdiamond}H}Z} \&\& {\mathrm{HH^{\blackdiamond}}X} \& {\mathrm{Q}}
	\arrow["{\mathrm{HH^{\blackdiamond}}\iota}", from=2-1, to=2-3]
	\arrow[""{name=0, anchor=center, inner sep=0}, "{\varepsilon^{\mathrm{H},\mathbf{M}}_{X}}", from=2-1, to=1-2]
	\arrow["\iota", from=1-2, to=1-4]
	\arrow[""{name=1, anchor=center, inner sep=0}, "{\varepsilon^{\mathrm{H},\mathbf{M}}_{\mathrm{H}Z}}"{pos=0.4}, from=2-3, to=1-4]
	\arrow["{\mathrm{H}\eta^{\mathrm{H},\mathbf{M}}_{Z}}", from=1-4, to=1-6]
	\arrow[""{name=2, anchor=center, inner sep=0}, "{\varepsilon^{\mathrm{H},\mathbf{M}}_{\mathrm{HH^{\blackdiamond}H}Z}}"{pos=0.4}, from=2-5, to=1-6]
	\arrow["{\mathrm{HH^{\blackdiamond}H}\eta^{\mathrm{H},\mathbf{M}}_{Z}}", from=2-3, to=2-5]
	\arrow["{\mathrm{HH^{\blackdiamond}HH^{\blackdiamond}}\pi}"', from=2-5, to=2-7]
	\arrow[""{name=3, anchor=center, inner sep=0}, "{\mathrm{HH^{\blackdiamond}}\varepsilon^{\mathrm{H},\mathbf{M}}_{X}}"{description}, from=2-7, to=1-8]
	\arrow["{\mathrm{HH^{\blackdiamond}}\pi}"{pos=0.6}, from=1-6, to=1-8]
	\arrow["{\overline{\alpha}_{\mathrm{H}}}", from=1-8, to=3-8]
	\arrow["{\mathrm{HH^{\blackdiamond}}\varepsilon^{\mathrm{H},\mathbf{M}}_{X}}", from=2-7, to=3-7]
	\arrow["{\mathrm{HH^{\blackdiamond}}\varepsilon^{\mathrm{H},\mathbf{M}}_{\mathrm{H}Z}}", from=2-5, to=3-5]
	\arrow[""{name=4, anchor=center, inner sep=0}, "{\mathrm{HH^{\blackdiamond}}\pi}"{description}, from=3-5, to=3-7]
	\arrow["{\overline{\alpha}_{\mathrm{H}}}"{description}, from=3-7, to=3-8]
	\arrow[""{name=5, anchor=center, inner sep=0}, "{\on{id}}"{description}, from=2-3, to=3-5]
	\arrow["{(1)}"{description}, draw=none, from=0, to=1]
	\arrow["{(2)}"{description}, draw=none, from=1, to=2]
	\arrow["{(3)}"{description}, draw=none, from=2, to=3]
	\arrow["{(4)}"{description, pos=0.6}, draw=none, from=5, to=2-5]
	\arrow["{(5)}"{description}, draw=none, from=4, to=2-7]
	\arrow["{(6)}"{description}, draw=none, from=3, to=3-8]
\end{tikzcd}\]
Here, Faces (1), (2), (3) and (5) commute by the naturality of $\varepsilon^{\mathrm{H},\mathbf{M}}$, Face (4) commutes by the zigzag equations for $\eta^{\mathrm{H},\mathbf{M}}$ and $\varepsilon^{\mathrm{H},\mathbf{M}}$. Face (6) commutes by definition. The upper outer path around the diagram is zero since the morphism given by Diagram~\eqref{Stonoga} is zero. The lower outer path is $\overline{\alpha}_{\mathrm{H}}$. We conclude that $\overline{\alpha}$ is zero, and so $\sigma$ is epi, which yields that $\theta^{\mathbf{M}}$ is an isomorphism, showing that
\[
\theta^{\mathbf{M}}_{X}: [\mathbf{M}^{\on{op}},\mathbf{Vec}_{\Bbbk}](\mathbb{1}_{[\ccf{S}^{\on{op}},\mathbf{Vec}_{\Bbbk}]})(\Hom{-,X}) \xiso \Hom{-,X}.
\]
Thus, $[\mathbf{M}^{\on{op}},\mathbf{Vec}_{\Bbbk}](\mathbb{1}_{[\ccf{S}^{\on{op}},\mathbf{Vec}_{\Bbbk}]})$ is cocontinuous and isomorphic to the identity on representables, so
\begin{equation}\label{ThisSuffices}
 [\mathbf{M}^{\on{op}},\mathbf{Vec}_{\Bbbk}](\mathbb{1}_{[\ccf{S}^{\on{op}},\mathbf{Vec}_{\Bbbk}]}) \simeq \on{Id}_{[\mathbf{M}^{\on{op}},\mathbf{Vec}_{\Bbbk}]}.
\end{equation}
Using Proposition~\ref{AnotherEGNOLemma}, formula~\eqref{ThisSuffices} is sufficient to conclude that $[\mathbf{M}^{\on{op}},\mathbf{Vec}_{\Bbbk}]$ is a monoidal $[\csym{S}^{\on{op}},\mathbf{Vec}_{\Bbbk}]$-module category.

The claimed bijective correspondence between equivalence classes of module categories can be shown using essentially the same proof as that of Theorem~\ref{AbelianizationCorrespondence}, with a few minor modifications.
We have shown that the components of $(\overline{\varepsilon}^{\mathrm{F},\mathbf{M}}_{!})_{\mathrm{F}}$ of the form $((\overline{\varepsilon}^{\mathrm{F},\mathbf{M}}_{!})_{\mathrm{F}})_{\Hom{-,X}}$ are epimorphisms for all $X \in \mathbf{M}$.
Since it is a transformation of cocontinuous functors, this is enough to conclude that $(\overline{\varepsilon}^{\mathrm{F},\mathbf{M}}_{!})_{\mathrm{F}}$ is an epimorphism, and so, given $\mathbf{M}$ on the left-hand side, the module category $[\mathbf{M}^{\on{op}},\mathbf{Vec}_{\Bbbk}]$ satisfies the conditions stated on the right-hand side.
The fact that $\mathbf{N}\!\on{-f.g.proj}$ is an $\csym{S}$-module subcategory, for any $\mathbf{N}$ in right-hand side, follows verbatim as in Theorem~\ref{AbelianizationCorrespondence}.
Since $((\overline{\varepsilon}^{\mathrm{F},\mathbf{N}}_{!})_{\mathrm{F}})_{\mathrm{Q}}$ is epi, there is a finite collection $\euler{G}(\mathrm{Q})$ of objects of $\csym{S}$ such that $((\overline{\varepsilon}^{\mathrm{F},\mathbf{N}}_{!})_{\mathrm{F}\in \euler{G}(\mathrm{Q})})_{\mathrm{Q}}$ is epi. Since an epimorphism onto a projective is split, we find that $((\overline{\varepsilon}^{\mathrm{F},\mathbf{N}}_{!})_{\mathrm{F}\in \euler{G}(\Hom{-,X})})_{\Hom{-,X}}$ is a split epi for any $X \in \mathbf{M}$, showing that $\csym{S} \star \mathbf{N}\!\on{-f.g.proj} = \mathbf{N}\!\on{-f.g.proj}$. The remainder of the proof is verbatim that of Theorem~\ref{AbelianizationCorrespondence}.
\end{proof}

\subsection{The symmetric case and enriched traces}\label{s34}

We now briefly recall the notion of a {\it trace} for a $\Bbbk$-linear category. A more extensive account, with a view towards categorification, can be found in \cite{BGHL}.
\begin{definition}
 Let $\mathcal{A}$ be a $\Bbbk$-linear category. The {\it trace} of $\mathcal{A}$ is the space
 \[
  \on{Tr}_{\Bbbk}(\mathcal{A}) = \int^{X \in \mathcal{A}} \on{Hom}_{\mathcal{A}}(X,X).
 \]
\end{definition}

The trace of a monoidal $\Bbbk$-linear category $\csym{C}$ is naturally endowed with the structure of a $\Bbbk$-algebra: writing
\[
\on{Tr}_{\Bbbk}(\csym{C}) = \Big(\bigoplus_{X \in \ccf{C}} \on{Hom}_{\ccf{C}}(X,X)\Big)/\left\langle f \circ g - g \circ f \; | \; f \in \on{Hom}_{\ccf{C}}(Y,Z), g \in \on{Hom}_{\ccf{C}}(Z,Y)  \right\rangle,
\]
one sees that the relation imposed can be graphically interpreted as identifying the top and the bottom boundary for the graphical calculus of $\csym{C}$ - thus, the trace can be thought of as the span of string diagrams drawn on an annulus, with the multiplication being given by ``stacking circles''.

Formally speaking, we have
\[
 \on{Tr}_{\Bbbk}(\csym{C}) =
\on{Coeq}\Bigg(
\begin{tikzcd}
	{\coprod_{\mathrm{F,G \in \ccf{C}}} \on{Hom}_{\ccf{C}}(\mathrm{F,G}) \kotimes \on{Hom}_{\ccf{C}}(\mathrm{G,F})} & {\coprod_{\mathrm{H \in \ccf{C}}} \on{Hom}_{\ccf{C}}(\mathrm{H,H}) }
	\arrow["{f \otimes g \mapsto f \circ g}", shift left=2, from=1-1, to=1-2]
	\arrow["{f\otimes g \mapsto g \circ f}"', shift right=2, from=1-1, to=1-2]
\end{tikzcd}
\Bigg)
\]
If $\csym{C}$ is closed symmetric, we could instead consider the $\csym{C}$-enriched category $\Cint$, defined by $\on{Ob}\Cint = \on{Ob}\csym{C}$ and $\Cint(\mathrm{F,G}) = [\mathrm{F,G}]$. We may then consider the $\csym{C}$-enriched variant of the coend defining the trace:
\[
 \on{Tr}_{\ccf{C}}(\Cint) =
 \on{Coeq}\Bigg(
\begin{tikzcd}
	{\coprod_{\mathrm{F,G} \in \Cintjr} \on{Hom}_{\Cintjr }(\mathrm{F,G}) \cotimes \on{Hom}_{\Cintjr }(\mathrm{G,F})} & {\coprod_{\mathrm{H} \in \Cintjr } \on{Hom}_{\Cintjr }(\mathrm{H,H}) }
	\arrow["{\mathsf{k}_{\mathrm{G,F,G}}}", shift left=2, from=1-1, to=1-2]
	\arrow["{\overline{\mathsf{k}}_{\mathrm{G,F,G}}}"', shift right=2, from=1-1, to=1-2]
\end{tikzcd}
\Bigg),
\]
where $\mathsf{k}$ is the composition transformation of $\Cint$, and $\overline{\mathsf{k}}$ is $\mathsf{k}$ precomposed with the symmetry of $\csym{C}$. Observe that this colimit does not necessarily exist in $\csym{C}$ - similarly to the approach of the preceding sections, we may want to consider the colimit of the image of the above diagram in a suitable cocompletion of $\csym{C}$.
By definition,
\[
 \on{Tr}_{\ccf{C}}(\Cint) =
 \on{Coeq}\Bigg(
 \begin{tikzcd}
	{\coprod_{\mathrm{F,G} \in \Cintjr} [\mathrm{F,G}] \cotimes [\mathrm{G,F}]} & {\coprod_{\mathrm{H} \in \Cintjr } [\mathrm{H,H}]}
	\arrow["{\mathsf{k}_{\mathrm{G,F,G}}}", shift left=2, from=1-1, to=1-2]
	\arrow["{\overline{\mathsf{k}}_{\mathrm{G,F,G}}}"', shift right=2, from=1-1, to=1-2]
 \end{tikzcd}
\Bigg)
\]
A particular case is that when $\csym{C}$ is rigid symmetric: then we find a closed structure on $\csym{C}$ by setting $[\mathrm{F,G}] :=\mathrm{F}^{\blackdiamond}\otimes \mathrm{G}$, and the composition map is defined via
\[
 \mathsf{k}_{\mathrm{F,G,H}}: [\mathrm{G,H}]\otimes [\mathrm{F,G}] \xiso [\mathrm{F,G}]\otimes [\mathrm{G,H}]
 = \mathrm{F}^{\blackdiamond} \otimes \mathrm{G} \otimes \mathrm{G}^{\blackdiamond} \otimes \mathrm{H} \xrightarrow{\mathrm{F}^{\blackdiamond} \otimes \varepsilon^{\mathrm{G}} \otimes \mathrm{H}} \mathrm{F}^{\blackdiamond} \otimes \mathbb{1} \otimes \mathrm{H} \xiso \mathrm{F}^{\blackdiamond} \otimes \mathrm{H} = [\mathrm{F,H}].
\]
Further, evaluation maps of this closed structure are defined by
\[
 \on{ev}_{[\mathrm{F,G}]}: \mathrm{F}\otimes [\mathrm{F,G}] = \mathrm{F} \otimes \mathrm{F}^{\blackdiamond}\otimes \mathrm{G} \xrightarrow{\varepsilon^{\mathrm{F}}} \mathrm{G}.
\]

It is easy to verify that the following diagram commutes:
\[\begin{tikzcd}
	{\mathrm{G\otimes G^{\blackdiamond}}\otimes \mathrm{F\otimes F}^{\blackdiamond}} && {\mathrm{F\otimes F^{\blackdiamond}}} \\
	{\mathrm{G}\otimes [\mathrm{G,F}]\otimes \mathrm{F}^{\blackdiamond}} \\
	{\mathrm{F}^{\blackdiamond}\otimes \mathrm{G} \otimes [\mathrm{G,F}]} & {[\mathrm{F,G}]\otimes [\mathrm{G,F}]} & {[\mathrm{G,F}]\otimes [\mathrm{F,G}],}
	\arrow["{=}"', from=1-1, to=2-1]
	\arrow["{\varepsilon^{\mathrm{G}}\otimes \mathrm{F}\otimes\mathrm{F}^{\blackdiamond}}", from=1-1, to=1-3]
	\arrow["{\on{ev}_{[\mathrm{G,F}}\otimes \mathrm{F}^{\blackdiamond}}"', from=2-1, to=1-3]
	\arrow["\simeq"', from=2-1, to=3-1]
	\arrow["{=}"', from=3-1, to=3-2]
	\arrow["{\mathsf{k}_{\mathrm{F,G,F}}}"', from=3-3, to=1-3]
	\arrow["\simeq"', from=3-2, to=3-3]
\end{tikzcd}\]
where the unlabelled isomorphisms are obtained by braiding. Thus, Theorem~\ref{UnitGeneralCase} can be reformulated, in the symmetric case, as follows:
\begin{theorem}
 If $\csym{S}$ is a rigid symmetric semigroup category, then $[\csym{S}^{\on{op}},\mathbf{Vec}_{\Bbbk}]$ is symmetric monoidal, and
 \[
  \mathbb{1}_{[\ccf{S}^{\on{op}},\mathbf{Vec}_{\Bbbk}]} \simeq \int^{\mathrm{F} \in \ccf{S}} \Hom{-,[\mathrm{F,F}]} = \on{Tr}_{\ccf{S}}(\Sint).
 \]
\end{theorem}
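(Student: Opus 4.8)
The plan is to deduce the statement from Theorem~\ref{UnitGeneralCase} by matching two coequalizer presentations. First I would dispose of the symmetric monoidal assertion: Day convolution transports the symmetry of $\csym{S}$ (the coend formula for $\oast$ is symmetric in its two arguments up to the braiding of $\csym{S}$, and the hexagon and coherence axioms follow from those of $\csym{S}$), while Theorem~\ref{PromonoidalUnital} supplies a unit object, so the Day structure on $[\csym{S}^{\on{op}},\mathbf{Vec}_{\Bbbk}]$ is genuinely symmetric monoidal.

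For the identification of the unit, I would first unwind the enriched trace. In the rigid symmetric case the closed structure $[\mathrm{F,G}]=\mathrm{F}^{\blackdiamond}\boxtimes\mathrm{G}$ gives $[\mathrm{H,H}]=\mathrm{H}^{\blackdiamond}\boxtimes\mathrm{H}$ and $[\mathrm{F,G}]\boxtimes[\mathrm{G,F}]=\mathrm{F}^{\blackdiamond}\mathrm{G}\mathrm{G}^{\blackdiamond}\mathrm{F}$, so that $\on{Tr}_{\ccf{S}}(\Sint)$, computed in the cocompletion $[\csym{S}^{\on{op}},\mathbf{Vec}_{\Bbbk}]$, is by definition the coequalizer of the Yoneda image of the parallel pair $\coprod_{\mathrm{F,G}}\Hom{-,\mathrm{F}^{\blackdiamond}\mathrm{G}\mathrm{G}^{\blackdiamond}\mathrm{F}}\rightrightarrows\coprod_{\mathrm{H}}\Hom{-,\mathrm{H}^{\blackdiamond}\mathrm{H}}$ with legs $\mathsf{k}_{\mathrm{F,G,F}}$ and $\overline{\mathsf{k}}_{\mathrm{F,G,F}}$. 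The strategy is then to exhibit this pair as isomorphic to the one computing $\mathbb{1}$ in Theorem~\ref{UnitGeneralCase}, namely $\coprod_{\mathrm{F,G}}\Hom{-,\mathrm{FF^{\blackdiamond}GG^{\blackdiamond}}}\rightrightarrows\coprod_{\mathrm{F}}\Hom{-,\mathrm{FF^{\blackdiamond}}}$, after which uniqueness of coequalizers closes the argument.

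The comparison isomorphism is built entirely from the braiding. On the target coproducts I would use, summandwise and reindexing by the identity on objects, the symmetry $\mathrm{FF}^{\blackdiamond}\xiso\mathrm{F}^{\blackdiamond}\mathrm{F}=[\mathrm{F,F}]$; on the source coproducts I would use the cyclic braiding $\mathrm{FF^{\blackdiamond}GG^{\blackdiamond}}\xiso\mathrm{F}^{\blackdiamond}\mathrm{G}\mathrm{G}^{\blackdiamond}\mathrm{F}=[\mathrm{F,G}]\boxtimes[\mathrm{G,F}]$ rotating the leading tensorand to the end. It then remains to check that these braidings intertwine the two legs of one diagram with the two legs of the other. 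One leg, which contracts the $\mathrm{GG}^{\blackdiamond}$-pair and lands in the $\mathrm{F}$-summand, is matched with $\mathsf{k}_{\mathrm{F,G,F}}$ precisely by the commuting square displayed immediately before the theorem; the complementary leg, contracting the $\mathrm{FF}^{\blackdiamond}$-pair and landing in the $\mathrm{G}$-summand, is matched with $\overline{\mathsf{k}}_{\mathrm{F,G,F}}=\mathsf{k}_{\mathrm{F,G,F}}\circ(\text{symmetry})$ by the mirror image of that square. Since the Yoneda embedding is strong monoidal (Theorem~\ref{UniversalPropertyDay}), giving $\Hom{-,[\mathrm{F,G}]\boxtimes[\mathrm{G,F}]}\simeq\Hom{-,[\mathrm{F,G}]}\oast\Hom{-,[\mathrm{G,F}]}$, the two diagrams are isomorphic, so $\mathbb{1}_{[\ccf{S}^{\on{op}},\mathbf{Vec}_{\Bbbk}]}\simeq\on{Tr}_{\ccf{S}}(\Sint)=\int^{\mathrm{F}\in\ccf{S}}\Hom{-,[\mathrm{F,F}]}$.

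The hard part will be the leg-matching bookkeeping: verifying that the two distinct face maps of the Theorem~\ref{UnitGeneralCase} coequalizer correspond respectively to $\mathsf{k}$ and to its symmetry-twist $\overline{\mathsf{k}}$, coherently and simultaneously across all index pairs $(\mathrm{F},\mathrm{G})$. This reduces to chasing naturality and the hexagon identity for the braiding through the cyclic rotations so that the source-isomorphisms and the target-isomorphism assemble into genuinely commuting squares. Mac Lane coherence for symmetric monoidal categories should let me treat every braiding rearrangement as canonical and thus reduce the verification to the single displayed square and its mirror; the delicate point is that here the reindexing is by the identity on objects (via the symmetry) rather than by $(-)^{\blackdiamond}$, so I must confirm the same object bijection is compatible with both legs at once.
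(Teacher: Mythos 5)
Your proposal is correct and takes essentially the same route as the paper: the paper also deduces the statement by matching the coequalizer presentation of $\mathbb{1}_{[\ccf{S}^{\on{op}},\mathbf{Vec}_{\Bbbk}]}$ from Theorem~\ref{UnitGeneralCase} against the coequalizer defining $\on{Tr}_{\ccf{S}}(\Sint)$, using braiding identifications whose key computational content is exactly the commuting square displayed immediately before the theorem (with its mirror image handling the $\overline{\mathsf{k}}$-leg). Your leg-matching bookkeeping --- the $\mathrm{FF}^{\blackdiamond}$-contraction corresponding to one of $\mathsf{k}_{\mathrm{G,F,G}}$, $\overline{\mathsf{k}}_{\mathrm{G,F,G}}$ and the $\mathrm{GG}^{\blackdiamond}$-contraction to the other --- is precisely what the paper compresses into ``it is easy to verify that the following diagram commutes.''
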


We conclude Section~\ref{s3} with the following conjecture:
\begin{conjecture}\label{ClosedConjecture}
 Let $\csym{S}$ be a closed symmetric semigroup cateogry. Then $[\csym{S}^{\on{op}},\mathbf{Vec}_{\Bbbk}]$ is symmetric monoidal, and
 \[
  \mathbb{1}_{[\ccf{S}^{\on{op}},\mathbf{Vec}_{\Bbbk}]} \simeq \on{Tr}_{\ccf{S}}(\Sint).
 \]
\end{conjecture}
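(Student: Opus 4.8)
The plan is to read the statement off Theorem~\ref{UnitGeneralCase}: its coequalizer presentation of the unit is, after the substitution $[\mathrm{F,G}]:=\mathrm{F}^{\blackdiamond}\boxtimes\mathrm{G}$, precisely the Yoneda image of the enriched trace, and the commuting square displayed just before the theorem is the computation that links the two.

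First I would dispatch the monoidal claim. By Theorem~\ref{PromonoidalUnital} the rigidity of $\csym{S}$ already makes $[\csym{S}^{\on{op}},\mathbf{Vec}_{\Bbbk}]$ monoidal, so only the braiding remains. The symmetry of $\boxtimes$ induces, through the defining coend of $\oast$ in Theorem~\ref{UniversalPropertyDay}, a natural braiding on Day convolution for which $\yo_{\ccf{S}}$ is symmetric monoidal; this is the standard transport of a symmetric promonoidal structure to a symmetric monoidal one, and the universal property forces the coherence axioms. I would also record the unit-free reading of $\Sint$: although $\csym{S}$ has no unit object, setting $\Sint(\mathrm{F,G})=\mathrm{F}^{\blackdiamond}\boxtimes\mathrm{G}$ and defining the composition $\mathsf{k}_{\mathrm{F,G,H}}$ by the module counit $\mathrm{F}^{\blackdiamond}\varepsilon^{\mathrm{G},r}_{\mathrm{H}}$ (rather than by a genuine counit $\mathrm{GG}^{\blackdiamond}\to\mathbb{1}$) yields a non-unital $\csym{S}$-enriched category whose trace coequalizer uses only $\mathsf{k}$ and the braiding, so $\on{Tr}_{\ccf{S}}(\Sint)$ is well-defined.

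Next I would compare the two coequalizers. Embedding the defining diagram of $\on{Tr}_{\ccf{S}}(\Sint)$ along $\Hom{-,-}$ and substituting $[\mathrm{F,G}]=\mathrm{F}^{\blackdiamond}\boxtimes\mathrm{G}$ exhibits it as the coequalizer of
\[
\coprod_{\mathrm{F,G}}\Hom{-,\mathrm{F^{\blackdiamond}GG^{\blackdiamond}F}}\rightrightarrows\coprod_{\mathrm{H}}\Hom{-,\mathrm{H^{\blackdiamond}H}},
\]
whose two legs are induced by $\mathsf{k}_{\mathrm{G,F,G}}$, which cancels the inner $\mathrm{F}$ and lands in the $\mathrm{G}$-summand, and by $\overline{\mathsf{k}}_{\mathrm{G,F,G}}$, which cancels the inner $\mathrm{G}$ and lands in the $\mathrm{F}$-summand. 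This is the same shape as the coequalizer of Theorem~\ref{UnitGeneralCase}, whose terms are $\Hom{-,\mathrm{FF^{\blackdiamond}GG^{\blackdiamond}}}$ and $\Hom{-,\mathrm{FF^{\blackdiamond}}}$ and whose two legs likewise cancel $\mathrm{F}$ (via $\varepsilon^{\mathrm{F},r}_{\mathrm{G}}\mathrm{G}^{\blackdiamond}$) and $\mathrm{G}$. The braidings $\mathrm{H^{\blackdiamond}H}\simeq\mathrm{HH^{\blackdiamond}}$ and $\mathrm{F^{\blackdiamond}GG^{\blackdiamond}F}\simeq\mathrm{FF^{\blackdiamond}GG^{\blackdiamond}}$ furnish term-by-term isomorphisms between the two diagrams, and the displayed commuting square is exactly the verification that, under these braidings, $\mathsf{k}$ agrees with the counit map $\varepsilon^{\mathrm{G}}\otimes\mathrm{FF^{\blackdiamond}}$ of Theorem~\ref{UnitGeneralCase}; the twisted leg $\overline{\mathsf{k}}$ follows from the same square with the roles of $\mathrm{F}$ and $\mathrm{G}$ interchanged, since $\overline{\mathsf{k}}$ equals $\mathsf{k}$ precomposed with the symmetry. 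Assembling these identifications shows that the braidings constitute an isomorphism of parallel-pair diagrams, hence induce an isomorphism of coequalizers, giving $\mathbb{1}_{[\ccf{S}^{\on{op}},\mathbf{Vec}_{\Bbbk}]}\simeq\on{Tr}_{\ccf{S}}(\Sint)$.

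The step I expect to be the main obstacle is the bookkeeping needed to promote these term-wise isomorphisms to a genuine morphism of parallel-pair diagrams: one must check that a single family of braidings intertwines \emph{both} legs at once and is natural in $\mathrm{F}$ and $\mathrm{G}$. The delicate point is matching which module counit --- left $\varepsilon^{\mathrm{F},l}$ or right $\varepsilon^{\mathrm{F},r}$ --- appears in each leg once the monoidal-closed formula $\mathrm{GG}^{\blackdiamond}\to\mathbb{1}$ is replaced by its unit-free whiskered form; the conversion between the two is precisely what Axiom~\ref{Involution} supplies in the symmetric setting. I would carry out this matching in the graphical calculus for the rigid symmetric category $\csym{S}$, where the displayed square and its companion become isotopies of cup--cap diagrams, and the cyclic-invariance relation defining the trace becomes visibly identical to the two-sided relation of Theorem~\ref{UnitGeneralCase}.
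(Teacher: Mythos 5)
There is a genuine gap, and it is at the level of hypotheses rather than bookkeeping: the statement you are asked to prove is a \emph{conjecture} about \emph{closed} symmetric semigroup categories, whereas every tool your argument invokes requires \emph{rigidity}. Your very first step, ``by Theorem~\ref{PromonoidalUnital} the rigidity of $\csym{S}$ already makes $[\csym{S}^{\on{op}},\mathbf{Vec}_{\Bbbk}]$ monoidal,'' assumes what the conjecture does not grant: a closed symmetric semigroup category need not be rigid (already in the monoidal world, $\mathbf{Vec}_{\Bbbk}$ itself is closed symmetric but not rigid). The same problem propagates through the rest of the proposal: the substitution $\Sint(\mathrm{F},\mathrm{G}) = \mathrm{F}^{\blackdiamond}\boxtimes\mathrm{G}$, the module counits $\varepsilon^{\mathrm{F},r}$, $\varepsilon^{\mathrm{F},l}$, Axiom~\ref{Involution}, the cup--cap graphical calculus, and above all Theorem~\ref{UnitGeneralCase} are only available when every object of $\csym{S}$ admits duals in the sense of Definition~\ref{AdjunctionSemigroup}. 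In the merely closed case the internal hom $[\mathrm{F},\mathrm{G}]$ exists but is not of the form $\mathrm{F}^{\blackdiamond}\boxtimes\mathrm{G}$, there is no counit to whisker, and neither the monoidality of $[\csym{S}^{\on{op}},\mathbf{Vec}_{\Bbbk}]$ nor any coequalizer presentation of a would-be unit is established by the paper's results.

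What your argument actually proves is the rigid symmetric case, and for that case it is essentially the paper's own proof: the unnumbered theorem immediately preceding Conjecture~\ref{ClosedConjecture} is obtained exactly by specializing Theorem~\ref{UnitGeneralCase} via $[\mathrm{F},\mathrm{G}] := \mathrm{F}^{\blackdiamond}\otimes\mathrm{G}$ and the displayed commuting square relating $\mathsf{k}_{\mathrm{F,G,F}}$ to $\varepsilon^{\mathrm{G}}\otimes\mathrm{F}\otimes\mathrm{F}^{\blackdiamond}$. The conjecture is precisely the assertion that this identification survives when rigidity is weakened to closedness, and the paper leaves it open, offering only the verification for $\csym{S} = A\!\on{-proj}$ with $A$ a finite-dimensional commutative algebra, where $\on{Tr}_{\ccf{S}}(\Sint) = A$. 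To make progress on the actual statement you would need, at minimum, a replacement for Theorem~\ref{PromonoidalUnital} producing a unit presheaf from internal homs alone (a natural candidate being an end or coend built from $\Hom{-,[\mathrm{F},\mathrm{F}]}$), together with an argument that Day convolution against it acts as the identity without ever invoking duals; none of the paper's machinery supplies this.
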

Observe that Conjecture~\ref{ClosedConjecture} is satisfied for the category $\csym{S} = A\!\on{-proj}$ of projective modules over a finite-dimensional commutative algebra $A$, where $\on{Tr}_{\ccf{S}}(\Sint) = \on{Hom}_{A}(A,A)/\left\langle ab - ba\right\rangle = A$.

\section{Finite tensor categories and simple rigid semigroup categories}\label{s4}

In this section, we assume the ground field $\Bbbk$ to be algebraically closed.

\begin{definition}
 Let $\mathcal{A}$ be a $\Bbbk$-linear category. We say that $\mathcal{A}$ is {\it finitary} if there is a finite-dimensional $\Bbbk$-algebra such that $\mathcal{A} \simeq A\!\on{-proj}$. We say that $\mathcal{A}$ is {\it finite abelian}, if there is a finite-dimensional $\Bbbk$-algebra $A$ such that $\mathcal{A} \simeq A\!\on{-mod}$.

 In particular, the category of projective objects of a finite abelian category is finitary.
\end{definition}

Following the terminology schema of \cite{MMMTZ1}, we define the following:
\begin{definition}
 We say that a $\Bbbk$-linear semigroup category is {\it quasi-fias} if it is finitary and rigid.
\end{definition}

\begin{definition}
 A {\it finite tensor category} is a rigid, monoidal, finite abelian $\Bbbk$-linear category $\mathcal{A}$ such that $\on{End}_{\mathcal{A}}(\mathbb{1}) \simeq \Bbbk$.
\end{definition}

\begin{definition}
 Let $\csym{S}$ be a semigroup category and let $\mathbf{M}$ be an $\csym{S}$-module category. An {\it $\csym{S}$-stable ideal} in $\mathbf{M}$ is an ideal $\mathbf{I}$ in the category $\mathbf{M}$ such that for all $\mathrm{F} \in \csym{S}$ and $g \in \mathbf{I}$, we have $\mathrm{F} \star g \in \mathbf{I}$.

 An $\csym{S}$-stable ideal $\mathbf{I}$ is said to be {\it non-trivial} if $\mathbf{I} \neq 0$ and $\mathbf{I} \neq \mathbf{M}$.
 An $\csym{S}$-stable ideal $\mathbf{I}$ is said to be {\it thick} if it is of the form $\mathbf{I} = \left\langle \on{id}_{X} \; | \; X \in \mathtt{I} \right\rangle$ for some collection $\mathtt{I} \subseteq \on{Ob}\mathbf{M}$.

 An $\csym{S}$-module category $\mathbf{M}$ is said to be {\it transitive} if it admits no non-trivial thick ideals, and it is said to be {\it simple transitive} if it admits no non-trivial ideals.

\end{definition}

\begin{definition}[{\cite[Definition~7.5.1]{EGNO}}]
  If $\csym{S}$ is an abelian semigroup category with enough projectives, and $\mathbf{M}$ is an abelian $\csym{S}$-module category with enough projectives, we say that $\mathbf{M}$ is an {\it exact $\csym{S}$-module category} if for any projective object $\mathrm{F} \in \csym{S}$ and any object $X \in \mathbf{M}$, the object $\mathrm{F} \star X$ is projective.
\end{definition}

Since finitary categories often feature as categories of projectives, we modify the above notion accordingly:
\begin{definition}
 If $\csym{S}$ is a finitary semigroup category, and $\mathbf{M}$ is an abelian $\csym{S}$-module category with enough projectives, we say that $\mathbf{M}$ is a {\it projectivizing $\csym{S}$-module category} if for any object $\mathrm{F} \in \csym{S}$ and any object $X \in \mathbf{M}$, the object $\mathrm{F} \star X$ is projective.
\end{definition}

\subsection{Auxiliary lemmata}\footnote[1]{The allusion is to \cite[Section~5.3]{MM5}} \label{41}

\begin{lemma}[{\cite[Lemma~13]{MM5}}]
 Let $\mathcal{A}$ be a finite abelian category and let $\euler{F}$ be an exact endofunctor of $\mathcal{A}$ such that, for every object $X \in \mathcal{A}$, the object $\euler{F}(X)$ is projective. Let $A$ be a finite-dimensional $\Bbbk$-algebra such that there is an equivalence $\mathcal{A} \xrightarrow[\simeq]{\Phi} A\!\on{-mod}$. There is a projective $A$-$A$-bimodule $V \in \on{add}\setj{A \kotimes A}$ such that the following diagram commutes up to natural isomorphism:
\[\begin{tikzcd}
	{\mathcal{A}} && {\mathcal{A}} \\
	{A\!\on{-mod}} && {A\!\on{-mod}}
	\arrow["{\euler{F}}", from=1-1, to=1-3]
	\arrow["\Phi"', from=1-3, to=2-3]
	\arrow["\simeq", from=1-3, to=2-3]
	\arrow["\Phi", from=1-1, to=2-1]
	\arrow[""{name=0, anchor=center, inner sep=0}, "\simeq"', from=1-1, to=2-1]
	\arrow["{{}_{A}V_{A}\otimes_{A}-}"', from=2-1, to=2-3]
	\arrow["\simeq", shift left=3, shorten <=13pt, Rightarrow, from=0, to=2-3]
\end{tikzcd}\]
\end{lemma}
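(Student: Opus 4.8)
The plan is to transport everything to modules and then recognise $\euler{F}$ as tensoring with a projective bimodule. First I would replace $\mathcal{A}$ by $A\!\on{-mod}$ along $\Phi$, i.e. work with $\euler{F}' = \Phi \circ \euler{F} \circ \Phi^{-1}$, which is again exact and sends every object to a projective; a natural isomorphism $\euler{F}' \simeq V \otimes_{A} -$ then yields the claimed commuting square. Since $\euler{F}'$ is exact it is in particular additive and right exact, so the $\Bbbk$-linear Eilenberg--Watts theorem applies: setting $V = \euler{F}'(A)$, with its right $A$-action coming from the ring map $A^{\on{op}} \cong \on{End}_{A}(A) \to \on{End}_{A}(V)$ induced by $\euler{F}'$, the canonical comparison map $V \otimes_{A} M \to \euler{F}'(M)$ is an isomorphism for $M = A$, hence for finitely generated frees by additivity, hence for all finitely generated $M$ by right exactness and the five lemma applied to a finite presentation. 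Moreover, exactness of $\euler{F}' = V \otimes_{A} -$ forces $V$ to be flat, hence (finitely generated over an Artinian ring) projective, as a right $A$-module.

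It remains to show $V \in \on{add}\setj{A \kotimes A}$, i.e. that $V$ is projective as a bimodule, equivalently as a left module over $A^{e} = A \kotimes A^{\on{op}}$ (the projective $A^{e}$-modules are exactly $\on{add}\setj{A\kotimes A}$). This is the crux: one-sided projectivity alone is not enough --- the identity functor has $V = A$, which is projective on both sides but not as a bimodule --- so the hypothesis must be fed in beyond the single object $A$. I would take a projective cover $p \colon W \twoheadrightarrow V$ of $V$ in $A^{e}\!\on{-mod}$, so that $W \in \on{add}\setj{A\kotimes A}$ and, writing $S_{i}$ for the simple left $A$-modules with projective covers $P_{i}$ and fixing primitive idempotents $e_{j}$, we have $W \cong \bigoplus_{i,j}(P_{i} \kotimes e_{j}A)^{c_{ij}}$ with $c_{ij} = \dim_{\Bbbk}\on{Hom}_{A^{e}}(V, S_{i}\kotimes S_{j}^{\on{op}})$ (here I use that $\Bbbk$ is algebraically closed, so the simple $A^{e}$-modules are precisely the $S_{i}\kotimes S_{j}^{\on{op}}$). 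The goal becomes to prove that $p$ is an isomorphism, and I would test it by applying the functors $- \otimes_{A} S_{j}$.

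The key comparison is that $W \otimes_{A} S_{j} \cong V \otimes_{A} S_{j}$. On the one hand a direct computation gives $W \otimes_{A} S_{j} \cong \bigoplus_{i}P_{i}^{c_{ij}}$; on the other hand $V \otimes_{A} S_{j} = \euler{F}'(S_{j})$ is projective by hypothesis, so it is determined by its top multiplicities $\dim_{\Bbbk}\on{Hom}_{A}(V\otimes_{A}S_{j}, S_{i})$, and the tensor--hom adjunction for bimodules identifies this number with $\dim_{\Bbbk}\on{Hom}_{A^{e}}(V, S_{i}\kotimes S_{j}^{\on{op}}) = c_{ij}$. Hence $p \otimes_{A} S_{j}$ is a surjection between isomorphic finite-dimensional modules, so it is an isomorphism. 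Finally, since $V$ is right-projective we have $\on{Tor}_{1}^{A}(V, S_{j}) = 0$, so the short exact sequence $0 \to K \to W \to V \to 0$, with $K = \ker p$, stays exact after $- \otimes_{A} S_{j}$; as $p\otimes_{A}S_{j}$ is an isomorphism this forces $K \otimes_{A} S_{j} = 0$ for all $j$. But $K$ is a right-module summand of the right-projective $W$ (a surjection of projective right modules splits), hence right-projective, and if $K \neq 0$ then by Nakayama its right-module top $K \otimes_{A}(A/\on{Rad}A)$ is nonzero, forcing $K \otimes_{A} S_{j} \neq 0$ for some $j$; therefore $K = 0$ and $V \cong W \in \on{add}\setj{A\kotimes A}$. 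The main obstacle throughout is exactly this promotion from one-sided to two-sided projectivity, which is what compels the argument to test the hypothesis on the simple modules rather than only on $A$.
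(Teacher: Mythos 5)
Your proof is correct, but note that there is nothing in the paper to compare it against: the paper does not prove this lemma, it imports it wholesale as \cite[Lemma~13]{MM5}, so what you have written is a self-contained replacement for that citation. Your route is essentially the standard one and, as far as the structure of the argument goes, reconstructs the original: Eilenberg--Watts to get $\euler{F}' \simeq V \otimes_{A} -$ with $V = \euler{F}'(A)$, right projectivity of $V$ from exactness, and then the genuinely nontrivial promotion to bimodule projectivity. You correctly isolate why that last step is the crux (one-sided projectivity cannot suffice, as $V = A$ for the identity functor shows when $A$ is not separable), and your choice of test objects is the right one: the adjunction $\on{Hom}_{A}(V \otimes_{A} S_{j}, S_{i}) \cong \on{Hom}_{A^{e}}(V, \on{Hom}_{\Bbbk}(S_{j},S_{i}))$ together with $\on{Hom}_{\Bbbk}(S_{j},S_{i}) \cong S_{i} \kotimes S_{j}^{\ast}$ matches the top multiplicities of $V$ over $A^{e}$ against those of the left module $\euler{F}'(S_{j})$, and the hypothesis that $\euler{F}'(S_{j})$ is projective is exactly what turns this numerical match into the isomorphism $W \otimes_{A} S_{j} \cong V \otimes_{A} S_{j}$; the $\on{Tor}_{1}$-vanishing from right projectivity of $V$ and the Nakayama argument then correctly force $K = 0$. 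Two cosmetic remarks: the right projectivity of $K$ is never needed, since Nakayama applies to any finitely generated module and $K \otimes_{A}(A/\on{Rad}A) = 0$ already gives $K = K\on{Rad}A = 0$; and your appeal to algebraic closedness (to identify the simple $A^{e}$-modules as the $S_{i} \kotimes S_{j}^{\ast}$, hence the shape of the projective cover $W$) is legitimate here, since the paper assumes $\Bbbk$ algebraically closed throughout Section~4, as does \cite{MM5}.
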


Our next aim is to give an account of \cite[Lemma~12]{MM5}, or, rather, its generalization \cite[Theorem~11]{KMMZ}. Although we do not give a generalization thereof beyond what can be concluded from the proof of \cite[Theorem~11]{KMMZ}, we hope to state it in a manner as open for further generalization as possible, since, as the reader will see in the remainder of this section, this is the main reason for the finiteness conditions required in Theorem~\ref{FinitaryCharacterization}.

 Given an additive category $\mathcal{C}$, we denote its split Grothendieck group by $\got{\mathcal{C}}$. Given a ring $R$, we denote the base change $R \zotimes \got{\mathcal{C}}$ by $\got{\mathcal{C}}_{R}$. We denote by $\on{Indec}	\mathcal{C}$ the set of isomorphism classes of indecomposable objects in $\mathcal{C}$, and by $\got{\on{Indec}\mathcal{C}}$ its image in $\got{\mathcal{C}}$; if $\mathcal{C}$ is Krull-Schmidt, then $\got{\mathcal{C}}$ is free and $\got{\on{Indec}\mathcal{C}}$ is a basis therein.

 Setting $\mathcal{A} \overline{\otimes}\mathcal{B} = (\mathcal{A} \zotimes \mathcal{B})^{\oplus}$, where $(-)^{\oplus}$ denotes the additive envelope, endows the category $\mathbf{Cat}_{\oplus}$ of additive categories and preadditive functors with a monoidal structure, and taking split Grothendieck groups defines a monoidal functor $\got{-}: (\mathbf{Cat}_{\oplus},\overline{\otimes}) \rightarrow (\mathbf{Ab},\zotimes)$.
 Thus, if $\csym{C}\in \mathbf{Cat}_{\oplus}$ is monoidal, then $\got{\csym{C}}$ is a ring; if $\csym{S}$ is an additive semigroup category, then $\got{\csym{S}}$ is a nonunital ring; if $\mathbf{M}$ is an $\csym{S}$- respectively $\csym{C}$-module category, then $\got{\mathbf{M}}$ is an $\got{\csym{S}}$-module respectively a unital $\got{\csym{C}}$-module. If $\euler{F}: \mathcal{A} \rightarrow \mathcal{B}$ is a preadditive functor, then $\got{\euler{F}}$ is a group homomorphism.

 \begin{lemma}
  Let $\mathcal{C}$ be an abelian Krull-Schmidt category all of whose objects admit projective covers. Then taking projective covers defines a group homomorphism $\mathfrak{c}: \got{\mathcal{C}} \rightarrow \got{\mathcal{C}\!\on{-proj}}$.
 \end{lemma}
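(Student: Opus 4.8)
The plan is to invoke the universal property of free abelian groups, exploiting that $\got{\mathcal{C}}$ is a \emph{split} Grothendieck group, so that the only relations to respect are those coming from direct sums. Since $\mathcal{C}$ is Krull--Schmidt, the group $\got{\mathcal{C}}$ is free with basis $\got{\on{Indec}\mathcal{C}}$, as recalled just before the statement. Hence it suffices to specify the value of $\mathfrak{c}$ on each basis element $\got{X}$, for $X$ indecomposable, and extend $\mathbb{Z}$-linearly; the universal property of the free abelian group then automatically produces a well-defined group homomorphism, and no further relations need be verified.

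First I would check that the assignment is well-defined on isomorphism classes. By hypothesis every object $X$ admits a projective cover, an essential epimorphism $p_X\colon P(X) \twoheadrightarrow X$ with $P(X)$ projective; in particular $P(X)$ is an object of $\mathcal{C}\!\on{-proj}$, so $\got{P(X)}$ lives in $\got{\mathcal{C}\!\on{-proj}}$. Projective covers are unique up to (non-unique) isomorphism, and isomorphic objects have the same class in a split Grothendieck group, so $\got{P(X)}$ depends only on the isomorphism class of $X$. Thus $\got{X} \mapsto \got{P(X)}$ is a genuine function on the basis $\got{\on{Indec}\mathcal{C}}$, and I define $\mathfrak{c}$ to be its linear extension.

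It then remains to confirm that $\mathfrak{c}$ really computes projective covers on \emph{all} of $\mathcal{C}$, i.e. that $\mathfrak{c}(\got{X}) = \got{P(X)}$ for an arbitrary, possibly decomposable, object $X$. Writing $X \cong \bigoplus_i X_i$ with each $X_i$ indecomposable by Krull--Schmidt, and using that $\got{X} = \sum_i \got{X_i}$ in $\got{\mathcal{C}}$, this reduces to the additivity statement $P(X \oplus Y) \cong P(X) \oplus P(Y)$ for projective covers. Given this, $\mathfrak{c}(\got{X}) = \sum_i \got{P(X_i)} = \got{\bigoplus_i P(X_i)} = \got{P(X)}$, as desired.

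The single point needing a categorical argument is this additivity, and it is where I expect the only (mild) obstacle to lie. Here I would use the characterisation of a projective cover as a projective object equipped with an essential epimorphism onto the target, namely an epimorphism whose kernel is a superfluous subobject. Given essential epimorphisms $p\colon P \to X$ and $q\colon Q \to Y$, one verifies that $p \oplus q\colon P \oplus Q \to X \oplus Y$ is again an epimorphism with projective source whose kernel $\ker p \oplus \ker q$ is superfluous in $P \oplus Q$; this is the standard fact that a direct sum of superfluous subobjects is superfluous. Combined with uniqueness of projective covers, this yields $P(X \oplus Y) \cong P(X) \oplus P(Y)$. Everything outside this lemma is formal manipulation in a free abelian group, and crucially relies on $\got{\mathcal{C}}$ being the split Grothendieck group: taking projective covers is additive with respect to $\oplus$ but is \emph{not} compatible with general short exact sequences, so the analogous assignment on the ordinary Grothendieck group would fail to be a homomorphism.
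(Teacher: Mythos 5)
Your proof is correct and rests on exactly the same key fact as the paper's one-line argument: additivity of projective covers, $P(X \oplus Y) \cong P(X) \oplus P(Y)$, which the paper invokes as immediate and you additionally justify via the standard superfluous-kernel argument. The detour through the free basis of indecomposables is harmless but unnecessary: since the relations in $\got{\mathcal{C}}$ are generated by direct-sum decompositions, additivity alone already makes $\got{X} \mapsto \got{P(X)}$ well-defined and additive, with no need for Krull--Schmidt freeness at this step.
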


 \begin{proof}
  This is an immediate consequence of projective covers being additive: the projective cover of a direct sum is the direct sum of respective projective covers.
 \end{proof}

 We denote the cone in $\got{\mathcal{C}}_{\mathbb{R}}$ given by the non-negative span of $\got{\on{Indec}\mathcal{C}}$ by $\got{\mathcal{C}}_{\geq 0}$. The cone induces a partial ordering on $\got{\mathcal{C}}_{\mathbb{R}}$ by setting $u \geq v$ if $u - v \in \got{\mathcal{C}}_{\geq 0}$. A further partial ordering is induced on $\on{Hom}_{\mathbb{R}}(\gotr{\mathbf{C}},\gotr{\mathbf{C}})$, by $\mathbf{f} \leq \mathbf{g}$ if $\mathbf{f}(u) - \mathbf{g}(u) \in \got{\mathcal{C}}_{\geq 0}$ for all $u \in \got{\mathcal{C}}_{\geq 0}$. Assume that $\mathcal{C}$ is Krull-Schmidt, and let $v = \sum_{\mathrm{X} \in \on{Indec}\mathcal{C}} \lambda_{X}\got{X} \in \got{\mathcal{C}}_{\geq 0}$, where $\lambda_{X} \in \mathbb{R}_{\geq 0}$ is zero for all but finitely many $X \in \on{Indec}\mathcal{C}$. We denote by $\on{Supp}(v)$ the set $\setj{X \in \on{Indec}\mathcal{C} \; | \; \lambda_{X} \neq 0}$.

 \begin{lemma}\label{Inequalities}
  Let $\csym{S}$ be a rigid semigroup category and let $\mathbf{M}$ be an abelian Krull-Schmidt $\csym{S}$-module category, respecting adjunctions in $\csym{S}$, and such that all of the objects of $\mathbf{M}$ admit projective covers. The map $\mathfrak{c}_{\mathbb{R}} \in \on{End}_{\mathbb{R}}(\gotr{\mathbf{M}})$ is then $\csym{S}$-subhomogeneous, i.e. for any $\mathrm{F} \in \csym{S}$, we have $\mathfrak{c}_{\mathbb{R}} \circ \gotr{\mathbf{M}\mathrm{F}} \leq \gotr{\mathbf{M}\mathrm{F}}\circ \mathfrak{c}$.
 \end{lemma}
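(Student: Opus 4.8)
The plan is to reduce the asserted inequality of $\mathbb{R}$-linear endomorphisms to a single comparison of objects in $\mathbf{M}\!\on{-proj}$, checked on the basis $\got{\on{Indec}\mathbf{M}}$ of $\got{\mathbf{M}}$. Both $\mathfrak{c}_{\mathbb{R}} \circ \gotr{\mathbf{M}\mathrm{F}}$ and $\gotr{\mathbf{M}\mathrm{F}}\circ \mathfrak{c}$ are $\mathbb{R}$-linear, and $\got{\mathbf{M}}_{\geq 0}$ is the non-negative span of the classes $\got{X}$ of indecomposables, so it suffices to show that for every indecomposable $X$ the difference
\[
\gotr{\mathbf{M}\mathrm{F}}(\mathfrak{c}(\got{X})) - \mathfrak{c}_{\mathbb{R}}(\gotr{\mathbf{M}\mathrm{F}}(\got{X}))
\]
lies in $\got{\mathbf{M}}_{\geq 0}$; the general case then follows by forming non-negative combinations. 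Writing $P(X)$ for the projective cover of $X$, the first term is $\got{\mathbf{M}\mathrm{F}(P(X))}$ and the second is $\got{P(\mathbf{M}\mathrm{F}(X))}$, so the whole statement reduces to comparing $\mathbf{M}\mathrm{F}(P(X))$ with the projective cover of $\mathbf{M}\mathrm{F}(X)$.

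First I would record two functorial properties of $\mathbf{M}\mathrm{F}$, and I expect the second of these to be the crux. Since $\mathbf{M}$ respects adjunctions in $\csym{S}$, the pair $(\mathbf{M}\mathrm{F},\mathbf{M}\mathrm{F}^{\blackdiamond})$ is an adjoint pair, so $\mathbf{M}\mathrm{F}$ is a left adjoint, hence right exact and in particular epimorphism-preserving. More delicately, $\mathbf{M}\mathrm{F}$ \emph{preserves projectives}: its right adjoint $\mathbf{M}\mathrm{F}^{\blackdiamond}$ is itself exact, because rigidity of $\csym{S}$ furnishes a further right dual $\mathrm{F}^{\blackdiamond\blackdiamond}$, so that $\mathbf{M}\mathrm{F}^{\blackdiamond}$ has both a left adjoint $\mathbf{M}\mathrm{F}$ and a right adjoint $\mathbf{M}\mathrm{F}^{\blackdiamond\blackdiamond}$ (both existing since the pairs $(\mathrm{F},\mathrm{F}^{\blackdiamond})$ and $(\mathrm{F}^{\blackdiamond},\mathrm{F}^{\blackdiamond\blackdiamond})$ are respected in $\mathbf{M}$); and a functor whose right adjoint is exact sends projectives to projectives. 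The subtlety here is precisely this ``double adjoint'' observation: exactness of $\mathbf{M}\mathrm{F}$ alone is \emph{not} what preserves projectives; one needs exactness of the \emph{right} adjoint, which is exactly what the two successive duals supply.

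Finally, with these inputs I would conclude as follows. Applying the right exact $\mathbf{M}\mathrm{F}$ to the projective cover $P(X)\twoheadrightarrow X$ yields an epimorphism $\mathbf{M}\mathrm{F}(P(X))\twoheadrightarrow \mathbf{M}\mathrm{F}(X)$ whose source $\mathbf{M}\mathrm{F}(P(X))$ is projective. Any projective admitting an epimorphism onto an object $Y$ contains $P(Y)$ as a direct summand: lift the essential epimorphism $P(Y)\twoheadrightarrow Y$ through the projective, note that the lift is itself epi because the kernel of $P(Y)\to Y$ is superfluous, and split it. Hence $P(\mathbf{M}\mathrm{F}(X))$ is a direct summand of $\mathbf{M}\mathrm{F}(P(X))$, so there is a projective $R$ with
\[
\mathbf{M}\mathrm{F}(P(X)) \cong P(\mathbf{M}\mathrm{F}(X)) \oplus R.
\]
In $\got{\mathbf{M}}$ this gives $\got{\mathbf{M}\mathrm{F}(P(X))} = \got{P(\mathbf{M}\mathrm{F}(X))} + \got{R}$ with $\got{R}\in \got{\mathbf{M}}_{\geq 0}$, which is exactly the required membership of the difference displayed above in $\got{\mathbf{M}}_{\geq 0}$. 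Extending by non-negative linearity over $\got{\on{Indec}\mathbf{M}}$ yields the claimed inequality $\mathfrak{c}_{\mathbb{R}}\circ \gotr{\mathbf{M}\mathrm{F}} \leq \gotr{\mathbf{M}\mathrm{F}}\circ \mathfrak{c}$.
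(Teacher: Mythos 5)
Your proof is correct and follows essentially the same route as the paper's: show that $\mathbf{M}\mathrm{F}$ preserves epimorphisms and projectives, apply it to the projective cover $P(X)\twoheadrightarrow X$, and deduce that the projective cover of $\mathbf{M}\mathrm{F}(X)$ splits off as a direct summand of $\mathbf{M}\mathrm{F}(P(X))$, which gives the inequality in $\gotr{\mathbf{M}}$ on the cone generators. The only differences are presentational: you prove the splitting-off step inline (the paper cites \cite[Lemma~5.6]{ASS}), and your justification of projective preservation, via exactness of the right adjoint $\mathbf{M}\mathrm{F}^{\blackdiamond}$ coming from the double dual $\mathrm{F}^{\blackdiamond\blackdiamond}$, is in fact stated more precisely than the paper's own wording.
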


 \begin{proof}
  Since $\mathbf{M}$ respects adjuctions in $\csym{S}$, its full subcategory $\mathbf{M}\!\on{-proj}$ is an $\csym{S}$-module category, as an exact functor with a right adjoint sends projective objects to projective objects. Further, if $P \xtwoheadrightarrow{p} X$ is a projective cover in $\mathbf{M}$, then, for any $\mathrm{F} \in \csym{S}$, the morphism $\mathrm{F}\star P \xtwoheadrightarrow{\mathrm{F}\star p} \mathrm{F}\star X$ is an epimorphism from a projective. But then the projective cover of $\mathrm{F}\star X$ is a direct summand of $\mathrm{F}\star P$, by \cite[Lemma~5.6]{ASS}.

  Hence $\mathfrak{c}(\got{\mathbf{M}\mathrm{F}}(\got{X})) = \mathfrak{c}(\got{\mathrm{F}\star X}) \leq \got{\mathrm{F}\star P} = \got{\mathbf{M}\mathrm{F}}(\mathfrak{c}(\got{X}))$, which proves the claim.
 \end{proof}

 \begin{definition}\label{KMIdempotents}
  Let $\csym{S}$ be a Krull-Schmidt semigroup category and let $\mathbf{M}$ be a Krull-Schmidt $\csym{S}$-module category.
  Denote by $\got{\mathbf{e}\star \mathbf{M}}_{\geq 0}$ the non-negative span of $\on{Indec}(\on{add}\setj{\mathrm{F} \star X \; | \; \mathrm{F} \in \on{Supp}(\mathbf{e}), X \in \mathbf{M}})$.

  We say that an element $\mathbf{e}$ of $\got{\csym{S}}_{\mathbb{R}}$ is a {\it KM-idempotent for $\mathbf{M}$} if
  \begin{enumerate}
   \item $\mathbf{e} \in \got{\csym{S}}_{\geq 0}$;
   \item $\mathbf{M}_{\mathbb{R}}(\mathbf{e})$ is idempotent;
   \item \label{SelfTranspose} $\mathbf{M}_{\mathbb{R}}(\mathbf{e})$ is injective on $\got{\mathbf{e}\star \mathbf{M}}_{\geq 0}$.
  \end{enumerate}
 \end{definition}

 \begin{lemma}\label{CoverPreservation}
  Let $\csym{S}$ be a Krull-Schmidt rigid semigroup category and let $\mathbf{M}$ be a Krull-Schmidt $\csym{S}$-module category.
  If $\mathbf{e}$ is a KM-idempotent for $\mathbf{M}$, then, for any $\mathrm{F},\mathrm{G} \in \on{Supp}(\mathbf{e})$ and any $X \in \mathbf{M}$, the projective cover of $\mathrm{GF}\star X$ can be obtained as $\mathrm{G} \star P$, where $P$ is the projective cover of $\mathrm{F}\star X$.
 \end{lemma}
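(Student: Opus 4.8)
The plan is to reduce the object-level statement to an equality in the real split Grothendieck group $\gotr{\mathbf{M}}$ and then to upgrade back via Krull-Schmidt. Write $\mathbf{e} = \sum_{i}\lambda_{i}\got{\mathrm{G}_{i}}$ with the $\mathrm{G}_{i}$ the pairwise non-isomorphic elements of $\on{Supp}(\mathbf{e})$ and $\lambda_{i} > 0$, so that $T := \mathbf{M}_{\mathbb{R}}(\mathbf{e}) = \sum_{i}\lambda_{i}\gotr{\mathbf{M}\mathrm{G}_{i}}$. Fixing a projective cover $p : P \twoheadrightarrow \mathrm{F}\star X$, the geometric input I would take from (the proof of) Lemma~\ref{Inequalities} is that each $\mathrm{G}_{i}\star P$ is projective and that the projective cover $Q_{i}$ of $\mathrm{G}_{i}\mathrm{F}\star X \cong \mathrm{G}_{i}\star(\mathrm{F}\star X)$ is a direct summand of $\mathrm{G}_{i}\star P$; write $\mathrm{G}_{i}\star P \cong Q_{i}\oplus R_{i}$. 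Since $\mathrm{G}$ is one of the $\mathrm{G}_{i}$, the whole lemma reduces to showing $R_{i}\cong 0$ for every $i$.

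The crucial preliminary step, and the part I expect to be the main obstacle, is to show that $T$ restricts to the identity on the cone $C := \got{\mathbf{e}\star\mathbf{M}}_{\geq 0}$. This is exactly where the two substantive clauses of Definition~\ref{KMIdempotents} are used together: non-negativity of $\mathbf{e}$ guarantees $T(C)\subseteq C$ (each $\mathrm{G}_{i}\star Z$ lies in $\on{add}\{\mathrm{F}'\star X' : \mathrm{F}' \in \on{Supp}(\mathbf{e})\}$), idempotency gives $T^{2}=T$, and injectivity of $T$ on $C$ is the remaining ingredient. For $v\in C$ one has $T(v)\in C$ and $T(T(v))=T(v)=T(v)$, so injectivity applied to the pair $T(v),\,v\in C$ forces $T(v)=v$. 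Getting the order of this short bookkeeping argument right is the delicate point; once it is in place the rest is formal.

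With ``$T=\on{id}$ on $C$'' established, I would run a counting argument. Using $\mathbb{R}$-linearity of $\mathfrak{c}$ and the module associativity isomorphism $\mathrm{G}_{i}\star(\mathrm{F}\star X)\cong \mathrm{G}_{i}\mathrm{F}\star X$, together with $\got{\mathrm{F}\star X}\in C$, one finds
\[
\sum_{i}\lambda_{i}\got{Q_{i}}
= \mathfrak{c}\Big(\sum_{i}\lambda_{i}\got{\mathrm{G}_{i}\star(\mathrm{F}\star X)}\Big)
= \mathfrak{c}\big(T(\got{\mathrm{F}\star X})\big)
= \mathfrak{c}(\got{\mathrm{F}\star X})
= \got{P}.
\]
On the other hand, since $P$ is a direct summand of $\mathrm{F}\star P_{X}$ (again by Lemma~\ref{Inequalities}, with $P_{X}$ a projective cover of $X$) we have $\got{P}\in C$, whence $\sum_{i}\lambda_{i}\got{\mathrm{G}_{i}\star P} = T(\got{P}) = \got{P}$. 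Subtracting these two identities gives $\sum_{i}\lambda_{i}\got{R_{i}} = 0$.

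Finally, because the $\lambda_{i}$ are strictly positive and each $\got{R_{i}}$ lies in the pointed cone $\gotr{\mathbf{M}}_{\geq 0}$ spanned by classes of indecomposables, the vanishing of the sum forces $\got{R_{i}} = 0$ for every $i$; Krull-Schmidt then yields $R_{i}\cong 0$. Hence $\mathrm{G}_{i}\star P\cong Q_{i}$ for all $i$, and in particular $\mathrm{G}\star P$ is the projective cover of $\mathrm{GF}\star X$, as claimed. The one hypothesis I would flag explicitly is that invoking Lemma~\ref{Inequalities} tacitly uses that $\mathbf{M}$ respects adjunctions in $\csym{S}$ and that its objects admit projective covers, which are the standing assumptions making the statement meaningful.
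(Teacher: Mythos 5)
Your proof is correct, and it runs on the same engines as the paper's -- the two substantive KM-idempotent axioms, positivity of the $\lambda_{i}$, and Krull-Schmidt to lift an identity in $\gotr{\mathbf{M}}$ back to objects -- but the decomposition of the argument is genuinely different. The paper's pivot is the interleaved operator identity $\mathbf{M}_{\mathbb{R}}(\mathbf{e})\circ\mathfrak{c}\circ\mathbf{M}_{\mathbb{R}}(\mathbf{e})=\mathfrak{c}\circ\mathbf{M}_{\mathbb{R}}(\mathbf{e})$: it combines idempotency with the subhomogeneity of Lemma~\ref{Inequalities}, notes that the difference takes values in $\got{\mathbf{e}\star\mathbf{M}}_{\geq 0}$, kills it using the injectivity axiom, then expands in the basis $\on{Supp}(\mathbf{e})$ and uses strict positivity to force each term-wise comparison coming from Lemma~\ref{Inequalities} to be an equality, i.e. $\mathfrak{c}\circ\gotr{\mathbf{M}\mathrm{F}_{i}}\circ\gotr{\mathbf{M}\mathrm{F}_{j}}=\gotr{\mathbf{M}\mathrm{F}_{i}}\circ\mathfrak{c}\circ\gotr{\mathbf{M}\mathrm{F}_{j}}$, which yields the claim after evaluating at $\got{X}$. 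Your pivot is instead the fixed-point statement that $T=\mathbf{M}_{\mathbb{R}}(\mathbf{e})$ restricts to the identity on $C=\got{\mathbf{e}\star\mathbf{M}}_{\geq 0}$, a statement the paper never isolates; your derivation of it (for $v\in C$ one has $T(v)\in C$ and $T(T(v))=T(v)$, so injectivity on $C$ gives $T(v)=v$) is exactly right, and it in fact subsumes the paper's operator identity, since $\mathfrak{c}\circ T$ also takes values in $C$. You then argue with elements rather than operators, applying positivity once, to $\sum_{i}\lambda_{i}\got{R_{i}}=0$. What your version buys is transparency: injectivity enters through a one-line fixed-point argument, and the paper's bookkeeping with the partial order on $\on{End}_{\mathbb{R}}(\gotr{\mathbf{M}})$ (whose sign convention in the paper is easy to garble) is replaced by the explicit complements $R_{i}$. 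What the paper's version gives is the term-wise operator equalities for all pairs in $\on{Supp}(\mathbf{e})$ simultaneously, though since your $X$ is arbitrary this also follows from your computation. Your closing flag is warranted as well: the lemma as stated omits the standing hypotheses on $\mathbf{M}$ (abelian, respecting adjunctions in $\csym{S}$, existence of projective covers) that Lemma~\ref{Inequalities} requires, and the paper's own proof tacitly assumes them too.
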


 \begin{proof}
  By Lemma~\ref{Inequalities}, it suffices to show that $\gotr{\mathbf{M}\mathrm{G}} \circ \mathfrak{c}\circ \gotr{\mathbf{M}\mathrm{F}}\leq \mathfrak{c}\circ \gotr{\mathbf{M}\mathrm{GF}}$.
 We have
 \begin{equation}\label{MoreInequalities}
 \mathfrak{c} \circ \mathbf{M}_{\mathbb{R}}(\mathbf{e}) = \mathfrak{c}\circ \mathbf{M}_{\mathbb{R}}(\mathbf{e}) \circ \mathbf{M}_{\mathbb{R}}(\mathbf{e}) \leq \mathbf{M}_{\mathbb{R}}(\mathbf{e})\circ \mathfrak{c} \circ \mathbf{M}_{\mathbb{R}}(\mathbf{e}),
 \end{equation}
  where the first equality is by the idempotency of $\mathbf{M}_{\mathbb{R}}(\mathbf{e})$, and the latter inequality follows by Lemma~\ref{Inequalities}.

  Clearly, $\on{Im}(\mathbf{M}_{\mathbb{R}}(\mathbf{e})\circ \mathfrak{c} \circ \mathbf{M}_{\mathbb{R}}(\mathbf{e})) \subseteq \got{\mathbf{e}\star \mathbf{M}}_{\geq 0}$. Since $\got{\mathbf{e}\star \mathbf{M}}_{\geq 0}$ is a subcone of $\got{\mathbf{M}}_{\geq 0}$, the inequality~\eqref{MoreInequalities} implies that $\on{Im}\big(\mathbf{M}_{\mathbb{R}}(\mathbf{e})\circ \mathfrak{c} \circ \mathbf{M}_{\mathbb{R}}(\mathbf{e})-\mathfrak{c} \circ \mathbf{M}_{\mathbb{R}}(\mathbf{e})\big) \subseteq \got{\mathbf{e}\star \mathbf{M}}_{\geq 0}$. Using Condition~\eqref{SelfTranspose} in Definition~\ref{KMIdempotents}, we find that if $\mathbf{M}_{\mathbb{R}}(\mathbf{e})\circ \mathfrak{c} \circ \mathbf{M}_{\mathbb{R}}(\mathbf{e})-\mathfrak{c} \circ \mathbf{M}_{\mathbb{R}}(\mathbf{e}) \neq 0$, then also
  \[
   \mathbf{M}_{\mathbb{R}}(\mathbf{e})\circ \big(\mathbf{M}_{\mathbb{R}}(\mathbf{e})\circ \mathfrak{c} \circ \mathbf{M}_{\mathbb{R}}(\mathbf{e})-\mathfrak{c} \circ \mathbf{M}_{\mathbb{R}}(\mathbf{e})\big) \neq 0,
  \]
  which is a contradiction, showing that
  \begin{equation}\label{BeforeBasis}
   \mathbf{M}_{\mathbb{R}}(\mathbf{e}) \circ \mathfrak{c} \circ \mathbf{M}_{\mathbb{R}}(\mathbf{e}) = \mathfrak{c} \circ \mathbf{M}_{\mathbb{R}}(\mathbf{e}) = \mathfrak{c} \circ \mathbf{M}_{\mathbb{R}}(\mathbf{e}) \circ \mathbf{M}_{\mathbb{R}}(\mathbf{e}).
  \end{equation}
 If we now write $\mathbf{e} = \sum_{i = 1}^{n} \lambda_{i} \gotr{\mathrm{F}_{i}}$, for $\mathrm{F}_{i} \in \csym{S}$ and $\lambda_{i} \in \mathbb{R}_{\geq 0}$, Equation~\eqref{BeforeBasis} becomes
 \begin{equation}\label{FinalSteps}
  \sum_{i,j} \lambda_{i}\lambda_{j} \gotr{\mathbf{M}\mathrm{F}_{i}}\circ \mathfrak{c} \circ \gotr{\mathbf{M}\mathrm{F}_{j}} = \sum_{i,j} \lambda_{i}\lambda_{j} \mathfrak{c} \circ \gotr{\mathbf{M}\mathrm{F}_{i}}\circ \gotr{\mathbf{M}\mathrm{F}_{j}}.
 \end{equation}
 From Lemma~\ref{Inequalities}, we know that, for all $i,j$, we have
 \begin{equation}\label{Ininequalities}
 \mathfrak{c} \circ \got{\mathbf{M}\mathrm{F}_{i}}\circ  \got{\mathbf{M}\mathrm{F}_{j}} \leq \got{\mathbf{M}\mathrm{F}_{i}} \circ \mathfrak{c} \circ  \got{\mathbf{M}\mathrm{F}_{j}}.
 \end{equation}
 But since Equation~\eqref{FinalSteps} holds, all of the inequalities in Equation~\eqref{Ininequalities} must be equalities.
 \end{proof}

 Given a ring $R$, we denote by $\mathfrak{J}$ the Jacobson radical of $R$. We now recall some of the facts about the radical of an additive category, the proofs of which can be found in \cite{Kr}:

 \begin{lemma}
  Let $\mathcal{C}$ be an additive category. There is a two-sided ideal $\on{Rad}_{\mathcal{C}}$ in $\mathcal{C}$, defined by setting
  \[
\on{Rad}_{\mathcal{C}}(X,Y) = \setj{f:X\rightarrow Y \mid gf \in \mathfrak{J}(\on{End}_{\mathcal{C}}(Y)) \text{ for all } g \in \on{Hom}_{\mathcal{C}}(Y,X)}
  \]
  and $\on{Rad}_{\mathcal{C}}$ is the unique two-sided ideal in $\mathcal{C}$ satisfying $\mathcal{I}(X,X) = \mathfrak{J}(\on{End}_{\mathcal{C}}(X))$ for all $X \in \mathcal{C}$.
 \end{lemma}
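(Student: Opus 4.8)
The plan is to build everything from two standard facts about the Jacobson radical of a unital associative ring $R$: that $a\in\mathfrak{J}(R)$ if and only if $1-ha$ is a unit for every $h\in R$ (equivalently $1-ah$ is a unit for every $h$), and the ``swapping'' identity that for $\alpha\colon U\to V$ and $\beta\colon V\to U$ the morphism $\on{id}_{U}-\beta\alpha$ is invertible precisely when $\on{id}_{V}-\alpha\beta$ is, with the explicit formula $(\on{id}_{V}-\alpha\beta)^{-1}=\on{id}_{V}+\alpha(\on{id}_{U}-\beta\alpha)^{-1}\beta$. First I would record the resulting reformulations of membership in $\on{Rad}_{\mathcal{C}}(X,Y)$: for $f\colon X\to Y$ the following are equivalent — (a) $gf\in\mathfrak{J}(\on{End}_{\mathcal{C}}(X))$ for all $g\colon Y\to X$; (b) $\on{id}_{X}-gf$ is invertible for all such $g$; (c) $\on{id}_{Y}-fg$ is invertible for all such $g$; (d) $fg\in\mathfrak{J}(\on{End}_{\mathcal{C}}(Y))$ for all such $g$. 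Equivalence of (a) and (b) is the unit-characterization of $\mathfrak{J}$ applied to $\on{End}_{\mathcal{C}}(X)$ (for (b)$\Rightarrow$(a) one runs the parameter over all $hg_{0}$), (b)$\Leftrightarrow$(c) is the swapping identity, and (c)$\Leftrightarrow$(d) is symmetric to (a)$\Leftrightarrow$(b). In particular the definition is left--right symmetric, so the side on which $\mathfrak{J}$ is taken is immaterial.

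Next I would verify that $\on{Rad}_{\mathcal{C}}$ is a two-sided ideal. Closure under addition is immediate from (a), since $\mathfrak{J}(\on{End}_{\mathcal{C}}(X))$ is an additive subgroup and $f\mapsto gf$ is additive. For post-composition, if $f\in\on{Rad}_{\mathcal{C}}(X,Y)$ and $h\colon Y\to Z$, then for any $g\colon Z\to X$ we have $g(hf)=(gh)f\in\mathfrak{J}(\on{End}_{\mathcal{C}}(X))$ because $gh\colon Y\to X$, so $hf\in\on{Rad}_{\mathcal{C}}(X,Z)$ by (a). For pre-composition I would instead use (d): if $f\in\on{Rad}_{\mathcal{C}}(X,Y)$ and $k\colon W\to X$, then for any $g\colon Y\to W$ we have $(fk)g=f(kg)$ with $kg\colon Y\to X$, and (d) gives $f(kg)\in\mathfrak{J}(\on{End}_{\mathcal{C}}(Y))$; hence $fk\in\on{Rad}_{\mathcal{C}}(W,Y)$. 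The identification $\on{Rad}_{\mathcal{C}}(X,X)=\mathfrak{J}(\on{End}_{\mathcal{C}}(X))$ then follows by taking $g=\on{id}_{X}$ in (a) for one inclusion, and from $\mathfrak{J}(\on{End}_{\mathcal{C}}(X))$ being a two-sided ideal for the other.

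Finally, for uniqueness, let $\mathcal{I}$ be any two-sided ideal with $\mathcal{I}(X,X)=\mathfrak{J}(\on{End}_{\mathcal{C}}(X))$ for all $X$. The inclusion $\mathcal{I}\subseteq\on{Rad}_{\mathcal{C}}$ is immediate: for $f\in\mathcal{I}(X,Y)$ and $g\colon Y\to X$ the ideal property gives $gf\in\mathcal{I}(X,X)=\mathfrak{J}(\on{End}_{\mathcal{C}}(X))$, so $f\in\on{Rad}_{\mathcal{C}}(X,Y)$ by (a). The reverse inclusion is the one genuine obstacle, and I would handle it through the biproduct $Z=X\oplus Y$ with structure morphisms $i_{X},i_{Y},p_{X},p_{Y}$. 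Given $f\in\on{Rad}_{\mathcal{C}}(X,Y)$, set $\tilde{f}=i_{Y}\circ f\circ p_{X}\in\on{End}_{\mathcal{C}}(Z)$. For any $h\in\on{End}_{\mathcal{C}}(Z)$, write $h\tilde{f}=\beta\alpha$ with $\alpha=f p_{X}\colon Z\to Y$ and $\beta=h i_{Y}\colon Y\to Z$; the swapping identity reduces invertibility of $\on{id}_{Z}-h\tilde{f}$ to that of $\on{id}_{Y}-\alpha\beta=\on{id}_{Y}-f(p_{X}h i_{Y})$, which holds by (c) since $p_{X}h i_{Y}\colon Y\to X$. Thus $\on{id}_{Z}-h\tilde{f}$ is invertible for every $h$, i.e. $\tilde{f}\in\mathfrak{J}(\on{End}_{\mathcal{C}}(Z))=\mathcal{I}(Z,Z)$. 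Since $\mathcal{I}$ is a two-sided ideal and $f=p_{Y}\circ\tilde{f}\circ i_{X}$ (using $p_{Y}i_{Y}=\on{id}_{Y}$ and $p_{X}i_{X}=\on{id}_{X}$), we conclude $f\in\mathcal{I}(X,Y)$, giving $\on{Rad}_{\mathcal{C}}\subseteq\mathcal{I}$ and hence equality. The crux throughout is the interplay between ideal membership $gf\in\mathfrak{J}$ and the unit condition on $\on{id}-gf$; once the four characterizations are in place the ideal axioms and one inclusion of the uniqueness statement are formal, with the biproduct embedding doing the real work in $\on{Rad}_{\mathcal{C}}\subseteq\mathcal{I}$.
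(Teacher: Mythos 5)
Your proof is correct, and it is worth noting that the paper does not actually prove this lemma at all: it is stated as a recollection of facts ``the proofs of which can be found in \cite{Kr}.'' So where the paper defers entirely to the literature, you have supplied a complete, self-contained argument, and it is essentially the standard one (and essentially the one in Krause's article): the unit-criterion for the Jacobson radical plus the swapping identity $(\on{id}_{V}-\alpha\beta)^{-1}=\on{id}_{V}+\alpha(\on{id}_{U}-\beta\alpha)^{-1}\beta$ yield the four equivalent characterizations of membership in $\on{Rad}_{\mathcal{C}}(X,Y)$, the ideal axioms and the inclusion $\mathcal{I}\subseteq\on{Rad}_{\mathcal{C}}$ then follow formally, and the biproduct embedding $f=p_{Y}\circ(i_{Y}fp_{X})\circ i_{X}$ with $i_{Y}fp_{X}\in\mathfrak{J}(\on{End}_{\mathcal{C}}(X\oplus Y))$ handles the only nontrivial inclusion $\on{Rad}_{\mathcal{C}}\subseteq\mathcal{I}$. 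Two further points in your favour: your four-fold equivalence (a)--(d) silently repairs a typing slip in the statement as printed (for $f\colon X\to Y$ and $g\colon Y\to X$ one has $gf\in\on{End}_{\mathcal{C}}(X)$, not $\on{End}_{\mathcal{C}}(Y)$, so the displayed condition should read $\mathfrak{J}(\on{End}_{\mathcal{C}}(X))$, or equivalently $fg\in\mathfrak{J}(\on{End}_{\mathcal{C}}(Y))$), and it shows the definition is left--right symmetric, so nothing depends on which convention one adopts. Your uniqueness argument also makes explicit where additivity of $\mathcal{C}$ is genuinely used (existence of the biproduct $X\oplus Y$), which the bare statement obscures.
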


 We have not found the proof of the following elementary observation in the literature, and thus spell it out below:

 \begin{lemma}\label{RadicalCover}
  Let $\mathcal{C}$ be a Krull-Schmidt abelian category, and assume that $P_{1} \xrightarrow{p_{1}} P_{0} \xtwoheadrightarrow{p_{0}} X \rightarrow 0$ is an exact sequence, where $p_{0}$ is a projective cover. Then $p_{1} \in \on{Rad}_{\mathcal{C}}(P_{1},P_{0})$.
 \end{lemma}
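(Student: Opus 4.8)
The plan is to reduce everything to the single claim that $1_{P_0} - p_1 g$ is an automorphism of $P_0$ for every $g \colon P_0 \to P_1$. By the description of the radical recalled just above (following \cite{Kr}), membership $p_1 \in \on{Rad}_{\mathcal{C}}(P_1, P_0)$ amounts to $p_1 g \in \mathfrak{J}(\on{End}_{\mathcal{C}}(P_0))$ for every $g \colon P_0 \to P_1$, and $p_1 g$ lies in $\mathfrak{J}(\on{End}_{\mathcal{C}}(P_0))$ precisely when $1_{P_0} - r\, p_1 g$ is invertible for all $r \in \on{End}_{\mathcal{C}}(P_0)$. Since $1 - r(p_1 g)$ is invertible if and only if $1 - (p_1 g)r = 1_{P_0} - p_1(g r)$ is, and $g r$ again ranges over morphisms $P_0 \to P_1$, it suffices to prove that $1_{P_0} - p_1 g$ is invertible for every $g \colon P_0 \to P_1$.

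So I would fix $g \colon P_0 \to P_1$ and set $\psi = 1_{P_0} - p_1 g$. The first key input is the identity $p_0 p_1 = 0$, which holds because exactness of the given sequence at $P_0$ gives $\on{im}(p_1) = \ker(p_0)$; consequently $p_0 \psi = p_0 - p_0 p_1 g = p_0$. I would then deduce that $\psi$ is an epimorphism. Writing $\pi \colon P_0 \to P_0/\ker(p_0)$ for the canonical projection and using that $p_0$ induces an isomorphism $P_0/\ker(p_0) \xrightarrow{\sim} X$, the relation $p_0 \psi = p_0$ becomes $\pi \psi = \pi$; as $\pi$ is epi, the image of $\pi \psi$ is all of $P_0/\ker(p_0)$, which translates into $\on{im}(\psi) + \ker(p_0) = P_0$. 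Here the second hypothesis enters: since $p_0$ is a projective cover, $\ker(p_0)$ is a superfluous subobject of $P_0$, so $\on{im}(\psi) + \ker(p_0) = P_0$ forces $\on{im}(\psi) = P_0$, i.e. $\psi$ is epi.

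Finally, because $P_0$ is projective and $\psi \colon P_0 \to P_0$ is surjective, the short exact sequence $0 \to \ker(\psi) \to P_0 \xrightarrow{\psi} P_0 \to 0$ splits, giving $P_0 \cong P_0 \oplus \ker(\psi)$. Since $\mathcal{C}$ is Krull-Schmidt, $P_0$ is a finite direct sum of indecomposables with local endomorphism rings and such decompositions are unique, so cancellation yields $\ker(\psi) = 0$; thus $\psi$ is both monic and epic, hence an isomorphism in the abelian category $\mathcal{C}$. This establishes the claim and with it the lemma. I expect the main obstacle to be one of care rather than depth: one must resist inferring "$\psi$ epi" directly from "$p_0 \psi$ epi" (which is not a valid implication), and instead route the argument through the superfluousness of $\ker(p_0)$; the Krull-Schmidt cancellation at the end, though standard, is the other point at which the standing hypotheses are genuinely used.
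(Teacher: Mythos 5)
Your proof is correct, but it follows a genuinely different route from the paper's. The paper argues by contradiction: assuming $p_{1} \notin \on{Rad}_{\mathcal{C}}(P_{1},P_{0})$, it picks a Krull--Schmidt decomposition $P_{0} = \bigoplus_{i} Q_{i}$ into indecomposables, uses locality of $\on{End}_{\mathcal{C}}(Q_{j})$ to produce an epimorphism $\pi_{j}\circ p_{1} \circ \varphi$ onto some summand $Q_{j}$, splits it by projectivity to embed $Q_{j}$ into $\on{Ker}p_{0}$ as a direct summand of $P_{0}$, and then contradicts the minimality of the cover because $\bigoplus_{i\neq j}Q_{i}$ would still surject onto $X$. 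You instead argue directly: you reduce radical membership, via the standard facts that $a \in \mathfrak{J}(R)$ iff $1-ra$ is invertible for all $r$ and that $1-ab$ is invertible iff $1-ba$ is, to showing $\psi = 1_{P_{0}} - p_{1}g$ is an automorphism; then $p_{0}\psi = p_{0}$ (needing only $p_{0}p_{1}=0$, not full exactness) plus essentiality of the cover gives $\psi$ epi, and projectivity plus Krull--Schmidt cancellation ($P_{0} \simeq P_{0}\oplus \on{Ker}\psi$ forces $\on{Ker}\psi = 0$) gives iso. Each approach isolates the hypotheses differently: the paper leans on local endomorphism rings of indecomposables and never mentions the Jacobson-radical characterization by units, while yours confines Krull--Schmidt to a single cancellation step and uses the cover hypothesis exactly once, which makes the logical dependencies more transparent; your reduction also quietly repairs the type mismatch in the paper's displayed definition of $\on{Rad}_{\mathcal{C}}(X,Y)$ (where $gf$ should lie in $\mathfrak{J}(\on{End}_{\mathcal{C}}(X))$, or $fg$ in $\mathfrak{J}(\on{End}_{\mathcal{C}}(Y))$), since the version you use is the equivalent, correctly typed one. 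You are also right to flag that "$p_{0}\psi$ epi $\Rightarrow$ $\psi$ epi" is exactly where essentiality of the cover is needed and not a general implication.
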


 \begin{proof}
  Assume $\alpha \not\in \on{Rad}_{\mathcal{C}}(P_{1},P_{0})$. Then there is $\varphi \in \on{Hom}_{\mathcal{C}}(P_{0},P_{1})$ such that $\alpha \circ \varphi \not\in \mathfrak{J}(\on{End}_{\mathcal{C}}(P_{0})) = \on{Rad}_{\mathcal{C}}(P_{0},P_{0})$.
  Using the Krull-Schmidt property, write $P_{0} = \bigoplus_{i=1}^{n} Q_{i}$, where $Q_{i}$ is indecomposable for $i=1,\ldots,n$. We then find $\on{id}_{P_{0}} = \sum_{i=1}^{n} \iota_{i} \circ \pi_{i}$, where $\iota_{i}$ and $\pi_{i}$ are the split monomorphisms and epimorphisms corresponding to the above decomposition. Thus, $\alpha \circ \varphi = \sum_{i=1}^{n} \iota_{i} \circ \pi_{i} \circ \alpha \circ \varphi \not\in \on{Rad}_{\mathcal{C}}(P_{0},P_{0})$.
  Hence, there is some $j \in \setj{1,\ldots,n}$ such that $\iota_{j} \circ \pi_{j} \circ \alpha \circ \varphi \not\in \on{Rad}_{\mathcal{C}}(P_{0},P_{0})$. If all the terms were radical, then the sum would also be radical. It follows that $\pi_{j} \circ \alpha \circ \varphi \not\in \on{Rad}_{\mathcal{C}}(P_{0},Q_{j})$ - otherwise, postcomposing with $\iota_{j}$ would render $\iota_{j} \circ \pi_{j} \circ \alpha \circ \varphi \not\in \on{Rad}_{\mathcal{C}}(P_{0},P_{0})$ radical.

  We now claim that $\pi_{j} \circ \alpha \circ \varphi$ is an epimorphism. If it is not, then neither is the endomorphism $\pi_{j} \circ \alpha \circ \varphi \circ \iota_{j}$ of the indecomposable object $Q_{i}$. But, since $\on{End}_{\mathcal{C}}(Q_{i})$ is local, its Jacobson radical consists of the non-units, and so $\pi_{j} \circ \alpha \circ \varphi \circ \iota_{j} \in \on{Rad}_{\mathcal{C}}(Q_{i},Q_{i})$, which is a contradiction.

  We further infer that $\pi_{j} \circ \alpha$ is an epimorphism, and so there is a natural embedding $Q_{j} \hookrightarrow \on{Im}p_{1} = \on{Ker}p_{0}$. This shows that $p_{0} \circ \iota_{j} = 0$. But then, the composite morphism $\bigoplus_{i\neq j} Q_{i} \xrightarrow{(\iota_{i})_{i\neq j}} P_{0} \xtwoheadrightarrow{p_{0}} X$ is an epimorphism, while $(\iota_{i})_{i\neq j}$ is not, which shows that $p_{0}$ is not a projective cover, yielding a contradiction.
 \end{proof}

 \begin{proposition}\label{MiniMM5}
 Let $\csym{S}$ be a rigid semigroup category and let $\mathbf{M}$ be an abelian Krull-Schmidt $\csym{S}$-module category with enough projectives, and such that, for all $\mathrm{F} \in \csym{S}$ and $X \in \mathbf{M}$, the object $\mathrm{F}\star X$ admits a projective cover. Assume that
 \begin{itemize}
  \item $\mathbf{M}\!\on{-proj}$ is a simple transitive $\csym{S}$-module category;
  \item there is a KM-idempotent $\mathbf{e}$ for $\mathbf{M}$ such that for any $\mathrm{F} \in \csym{S}$, there is $\mathrm{F}' \in \on{Supp}(\mathbf{e})$ such that $\mathbf{M}\mathrm{F} \simeq \mathbf{M}\mathrm{F}'$.
 \end{itemize}
Then $\mathbf{M}$ is a projectivizing module category.
 \end{proposition}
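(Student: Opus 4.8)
The plan is to argue by contradiction: from a hypothetical non-projective object $\mathrm{F}\star X$ I would manufacture a nonzero $\csym{S}$-stable ideal of $\mathbf{M}\!\on{-proj}$ that is contained in the radical, which is impossible by simple transitivity.

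First I would record the standing facts. Since $\mathbf{M}\!\on{-proj}$ is assumed to be an $\csym{S}$-module category, each action functor $\mathbf{M}\mathrm{F}$ preserves projectives, and (being a left adjoint, exactly as in the proof of Lemma~\ref{Inequalities}) is right exact. Using the hypothesis on $\mathbf{e}$, every $\mathbf{M}\mathrm{F}$ is isomorphic to some $\mathbf{M}\mathrm{F}'$ with $\mathrm{F}'\in\on{Supp}(\mathbf{e})$, and $\mathrm{F}\star X\cong\mathrm{F}'\star X$; hence it suffices to treat $\mathrm{F}\in\on{Supp}(\mathbf{e})$. Suppose then that $\mathrm{F}\star X$ is non-projective for some $\mathrm{F}\in\on{Supp}(\mathbf{e})$, and let $c\colon P\twoheadrightarrow\mathrm{F}\star X$ be a projective cover (which exists by hypothesis); then $\on{Ker}c\neq 0$ and $\on{Ker}c\subseteq\on{rad}P$. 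Choosing, by enough projectives, an epimorphism $P'\twoheadrightarrow\on{Ker}c$ and composing with $\on{Ker}c\hookrightarrow P$ produces a nonzero morphism $j\colon P'\to P$ in $\mathbf{M}\!\on{-proj}$ which is radical, because its image lies in $\on{rad}P$ (a map between projectives whose image is contained in the radical of its codomain is radical).

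The core of the argument is to show that the whole $\csym{S}$-orbit of $j$ remains radical. I would prove by induction on $n$, invoking Lemma~\ref{CoverPreservation} together with the hypothesis on $\mathbf{e}$ to keep every functor indexed by $\on{Supp}(\mathbf{e})$, that for $\mathrm{H}_{1},\dots,\mathrm{H}_{n}\in\on{Supp}(\mathbf{e})$ the morphism $\mathrm{H}_{n}\cdots\mathrm{H}_{1}\star c$ is again a projective cover, namely of $\mathrm{H}_{n}\cdots\mathrm{H}_{1}\mathrm{F}\star X$; here only right exactness is needed, to see that $\mathrm{H}_{n}\cdots\mathrm{H}_{1}\star c$ is still an epimorphism from the cover object and hence a cover. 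Consequently $\on{Ker}(\mathrm{H}_{n}\cdots\mathrm{H}_{1}\star c)\subseteq\on{rad}(\mathrm{H}_{n}\cdots\mathrm{H}_{1}\star P)$, and since $c\circ j=0$, applying the functor gives $\on{Im}(\mathrm{H}_{n}\cdots\mathrm{H}_{1}\star j)\subseteq\on{Ker}(\mathrm{H}_{n}\cdots\mathrm{H}_{1}\star c)\subseteq\on{rad}(\mathrm{H}_{n}\cdots\mathrm{H}_{1}\star P)$, so $\mathrm{H}_{n}\cdots\mathrm{H}_{1}\star j$ is radical. Extending from $\on{Supp}(\mathbf{e})$ to all of $\csym{S}$ via the isomorphisms $\mathbf{M}\mathrm{G}\cong\mathbf{M}\mathrm{G}'$, the two-sided ideal of $\mathbf{M}\!\on{-proj}$ generated by the $\csym{S}$-orbit of $j$ is $\csym{S}$-stable; as each generator is radical and $\on{Rad}_{\mathbf{M}\!\on{-proj}}$ is a two-sided ideal closed under sums, this ideal is contained in $\on{Rad}_{\mathbf{M}\!\on{-proj}}$. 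It is nonzero (it contains $j$) and proper (it contains no identity of a nonzero object), contradicting the simple transitivity of $\mathbf{M}\!\on{-proj}$. Thus $\mathrm{F}\star X$ is projective for all $\mathrm{F}$ and $X$, i.e. $\mathbf{M}$ is projectivizing.

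The \emph{main obstacle} is the inductive cover-preservation step: verifying that the hypothetical syzygy $j$ stays inside the radical after acting by arbitrarily long words forces one to reapply Lemma~\ref{CoverPreservation} at every stage, and the bookkeeping required to keep each functor $\mathbf{M}(\mathrm{H}_{n}\cdots\mathrm{H}_{1}\mathrm{F})$ within the isomorphism classes indexed by $\on{Supp}(\mathbf{e})$—so that the lemma applies—is the delicate point. The conceptual crux is recognizing that projectivity of the action is equivalent to the action never pushing a syzygy out of the radical: this is precisely what the KM-idempotent condition guarantees through Lemma~\ref{CoverPreservation}, and what simple transitivity then turns into the desired projectivity.
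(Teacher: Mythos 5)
Your proof is correct and takes essentially the same route as the paper's: reduce to $\on{Supp}(\mathbf{e})$, take a projective-cover presentation of $\mathrm{F}'\star X$, use Lemma~\ref{CoverPreservation} together with Lemma~\ref{RadicalCover} to see that the whole $\csym{S}$-orbit of the syzygy morphism (your $j$, the paper's $\alpha$) stays radical, and let simple transitivity of $\mathbf{M}\!\on{-proj}$ force it to vanish. The only difference is cosmetic: your induction over arbitrary words $\mathrm{H}_{n}\cdots\mathrm{H}_{1}$ is unnecessary, since the coherence isomorphisms $\mathbf{M}(\mathrm{G})\circ\mathbf{M}(\mathrm{H})\simeq\mathbf{M}(\mathrm{G}\boxtimes\mathrm{H})$ show that the $\csym{S}$-stable ideal generated by $j$ is already the ordinary two-sided ideal generated by $j$ together with the single-action morphisms $\mathbf{M}\mathrm{G}(j)$, $\mathrm{G}\in\csym{S}$, which is exactly what the paper's proof uses.
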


 \begin{proof}
 Let $X$ and let $\mathrm{F} \in \csym{S}$. We want to show that $\mathbf{M}\mathrm{F}(X)$ is projective. Let $\mathrm{F}' \in \on{Supp}(\mathbf{e})$ be such that $\mathbf{M}\mathrm{F} \simeq \mathbf{M}\mathrm{F}'$. Using the assumptions on $\mathbf{M}$, there is an exact sequence $P_{1} \xrightarrow{\alpha} P_{0} \xtwoheadrightarrow{\pi} \mathbf{M}\mathrm{F}'(X) \rightarrow 0$ such that $P_{1},P_{0}$ are projective and $\pi$ is a projective cover. By Lemma~\ref{CoverPreservation}, for any $\mathrm{G} \in \csym{S}$, the sequence
\[\begin{tikzcd}
	{\mathbf{M}\mathrm{G}(P_{1})} & {\mathbf{M}\mathrm{G}(P_{0})} & {\mathbf{M}(\mathrm{GF'})(X)} & 0
	\arrow["{\mathbf{M}\mathrm{G}(\alpha)}", from=1-1, to=1-2]
	\arrow["{\mathbf{M}\mathrm{G}(\pi)}", two heads, from=1-2, to=1-3]
	\arrow[from=1-3, to=1-4]
\end{tikzcd}\]
has the same properties.
 By Lemma~\ref{RadicalCover}, both $\alpha$ and $\mathbf{M}\mathrm{G}(\alpha)$ are radical. Thus, $\setj{\alpha}\sqcup\setj{\mathbf{M}\mathrm{G}(\alpha) \mid \mathrm{G} \in \csym{S}} \subseteq \on{Rad}_{\mathbf{M}\!\on{-proj}}$ and thus the $\csym{S}$-stable ideal $\left\langle \alpha \right\rangle$ generated by $\alpha$ is contained in $\on{Rad}_{\mathbf{M}\!\on{-proj}}$. Thus, since $\mathbf{M}\!\on{-proj}$ is simple transitive, $\left\langle\alpha \right\rangle = 0$, which implies that $\alpha = 0$. Thus, $\mathbf{M}\mathrm{F}'(X) \simeq \mathbf{M}\mathrm{F}(X)$ is projective, as we wanted to show.
 \end{proof}

\begin{definition}
 We say that a semigroup category $\csym{S}$ is {\it $\mathcal{J}$-cell-trivial} if, for any $\mathrm{F,G} \in \csym{S}$, there are $\mathrm{H,K} \in \csym{S}$ such that $\mathrm{F}$ is a direct summand of $\mathrm{H}\otimes \mathrm{G} \otimes \mathrm{K}$.
\end{definition}

In particular, if $\csym{S}$ is a $\mathcal{J}$-cell trivial semigroup category, then both the left regular $\csym{S}$-module category ${}_{\ccf{S}}\csym{S}$ and the right regular $\csym{S}$-module category $\csym{S}_{\ccf{S}}$ are transitive.

\begin{lemma}[{\cite[Proposition~18(i)]{KM}}]\label{PerronFrobenius}
 Let $\csym{S}$ be a $\mathcal{J}$-cell-trivial quasi-fias semigroup category. There is an idempotent $\mathbf{e} \in \got{\csym{S}}_{\geq 0}$ such that $\on{Supp}(\mathbf{e}) = \on{Indec}\csym{S}$. It is given by $\lim\limits_{n \rightarrow \infty} \frac{\got{\bigoplus_{\mathrm{F} \in \on{Indec}\cccsym{S}} \mathrm{F}}^{n}}{\lambda^{n}}$, where $\lambda$ is the Perron-Frobenius eigenvalue of $\got{\bigoplus_{\mathrm{F} \in \on{Indec}\cccsym{S}} \mathrm{F}}$.

 The idempotent $\mathbf{e}$ is a KM-idempotent for any transitive Krull-Schmidt $\csym{S}$-module category $\mathbf{M}$.
\end{lemma}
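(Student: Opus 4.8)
The plan is to translate the statement into Perron--Frobenius theory for nonnegative matrices on the finite-dimensional space $\gotr{\csym{S}}$, which is free on $\got{\on{Indec}\csym{S}}$ since $\csym{S}$ is finitary (hence Krull--Schmidt with finitely many indecomposables). Write $B = \bigoplus_{\mathrm{F}\in\on{Indec}\csym{S}}\mathrm{F}$, let $N$ and $N'$ be the matrices of left and right multiplication by $\got{B}$ on $\gotr{\csym{S}}$, and set $T = NN'$, so that $T(\got{\mathrm{G}}) = \got{B\otimes\mathrm{G}\otimes B}$. The first observation is that $\mathcal{J}$-cell-triviality makes $T$ \emph{strictly positive}: for every pair $\mathrm{F}_i,\mathrm{F}_j$ of indecomposables there are $\mathrm{H},\mathrm{K}$ with $\mathrm{F}_i$ a summand of $\mathrm{H}\otimes\mathrm{F}_j\otimes\mathrm{K}$, and since $\mathrm{H},\mathrm{K}$ are summands of $B$ the $(i,j)$-entry of $T$ is at least $1$. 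Thus $T$ is primitive, with a simple dominant eigenvalue $\mu>0$ and strictly positive right and left eigenvectors $v,w$; crucially $N,N'$ commute (by associativity $NN'=N'N$) and therefore preserve the one-dimensional top eigenspace $\mathbb{R}v$, giving $Nv=\alpha v$, $N'v=\beta v$ with $\alpha\beta=\mu$ and $\alpha,\beta>0$.

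To produce $\mathbf{e}$ I would compute $\got{B}^{2m+1}=T^m(\got{B})$, so that $\got{B}^{2m+1}/\mu^m\to v\,(w\got{B})/(wv)$, a positive multiple of $v$. To see that the \emph{full} sequence $\got{B}^n/\lambda^n$ converges, where $\lambda=\sqrt{\mu}$, I need the even and odd subsequences to agree, which amounts to $\alpha=\beta$. This is where rigidity enters: taking duals gives a linear automorphism $D$ of $\gotr{\csym{S}}$ with $D\got{\mathrm{F}}=\got{\mathrm{F}^{\blackdiamond}}$, an anti-homomorphism fixing $\got{B}$ (duality permutes the indecomposables), whence $DND^{-1}=N'$ and $DTD^{-1}=T$; as $D$ preserves the cone it scales $v$, and a short computation gives $N'v=\alpha v$, i.e.\ $\beta=\alpha=\lambda$. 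Then $\mathbf{e}:=\lim_n\got{B}^n/\lambda^n$ exists and is a positive multiple of $v$, so $\on{Supp}(\mathbf{e})=\on{Indec}\csym{S}$; since $v$ is a strictly positive eigenvector of $N$, $\lambda$ is the spectral radius of $N$, i.e.\ the Perron--Frobenius eigenvalue of $\got{B}$. Finally $\mathbf{e}\cdot\got{B}=\lambda\mathbf{e}$ (from $N'v=\lambda v$) gives $\mathbf{e}\cdot\got{B}^n=\lambda^n\mathbf{e}$ by induction, and hence $\mathbf{e}\cdot\mathbf{e}=\lim_n(\mathbf{e}\cdot\got{B}^n)/\lambda^n=\mathbf{e}$, so $\mathbf{e}$ is idempotent.

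For the second claim, let $\mathbf{M}$ be transitive Krull--Schmidt (with finitely many indecomposables, which transitivity over a finitary $\csym{S}$ supplies) and write $P=\mathbf{M}_{\mathbb{R}}(\got{B})$ for the action of $\got{B}$ on $\gotr{\mathbf{M}}$. Condition (i) of Definition~\ref{KMIdempotents} is the full support just established, and condition (ii) is immediate: $\mathbf{M}_{\mathbb{R}}$ is a homomorphism of (nonunital) rings, so $\mathbf{M}_{\mathbb{R}}(\mathbf{e})^2=\mathbf{M}_{\mathbb{R}}(\mathbf{e}\cdot\mathbf{e})=\mathbf{M}_{\mathbb{R}}(\mathbf{e})$. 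For condition (iii) I would first note that transitivity makes $P$ irreducible, and that $\mathbf{M}_{\mathbb{R}}(\mathbf{e})=\lim_n P^n/\lambda^n\neq 0$: indeed for any indecomposable $X$ one has $\mathbf{M}_{\mathbb{R}}(\mathbf{e})(\got{X})=\sum_i(\mathbf{e})_i\,\got{\mathrm{F}_i\star X}$, a nonnegative combination with strictly positive coefficients of classes $\got{\mathrm{F}_i\star X}$ not all zero (by transitivity). Since $\mathbf{M}_{\mathbb{R}}(\mathbf{e})$ is a nonzero limit of $P^n/\lambda^n$, the irreducible $P$ must be primitive with Perron--Frobenius eigenvalue $\lambda$, and $\mathbf{M}_{\mathbb{R}}(\mathbf{e})$ is the associated rank-one projection $v_{\mathbf{M}}w_{\mathbf{M}}/(w_{\mathbf{M}}v_{\mathbf{M}})$ with $w_{\mathbf{M}}$ strictly positive. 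Hence its kernel meets the cone $\got{\mathbf{e}\star\mathbf{M}}_{\geq 0}$ only at $0$, which is exactly the injectivity required in (iii).

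The main obstacle is the one circumvented in the first paragraph: because a single two-sided ($\mathcal{J}$-)cell may split into several left cells, left multiplication $N$ by $\got{B}$ need not itself be irreducible, so one cannot feed $N$ directly into Perron--Frobenius. Passing to the two-sided operator $T=NN'$, which is manifestly strictly positive, fixes this, but then one must reconcile the resulting even/odd behaviour of $\got{B}^n$; the identity $\beta=\alpha$ forced by duality is the key technical point making $\lim_n\got{B}^n/\lambda^n$ exist on the nose rather than only along $n\equiv 1\pmod 2$. The remaining care is bookkeeping: verifying that transitivity indeed yields both finiteness of $\on{Indec}\mathbf{M}$ and irreducibility of $P$, and checking that ``injective on the cone'' in Definition~\ref{KMIdempotents} is used only through triviality of the kernel's intersection with the cone, which the strict positivity of $w_{\mathbf{M}}$ delivers.
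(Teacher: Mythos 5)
Your proposal is correct in substance, but it takes a genuinely different, and far more self-contained, route than the paper. The paper proves the first claim by citation alone: it invokes \cite[Proposition~18(i)]{KM} and never runs any Perron--Frobenius argument itself, whereas you reprove that result from scratch; your argument (strict positivity of $T=NN'$ from $\mathcal{J}$-cell-triviality, commutativity of $N$ and $N'$, duality forcing $\beta=\alpha$) is sound, although $\alpha=\beta$ comes more cheaply from the fact that $N$ and $N'$ agree on all powers of $\got{B}$, so the appeal to $(-)^{\blackdiamond}$ can be avoided. For the second claim, the paper checks conditions (i) and (ii) exactly as you do, but its \emph{entire} verification of (iii) is the positivity observation you establish only as an intermediate step: for a nonzero $u$ in the cone, $\mathbf{M}_{\mathbb{R}}(\mathbf{e})(u)$ is a nonnegative combination, with strictly positive coefficients, of the nonnegative and not-all-zero classes $\got{\mathrm{F}\star X}$, hence nonzero. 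This already gives triviality of $\ker \mathbf{M}_{\mathbb{R}}(\mathbf{e})$ on the cone, so your subsequent Perron--Frobenius analysis of $P=\mathbf{M}_{\mathbb{R}}(\got{B})$ (finiteness of $\on{Indec}\mathbf{M}$, irreducibility, primitivity, the rank-one limit) is correct but unnecessary, and it is precisely where the only fragile points of your write-up sit: finiteness of $\on{Indec}\mathbf{M}$ does follow from transitivity, but it needs the observation that every indecomposable summand of $B^{\otimes n}\star X$ with $n\geq 1$ is already a summand of $B\star X$ (decompose $B^{\otimes n}$ inside $\csym{S}$ first), and the degenerate transitive case of a zero action on $\on{add}(X)$ breaks irreducibility of $P$, though there the cone $\got{\mathbf{e}\star\mathbf{M}}_{\geq 0}$ is zero and (iii) holds vacuously (the paper's one-line argument silently skips this case as well). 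Finally, your decision to read (iii) as ``$\ker\mathbf{M}_{\mathbb{R}}(\mathbf{e})$ meets the cone trivially'' is the right one: this is exactly how the condition is used in the proof of Lemma~\ref{CoverPreservation}, and your own computation shows the literal reading could not hold, since $\mathbf{M}_{\mathbb{R}}(\mathbf{e})$ is a rank-one projection and therefore never injective, in the set-theoretic sense, on a cone spanning a space of dimension at least two.
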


\begin{proof}
 The first statement is \cite[Proposition~18(i)]{KM}. For the latter, observe that condition (i) of Definition~\ref{KMIdempotents} is satisfied by assumption. Condition (ii) holds since $\got{\mathbf{M}(-)}: \got{\csym{S}} \rightarrow \on{End}_{\mathbf{Ab}}(\got{\mathbf{M}})$ is a ring homomorphism, and thus sends idempotents to idempotents. Finally, condition (iii) holds since for any $X \in \on{Indec}\mathbf{M}$, the object $\bigoplus_{\mathrm{F} \in \on{Indec}\ccf{S}} \mathrm{F} \star X$ is not zero.
\end{proof}

\begin{theorem}[{\cite[Proposition~18(i)]{KM}}]\label{MazorchukMiemietzLemma}
 Let $\csym{S}$ be a $\mathcal{J}$-cell-trivial quasi-fias semigroup category. Let $\mathbf{M}$ be a finitary simple transitive $\csym{S}$-module category. The $\csym{S}$-module category $[\mathbf{M}^{\on{op}},\mathbf{vec}_{\Bbbk}]$, endowed with $\csym{S}$-module category structure described in Definition~\ref{MV}, is projectivizing.
\end{theorem}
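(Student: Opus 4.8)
The plan is to realize $[\mathbf{M}^{\on{op}},\mathbf{vec}_{\Bbbk}]$ as an abelian module category satisfying the hypotheses of Proposition~\ref{MiniMM5}. Write $\mathbf{N} = [\mathbf{M}^{\on{op}},\mathbf{vec}_{\Bbbk}]$. Since $\mathbf{M}$ is finitary, fix $A$ with $\mathbf{M} \simeq A\!\on{-proj}$; then $\mathbf{N} \simeq A\!\on{-mod}$ is finite abelian, Krull--Schmidt, has enough projectives and projective covers, and its indecomposable projectives are exactly the representables $\Hom{-,Y}$ with $Y \in \mathbf{M}$. By Definition~\ref{MV} the action $\mathbf{N}\mathrm{F} = (\mathbf{M}\mathrm{F})_{!}$ is the cocontinuous (hence right exact) extension of $\mathbf{M}\mathrm{F}$, and it sends $\Hom{-,Y}$ to $\Hom{-,\mathbf{M}\mathrm{F}(Y)}$; in particular $\mathbf{N}\mathrm{F}$ preserves projectives and restricts on $\mathbf{N}\!\on{-proj}$ to the original action, so $\mathbf{N}\!\on{-proj} \simeq \mathbf{M}$ as $\csym{S}$-module categories. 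As $\csym{S}$ is quasi-fias it is rigid, and $\mathbf{N}\!\on{-proj} \simeq \mathbf{M}$ is simple transitive by hypothesis, so all the hypotheses of Proposition~\ref{MiniMM5} except the existence of a suitable KM-idempotent for $\mathbf{N}$ are immediate. I note that, although Lemma~\ref{Inequalities} and Lemma~\ref{CoverPreservation} are stated for adjunction-respecting module categories, their proofs use only that $\mathbf{N}\mathrm{F}$ is right exact and preserves projectives; both hold here, so their conclusions apply to $\mathbf{N}$ verbatim.

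For the KM-idempotent I would take the Perron--Frobenius idempotent $\mathbf{e} \in \got{\csym{S}}_{\geq 0}$ produced by Lemma~\ref{PerronFrobenius}, using that $\csym{S}$ is $\mathcal{J}$-cell-trivial quasi-fias; it has $\on{Supp}(\mathbf{e}) = \on{Indec}\csym{S}$. Because a direct sum of projectives is projective, it suffices to prove $\mathbf{N}\mathrm{F}(X)$ projective for indecomposable $\mathrm{F}$, and every such $\mathrm{F}$ lies in $\on{Supp}(\mathbf{e})$; thus the support hypothesis of Proposition~\ref{MiniMM5} is met (with $\mathrm{F}' = \mathrm{F}$). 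Conditions (i) and (ii) of Definition~\ref{KMIdempotents} for $\mathbf{N}$ are formal: $\mathbf{e} \in \got{\csym{S}}_{\geq 0}$ by construction, and $\mathbf{N}_{\mathbb{R}}(\mathbf{e})$ is idempotent because $\got{\mathbf{N}(-)}$ is a ring homomorphism.

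The delicate point --- and the main obstacle --- is condition (iii): that $\mathbf{N}_{\mathbb{R}}(\mathbf{e})$ is injective on $\got{\mathbf{e}\star\mathbf{N}}_{\geq 0}$. One must be careful here, since $\mathbf{N}$ is \emph{not} transitive (its projectives $\mathbf{N}\!\on{-proj} \simeq \mathbf{M}$ form a nonzero proper $\csym{S}$-stable thick ideal), so the transitive case of Lemma~\ref{PerronFrobenius} cannot simply be quoted. Following the proof of that lemma, condition (iii) reduces to the non-vanishing statement $\bigoplus_{\mathrm{F} \in \on{Indec}\csym{S}} \mathbf{N}\mathrm{F}(X) \neq 0$ for every nonzero indecomposable $X \in \mathbf{N}$, which I would establish as follows. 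Any nonzero $X$ admits an epimorphism onto a simple object $L$; by right exactness $\mathbf{N}\mathrm{F}(X) \twoheadrightarrow \mathbf{N}\mathrm{F}(L)$, so it suffices to treat simple $L$. Writing $P(L) \in \mathbf{M}$ for its projective cover, transitivity of $\mathbf{M}$ gives an indecomposable $\mathrm{F}_0$ for which $P(L)$ is a direct summand of $\mathbf{M}\mathrm{F}_0(P(L))$; unwinding the coend that computes $\mathbf{N}\mathrm{F}_0(L) = (\mathbf{M}\mathrm{F}_0)_{!}(L)$, the corresponding split monomorphism survives to show $\mathbf{N}\mathrm{F}_0(L) \neq 0$.

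The clean execution of this last step is where the real work lies: identifying $\mathbf{N}\mathrm{F}(L)$, via the coYoneda/coend calculus, with a nonzero ``top'' contribution of $\mathbf{M}\mathrm{F}(P(L))$, for which the description of the action functors as tensoring with (projective) bimodules, as in \cite[Lemma~13]{MM5}, is the natural tool. Once the non-vanishing is in hand, condition (iii) follows exactly as in Lemma~\ref{PerronFrobenius}, so $\mathbf{e}$ is a KM-idempotent for $\mathbf{N}$, all hypotheses of Proposition~\ref{MiniMM5} hold, and we conclude that $\mathbf{N} = [\mathbf{M}^{\on{op}},\mathbf{vec}_{\Bbbk}]$ is projectivizing.
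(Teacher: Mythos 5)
You follow the same skeleton as the paper's own proof: verify the standing hypotheses of Proposition~\ref{MiniMM5} for $\mathbf{N}=[\mathbf{M}^{\on{op}},\mathbf{vec}_{\Bbbk}]$ and feed in the Perron--Frobenius idempotent of Lemma~\ref{PerronFrobenius}. Two of your preliminary observations are correct and genuinely sharpen the paper's very terse argument: first, the proofs of Lemma~\ref{Inequalities} and Lemma~\ref{CoverPreservation} indeed use only that the action functors are right exact and preserve projectives, both of which hold for $(\mathbf{M}\mathrm{F})_{!}$ by construction, so these lemmata apply to $\mathbf{N}$ even though $\mathbf{N}$ is not known to respect adjunctions; second, $\mathbf{N}$ is indeed not transitive (morphisms factoring through projectives form a non-trivial $\csym{S}$-stable thick ideal), so the last assertion of Lemma~\ref{PerronFrobenius} cannot be quoted verbatim for $\mathbf{N}$, a point the paper's proof passes over in silence.

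However, your repair of condition (iii) of Definition~\ref{KMIdempotents} fails at exactly the step you flag as ``where the real work lies''. The implication you need --- if $P(L)$ is a direct summand of $\mathbf{M}\mathrm{F}_{0}(P(L))$ then $(\mathbf{M}\mathrm{F}_{0})_{!}(L)\neq 0$ --- is false for a general right-exact, projective-preserving functor, and at that point your argument invokes nothing beyond these properties. Concretely, let $A$ be the path algebra of the quiver $1\to 2$, let $V=Ae_{1}\kotimes e_{2}A$, and let $\euler{G}=V\otimes_{A}-$ acting on $A\!\on{-mod}\simeq[(A\!\on{-proj})^{\on{op}},\mathbf{vec}_{\Bbbk}]$; then $\euler{G}$ is cocontinuous, preserves projectives, and is the extension of its restriction to projectives, so it has exactly the form $(\mathbf{M}\mathrm{F})_{!}$ appearing in your argument. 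Here $\euler{G}(Ae_{1})=Ae_{1}\kotimes e_{2}Ae_{1}\simeq Ae_{1}$, so $P(S_{1})=Ae_{1}$ is a direct summand of (indeed equal to) $\euler{G}(P(S_{1}))$, and yet $\euler{G}(S_{1})=Ae_{1}\kotimes e_{2}S_{1}=0$: applying $\euler{G}$ to the minimal presentation $Ae_{2}\to Ae_{1}\twoheadrightarrow S_{1}$ produces an isomorphism $Ae_{1}\to Ae_{1}$, so the split monomorphism does not ``survive the coend''. What rules this out under the hypotheses of the theorem is rigidity, which your argument never uses at this step: for instance, if the adjunction $(\mathrm{F}_{0},\mathrm{F}_{0}^{\blackdiamond})$ is respected in the sense of Definition~\ref{Respect} (equivalently preserved, by Proposition~\ref{Djunctions}), then $\on{Hom}_{\mathbf{N}}\big((\mathbf{M}\mathrm{F}_{0})_{!}\Hom{-,Y},L\big)\simeq\on{Hom}_{\mathbf{N}}\big(\Hom{-,Y},(\mathbf{M}\mathrm{F}_{0}^{\blackdiamond})_{!}(L)\big)$, and $P(L)$ being a summand of $\mathrm{F}_{0}\star Y$ makes the left-hand side nonzero, whence $\mathbf{N}\mathrm{F}_{0}^{\blackdiamond}(L)\neq 0$; some such appeal to duals is unavoidable. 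Two further, smaller points: transitivity of $\mathbf{M}$ gives $P(L)$ as a summand of $\mathrm{F}_{0}\star Y$ for \emph{some} $Y\in\mathbf{M}$, and upgrading $Y$ to $P(L)$ requires an extra Krull--Schmidt argument; and condition (iii) only concerns indecomposables in $\on{add}(\csym{S}\star\mathbf{N})$, not all of $\mathbf{N}$ --- in the example above $S_{1}$ is killed while condition (iii) still holds, because $S_{1}$ is not reachable by the action --- so the blanket non-vanishing statement you set out to prove is strictly stronger than what is needed, and it is precisely the statement your method cannot reach.
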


\begin{proof}
 The category $[\mathbf{M}^{\on{op}},\mathbf{vec}_{\Bbbk}]$ is finite abelian, and thus it has enough projectives, and all of its objects admit projective covers.
 Since $[\mathbf{M}^{\on{op}},\mathbf{vec}_{\Bbbk}]\!\on{-proj} \simeq \mathbf{M}$, the first condition in Proposition~\ref{MiniMM5} is satisfied by $[\mathbf{M}^{\on{op}},\mathbf{vec}_{\Bbbk}]$. The latter condition is satisfied by Lemma~\ref{PerronFrobenius}, and so the result follows from Proposition~\ref{MiniMM5}.
\end{proof}

We now state some analogues of some of the results of \cite{BEO} on splitting ideals:
\begin{definition}[{\cite[Proposition~2.12]{BEO}}]
 Given a quasi-fias semigroup category $\csym{S}$, the bounded above homotopy category $\mathcal{K}^{-}(\csym{S})$ has a natural structure of a semigroup category, defined analogously to the monoidal case. Transporting the semigroup category structure along the equivalence $\mathcal{K}^{-}(\csym{S}) \simeq \mathcal{D}^{-}([\csym{S}^{\on{op}},\mathbf{vec}_{\Bbbk}])$ of the bounded below homotopy category of $\csym{S}$ with the bounded below derived category of $[\csym{S}^{\on{op}},\mathbf{vec}_{\Bbbk}]$ endows the latter with a semigroup category structure.
\end{definition}

Similarly to \cite{BEO}, given $\mathrm{X,Y} \in [\csym{S}^{\on{op}},\mathbf{vec}_{\Bbbk}]$, we write $X \boxtimes^{\mathbf{L}} Y$ for the object of $\mathcal{D}^{-}([\csym{S}^{\on{op}},\mathbf{vec}_{\Bbbk}])$ given by the tensor product $\mathrm{P}_{\bullet} \boxtimes \mathrm{Q}_{\bullet}$ of the projective resolutions of $\mathrm{X}$ and $\mathrm{Y}$ respectively. For $i \in \mathbb{Z}_{\geq 0}$, we write $\mathrm{X} \boxtimes^{\mathbf{L}_{i}} Y$ for the object of $[\csym{S}^{\on{op}},\mathbf{vec}_{\Bbbk}]$ given by $\mathtt{H}^{i}(\mathrm{X} \boxtimes \mathrm{Q}_{\bullet})$. The following is our analogue of \cite[Lemma~2.15(ii), Corollary~2.16]{BEO}:
\begin{lemma}\label{BEO215}
 Let $\mathrm{P} \in \csym{S}$ and $\mathrm{X,Y} \in [\csym{S}^{\on{op}},\mathbf{vec}_{\Bbbk}]$.
 \begin{enumerate}
  \item $- \boxtimes -: [\csym{S}^{\on{op}},\mathbf{vec}_{\Bbbk}] \kotimes [\csym{S}^{\on{op}},\mathbf{vec}_{\Bbbk}] \rightarrow [\csym{S}^{\on{op}},\mathbf{vec}_{\Bbbk}]$ is right exact in both variables;
  \item if $\mathrm{X}$ is projective, then the functors $\mathrm{X} \boxtimes -$ and $- \boxtimes \mathrm{X}$ are exact;
  \item $\euler{H}^{i}(\mathrm{P} \boxtimes \mathrm{Y}) = 0 = \euler{H}^{i}(\mathrm{Y} \boxtimes \mathrm{P})$ for $i > 0$;
  \item $\mathrm{X} \boxtimes^{\mathbf{L}_{i}} \mathrm{Y} \simeq \euler{H}^{i}(\mathrm{X} \boxtimes^{\euler{L}} \mathrm{Y})$ for $i \geq 0$.
 \end{enumerate}
\end{lemma}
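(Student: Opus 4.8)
The plan is to establish the four items in order, each relying on the previous ones together with the structure theory of Section~\ref{s2}; the only genuinely new ingredient is rigidity, which enters at item~(2), after which (3) and (4) become formal homological algebra. For item~(1), I would recall that, by Theorem~\ref{UniversalPropertyDay}, the convolution product $-\boxtimes-$ is defined by a coend and hence is cocontinuous in each variable. A functor between abelian categories preserving colimits preserves cokernels and is therefore right exact; since $\csym{S}$ is finitary the convolution restricts to $[\csym{S}^{\on{op}},\mathbf{vec}_{\Bbbk}]$ (a convolution of finite-dimensional presheaves is again finite-dimensional, there being only finitely many indecomposables), and on this subcategory it still preserves the finite colimit computing cokernels, giving right exactness in each variable.

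For item~(2) I would first reduce to representables: the finitely generated projectives of $[\csym{S}^{\on{op}},\mathbf{vec}_{\Bbbk}]$ are precisely the summands of finite direct sums of representables $\Hom{-,\mathrm{F}}$, $\mathrm{F}\in\csym{S}$, under the equivalence $[\csym{S}^{\on{op}},\mathbf{vec}_{\Bbbk}]\!\on{-proj}\simeq\csym{S}$. As $\mathrm{X}\boxtimes-$ is additive and exactness is inherited by direct summands and finite direct sums of functors, it suffices to show $\Hom{-,\mathrm{F}}\boxtimes-$ and $-\boxtimes\Hom{-,\mathrm{F}}$ are exact. Here I invoke Theorem~\ref{ThmA}: the Yoneda image lies in the rigid objects of the presheaf category, so $\Hom{-,\mathrm{F}}$ has both a right dual $\Hom{-,\mathrm{F}^{\blackdiamond}}$ and a left dual $\Hom{-,{}^{\blackdiamond}\mathrm{F}}$, and since these duals are themselves representable, rigidity is witnessed already inside $[\csym{S}^{\on{op}},\mathbf{vec}_{\Bbbk}]$. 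Tensoring with a rigid object admits both a left and a right adjoint, given by tensoring with its two duals (concretely, one adjoint is furnished by the adjunction of representables in the corollary following Theorem~\ref{PromonoidalUnital}, and the other by the same construction applied to the opposite dual). A functor possessing both adjoints preserves all limits and colimits, hence kernels and cokernels, and is therefore exact. Item~(3) is then immediate: for $\mathrm{P}\in\csym{S}$ the functor $\mathrm{P}\boxtimes-$ is exact by~(2), so applying it to a projective resolution $\mathrm{Q}_{\bullet}\twoheadrightarrow\mathrm{Y}$ and using that exact functors commute with cohomology yields $\euler{H}^{i}(\mathrm{P}\boxtimes\mathrm{Q}_{\bullet})\simeq\mathrm{P}\boxtimes\mathtt{H}^{i}(\mathrm{Q}_{\bullet})$, which vanishes for $i>0$ and equals $\mathrm{P}\boxtimes\mathrm{Y}$ for $i=0$; the argument for $\mathrm{Y}\boxtimes\mathrm{P}$ is identical.

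Finally, for item~(4) I would compute $\mathrm{X}\boxtimes^{\euler{L}}\mathrm{Y}$ as the total complex of the double complex $\mathrm{P}_{\bullet}\boxtimes\mathrm{Q}_{\bullet}$, where $\mathrm{P}_{\bullet}\twoheadrightarrow\mathrm{X}$ and $\mathrm{Q}_{\bullet}\twoheadrightarrow\mathrm{Y}$ are projective resolutions, and run the spectral sequence obtained by first taking cohomology in the $\mathrm{P}$-direction. Since each $\mathrm{Q}_{q}$ is projective, $-\boxtimes\mathrm{Q}_{q}$ is exact by~(2), so each complex $\mathrm{P}_{\bullet}\boxtimes\mathrm{Q}_{q}$ has cohomology concentrated in degree $0$, equal to $\mathrm{X}\boxtimes\mathrm{Q}_{q}$; the spectral sequence therefore degenerates and gives $\euler{H}^{i}(\mathrm{X}\boxtimes^{\euler{L}}\mathrm{Y})\simeq\mathtt{H}^{i}(\mathrm{X}\boxtimes\mathrm{Q}_{\bullet})=\mathrm{X}\boxtimes^{\mathbf{L}_{i}}\mathrm{Y}$. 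Convergence is unproblematic because all resolutions are bounded above, so after reindexing the double complex is supported in a single quadrant. The main obstacle is squarely item~(2): the whole lemma turns on upgrading the right exactness of~(1) to genuine exactness of tensoring with a representable, and this is exactly the point at which rigidity of $\csym{S}$ — equivalently, the existence of both duals for representables in the presheaf category — is indispensable.
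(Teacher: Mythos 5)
Your proposal is correct and follows essentially the same route as the paper's proof: item (1) via cocontinuity of Day convolution restricted to finite-dimensional presheaves, item (2) via the monoidal equivalence $[\csym{S}^{\on{op}},\mathbf{vec}_{\Bbbk}]\!\on{-proj} \simeq \csym{S}$ together with rigidity supplying left and right adjoints (hence exactness), and items (3)--(4) as formal homological consequences. The paper merely states that (3) and (4) ``follow directly from the first two''; your double-complex argument is exactly what that remark compresses, so the only difference is the level of detail.
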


\begin{proof}
 The first statement follows from the fact that $\boxtimes$ is defined via Day convolution, and $[\csym{S}^{\on{op}},\mathbf{vec}_{\Bbbk}]$ is the free finite cocompletion of $\mathbf{M}$. Thus, this is a slight modification of Theorem~\ref{UniversalPropertyDay}, as described in \cite[Section~10]{St}.
 The second statement follows from the monoidal equivalence $[\csym{S}^{\on{op}},\mathbf{vec}_{\Bbbk}]\!\on{-proj} \simeq \csym{S}$, combined with the rigidity of $\csym{S}$. This shows that if $\mathrm{X}$ is projective, then $\mathrm{X} \boxtimes -$ and $- \boxtimes \mathrm{X}$ admit left and right adjoints, yielding the exactness. The latter two statements follow directly from the first two.
\end{proof}

\subsection{Finite tensor categories are abelianizations of disimple quasi-fias categories}\label{42}

\begin{definition}
 A quasi-fias category $\csym{S}$ is {\it disimple} if both the left $\csym{S}$-module category ${}_{\ccf{S}}\csym{S}$ and the right $\csym{S}$-module category $\csym{S}_{\ccf{S}}$ are simple transitive.
\end{definition}

In particular, a disimple quasi-fias category $\csym{S}$ is $\mathcal{J}$-cell trivial.

\begin{proposition}[{\cite[Proposition~4.2.12]{EGNO}}]\label{EGNOHelpsAgain}
 Let $\csym{C}$ be a finite tensor category. Let $\mathrm{P} \in \csym{C}\!\on{-proj}$, and let $\mathrm{X} \in \csym{C}$. Then $\mathrm{P} \otimes \mathrm{X}, \mathrm{X} \otimes \mathrm{P} \in \csym{C}\!\on{-proj}$.
 Similarly, if $\mathrm{I} \in \csym{C}\!\on{-inj}$, then $\mathrm{I} \otimes \mathrm{X},\, \mathrm{X} \otimes \mathrm{I} \in \csym{C}\!\on{-inj}$.
\end{proposition}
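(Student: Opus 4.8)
The plan is to prove both the projectivity and the injectivity statements by the same mechanism: reduce each to the exactness of a suitable $\on{Hom}$-functor, and then transport the tensor factor across a duality adjunction so that it is absorbed into an exact functor. The only structural input beyond rigidity is that the tensor product of a finite tensor category is exact in each variable. I would record this first: for any $\mathrm{X}$ the functor $\mathrm{X}\otimes -$ is simultaneously left adjoint to $\mathrm{X}^{\blackdiamond}\otimes -$ and right adjoint to ${}^{\blackdiamond}\mathrm{X}\otimes -$ (Observation~\ref{RigidMonoidal} together with its left-handed analogue), and a functor between abelian categories admitting both a left and a right adjoint is exact; the same holds for $-\otimes \mathrm{X}$.

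For the projectivity claim, fix $\mathrm{P}\in \csym{C}\!\on{-proj}$. To see that $\mathrm{P}\otimes \mathrm{X}$ is projective I would show that $\on{Hom}_{\ccf{C}}(\mathrm{P}\otimes \mathrm{X}, -)$ is exact. By the duality isomorphism $\on{Hom}_{\ccf{C}}(\mathrm{H}\otimes \mathrm{F}^{\blackdiamond}, \mathrm{K}) \cong \on{Hom}_{\ccf{C}}(\mathrm{H}, \mathrm{K}\otimes \mathrm{F})$ (Corollary~\ref{HomIsos}, equivalently \cite[Proposition~2.10.8]{EGNO}), setting $\mathrm{F}^{\blackdiamond} = \mathrm{X}$ yields a natural isomorphism $\on{Hom}_{\ccf{C}}(\mathrm{P}\otimes \mathrm{X}, \mathrm{Y}) \cong \on{Hom}_{\ccf{C}}(\mathrm{P}, \mathrm{Y}\otimes {}^{\blackdiamond}\mathrm{X})$. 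The right-hand side is the composite of the exact functor $\mathrm{Y}\mapsto \mathrm{Y}\otimes {}^{\blackdiamond}\mathrm{X}$ with $\on{Hom}_{\ccf{C}}(\mathrm{P},-)$, which is exact since $\mathrm{P}$ is projective; hence $\mathrm{P}\otimes \mathrm{X}$ is projective. For $\mathrm{X}\otimes \mathrm{P}$ I would use the mirror adjunction $\on{Hom}_{\ccf{C}}(\mathrm{X}\otimes \mathrm{P}, \mathrm{Y}) \cong \on{Hom}_{\ccf{C}}(\mathrm{P}, \mathrm{X}^{\blackdiamond}\otimes \mathrm{Y})$ and argue identically.

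The injective statements are the exact contravariant duals of these. For $\mathrm{I}\in \csym{C}\!\on{-inj}$ the same isomorphisms give $\on{Hom}_{\ccf{C}}(\mathrm{A}, \mathrm{I}\otimes \mathrm{X}) \cong \on{Hom}_{\ccf{C}}(\mathrm{A}\otimes \mathrm{X}^{\blackdiamond}, \mathrm{I})$ and $\on{Hom}_{\ccf{C}}(\mathrm{A}, \mathrm{X}\otimes \mathrm{I}) \cong \on{Hom}_{\ccf{C}}({}^{\blackdiamond}\mathrm{X}\otimes \mathrm{A}, \mathrm{I})$; each right-hand side is the composite of an exact covariant tensoring functor with the exact contravariant functor $\on{Hom}_{\ccf{C}}(-,\mathrm{I})$, so $\on{Hom}_{\ccf{C}}(-, \mathrm{I}\otimes \mathrm{X})$ and $\on{Hom}_{\ccf{C}}(-, \mathrm{X}\otimes\mathrm{I})$ are exact, giving injectivity. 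I do not expect a genuine obstacle: the entire content is the biexactness of $\otimes$ coming from rigidity, and the remainder is bookkeeping — the only points to watch are pairing each tensor factor with the correct (left versus right) dual, and tracking the variance in the injective case.
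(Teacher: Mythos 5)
Your proof is correct. Note that the paper does not actually prove this proposition: it is imported by citation from \cite[Proposition~4.2.12]{EGNO}, and your argument is precisely the standard one given there --- transport the projective (resp.\ injective) test object across the duality adjunctions $\on{Hom}(\mathrm{P}\otimes \mathrm{X},\mathrm{Y})\cong \on{Hom}(\mathrm{P},\mathrm{Y}\otimes{}^{\blackdiamond}\mathrm{X})$, $\on{Hom}(\mathrm{X}\otimes\mathrm{P},\mathrm{Y})\cong\on{Hom}(\mathrm{P},\mathrm{X}^{\blackdiamond}\otimes\mathrm{Y})$, and use that tensoring with any object is exact because rigidity supplies both adjoints. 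Your pairing of each factor with the correct left/right dual and your variance bookkeeping in the injective case are both accurate, so there is nothing to repair.
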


\begin{corollary}
 Let $\csym{C}$ be a finite tensor category. Then $\csym{C}\!\on{-proj}$ is a semigroup category, and $\csym{C}$ is a projectivizing semigroup module category over $\csym{C}\!\on{-proj}$.
\end{corollary}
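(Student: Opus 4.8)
The plan is to obtain both assertions as immediate consequences of Proposition~\ref{EGNOHelpsAgain}, which is the only substantive ingredient; everything else amounts to checking that the relevant definitions restrict appropriately from the tensor structure of $\csym{C}$.

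First I would verify that $\csym{C}\!\on{-proj}$ is a semigroup category. The monoidal product $\otimes$ of $\csym{C}$ restricts to $\csym{C}\!\on{-proj}$: for $\mathrm{P},\mathrm{Q} \in \csym{C}\!\on{-proj}$, taking $\mathrm{X} = \mathrm{Q}$ in Proposition~\ref{EGNOHelpsAgain} gives $\mathrm{P}\otimes \mathrm{Q} \in \csym{C}\!\on{-proj}$. Since $\csym{C}\!\on{-proj}$ is a full subcategory of $\csym{C}$ closed under $\otimes$, the associator $\mathsf{a}$ of $\csym{C}$ restricts to a natural isomorphism on $\csym{C}\!\on{-proj}$ still satisfying the pentagon axiom, so $(\csym{C}\!\on{-proj},\otimes,\mathsf{a})$ is a semigroup category. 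As remarked in the introduction, it fails to be monoidal unless $\mathbb{1}_{\ccf{C}}$ is itself projective, equivalently unless $\csym{C}$ is semisimple. Moreover $\csym{C}\!\on{-proj}$ is finitary, being the category of projective objects of the finite abelian category $\csym{C}$.

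Next I would equip $\csym{C}$ with the structure of a semigroup $\csym{C}\!\on{-proj}$-module category in the sense of Definition~\ref{SemigroupModules}, via the action $\mathrm{P}\star X := \mathrm{P}\otimes X$ for $\mathrm{P}\in \csym{C}\!\on{-proj}$ and $X \in \csym{C}$; this is simply the restriction to $\csym{C}\!\on{-proj}$ of the regular module structure of $\csym{C}$ over itself. The required natural isomorphisms $(\mathrm{Q}\otimes \mathrm{P})\star X \xiso \mathrm{Q}\star(\mathrm{P}\star X)$ are supplied by the associator of $\csym{C}$, and the module coherence axiom follows from the pentagon axiom of $\csym{C}$. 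As $\csym{C}$ is finite abelian, it is abelian with enough projectives, so the standing hypotheses in the definition of a projectivizing module category are satisfied.

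Finally, to check the projectivizing condition itself, I would apply Proposition~\ref{EGNOHelpsAgain} a second time: for any $\mathrm{P}\in \csym{C}\!\on{-proj}$ and any $X \in \csym{C}$, the object $\mathrm{P}\star X = \mathrm{P}\otimes X$ lies in $\csym{C}\!\on{-proj}$, i.e. is projective, which is exactly the requirement in the definition of a projectivizing $\csym{C}\!\on{-proj}$-module category. I do not anticipate a genuine obstacle: the mathematical content is entirely contained in Proposition~\ref{EGNOHelpsAgain}, and the remaining steps are bookkeeping verifications that the semigroup-categorical and module-categorical structures on $\csym{C}$ are inherited by restriction from its tensor structure.
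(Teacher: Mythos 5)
Your proposal is correct and matches the paper's (implicit) argument: the paper states this corollary without proof precisely because it follows immediately from Proposition~\ref{EGNOHelpsAgain}, exactly as you argue — the tensor product restricts to $\csym{C}\!\on{-proj}$, the associator and coherence data are inherited, and the projectivizing condition is a second direct application of the same proposition. The bookkeeping you spell out (finitariness of $\csym{C}\!\on{-proj}$, enough projectives in $\csym{C}$, restriction of the regular module structure) is just what is needed to meet the definitions, so there is nothing to add.
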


\begin{proposition}\label{FaithfulUnits}
 Let $\csym{C}$ be a finite tensor category and let $\mathrm{P} \in \csym{C}\!\on{-proj}$ be a non-zero projective object of $\csym{C}$. Then the unit morphism $\eta^{\mathrm{P}}: \mathbb{1}_{\ccf{C}} \rightarrow \mathrm{P}^{\blackdiamond} \otimes \mathrm{P}$ is a monomorphism. Thus, the functors $\mathrm{P} \otimes -: \csym{C} \rightarrow \csym{C}$ and $- \otimes \mathrm{P}: \csym{C} \rightarrow \csym{C}$ are faithful, and the functors $\mathrm{P} \otimes -: \csym{C}\!\on{-proj} \rightarrow \csym{C}\!\on{-proj}$ and $-\otimes \mathrm{P}: \csym{C}\!\on{-proj} \rightarrow \csym{C}\!\on{-proj}$ are liberal.
\end{proposition}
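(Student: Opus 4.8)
The plan is to reduce everything to two facts: that the unit object $\mathbb{1}_{\ccf{C}}$ is simple, and that $\otimes$ is exact in each variable. Since $\on{End}_{\ccf{C}}(\mathbb{1}_{\ccf{C}}) \simeq \Bbbk$, the unit object of a finite tensor category is simple (see \cite[Section~4.3]{EGNO}); and since $\csym{C}$ is rigid, each of $\mathrm{X}\otimes-$ and $-\otimes\mathrm{X}$ admits both adjoints by Observation~\ref{RigidMonoidal} and its mirror, hence is exact. I would use these two inputs repeatedly.

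First I would show $\eta^{\mathrm{P}}$ is a monomorphism. The zig-zag identity $(\varepsilon^{\mathrm{P}}\otimes \mathrm{P})\circ(\mathrm{P}\otimes\eta^{\mathrm{P}}) = \on{id}_{\mathrm{P}}$ together with $\mathrm{P}\neq 0$ forces $\eta^{\mathrm{P}}\neq 0$, since otherwise the left-hand side would vanish. A nonzero morphism out of the simple object $\mathbb{1}_{\ccf{C}}$ has kernel a proper subobject of $\mathbb{1}_{\ccf{C}}$, hence zero; so $\eta^{\mathrm{P}}$ is mono. By the same token, the other zig-zag identity $(\mathrm{P}^{\blackdiamond}\otimes\varepsilon^{\mathrm{P}})\circ(\eta^{\mathrm{P}}\otimes\mathrm{P}^{\blackdiamond})=\on{id}_{\mathrm{P}^{\blackdiamond}}$ together with $\mathrm{P}^{\blackdiamond}\neq 0$ gives $\varepsilon^{\mathrm{P}}\neq 0$; a nonzero morphism into the simple object $\mathbb{1}_{\ccf{C}}$ has cokernel zero, so $\varepsilon^{\mathrm{P}}$ is an epimorphism. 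This epi-ness of $\varepsilon^{\mathrm{P}}$ is the extra ingredient I will need for liberality.

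For faithfulness I would use the adjunction $(\mathrm{P}\otimes-,\mathrm{P}^{\blackdiamond}\otimes-)$ of Observation~\ref{RigidMonoidal}, whose unit is $\eta^{\mathrm{P}}\otimes-$. A left adjoint is faithful precisely when its unit is a pointwise monomorphism; here $\eta^{\mathrm{P}}\otimes\mathrm{X}$ is a monomorphism for every $\mathrm{X}$ because $\eta^{\mathrm{P}}$ is mono and $-\otimes\mathrm{X}$ is exact. Hence $\mathrm{P}\otimes-$ is faithful. The statement for $-\otimes\mathrm{P}$ then follows by applying the same argument in $\csym{C}^{\on{rev}}$, which is again a finite tensor category with the same unit object, using that $-\otimes\mathrm{P}$ on $\csym{C}$ is $\mathrm{P}\otimes_{\on{rev}}-$ on $\csym{C}^{\on{rev}}$ and that the left dual of $\mathrm{P}$ in $\csym{C}$ is its right dual in $\csym{C}^{\on{rev}}$.

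Finally, for liberality of $\mathrm{P}\otimes-\colon\csym{C}\!\on{-proj}\to\csym{C}\!\on{-proj}$, note first that this functor is well defined by Proposition~\ref{EGNOHelpsAgain}. Given $\mathrm{R}\in\csym{C}\!\on{-proj}$, the morphism $\varepsilon^{\mathrm{P}}\otimes\mathrm{R}\colon \mathrm{P}\otimes\mathrm{P}^{\blackdiamond}\otimes\mathrm{R}\to \mathbb{1}_{\ccf{C}}\otimes\mathrm{R}\cong\mathrm{R}$ is an epimorphism, being $\varepsilon^{\mathrm{P}}$ tensored with $\mathrm{R}$ and $-\otimes\mathrm{R}$ exact. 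Since $\mathrm{R}$ is projective this epimorphism splits, exhibiting $\mathrm{R}$ as a direct summand of $\mathrm{P}\otimes(\mathrm{P}^{\blackdiamond}\otimes\mathrm{R})$; and $\mathrm{P}^{\blackdiamond}\otimes\mathrm{R}\in\csym{C}\!\on{-proj}$, again by Proposition~\ref{EGNOHelpsAgain}. Thus $\mathrm{P}\otimes-$ is liberal (indeed with a single summand), and the statement for $-\otimes\mathrm{P}$ follows once more by passing to $\csym{C}^{\on{rev}}$. I expect no genuine obstacle here: the only external input is the simplicity of $\mathbb{1}_{\ccf{C}}$, and the rest is the two zig-zag identities plus exactness of $\otimes$; the one place to be careful is the bookkeeping of left versus right duals and the side on which one tensors.
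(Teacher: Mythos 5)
Your proposal is correct, and its first two steps coincide with the paper's own argument: monomorphy of $\eta^{\mathrm{P}}$ from simplicity of $\mathbb{1}_{\ccf{C}}$ together with $\eta^{\mathrm{P}}\neq 0$, and faithfulness of $\mathrm{P}\otimes-$ from the fact that the unit $\eta^{\mathrm{P}}\otimes-$ of the adjunction $(\mathrm{P}\otimes-,\mathrm{P}^{\blackdiamond}\otimes-)$ is a pointwise monomorphism by exactness of the tensor product. Where you genuinely diverge is the liberality step. The paper deduces it in one line from Proposition~\ref{SufficientCondition}, i.e.\ from the presheaf-level criterion that $\euler{F}^{\ast}$ is faithful if and only if $\euler{F}$ is liberal; making that invocation fully explicit requires identifying $(\mathrm{P}\otimes-)^{\ast}$ with the extension of $\mathrm{P}^{\blackdiamond}\otimes-$ to $[(\csym{C}\!\on{-proj})^{\on{op}},\mathbf{Vec}_{\Bbbk}]$ and verifying faithfulness there. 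You instead stay entirely inside $\csym{C}$: you extract from the second zig-zag identity and simplicity of $\mathbb{1}_{\ccf{C}}$ that $\varepsilon^{\mathrm{P}}$ is an epimorphism, tensor it with a projective $\mathrm{R}$, and split the resulting epimorphism $\mathrm{P}\otimes\mathrm{P}^{\blackdiamond}\otimes\mathrm{R}\twoheadrightarrow\mathrm{R}$ using projectivity of $\mathrm{R}$, with Proposition~\ref{EGNOHelpsAgain} guaranteeing $\mathrm{P}^{\blackdiamond}\otimes\mathrm{R}\in\csym{C}\!\on{-proj}$. This is in effect the relevant direction of Proposition~\ref{SufficientCondition} specialized and made self-contained; it buys a more elementary proof with no passage through presheaf categories, and the slightly stronger conclusion that each projective is a summand of a single object $\mathrm{P}\otimes\mathrm{Q}$ rather than of a finite direct sum, whereas the paper's route buys brevity and reuse of its general machinery. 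Two small points of bookkeeping: your appeal to $\mathrm{P}^{\blackdiamond}\neq 0$ deserves its one-line justification (if $\mathrm{P}^{\blackdiamond}=0$, the first zig-zag identity forces $\on{id}_{\mathrm{P}}=0$, contradicting $\mathrm{P}\neq 0$), and splitting the epimorphism onto $\mathrm{R}$ only needs right exactness of $-\otimes\mathrm{R}$, so nothing beyond what you already established is used.
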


\begin{proof}
 That $\eta^{\mathrm{P}}$ is a monomorphism follows from the fact that $\mathbb{1}_{\ccf{C}}$ is a simple object of $\csym{C}$ (\cite[Theorem~4.3.8]{EGNO}), and that $\mathrm{P} \neq 0$, so $\eta^{\mathrm{P}} \neq 0$.

 By exactness of the tensor product in $\csym{C}$, the transformation $\eta \otimes -: \mathbb{1}_{\ccf{C}} \otimes - \rightarrow (\mathrm{P}^{\blackdiamond} \otimes \mathrm{P}) \otimes -$ is a monomorphism, which is equivalent to $\mathrm{P} \otimes -: \csym{C} \rightarrow \csym{C}$ being faithful. By Proposition~\ref{SufficientCondition}, this implies that $\mathrm{P} \otimes -: \csym{C}\!\on{-proj} \rightarrow \csym{C}\!\on{-proj}$ is liberal. The result for $- \otimes \mathrm{P}$ follows similarly.

\end{proof}

\begin{corollary}[{\cite[Proposition~6.1.3]{EGNO}}] \label{SelfInjective}
 A finite tensor category is self-injective, that is, $\csym{C}\!\on{-proj} = \csym{C}\!\on{-inj}$.
\end{corollary}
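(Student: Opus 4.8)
The statement is cited as \cite[Proposition~6.1.3]{EGNO}, so the plan is to reassemble its proof from the tools just established, and the first move is to exploit the symmetry of the claim: since the class of injectives is interchanged with the class of projectives by duality (below), it suffices to prove that every projective object is injective, the reverse inclusion following by the mirror-image argument (or by counting indecomposables, there being one of each type per simple object). To prove ``projective $\Rightarrow$ injective'' I would first reduce to the projective cover $P(\mathbb{1})$ of the unit. Indeed, for a simple $L$ the projective cover surjection $P(\mathbb{1})\twoheadrightarrow \mathbb{1}$ tensored with $L$ gives an epimorphism $P(\mathbb{1})\otimes L\twoheadrightarrow L$ whose source is projective by Proposition~\ref{EGNOHelpsAgain}; hence the indecomposable projective $P(L)$ is a direct summand of $P(\mathbb{1})\otimes L$. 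If $P(\mathbb{1})$ is injective, then $P(\mathbb{1})\otimes L$ is injective (again Proposition~\ref{EGNOHelpsAgain}, injective side), so its summand $P(L)$ is injective, and every projective is then injective by additivity.

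The second ingredient is the homological behaviour of duality. The functor $(-)^{\blackdiamond}$ is exact (biexactness of $\boxtimes$ together with the adjunction isomorphisms of Corollary~\ref{HomIsos}) and is an anti-autoequivalence, so it induces natural isomorphisms $\operatorname{Ext}^{i}(A,B)\simeq \operatorname{Ext}^{i}(B^{\blackdiamond},A^{\blackdiamond})$. From these one reads off that $P$ projective forces $P^{\blackdiamond}$ injective, and dually $I$ injective forces ${}^{\blackdiamond}I$ projective. Applying $(-)^{\blackdiamond}$ to the essential epimorphism $P(\mathbb{1})\twoheadrightarrow\mathbb{1}$ and using $\mathbb{1}^{\blackdiamond}\simeq\mathbb{1}$ (Proposition~\ref{FaithfulUnits} records $\mathbb{1}$ as simple) yields an essential monomorphism $\mathbb{1}\hookrightarrow P(\mathbb{1})^{\blackdiamond}$ into an injective object, identifying $P(\mathbb{1})^{\blackdiamond}\simeq I(\mathbb{1})$, the injective hull of the unit. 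Thus duality cleanly exchanges the projective cover and the injective hull of $\mathbb{1}$.

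The main obstacle is precisely the remaining step: upgrading ``$P(\mathbb{1})^{\blackdiamond}$ is injective'' to ``$P(\mathbb{1})$ is injective''. This cannot be extracted from formal duality alone, because the interchange of projectives and injectives is perfectly symmetric, and it is further clouded by the fact that $(-)^{\blackdiamond\blackdiamond}$ is a nontrivial autoequivalence rather than the identity. The route I would take is to bring in the finiteness and tensor structure through the Nakayama functor: writing $\csym{C}\simeq A\!\on{-mod}$, self-injectivity of $A$ is equivalent to exactness of $\mathbb{N}(-)=\operatorname{Hom}_{\Bbbk}(\operatorname{Hom}_{A}(-,A),\Bbbk)$, and for a finite tensor category $\mathbb{N}$ is isomorphic to $\mathrm{D}\boxtimes(-)^{\blackdiamond\blackdiamond}$ for the distinguished invertible object $\mathrm{D}$, a composite of equivalences and hence itself an (exact) equivalence. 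Since $\mathbb{N}$ carries $P(L)$ to $I(L)$, its being an equivalence forces the classes of projectives and injectives to coincide, in particular making $P(\mathbb{1})$ injective and closing the reduction. Here Proposition~\ref{FaithfulUnits} and Proposition~\ref{SufficientCondition} play the supporting role of guaranteeing the faithful, liberal behaviour of tensoring by a projective that underlies the passage to the algebra $A$; the genuinely hard kernel, for which I would ultimately defer to \cite[Proposition~6.1.3]{EGNO}, is the identification of $\mathbb{N}$ with a tensor autoequivalence.
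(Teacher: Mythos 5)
There is a genuine gap: your argument never actually proves the statement. You correctly reduce to showing that a projective object is injective, and you correctly establish that $\mathrm{P}^{\blackdiamond}$ is injective whenever $\mathrm{P}$ is projective; but the decisive step --- passing from injectivity of the dual back to injectivity of $\mathrm{P}$ itself --- is exactly the point at which you defer to \cite[Proposition~6.1.3]{EGNO}. Since that proposition \emph{is} the statement being proved, the proposal is circular as written. The circularity is not merely cosmetic: in \cite{EGNO}, the distinguished invertible object and the identification of the Nakayama functor with $\mathrm{D}\otimes(-)^{\blackdiamond\blackdiamond}$ are developed \emph{after} self-injectivity and depend on it (the distinguished invertible object is defined using the socle of $P(\mathbb{1})$ and the fact that $P(\mathbb{1})$ is injective), so this route cannot be straightened into a proof without redoing that theory from scratch.

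Moreover, your claim that the remaining step ``cannot be extracted from formal duality alone'' is false, and this is precisely where the paper's proof goes differently. By Remark~\ref{SplitMono}, the zigzag equations exhibit $\mathrm{P}$ as a direct summand of $\mathrm{P}\otimes\mathrm{P}^{\blackdiamond}\otimes\mathrm{P}$: the morphism $\mathrm{P}\otimes\eta^{\mathrm{P}}$ is split by $\varepsilon^{\mathrm{P}}\otimes\mathrm{P}$. Since $\mathrm{P}^{\blackdiamond}$ is injective --- the very fact you proved --- and since tensoring an injective object on either side with an arbitrary object yields an injective object (the injective half of Proposition~\ref{EGNOHelpsAgain}, which you already invoke on the projective side), the object $\mathrm{P}\otimes\mathrm{P}^{\blackdiamond}\otimes\mathrm{P}$ is injective, and hence so is its direct summand $\mathrm{P}$. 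This splitting argument replaces the entire Nakayama-functor apparatus, and it also makes your preliminary reduction to $P(\mathbb{1})$ unnecessary, since it applies verbatim to every projective $\mathrm{P}$.
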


\begin{proof}
 Let $\mathrm{P} \in \csym{C}\!\on{-proj}$. Then, following Remark~\ref{SplitMono}, $\mathrm{P}$ is a direct summand of $\mathrm{P} \otimes \mathrm{P}^{\blackdiamond} \otimes \mathrm{P}$. First, we have $\on{Hom}_{\ccf{C}}(-,\mathrm{P}^{\blackdiamond}) \simeq \on{Hom}_{\ccf{C}}(\mathrm{P},{}^{\blackdiamond}\!(-))$, so $\mathrm{P}^{\blackdiamond} \in \csym{C}\!\on{-inj}$. From Lemma~\ref{EGNOHelpsAgain} it follows that $\mathrm{P} \otimes \mathrm{P}^{\blackdiamond} \otimes \mathrm{P} \in \csym{C}\!\on{-inj}$. But, following Remark~\ref{SplitMono}, $\mathrm{P}$ is a direct summand of $\mathrm{P} \otimes \mathrm{P}^{\blackdiamond} \otimes \mathrm{P}$, so $\mathrm{P} \in \csym{C}\!\on{-inj}$. The proof that every injective is projective is similar.
\end{proof}

\begin{proposition}\label{ProjDis}
 Let $\csym{C}$ be a finite tensor category. The category $\csym{C}\!\on{-proj}$ of projective objects in $\csym{C}$ is a disimple quasi-fias category.
\end{proposition}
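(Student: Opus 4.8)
The plan is to verify separately the three conditions making up the statement: that $\csym{C}\!\on{-proj}$ is finitary, that it is rigid, and that both of its regular module categories over itself are simple transitive. Finitariness is immediate: choosing a finite-dimensional algebra $A$ with $\csym{C}\simeq A\!\on{-mod}$ gives $\csym{C}\!\on{-proj}\simeq A\!\on{-proj}$, so $\csym{C}\!\on{-proj}$ is finitary.

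For rigidity I would exploit that $\csym{C}\!\on{-proj}$ is a full semigroup subcategory of the rigid monoidal category $\csym{C}$ which is closed under the tensor product (Proposition~\ref{EGNOHelpsAgain}) and under passing to left and right duals (by Corollary~\ref{SelfInjective}, the dual of a projective is injective, hence projective). Given $\mathrm{P}\in\csym{C}\!\on{-proj}$, the monoidal duality datum $\eta^{\mathrm P}\colon\mathbb 1\to\mathrm P^{\blackdiamond}\otimes\mathrm P$, $\varepsilon^{\mathrm P}\colon\mathrm P\otimes\mathrm P^{\blackdiamond}\to\mathbb 1$ produces the semigroup-categorical adjunction datum of Definition~\ref{AdjunctionSemigroup} by whiskering, e.g.\ $\eta^{l}_{\mathrm H}=\mathrm H\otimes\eta^{\mathrm P}$ and $\eta^{r}_{\mathrm H}=\eta^{\mathrm P}\otimes\mathrm H$, and likewise for the counits. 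For $\mathrm H,\mathrm P\in\csym{C}\!\on{-proj}$ every object occurring ($\mathrm H\otimes\mathrm P^{\blackdiamond}\otimes\mathrm P$, etc.) is projective, so these components are morphisms of $\csym{C}\!\on{-proj}$; Axioms~\ref{LeftAdj}--\ref{Involution} hold because the corresponding zig-zag, interchange, and module-functor adjunction identities already hold in $\csym{C}$ and restrict to the full subcategory. Thus every object of $\csym{C}\!\on{-proj}$ admits both duals, and $\csym{C}\!\on{-proj}$ is a quasi-fias category.

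The heart of the proof is disimplicity. I treat the left regular module ${}_{\ccf{S}}\csym{S}$ (with $\csym{S}=\csym{C}\!\on{-proj}$); the right case is symmetric, using right duals. Let $\mathbf{I}$ be a nonzero $\csym{S}$-stable ideal and pick $0\neq g\colon X\to Y$ in it. The decisive move is to pass to the mate of $g$ under the adjunction isomorphism $\Hom{{}^{\blackdiamond}Y\otimes X,\mathbb 1}_{\ccf C}\cong\Hom{X,Y}_{\ccf C}$ afforded by the rigid structure of $\csym{C}$ (cf.\ Corollary~\ref{HomIsos}), namely $c:=\on{ev}^{{}^{\blackdiamond}Y}\circ({}^{\blackdiamond}Y\otimes g)\colon{}^{\blackdiamond}Y\otimes X\to\mathbb 1$. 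Since this isomorphism is a bijection, $c\neq 0$; and because $\mathbb 1$ is simple (this is the only place the tensor, as opposed to multitensor, hypothesis is used, via \cite[Theorem~4.3.8]{EGNO}) the nonzero map $c$ is an epimorphism. Now for an arbitrary $Z\in\csym{C}\!\on{-proj}$ consider $Z\otimes c\colon Z\otimes{}^{\blackdiamond}Y\otimes X\to Z$. On the one hand it factors as $(Z\otimes\on{ev}^{{}^{\blackdiamond}Y})\circ\big((Z\otimes{}^{\blackdiamond}Y)\otimes g\big)$ and so lies in $\mathbf{I}$: the second factor is $g$ left-tensored by the object $Z\otimes{}^{\blackdiamond}Y\in\csym{C}\!\on{-proj}$, hence in $\mathbf{I}$ by $\csym{S}$-stability, and post-composition with the morphism $Z\otimes\on{ev}^{{}^{\blackdiamond}Y}$ of $\csym{C}\!\on{-proj}$ keeps it there. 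On the other hand, tensoring by $Z$ is exact in $\csym{C}$, so $Z\otimes c$ is an epimorphism onto the projective $Z$, hence split; writing $\on{id}_{Z}=(Z\otimes c)\circ s$ then shows $\on{id}_{Z}\in\mathbf{I}$. As $Z$ was arbitrary, $\mathbf{I}$ contains every identity and therefore is the whole module category, which is simple transitivity.

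I expect the main obstacle to be precisely this last point: manufacturing an identity out of a single, possibly radical, morphism $g$. The trick that resolves it is to whisker the mate $c$ by a free projective $Z$ and use that an epimorphism onto a projective splits; the delicate bookkeeping is that the natural constructions (the mate $c$ and the evaluation $\on{ev}^{{}^{\blackdiamond}Y}$) land at the non-projective unit $\mathbb 1$ and so live in $\csym{C}$ rather than $\csym{C}\!\on{-proj}$, yet the composite $Z\otimes c$ is a genuine morphism of $\csym{C}\!\on{-proj}$, so one never actually leaves the subcategory. Non-vanishing of $c$ is guaranteed abstractly by Corollary~\ref{HomIsos} together with the simplicity of the unit, so no explicit computation with $g$ is needed. (In particular this argument yields simple transitivity directly, a fortiori transitivity and $\mathcal{J}$-cell-triviality, so no separate treatment of radical ideals is required.)
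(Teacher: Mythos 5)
Your proof is correct, but its core — simple transitivity of the two regular module categories — follows a genuinely different route from the paper's. The paper argues in two steps: first it shows, using faithfulness/liberality of tensoring by a nonzero projective (Proposition~\ref{FaithfulUnits}), that an ideal containing the identity of a single nonzero object must contain all identities; then, given a nonzero $\mathrm{f}\in\mathbf{I}(\mathrm{P},\mathrm{Q})$, it takes the image factorization $\mathrm{f}=\iota\circ\mathrm{p}$ in $\csym{C}$ and tensors with a projective $\mathrm{P}'$: the epimorphism $\mathrm{p}\otimes\mathrm{P}'$ splits because $\on{Im}(\mathrm{f})\otimes\mathrm{P}'$ is projective (Proposition~\ref{EGNOHelpsAgain}), the monomorphism $\iota\otimes\mathrm{P}'$ splits because that same object is injective (Corollary~\ref{SelfInjective}), and conjugating $\mathrm{f}\otimes\mathrm{P}'$ by the resulting section and retraction produces $\on{id}_{\on{Im}(\mathrm{f})\otimes\mathrm{P}'}\in\mathbf{I}$, nonzero by faithfulness. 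You instead pass to the duality mate $c=\on{ev}^{{}^{\blackdiamond}Y}\circ({}^{\blackdiamond}Y\otimes g)$, use simplicity of $\mathbb{1}$ to conclude $c$ is epi, and whisker by an arbitrary projective $Z$ to obtain a split epimorphism onto $Z$ lying in $\mathbf{I}$, so $\on{id}_{Z}\in\mathbf{I}$ for every $Z$ in one stroke. Your argument buys brevity and uniformity: it needs only that an epimorphism onto a projective splits, it bypasses image factorizations and split monomorphisms (self-injectivity enters only to keep duals inside $\csym{C}\!\on{-proj}$, not in any splitting argument), and it renders the paper's preliminary "one identity implies all identities" step unnecessary. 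Both proofs ultimately rest on simplicity of the unit — the paper indirectly, via Proposition~\ref{FaithfulUnits}, you directly via "a nonzero map to a simple object is epi". A further small merit of your write-up is that you verify rigidity of $\csym{C}\!\on{-proj}$ as a semigroup category explicitly (whiskering the monoidal duality data of $\csym{C}$, with closure under duals supplied by Corollary~\ref{SelfInjective}), a point the paper's proof leaves implicit.
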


\begin{proof}
 Since $\csym{C}$ is a finite abelian category, $\csym{C}\!\on{-proj}$ is finitary.

 A non-trivial right $(\csym{C}\!\on{-proj})$-ideal $\mathbf{I}$ in $\csym{C}\!\on{-proj}$ cannot contain the identity morphism of a non-zero object: indeed, if $\on{id}_{\mathrm{P}} \in \mathbf{I}$, then $\on{id}_{\mathrm{P'}} \in \mathbf{I}$, for any direct summand $\mathrm{P}'$ of an object of the form $\mathrm{P} \otimes \mathrm{Q}$, for $\mathrm{Q} \in \csym{C}\!\on{-proj}$. By Proposition~\ref{FaithfulUnits}, this implies that $\on{id}_{\mathrm{P'}} \in \mathbf{I}$ for all $\mathrm{P}' \in \csym{C}\!\on{-proj}$.

 Now let $\mathrm{f} \in \mathbf{I}(\mathrm{P,Q})$ be an arbitrary non-zero morphism in $\mathbf{I}$.  Let $\mathrm{P} \xrightarrow{\mathrm{p}} \on{Im}(\mathrm{f}) \xrightarrow{\iota}\mathrm{Q}$ be the image decomposition of $\mathrm{f}$. Let $\mathrm{P}' \in \csym{C}\!\on{-proj}$.

 By separate exactness of $- \otimes -$, the morphism $\mathrm{p} \otimes \mathrm{P}'$ is epi, and $\iota \otimes \mathrm{P}'$ is mono. Since $\mathrm{P}'$ is projective, by \ref{EGNOHelpsAgain}, so is $\mathrm{Q} \otimes \mathrm{P}$, and thus $\mathrm{p} \otimes \mathrm{P}'$ is a split epimorphism.
 By Corollary~\ref{SelfInjective}, $\mathrm{P}'$ is also injective, and thus, following \ref{EGNOHelpsAgain}, so is $\mathrm{P} \otimes \mathrm{P}'$, so $\iota \otimes \mathrm{P}'$ is a split monomorphism.
 Choosing a section $\mathrm{i}: \on{Im}(\mathrm{f}) \otimes \mathrm{P}' \hookrightarrow \mathrm{P}\otimes \mathrm{P}'$ for $\mathrm{p} \otimes \mathrm{P}'$, and a retraction $\pi: \mathrm{Q} \otimes \mathrm{P}' \twoheadrightarrow \on{Im}(\mathrm{f}) \otimes \mathrm{P}'$ to $\iota \otimes \mathrm{P}'$, we find that
 \[
  \pi \circ (\mathrm{f} \otimes \mathrm{P}') \circ \mathrm{i} = \pi \circ (\iota \otimes \mathrm{P}') \circ (\mathrm{p} \otimes \mathrm{P}') \circ \mathrm{i} = \on{id}_{\on{Im}(\mathrm{f}) \otimes \mathrm{P}'}
 \]
 Finally, $\mathrm{f} \otimes \mathrm{P}' \neq 0$, since $\mathrm{f} \neq 0$ and, by Proposition~\ref{FaithfulUnits}, $- \otimes \mathrm{P}'$ is faithful. Thus, $\on{Im}(\mathrm{f}) \otimes \mathrm{P}' \simeq \on{Im}(\mathrm{f} \otimes \mathrm{P}') \neq 0$, and we have found the identity morphism of a non-zero object in $\mathbf{I}$, which, using the first paragraph, yields $\mathbf{I} = \csym{C}\!\on{-proj}$.

 Thus, $\csym{C}\!\on{-proj}$ admits no non-trivial right $(\csym{C}\!\on{-proj})$-ideals. The proof that no non-trivial left ideals exist is similar.
\end{proof}

\begin{proposition}\label{FinTensor}
 Let $\csym{S}$ be a disimple quasi-fias category. Then $[\csym{S}^{\on{op}},\mathbf{vec}_{\Bbbk}]$ is a finite tensor category.
\end{proposition}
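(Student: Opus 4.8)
The plan is to verify, for $\mathcal{A} := [\csym{S}^{\on{op}},\mathbf{vec}_{\Bbbk}]$, the four defining features of a finite tensor category: finite abelianness, monoidality with a unit, rigidity, and $\on{End}_{\mathcal{A}}(\mathbb{1}) \simeq \Bbbk$. The first two are quickly dispatched. Since $\csym{S}$ is quasi-fias it is finitary, so $\csym{S} \simeq A\!\on{-proj}$ for a finite-dimensional $\Bbbk$-algebra $A$, whence $\mathcal{A} \simeq A\!\on{-mod}$ is finite abelian and $\mathcal{A}\!\on{-proj} \simeq \csym{S}$. As $\csym{S}$ is rigid, Theorem~\ref{PromonoidalUnital} equips $[\csym{S}^{\on{op}},\mathbf{Vec}_{\Bbbk}]$ with a monoidal Day-convolution structure; by the finitary remark following the corollary to Theorem~\ref{AbelianizationCorrespondence}, this restricts to $\mathcal{A}$ (the free finite cocompletion of $\csym{S}$), with unit $\mathbb{1}$ the coequalizer of Theorem~\ref{UnitGeneralCase}. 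Moreover, by Lemma~\ref{BEO215}, $\boxtimes$ is right exact in both variables on $\mathcal{A}$, and $\mathrm{P}\boxtimes-$, $-\boxtimes\mathrm{P}$ are exact for $\mathrm{P}$ projective.

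Next I would record the two structural inputs supplied by disimplicity. A disimple $\csym{S}$ is $\mathcal{J}$-cell-trivial, and its two regular module categories ${}_{\ccf{S}}\csym{S}$ and $\csym{S}_{\ccf{S}}$ are finitary simple transitive; hence Theorem~\ref{MazorchukMiemietzLemma}, applied to each, shows that $\mathcal{A}$ is projectivizing on both sides, i.e. $\mathrm{P}\boxtimes X$ and $X\boxtimes\mathrm{P}$ are projective for $\mathrm{P}$ projective and all $X$. Furthermore every projective object is already rigid \emph{as an object of} $\mathcal{A}$: the duality $(-)^{\blackdiamond}$ of Proposition~\ref{Dualize} on $\csym{S}\simeq\mathcal{A}\!\on{-proj}$, together with the evaluation and coevaluation of $\csym{S}$, transport through the strong monoidal Yoneda embedding to genuine unit and counit morphisms in $\mathcal{A}$, and the zig-zag identities, holding in $\csym{S}$, persist in $\mathcal{A}$. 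Combining this with enough projectives, every object of $\mathcal{A}$ is a quotient of a rigid (projective) object.

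The crux is promoting rigidity from projectives to all of $\mathcal{A}$, which hinges on biexactness of $\boxtimes$ and on self-injectivity of $\mathcal{A}$. For biexactness I would show the higher terms $X\boxtimes^{\mathbf{L}_{i}}Y$ of Lemma~\ref{BEO215} vanish: resolving $X$ by projectives $P_{\bullet}$, the complex $P_{\bullet}\boxtimes Y$ consists of projective objects by the projectivizing property, and self-injectivity then forces its higher homology to split off. Self-injectivity itself I would extract from Lemma~\ref{MM5Lemma13}: for $\mathrm{P}$ projective, $\mathrm{P}\boxtimes-$ is exact with projective values, so it is $V\otimes_{A}-$ for a projective bimodule $V\in\on{add}\{A\kotimes A\}$, and its right adjoint $\mathrm{P}^{\blackdiamond}\boxtimes-\cong\on{Hom}_{A}(V,-)$ is \emph{also} exact with projective values; identifying the relevant coinduction functor $\on{Hom}_{\Bbbk}(A,-)$ (whose essential image contains all injectives) with a projective-valued functor yields $\csym{C}\!\on{-inj}\subseteq\csym{C}\!\on{-proj}$, mirroring Corollary~\ref{SelfInjective}. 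Once $\mathcal{A}$ is self-injective, every object is also a subobject of a rigid (injective $=$ projective) object; with $\boxtimes$ biexact and every object a sub- and a quotient of a rigid object, the standard argument produces, from a projective presentation $P_{1}\xrightarrow{f}P_{0}\to X\to 0$, the right dual $X^{\blackdiamond}=\ker(f^{\blackdiamond}\colon P_{0}^{\blackdiamond}\to P_{1}^{\blackdiamond})$ together with evaluation and coevaluation satisfying the zig-zag identities, so $X$ is right rigid; left duals follow symmetrically from the left regular module. I expect precisely this interlocking of biexactness, self-injectivity and rigidity to be the main obstacle, since each is most naturally proved from the others, and the circle must be broken by starting from the two unconditional facts above, namely that projectives are rigid and that $\mathcal{A}$ is projectivizing.

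Finally, $\on{End}_{\mathcal{A}}(\mathbb{1})$ is a finite-dimensional commutative $\Bbbk$-algebra. Any non-isomorphism $\varphi\in\on{End}_{\mathcal{A}}(\mathbb{1})$, or any idempotent $\neq 0,\on{id}_{\mathbb{1}}$, produces through the family $\varphi\boxtimes\on{id}_{\mathrm{P}}$ a collection of radical (respectively, non-trivial idempotent) endomorphisms of the objects $\mathrm{P}$ of the regular module $\csym{S}\simeq\mathcal{A}\!\on{-proj}$, generating a proper nonzero $\csym{S}$-stable ideal and contradicting simple transitivity of ${}_{\ccf{S}}\csym{S}$. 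Hence $\mathbb{1}$ is indecomposable and $\on{End}_{\mathcal{A}}(\mathbb{1})$ is local with vanishing radical, so, as $\Bbbk$ is algebraically closed, $\on{End}_{\mathcal{A}}(\mathbb{1})\simeq\Bbbk$. Assembling the four points shows that $[\csym{S}^{\on{op}},\mathbf{vec}_{\Bbbk}]$ is a finite tensor category.
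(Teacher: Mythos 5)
Your opening moves match the paper's: disimplicity plus Theorem~\ref{MazorchukMiemietzLemma} (applied to both regular module categories) makes $[\csym{S}^{\on{op}},\mathbf{vec}_{\Bbbk}]$ projectivizing on both sides, and projectives are rigid via Yoneda. At that point the paper simply imports \cite[Propositions~2.34(ii), 2.35, 2.36]{BEO} for biexactness and rigidity, and then gets $\on{End}(\mathbb{1})\simeq\Bbbk$ from the multitensor decomposition of \cite[Remark~4.3.4]{EGNO} plus a thick-ideal argument. You instead try to re-derive the BEO steps by hand, and that is where your proof has two genuine gaps. The first is self-injectivity: from \cite[Lemma~13]{MM5} you know $\mathrm{P}\boxtimes-\cong V\otimes_{A}-$ with $V\in\on{add}\{A\kotimes A\}$, hence $\mathrm{P}^{\blackdiamond}\boxtimes-\cong\on{Hom}_{A}(V,-)$, but this containment points the wrong way for your conclusion. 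It makes the values of $\on{Hom}_{A}(V,-)$ direct summands of values of the coinduction $\on{Hom}_{\Bbbk}(A,-)=\on{Hom}_{A}(A\kotimes A,-)$, not vice versa; to ``identify coinduction with a projective-valued functor'' you would need $A\kotimes A\in\on{add}\{V_{\mathrm{Q}}\colon \mathrm{Q}\in\csym{S}\}$, i.e.\ an object of $\csym{S}$ whose associated bimodule additively generates $A\kotimes A$, and nothing in your argument or in the hypotheses produces one. (Self-injectivity does hold, but by a different route: for $I$ injective and $\mathrm{P}$ projective nonzero, $\mathrm{P}\boxtimes I$ is projective by the projectivizing property, injective because $\mathrm{P}\boxtimes-$ has the exact left adjoint ${}^{\blackdiamond}\mathrm{P}\boxtimes-$, and nonzero; then transitivity of ${}_{\ccf{S}}\csym{S}$ propagates ``projective and injective'' to all projectives.)

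The second gap is biexactness. ``Self-injectivity forces the higher homology of $P_{\bullet}\boxtimes Y$ to split off'' is not an argument: a complex of projective-injective objects can have arbitrary nonvanishing homology (take zero differentials). What actually kills $\euler{H}^{i}(P_{\bullet}\boxtimes Y)$ for $i>0$ --- and this is the mechanism of \cite[Proposition~2.34]{BEO} that the paper invokes --- is again the projectivizing property: for $\mathrm{Q}$ projective, $\mathrm{Q}\boxtimes-$ is exact by Lemma~\ref{BEO215}, so $\mathrm{Q}\boxtimes\euler{H}^{i}(P_{\bullet}\boxtimes Y)\simeq\euler{H}^{i}((\mathrm{Q}\boxtimes P_{\bullet})\boxtimes Y)$, and $\mathrm{Q}\boxtimes P_{\bullet}$ is an exact complex of projectives resolving the \emph{projective} object $\mathrm{Q}\boxtimes X$, so these homologies vanish; one then concludes $\euler{H}^{i}(P_{\bullet}\boxtimes Y)=0$ because an object annihilated by $\mathrm{Q}\boxtimes-$ for every projective $\mathrm{Q}$ is zero, the unit being a quotient of a finite sum $\bigoplus_{i}\mathrm{F}_{i}\boxtimes\mathrm{F}_{i}^{\blackdiamond}$ by Theorem~\ref{UnitGeneralCase}. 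Since your rigidity step and your stated ``circle-breaking'' both rest on these two claims, the circle is not actually broken as written. I will add that your last step --- deducing $\on{End}(\mathbb{1})\simeq\Bbbk$ directly from simple transitivity of ${}_{\ccf{S}}\csym{S}$ via the central transformations $\varphi\boxtimes-$ --- is a genuinely different and viable alternative to the paper's detour through multitensor categories, provided you verify, by sliding these central morphisms along naturality squares, that the $\csym{S}$-stable ideal they generate is nonzero and proper.
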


\begin{proof}
 Since $\csym{S}$ is disimple, both the $\csym{S}$-module categories ${}_{\ccf{S}}[\csym{S}^{\on{op}},\mathbf{vec}_{\Bbbk}]$ and $[\csym{S}^{\on{op}},\mathbf{vec}_{\Bbbk}]_{\ccf{S}}$ are simple transitive, and thus projectivizing, by Theorem~\ref{MazorchukMiemietzLemma}.

 Having established this fact, one can repeat verbatim the proofs of \cite[Proposition~2.34(ii)]{BEO} and \cite[Proposition~2.35]{BEO}, and show that $[\csym{S}^{\on{op}},\mathbf{vec}_{\Bbbk}]$ is a multiring category. Since every object of $[\csym{S}^{\on{op}},\mathbf{vec}_{\Bbbk}]$ is the cokernel of a morphism in $\csym{S} \simeq [\csym{S}^{\on{op}},\mathbf{vec}_{\Bbbk}]\!\on{-proj}$, we find that $[\csym{S}^{\on{op}},\mathbf{vec}_{\Bbbk}]$ is rigid, by \cite[Proposition~2.36]{BEO}.
 Thus, it is a finite multitensor category. By \cite[Remark~4.3.4]{EGNO}, there are finite tensor categories $\widehat{\csym{S}}_{ij}$, for $i,j=1,\ldots,n$, for some positive integer $n$, such that $[\csym{S}^{\on{op}},\mathbf{vec}_{\Bbbk}] \simeq \bigoplus_{i,j} \widehat{\csym{S}}_{ij}$.
 Further, for a fixed $j$, the category $\bigoplus_{i} \widehat{\csym{S}}_{ij}$ defines a thick left $[\csym{S}^{\on{op}},\mathbf{vec}_{\Bbbk}]$-ideal in ${}_{[\ccf{S}^{\on{op}},\mathbf{vec}_{\Bbbk}]}[\csym{S}^{\on{op}},\mathbf{vec}_{\Bbbk}]$. Thus, $\big(\bigoplus_{i} \widehat{\csym{S}}_{ij}\big)\!\on{-proj} \simeq \bigoplus_{i} \widehat{\csym{S}}_{ij}\big)\!\on{-proj}$ defines a thick left $\csym{S}$-ideal in ${}_{\ccf{S}}\csym{S}$. Thus, since $\csym{S}$ is disimple, we must have $n=1$, and therefore $[\csym{S}^{\on{op}},\mathbf{vec}_{\Bbbk}]$ is a finite tensor category.
\end{proof}

We have thus established that  a finite abelian monoidal category $\csym{C}$ is a tensor category if and only if $\csym{C}\!\on{-proj}$ is disimple quasi-fias. Conversely, the abelianization a finitary semigroup $\csym{S}$ is a finite tensor category if and only if $\csym{S}$ is disimple quasi-fias. Indeed, we obtain the following result:
\begin{theorem}\label{FinitaryCharacterization}
 There is a bijection
   \begin{equation}\label{FinalBijection}
  \begin{aligned}
 \Big\{
  \begin{aligned}
  &\text{Disimple quasi-fias semigroup categories} 
  \end{aligned}\Big\}
/&\simeq
 \longleftrightarrow
  \Big\{
  \begin{aligned}
  \text{Finite tensor categories}
  \end{aligned}
\Big\}/\simeq
 \\
 &\csym{S} \longmapsto  [\csym{S}^{\on{op}},\mathbf{vec}_{\Bbbk}] \\
 &\csym{C}\!\on{-proj} \longmapsfrom \csym{C}
 \end{aligned}
  \end{equation}

\end{theorem}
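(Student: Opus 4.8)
The plan is to establish that the two assignments in \eqref{FinalBijection} are well-defined on equivalence classes and mutually inverse. Well-definedness is already in hand: Proposition~\ref{FinTensor} shows that $[\csym{S}^{\on{op}},\mathbf{vec}_{\Bbbk}]$ is a finite tensor category whenever $\csym{S}$ is disimple quasi-fias, while Proposition~\ref{ProjDis} shows that $\csym{C}\!\on{-proj}$ is disimple quasi-fias whenever $\csym{C}$ is a finite tensor category; both constructions manifestly send equivalent categories to equivalent categories. What remains is to check that the two round-trips are naturally equivalent to the respective identities.

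For the round-trip beginning with a finite tensor category $\csym{C}$, I would first invoke the purely abelian fact that a finite abelian category is recovered as the category of finite-dimensional presheaves on its finitary category of projectives. Writing $\csym{C}\simeq A\!\on{-mod}$, one has $\csym{C}\!\on{-proj}\simeq A\!\on{-proj}$, and the functor $\csym{C}\rightarrow [(\csym{C}\!\on{-proj})^{\on{op}},\mathbf{vec}_{\Bbbk}]$, $M\mapsto \on{Hom}_{\csym{C}}(-,M)|_{\csym{C}\!\on{-proj}}$, is an equivalence of abelian categories exhibiting the target as the free finite cocompletion of $\csym{C}\!\on{-proj}$ (see \cite[Corollary~10.21]{St} and \cite[Corollary~10.23]{St}). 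The real content is to promote this to a monoidal equivalence. For that I would use that the tensor product of $\csym{C}$ is exact, hence right exact, in each variable, and restricts to $\boxtimes$ on $\csym{C}\!\on{-proj}$ by Proposition~\ref{EGNOHelpsAgain}. By the finitary form of the universal property of Day convolution (the finite-cocompletion analogue of Theorem~\ref{UniversalPropertyDay}, as in \cite[Section~10]{St}), both the tensor product of $\csym{C}$ and the Day convolution on $[(\csym{C}\!\on{-proj})^{\on{op}},\mathbf{vec}_{\Bbbk}]$ are right-exact-in-each-variable extensions of $\boxtimes$ along the Yoneda embedding, and such an extension is essentially unique; hence the abelian equivalence above is monoidal, giving $[(\csym{C}\!\on{-proj})^{\on{op}},\mathbf{vec}_{\Bbbk}]\simeq \csym{C}$ as finite tensor categories.

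For the round-trip beginning with a disimple quasi-fias category $\csym{S}$, I would use the dual abelian statement: since $\csym{S}$ is finitary and therefore idempotent complete, the finitely generated projective objects of the finite abelian category $[\csym{S}^{\on{op}},\mathbf{vec}_{\Bbbk}]$ are precisely the summands of representables, i.e. the image of the Yoneda embedding, yielding an equivalence $[\csym{S}^{\on{op}},\mathbf{vec}_{\Bbbk}]\!\on{-proj}\simeq \csym{S}$ of categories (Proposition~\ref{FinTensor} guarantees the ambient category is genuinely a finite tensor category, so that \emph{projectives} is the intended notion). That this is a semigroup equivalence follows from the Yoneda embedding being strong monoidal for Day convolution (Theorem~\ref{UniversalPropertyDay}): the isomorphisms $\Hom{-,\mathrm{F}}\oast \Hom{-,\mathrm{G}}\simeq \Hom{-,\mathrm{F}\boxtimes \mathrm{G}}$ identify the restriction of the convolution product to projectives with $\boxtimes$, so $\big([\csym{S}^{\on{op}},\mathbf{vec}_{\Bbbk}]\big)\!\on{-proj}\simeq \csym{S}$ as semigroup categories.

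Combining the two round-trips, the displayed assignments descend to mutually inverse maps on equivalence classes, which is the asserted bijection. In both directions the main obstacle is the monoidal, as opposed to merely abelian, bookkeeping: the abelian-level recollections are standard Morita theory, but matching the two tensor products requires exactness of the tensor product together with the essential uniqueness of right-exact monoidal extensions along the Yoneda embedding, which is what forces the recovered monoidal structures to coincide.
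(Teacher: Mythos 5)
Your proposal is correct and follows essentially the same route as the paper: well-definedness is delegated to Proposition~\ref{ProjDis} and Proposition~\ref{FinTensor}, and the two round-trips are handled exactly as in the proof of Theorem~\ref{AbelianizationCorrespondence}, by identifying each category with the (finitely generated) projectives of its presheaf category and invoking the essential uniqueness of right-exact (finitely cocontinuous) monoidal extensions along the Yoneda embedding. The paper compresses this into a one-line reference to Theorem~\ref{AbelianizationCorrespondence}; you have simply spelled out the same argument in its finitary form, citing the appropriate results of \cite{St}.
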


\begin{proof}
 Both maps are well-defined as a consequence of Proposition~\ref{ProjDis} and Proposition~\ref{FinTensor}. That they are mutually inverse bijections follows similarly to Theorem~\ref{AbelianizationCorrespondence}.
\end{proof}

\vspace{5mm}

\noindent
Department of Mathematics, Uppsala University, Box. 480, SE-75106, Uppsala, SWEDEN, \\
email: {\tt mateusz.stroinski\symbol{64}math.uu.se}

\end{document}